\documentclass{amsart}

\DeclareMathOperator{\Hom}{Hom}

\DeclareMathOperator{\id}{id}

\DeclareMathOperator{\ad}{ad}

\DeclareMathOperator{\im}{im}
\DeclareMathOperator{\ssc}{sc}

\DeclareMathOperator{\Int}{Int}

\DeclareMathOperator{\Gal}{Gal}

\DeclareMathOperator{\cor}{cor} 
\DeclareMathOperator{\Br}{Br} 
 
\DeclareMathOperator{\alg}{alg} 
 
\DeclareMathOperator{\Inf}{Inf} 
\DeclareMathOperator{\Res}{Res} 
\DeclareMathOperator{\res}{res}

\DeclareMathOperator{\tran}{tran} 
 
\DeclareMathOperator{\source}{source}
\DeclareMathOperator{\target}{target} 
\DeclareMathOperator{\eval}{eval} 
\DeclareMathOperator{\Cor}{Cor} 
 
\DeclareMathOperator{\bsc}{bsc}
\DeclareMathOperator{\der}{der} 
 
\DeclareMathOperator{\Loc}{Loc}

\usepackage{amscd}
\usepackage{amssymb}

\numberwithin{equation}{section}

\newtheorem{theorem}{Theorem}[section]
\newtheorem{corollary}[theorem]{Corollary}
\newtheorem{lemma}[theorem]{Lemma}
\newtheorem{proposition}[theorem]{Proposition}

\theoremstyle{definition}
\newtheorem{definition}[theorem]{Definition} 
\newtheorem{remark}[theorem]{Remark} 
\newtheorem{example}[theorem]{Example}

\begin{document}

\title[$\mathbf B(G)$ for all local and global fields]
{$\mathbf B(G)$ for all local and global fields}

\author{Robert\ E.\ Kottwitz}
\address{Robert E. Kottwitz\\Department of Mathematics\\ University of Chicago\\ 5734 University
Avenue\\ Chicago, Illinois 60637}

\email{kottwitz@math.uchicago.edu}



\maketitle 
\setcounter{tocdepth}{1}

\tableofcontents

\section{Introduction}

\subsection{Overview}
Let $F$ be a local or global field. In this paper we are going to define 
and study a functor 
$
G \mapsto B(F,G)
$
from the category of linear algebraic $F$-groups to the category of pointed
sets. When $G$ is commutative (e.g.~a torus) $B(F,G)$ will actually be an
abelian group. For $p$-adic fields $F$ the functor $G \mapsto B(F,G)$ is
naturally isomorphic to the functor $G \mapsto \mathbf B(G)$ studied in
\cite{IsoII} (which agrees with $B(G)$ from \cite{IsoI} when $G$ is
connected). There is a natural inclusion 
\begin{equation}\label{eq.IncHBIntro}
H^1(F,G) \hookrightarrow B(F,G),
\end{equation} 
so $B(F,G)$ can be thought of as an enlargement of $H^1(F,G)$. 

The study of $B(F,G)$ breaks into two parts. To get the theory off the
ground, one must first treat the case in which $G$ is an $F$-torus $T$. 
Let $K$ be a finite Galois extension of $F$ that splits $T$, and let $G(K/F)$ 
denote the Galois group of $K/F$. 
Then 
$H^1(F,T)=H^1(G(K/F),T(K))$, and Tate-Nakayama theory gives us a 
$G(K/F)$-module $X(K)$ (discussed in more detail  
later in this introduction) such that 
\begin{equation}\label{eq.int1}
H^1(G(K/F),T(K)) \xleftarrow{\simeq} H^{-1}(G(K/F),X_*(T) \otimes X(K)),
\end{equation}
the isomorphism being given by cup product with a canonical class 
\[
\alpha(K/F) \in H^2(G(K/F),\mathbb D_{K/F}(K)),
\]
where $\mathbb D_{K/F}$ is the protorus over $F$ whose character group is
$X(K)$. 
Now the Tate cohomology group 
$H^{-1}(G(K/F),X_*(T) \otimes X(K))$ is by definition the subgroup of 
$\bigl(X_*(T) \otimes X(K)\bigr)_{G(K/F)}$ obtained as the kernel of the
norm map  
\[
N_{K/F}:\bigl(X_*(T) \otimes X(K)\bigr)_{G(K/F)} \to
\bigl(X_*(T) \otimes X(K)\bigr)^{G(K/F)},  
\] 
and in this paper we will extend 
 \eqref{eq.int1}  to an isomorphism 
\begin{equation}\label{eq.int12}
B(F,T) \xleftarrow{\simeq} \bigl(X_*(T) \otimes X(K)\bigr)_{G(K/F)}.
\end{equation} 

The part of this paper that treats tori is very much inspired by two
sources. The first is Tate's article \cite{T}, which is used heavily
throughout the early sections of the paper. The second is Satz 2.3 in 
\cite{LR}. In fact Satz 2.3 of Langlands and Rapoport can be viewed as a
way of constructing  certain special elements in $B(F,T)$ (though they do
not phrase things in this way), at least when $F$ is a number field. 
In this paper we pursue such ideas systematically and end up with the
isomorphism \eqref{eq.int12}. 

The second part of the study of $B(F,G)$ consists in going from tori to
general connected reductive groups. As usual \cite{IsoI,IsoII,CTT,EST} 
this is done in two steps. First one goes from tori to connected reductive 
groups with simply connected derived group, and then one uses
$z$-extensions to go from these to general connected reductive groups. In
several respects our treatment has  been influenced by Borovoi's work
\cite{B2}. 

The rest of this introduction will summarize the main
results in the paper, but before doing so I want to express my
gratitude to T.~Kaletha and M.~Rapoport for encouraging me to flesh out and
write up the rough ideas I had on this topic, and for sharing 
with me their ideas about the relation between $\kappa_G(b)$ and the 
Newton point of $b$    (see subsection \ref{sub.NewtonAndKappa}
as well as \ref{subsub.KalRap}). 
I would also like to thank them, as well as T.~Haines, for some very 
helpful comments on a preliminary version of this paper. 

\subsection{Definition of $B(F,G)$} 
For any finite Galois extension $K/F$ we  consider the $G(K/F)$-module 
\[
X(K):=\begin{cases}
\mathbb Z &\text{ if $F$ is local,} \\
\mathbb Z[V_K]_0 &\text{ if $F$ is global,} 
\end{cases}
\]
where $\mathbb Z[V_K]$ is the free abelian group on the set $V_K$ 
of places of $K$,  and $\mathbb
Z[V_K]_0$ is the kernel of the homomorphism $\mathbb Z[V_K] \to\mathbb Z$
defined by $\sum_{v \in V_K}n_v v \mapsto \sum_{v \in V_K} n_v$. We 
define $\mathbb D_{K/F}$ to be the $F$-group of multiplicative type whose
character group is $X(K)$. When $F$ is local, $\mathbb D_{K/F}=\mathbb
G_m$, and when $F$ is global, $\mathbb D_{K/F}$ is an interesting protorus
over $F$. 

The Tate-Nakayama isomorphisms are given by cup product with a canonical
element 
\[
\alpha(K/F) \in H^2(G(K/F),\mathbb D_{K/F}(K)).
\] 
We choose an extension 
\[
1 \to \mathbb D_{K/F}(K) \to \mathcal E(K/F) \to G(K/F) \to 1
\] 
whose associated cohomology class is $\alpha(K/F)$. This extension is an
example of a Galois gerb for $K/F$, as in \cite{LR}. 

Using this extension (and the protorus $\mathbb D_{K/F}$), we define (see
subsection \ref{sub.DefH1Alg}), for each linear algebraic group $G$ over
$F$, a pointed set $H^1_{\alg}(\mathcal E(K/F),G(K))$. Up to canonical
isomorphism, this pointed set is independent of the choice of a specific
extension $\mathcal E(K/F)$ having $\alpha(K/F)$ as its associated
cohomology class. This is due to the vanishing of $H^1(G(K/F),\mathbb
D_{K/F}(K))$. We have to prove many such vanishing theorems; this is the
main purpose of Appendix \ref{App.A}. 

Given a larger finite Galois extension $L \supset K$ there are natural
$G(L/F)$-maps $p:X(K) \hookrightarrow X(L)$ and $j:X(L) \twoheadrightarrow
X(K)$. Moreover $j$ induces an isomorphism $\gamma:X(L)_{G(L/K)} \to X(K)$. 
Using $p$, one forms an inflation map 
\[
H^1_{\alg}(\mathcal E(K/F),G(K)) \to H^1_{\alg}(\mathcal E(L/F),G(L)). 
\]
Using these inflation maps as transition morphisms, we form a pointed set 
$B(F,G)$ as the  colimit of $H^1_{\alg}(\mathcal E(K/F),G(K))$, with $K$
varying over the directed set of finite Galois extensions of $F$ in some
fixed separable closure $\bar F$ of $F$. 

Readers familiar with \cite{LR} (or \cite{Saa}) will understand that, for
$F=\mathbb Q_p$, the category of representations of the Galois gerb
$\mathcal E(K/F)$ is equivalent to the category of isocrystals having all
slopes in $[K:\mathbb Q_p]^{-1}\mathbb Z \subset \mathbb Q$. 

In the same vein, for any $p$-adic field $F$, the pointed set $B(F,G)$ is
naturally isomorphic to the pointed set $\mathbf B(G)$ in \cite{IsoII}. 
The Tannakian reasoning required to justify this last statement is standard
enough to be left to the reader. In any case 
we start from scratch in this paper, proving everything we need about
$B(F,G)$ in the
$p$-adic case directly, without appealing to \cite{IsoI,IsoII}. 

\subsection{General discussion of $B(F,G)$ for linear algebraic groups $G$} 
\subsubsection{} For any finite separable extension $E/F$ there is a
restriction map 
\[
B(F,G) \to B(E,G) 
 \] 
and a Shapiro isomorphism (see section \ref{sec.Shapiro}) 
\[
B(F,R_{E/F} G_0)=B(E,G_0). 
\]
\subsubsection{} For any place $u$ of a global field $F$ there is a
localization map (see section~\ref{sec.Loc}) 
\[
B(F,G) \to B(F_u,G).
\] 
\subsubsection{} There is a Newton map (see subsection
\ref{sub.NewtonMapping}) 
\begin{equation}\label{eq.SlopeMapIntro} 
 B(F,G) \to [\Hom_{\bar F}(\mathbb D_F,G)/G(\bar F)]^{\Gamma},  
\end{equation} 
where $\Gamma:=\Gal(\bar F/F)$ and $\mathbb D_F:=\projlim_{K} \mathbb
D_{K/F}$, the limit being taken over the directed set of finite Galois
extensions $K$ of $F$ in $\bar F$. The kernel of the Newton map is the
image of $H^1(F,G)$ under the inclusion \eqref{eq.IncHBIntro}. 

\subsubsection{} 
Inside the target of the Newton map is the subset $\Hom_F(\mathbb
D_F,Z(G))$, where $Z(G)$ denotes the center of $G$. The preimage of 
$\Hom_F(\mathbb
D_F,Z(G))$ under the Newton map is by definition the set $B(F,G)_{\bsc}$ of
\emph{basic} elements in $B(F,G)$. Obviously $B(F,G)_{\bsc}$ contains the
image of $H^1(F,G)$ under the inclusion \eqref{eq.IncHBIntro}. 

\subsubsection{} 
When $G$ is connected, the total localization map 
\[
B(F,G) \to \prod_{u \in V_F} B(F_u,G)
\] 
takes values in  $\bigoplus_{u \in V_F}
B(F_u,G)$, by which we mean the subset of the direct product consisting of
families of elements that are trivial at all but finitely many places $u
\in V_F$. See Corollary  \ref{cor.ResDirProd} for this. 

\subsubsection{}
Let 
\[
1 \to Z \xrightarrow{i} G' \xrightarrow{p} G \to 1
\] 
be a short exact sequence of linear algebraic $F$-groups in which $Z$ is a 
central torus in $G'$. Then the natural map   
\begin{equation}\label{eq.GG'ppBI}
p:B(F,G') \to B(F,G) 
\end{equation}
is surjective (see Proposition
\ref{prop.ZactB}). Moreover the map \eqref{eq.GG'ppBI} induces a bijection 
between $B(F,G)$ and the quotient of
$B(F,G')$ by the action of $B(F,Z)$. Similarly \eqref{eq.GG'ppBI} induces a
bijection  between $B(F,G)_{\bsc}$ and the quotient of
$B(F,G')_{\bsc}$ by the action of $B(F,Z)$. These facts  are needed whenever
we use $z$-extensions to reduce results about $B(F,G)$ for general
connected reductive groups to the special case of ones with simply
connected derived group. 

\subsection{Discussion of $B(F,G)$ for connected reductive groups}  
\subsubsection{} 
Now let $G$ be a connected reductive $F$-group. Then $Z(G)$ is a group of
multiplicative type, and we denote by $C(G)$ the biggest torus in $Z(G)$. 
We write $\Lambda_G$ for Borovoi's algebraic fundamental
group of $G$.
Restricted to basic elements,  the Newton map yields 
\[
B(F,G)_{\bsc} \to \Hom_F(\mathbb D_F,C(G))=(\Lambda_{C(G)} \otimes
X^*(\mathbb D_F))^{\Gamma}. 
\] 

\subsubsection{}  Choose a finite Galois extension $K$ of $F$ in $\bar F$ such  
 that $\Gal(\bar F/K)$ acts trivially on $\Lambda_G$, and put 
\[
A(F,G):=(\Lambda_G \otimes X(K))_{G(K/F)},
\]
this group being independent of the choice of $K$, up to canonical
isomorphism. Then (see section \ref{sec.KappaGforB}) there is a natural
map 
\[
\kappa_G:B(F,G) \to A(F,G). 
\] 
\subsubsection{}\label{subsub.KalRap}

Let $b \in B(F,G)_{\bsc}$. Then the image of $b$ under the Newton map is
determined by $\kappa_G(b)$. More precisely, the square (see Proposition 
\ref{prop.NewtKappa}) 
\begin{equation}\label{CD.NewtIntroKappa}
\begin{CD}
B(F,G)_{\bsc} @>{\kappa_G}>> A(F,G)\\
@V{Newton}VV @VNVV \\
(\Lambda_{C(G)} \otimes X^*(\mathbb D_F))^{\Gamma} @>i>> (\Lambda_G \otimes
X^*(\mathbb D_F))^{\Gamma}
\end{CD}
\end{equation}
commutes, and the map $i$ is injective.  
Here the vertical map $N$ is the composite 
\[
(\Lambda_G \otimes X(K))_{G(K/F)} \xrightarrow{N_{K/F}}(\Lambda_G \otimes
X(K))^{G(K/F)}
\hookrightarrow (\Lambda_G \otimes X^*(\mathbb D_F))^{\Gamma}
\]  
and the bottom arrow $i$ is induced by the inclusion $C(G)  \hookrightarrow
G$. (We are taking $K$ large enough that $\Gal(\bar F/K)$ acts
trivially on $\Lambda_G$, and so $A(F,G)=(\Lambda_G \otimes
X(K))_{G(K/F)}$.) 

For arbitrary $b \in B(F,G)$ there is a compatibility between $\kappa_G(b)$ 
and the Newton point of $b$. For this see Proposition \ref{prop.NewtKappaBG}, 
which generalizes part of Theorem 1.15 of Rapoport-Richartz \cite{RapRich}.  

\subsubsection{} 

Since the map $i$ in  \eqref{CD.NewtIntroKappa} 
is injective,  we may view 
$(\Lambda_{C(G)} \otimes X^*(\mathbb D_F))^{\Gamma}$ as a subset
of $(\Lambda_G
\otimes X^*(\mathbb D_F))^{\Gamma}$ and then form its preimage $A_0(F,G)$ under
$N$.  Propositions  \ref{prop.LocImKap} and \ref{prop.ImKapGlob} assert 
that  the image of
$B(F,G)_{\bsc}$ under $\kappa_G$ is equal to $A_0(F,G)$. (It is clear from
the  commutative square \eqref{CD.NewtIntroKappa} that the image is
contained in $A_0(F,G)$,  so the real point of the propositions is to
prove the reverse inclusion.)

\subsubsection{} When $F$ is nonarchimedean local, the map $\kappa_G$
restricts to a bijection (see Proposition \ref{prop.BscLocalMain}) 
\begin{equation}\label{eq.LocalBijectionIntro}
B(F,G)_{\bsc}  \simeq A(F,G).
\end{equation}

\subsubsection{} 

When $F=\mathbb R$, the set $B(F,G)_{\bsc}$ can be understood in terms of
$B(F,T)$ for any fundamental maximal $\mathbb R$-torus $T$ in $G$ (see 
Lemma \ref{lem.BscReal}). 

\subsubsection{}

When $F=\mathbb C$, we have (see subsection \ref{sub.Cmplx}) 
\begin{equation}
B(\mathbb C,G)_{\bsc}=\Lambda_{C(G)}.
\end{equation}

\subsubsection{}

When $F$ is global, the square 
\begin{equation}\label{CD.BAMainIntro}
\begin{CD}
B(F,G)_{\bsc} @>>> \prod_{u \in S_{\infty}} B(F_u,G)_{\bsc}\\
@V{\kappa_G}VV @VVV \\
A(F,G) @>>> \prod_{u \in S_{\infty}} A(F_u,G)
\end{CD}
\end{equation}
is cartesian (see Proposition \ref{prop.BAMain}). So $B(F,G)_{\bsc}$ is a
fiber product involving three sets that are easy to understand, and
therefore
$B(F,G)_{\bsc}$ may itself be regarded as well understood. In the function
field case
$S_\infty$ is empty, and so $\kappa_G$ induces an isomorphism
$B(F,G)_{\bsc} \simeq A(F,G)$, just as in the nonarchimedean local case. 

The picture of $B(F,G)_{\bsc}$ given by the cartesian square 
\eqref{CD.BAMainIntro} is further enhanced by Proposition
\ref{prop.CokGlobalG}, which tells us that the image of the total
localization map 
\[
B(F,G)_{\bsc} \to \bigoplus_{u \in V_F} B(F_u,G)_{\bsc}
\]
is the kernel of a certain natural map 
\[
\bigoplus_{u \in V_F} B(F_u,G)_{\bsc} \to (\Lambda_G)_{\Gamma}. 
\]

\subsection{Tori} 

For a torus $T$ over a global field there is more to be said. Let $K$ be a
finite Galois extension of $F$ that splits $T$. We write $M$ for the 
$G(K/F)$-module $X_*(T)$. Then there is a commutative diagram 
\begin{equation*}
\begin{CD}
H^1(K/F,T(K)) @>>> H^1(K/F,T(\mathbb A_K)) @>>> H^1(K/F,T(\mathbb
A_K)/T(K)) \\ @A{\simeq}AA @A{\simeq}AA @A{\simeq}AA \\
H^{-1}(K/F,M \otimes X_3) @>>> H^{-1}(K/F,M \otimes X_2) @>>>
H^{-1}(K/F,M \otimes X_1)
\end{CD}
\end{equation*}
with exact rows, in which the vertical arrows are Tate-Nakayama
isomorphisms. Here $X_1=\mathbb Z$, $X_2=\mathbb Z[V_K]$ and 
$X_3=\mathbb Z[V_K]_0$, and so there is a natural short exact sequence 
\[
0 \to X_3 \to X_2 \to X_1 \to 0
\] 
of Galois modules. 
Earlier in this introduction we wrote
$X(K)$ rather than $X_3$. We are now removing $K$ from the notation, in
order to save space, and adding the subscript $3$ in order to have uniform
notation in the  diagram. 

In section \ref{sec.GTN} we enlarge all
the groups in the diagram, obtaining 
\begin{equation*}
\begin{CD}
B_3(F,T) @>>> B_2(F,T) @>>> B_1(F,T) @>>> 0\\ 
@A{\simeq}AA @A{\simeq}AA @A{\simeq}AA \\ 
(M \otimes X_3)_{G(K/F)} @>>> (M
\otimes X_2)_{G(K/F)} @>>> (M \otimes X_1)_{G(K/F)} @>>> 0
\end{CD}
\end{equation*}
in which again the rows are  exact and the vertical maps are 
isomorphisms. 
Here 
\begin{itemize}
\item $B_3(F,T)=H^1_{\alg}(\mathcal E_3(K/F),T(K))$, \\
\item $B_2(F,T)=H^1_{\alg}(\mathcal E_2(K/F),T(\mathbb A_K))$, \\
\item $B_1(F,T)=H^1_{\alg}(\mathcal E_1(K/F),T(\mathbb A_K)/T(K))$,  
\end{itemize}
 all three being independent of the choice of $K$, up to canonical
isomorphism. 

The group $B_3(F,T)$ was denoted simply by $B(F,T)$ previously
in this introduction. The group $B_2(F,T)$ is canonically isomorphic to the
direct sum of all the local groups $B(F_u,T)$. The group $B_1(F,T)$ is more
interesting. It is defined using algebraic $1$-cocycles of the group
$\mathcal E_1(K/F)$ in $T(\mathbb A_K)/T(K)$, and $\mathcal E_1(K/F)$ is
the usual Weil group associated to $K/F$. Since $X_1=\mathbb Z$, the
rightmost vertical isomorphism is telling us that 
$B_1(F,T)$ is canonically isomorphic to the Galois coinvariants on
$X_*(T)$, in perfect analogy to what happens in the local case. So these
Galois coinvariants measure the failure of the total localization map for
$T$ to be surjective. This is just a special case of  Proposition
\ref{prop.CokGlobalG}, but with the new feature  that
these Galois coinvariants can also be interpreted as  a group $B_1(F,T)$
built using a suitable notion of algebraic $1$-cocycles for global Weil
groups. For all this see  section \ref{sec.GTN} and the sections preceding
it. That all three groups $B_i(F,T)$ are independent of the choice of $K$
splitting $T$ is established in section \ref{sec.InflationLocGlob}. 

In subsection \ref{sub.ResAndCor} we define corestriction maps for tori.
Let $E/F$ be a finite separable extension of global
fields, and let $T$ be an $F$-torus. Then for $i=1,2,3$ we define a
corestriction map 
$\Cor:B_i(E,T) \to B_i(F,T)$. 
Now let $K/E$ be a finite
extension such that $K/F$ is Galois and $T$ is split by $K$.  Put
$Y_i(K):=X_*(T) \otimes X_i(K)$.  
Then, for $i=1,2,3$  there is a commutative diagram  
\begin{equation*}
\begin{CD}
Y_i(K)_{G(K/E)} @>{\simeq}>> B_i(E,T) @>>> Y_i(K)^{G(K/E)}
\\ @VVV @V{\Cor}VV @VVV \\
Y_i(K)_{G(K/F)} @>{\simeq}>> B_i(F,T) @>>> Y_i(K)^{G(K/F)}
\end{CD}
\end{equation*}
The left vertical arrow is induced by the identity map on $Y_i(K)$. The
middle vertical arrow is corestriction for $E/F$. The right vertical arrow
is given by $y \mapsto \sum_{\sigma \in G(K/F)/G(K/E)}
\sigma(y)$. 
Similarly, 
for $i=1,2,3$  there is a commutative diagram 
\begin{equation*}
\begin{CD}
Y_i(K)_{G(K/E)} @>{\simeq}>> B_i(E,T) @>>> Y_i(K)^{G(K/E)}
\\ @AAA @A{\Res}AA @AAA \\
Y_i(K)_{G(K/F)} @>{\simeq}>> B_i(F,T) @>>> Y_i(K)^{G(K/F)}
\end{CD}
\end{equation*} 
The left vertical arrow is  given
by $y \mapsto \sum_{\sigma \in G(K/E)\backslash G(K/F)}
\sigma(y)$. 
 The middle vertical arrow is restriction for $E/F$. The right
vertical arrow is induced by the identity map on $Y_i(K)$. There are two
analogous commutative diagrams in the local case (see Lemma 
\ref{lem.LocalCorRes}).

 \subsection{Comments on notation} 
In this introduction we have consistently used $G(K/F)$ to denote the Galois 
group of $K/F$, and $G$ to denote a linear algebraic $F$-group. In the body of 
the text we often, but not always,  follow the same conventions. In parts of the text 
in which an abstract finite group is being considered, it is usually denoted by $G$. 
In parts where a single Galois extension $K/F$ is in play, and a general linear algebraic 
group is not, we sometimes abbreviate $G(K/F)$ to $G$. Such conventions are 
spelled out at the beginning of sections or subsections, as appropriate. 

For any global field $F$ we denote by $V_F$ the set of places of $F$. 
When $K/F$ is an extension of global fields, we  typically denote places of $K$ by $v$, and 
places of $F$ by $u$. When $v \in V_K$ lies over $u \in V_F$, the local Galois group 
$G(K_v/F_u)$ can be identified with the stabilizer $G_v$ of $v$ in $G=G(K/F)$. 
When using the abbreviation $G=G(K/F)$, we often employ the notation $G_v$ 
rather than $G(K_v/F_u)$. 

We consistently write $\bar F$ for a separable closure of a given field $F$, and $\Gamma$ 
for the absolute Galois group $\Gal(\bar F/F)$. 

\section{The set $H^1_{\alg}(\mathcal E, G(K))$}\label{sec.GalGerb}

 \subsection{Goal of this section} Langlands-Rapoport \cite{LR} found a
convenient way to make concrete the notion of a gerb over a field $F$. In
this section we begin by reviewing their definition of Galois gerb, but
using slightly different conventions: 
\begin{itemize}
\item We do not require that our base field $F$ have characteristic $0$. 
\item We  only work with  Galois gerbs $\mathcal E$ for finite Galois
extensions
$K/F$. 
\item We require that $\mathcal E$ be bound by a 
group $D$ of multiplicative type  over
$F$. We do not assume that  $D$ is of finite type over $F$. 
\end{itemize} 
Once the definition of Galois gerb $\mathcal E$ has been reviewed, we
introduce, for any linear algebraic group $G$ over $F$,   the set
$H^1_{\alg}(\mathcal E,G(K))$ of equivalence classes of algebraic
$1$-cocycles. We then go on to develop some
 basic constructions involving $H^1_{\alg}(\mathcal E,G(K))$.

\subsection{Review of Galois gerbs $\mathcal E$ for $K/F$} \label{sub.RvGaGe}
Let $K$ be a finite Galois extension of $F$. We write $G(K/F)$ for the
Galois group of $K/F$.  Let $X$ be a
$G(K/F)$-module, and let $D$ denote the group of multiplicative type having
$X$ as its module of characters. (Over $K$ the ring of regular functions on
$D$ is the group algebra $K[X]$, and over $F$ it is $K[X]^{G(K/F)}$.) We
are going to consider gerbs bound by $D$.

For the  purposes of this note, a Galois gerb for $K/F$, bound by $D$, 
is an extension of groups 
\begin{equation}\label{eq.GalGerb}
1 \to D(K) \to \mathcal E \to G(K/F) \to 1. 
\end{equation} 
(The corresponding Tannakian category over $F$ is then equipped with a fiber
functor over $K$, but we are not going to pursue the Tannakian point of
view.) Associated to such an extension of groups is a class  
$\alpha \in H^2(G(K/F),D(K))$. We will typically denote an element in
$\mathcal E$ by the letter $w$. (Using $e$ for this purpose might be
confusing, because it is often used to denote the identity element in a
group. Besides, when
$F$ is a local field, the main Galois gerb of interest is the Weil group
$W_{K/F}$, and it is natural to denote its elements  by $w$.) 

\subsection{Algebraic $1$-cocycles}  
 Let $G$ be a linear algebraic group over $F$ (i.e.~a smooth 
affine group scheme of finite type over $F$).  The Galois group
$G(K/F)$ acts on
$G(K)$. The canonical surjection $\mathcal E \twoheadrightarrow
G(K/F)$ then yields an action of $\mathcal E$ on $G(K)$, with the
subgroup $D(K)$ acting trivially. So we can consider the set 
$Z^1(\mathcal E,G(K))$ of abstract $1$-cocycles of $\mathcal E$ in $G(K)$.
Such a
$1$-cocycle $x$ is a map $w \mapsto x_w$ from $\mathcal E$ to $G(K)$
satisfying the cocycle condition 
\[
x_{w_1w_2}=x_{w_1} w_1(x_{w_2}).
\]
We need to take notice of two simple consequences of the cocycle condition. 
The first is that 
\begin{itemize}
\item $d \mapsto x_d$ is a homomorphism $\nu_0$ from $D(K)$ to $G(K)$. 
\end{itemize} 
There is a natural action of $G(K/F)$ on the set of homomorphisms
$\nu_1:D(K) \to G(K)$, defined by
$\sigma(\nu_1)(d):=\sigma(\nu_1(\sigma^{-1}(d)))$. 
The second simple consequence of the cocycle condition is that 
\begin{itemize}
\item $\Int(x_w) \circ \sigma(\nu_0)=\nu_0$ 
whenever $w \in \mathcal E$ maps to $\sigma \in G(K/F)$. 
\end{itemize}
 (For an element $x$ in a
group, we denote by $\Int(x)$ the inner automorphism of that group defined
by $g \mapsto xgx^{-1}$.)

An \emph{algebraic $1$-cocycle} of $\mathcal E$ in $G(K)$ is a pair
$(\nu,x)$ consisting of 
\begin{itemize}
\item a homomorphism $\nu:D \to G$ over  $K$, and 
\item an abstract $1$-cocycle $x$ of $\mathcal E$ in $G(K)$, 
\end{itemize}
satisfying the following two compatibilities: 
\begin{itemize}
\item $x_d=\nu(d)$ for all $d \in D(K)$, 
\item $\Int(x_w) \circ \sigma(\nu)=\nu$ 
whenever $w \in \mathcal E$ maps to $\sigma \in G(K/F)$. 
\end{itemize} 

\subsection{The pointed set $H^1_{\alg}(\mathcal
E,G(K))$}\label{sub.DefH1Alg} We write $Z^1_{\alg}(\mathcal E,G(K))$ for the
set of algebraic
$1$-cocycles of $\mathcal E$ in $G(K)$.  There is an obvious action of $G(K)$ on the set of
algebraic $1$-cocycles: $g \in G(K)$ transforms an algebraic $1$-cocycle
$(\nu,x)$ into $(\Int(g) \circ \nu, w \mapsto gx_w w(g)^{-1})$.  
We write $H^1_{\alg}(\mathcal E,G(K))$ for the pointed set
obtained as the quotient of
$Z^1_{\alg}(\mathcal E,G(K))$ by the action of $G(K)$.  The basepoint is of
course  the class of   the pair consisting of the trivial homomorphism and
the trivial abstract $1$-cocycle.

The map $(\nu,x) \mapsto \nu$ induces a well-defined map 
\begin{equation}\label{eq.NuCpt}
H^1_{\alg}(\mathcal E,G(K)) \to \big(\Hom_K(D,G)/\Int(G(K))\bigr)^{G(K/F)}, 
\end{equation}
which we call a \emph{Newton map}. 

\subsection{The situation when $G$ is a torus}\label{sub.Torus}
 Suppose that $G$ is a torus 
$T$. Then the second compatibility in the definition of algebraic
$1$-cocycle just says that $\nu$ is defined over $F$. Moreover it is easy to
check that the  commutative square  
\begin{equation}
\begin{CD}
H^1_{\alg}(\mathcal E,T(K)) @>>> \Hom_F(D,T) \\
@VVV @VVV \\ 
H^1(\mathcal E,T(K)) @>>> \Hom(D(K),T(K))
\end{CD}
\end{equation}
is  cartesian. So, for a torus $T$, we could equally well have defined 
$H^1_{\alg}(\mathcal E,T(K))$ as a fiber product. This observation
will become relevant later, when we are working with  $T(\mathbb
A_K)$ and
$T(\mathbb A_K)/T(K)$ instead of $T(K)$.

\subsection{The $F$-group $J_b$} \label{sub.J_b}
For any $K$-homomorphism $\nu:D \to G$  
we denote by $G_\nu$ the $K$-group obtained as the centralizer in $G$ of 
$\nu$. Let $b=(\nu,x)$ be an algebraic $1$-cocycle of $\mathcal E$ in
$G(K)$. Let $\sigma \in G(K/F)$ and choose $w \in \mathcal E$ such that $w
\mapsto \sigma$. Then the restriction of $\Int(x_w)$ is a $K$-isomorphism,
call it $f_\sigma$, from $\sigma(G_\nu)=G_{\sigma(\nu)}$ to $G_\nu$.
Moreover $f_\sigma$ is independent of the choice of lifting $w$, and the
family of isomorphisms $(f_\sigma)_{\sigma \in G(K/F)}$ is descent data for
$K/F$. This descent data produces an $F$-form, call it $J_b$, of
the $K$-group $G_\nu$. The action $\sigma_{J_b}$ of $\sigma \in G(K/F)$ on
$h \in J_b(K)=G_\nu(K)$ is given by $\sigma_{J_b}(h)=x_w \sigma(h)
x_w^{-1}$ (for any lift $w$ of $\sigma$). It follows that the group
$J_b(F)$ coincides with the stabilizer of $b$ in $G(K)$. 

It is a tautology that the $K$-homomorphism $\nu$ factors through the center
of $G_{\nu}$. Moreover, $\nu$ is defined over $F$ for the $F$-form $J_b$ of
$G_\nu$. Therefore we may view $\nu$ as a central $F$-homomorphism 
$
\nu:D \to J_b. 
$

\subsection{Algebraic $1$-cocycles with a given first component $\nu$}
\label{sub.1rstCpt}
 Let
us fix a $K$-homomorphism  $\nu:D \to G$. There may or may not be an
algebraic $1$-cocycle having $\nu$ as its first component, but let us
suppose that there does exist such a $1$-cocycle $b=(\nu,x)$. It is then
easy to see that $j \mapsto (\nu,jx)$ is a bijection from the set of
$1$-cocycles $j$ of $G(K/F)$ in $J_b(K)$ to the set of
algebraic $1$-cocycles having $\nu$ as their first component. In this way
we obtain a bijection from $H^1(G(K/F),J_b(K))$ to  the fiber of 
\eqref{eq.NuCpt} through the class of $b$. 

In the special case that $\nu$ is trivial, we may take $b$ to be trivial
as well. Then $J_b=G$ and we obtain a canonical injection 
\[
 H^1(G(K/F),G(K)) \hookrightarrow H^1_{\alg}(\mathcal E,G(K)) 
\]
whose image consists of classes $b$ whose image under the Newton map is
trivial. So $H^1_{\alg}(\mathcal E,G(K))$ can be viewed as an enlargement of 
$H^1(G(K/F),G(K))$.

\subsection{Changing the band $D$} \label{sub.ChBnd} 
 Now let us consider two Galois gerbs
\[
1 \to D_i(K) \to \mathcal E_i \to G(K/F) \to 1
\] 
(for $i=1,2$) bound by two groups $D_1$, $D_2$ of multiplicative type over
$F$ (and coming from $G(K/F)$-modules $X_1$, $X_2$ respectively). Suppose
further that we are given a homomorphism $\phi:D_1 \to D_2$ and a
homomorphism $\eta:\mathcal E_1 \to \mathcal E_2$ making 
\[
\begin{CD}
1 @>>> D_1(K) @>>> \mathcal E_1 @>>> G(K/F) @>>> 1 \\ 
@. @V{\phi}VV @V{\eta}VV @| @. \\
1 @>>> D_2(K) @>>> \mathcal E_2 @>>> G(K/F) @>>> 1
\end{CD}
\] 
commute. Then there is a natural map 
\begin{equation}\label{eq.NewBnd}
\eta^*:H^1_{\alg}(\mathcal E_2,G(K)) \to H^1_{\alg}(\mathcal E_1,G(K))
\end{equation} 
for any $G$, induced by the cocycle-level map sending $(\nu,x)$ to
$(\nu\circ\phi, x \circ \eta)$.  

\subsection{Isomorphisms of Galois gerbs for $K/F$} 
Let us consider two Galois gerbs for $K/F$, both bound by $D$: 
\[
1 \to D(K) \to \mathcal E' \to G(K/F) \to 1
\] 
\[
1 \to D(K) \to \mathcal E \to G(K/F) \to 1
\] 
An isomorphism from the first to the second is an isomorphism
$\eta:\mathcal E' \to \mathcal E$ making the diagram 
\begin{equation}\label{CD.eta}
\begin{CD}
1 @>>> D(K) @>>> \mathcal E' @>>> G(K/F) @>>> 1 \\ 
@. @| @V{\eta}VV @| @. \\
1 @>>> D(K) @>>> \mathcal E @>>> G(K/F) @>>> 1 
\end{CD}
\end{equation} 
commute. Such an isomorphism   exists if and only if the 
associated classes
$\alpha,\alpha' \in  H^2(G(K/F),D(K))$ for $\mathcal E$,
$\mathcal E'$ are equal.

The map defined in the previous subsection is then an isomorphism 
\[
\eta^*:H^1_{\alg}(\mathcal E,G(K)) \to H^1_{\alg}(\mathcal E',G(K)). 
\] 
If we assume that the group $H^1(G(K/F),D(K))$ vanishes, then the isomorphism
$\eta^*$ is independent of the choice of
$\eta$. It is for this reason that we will need to prove quite a number of
such vanishing theorems.  

\subsection{Changing the Galois extension $K/F$} \label{sub.ChGalExt} 
Consider a Galois gerb 
\begin{equation}\label{eq.GlGb} 
1 \to D(K) \to \mathcal E \to G(K/F) \to 1
\end{equation}
for $K/F$,  
and suppose that we are given another finite Galois extension $K'/F'$, as
well as embeddings $F \hookrightarrow F'$ and $K\hookrightarrow K'$. We do
not assume that the extensions $F'/F$ and $K'/K$ are algebraic, but we do
assume that the square 
\begin{equation}
\begin{CD}
K @>>> K' \\
@AAA @AAA \\
F @>>> F'
\end{CD}
\end{equation}
commutes. There is then a canonical homomorphism $\rho:G(K'/F') \to G(K/F)$.
First pulling back \eqref{eq.GlGb} along $\rho$, and then pushing forward
along the inclusion  $D(K) \hookrightarrow D(K')$,  we obtain 
the following commutative
diagram (with exact rows): 
\[
\begin{CD}
1 @>>> D(K) @>>> \mathcal E @>>> G(K/F) @>>> 1\\
@. @| @AAA @A{\rho}AA @.\\
1 @>>> D(K) @>>> \mathcal E'' @>>> G(K'/F') @>>> 1\\ 
@. @VVV @VVV @| @. \\
1 @>>> D(K') @>>> \mathcal E' @>>> G(K'/F') @>>> 1.
\end{CD}
\] 
The
homomorphism $\mathcal E'' \to \mathcal E'$ is injective. Using it to
identify $\mathcal E''$ with a subgroup of $\mathcal E'$, we then have
$\mathcal E'=D(K')\mathcal E''$ and $D(K') \cap \mathcal E''=D(K)$. 

For any linear algebraic group $G$ over $F$ there is a natural 
map 
\begin{equation} \label{eq.AlgInf}
H^1_{\alg}(\mathcal E,G(K)) \to H^1_{\alg}(\mathcal E',G(K')),
\end{equation} 
induced by the cocycle-level  map sending $(\nu,x)$ to $(\nu,x')$,
where $x'$ is defined as follows. Write $w' \in \mathcal E'$ as a
product $dw''$, with $d \in D(K')$ and $w'' \in \mathcal E''$, and then
define the value of $x'$ on $w'$ to be $\nu(d) x_{w}$, where $w$ denotes
the image of
$w''$ under $\mathcal E'' \twoheadrightarrow \mathcal E$. (It is easy to see
that the product $\nu(d)x_w$ is independent of the choice of decomposition
$w'=dw''$.)

We will use \eqref{eq.AlgInf} in the following three situations. 
\begin{example}\label{Ex.ResMpp}
Let $F'$ be a field between $K$ and $F$, and take $K'=K$. Then
\eqref{eq.AlgInf} yields a restriction map 
\begin{equation}\label{Ex.Res}
\Res:H^1_{\alg}(\mathcal E,G(K)) \to H^1_{\alg}(\mathcal E',G(K)).
\end{equation}
In this case $\mathcal E'$ is simply the preimage of $G(K/F')$ under
$\mathcal E \twoheadrightarrow G(K/F)$.
\end{example}

\begin{example}\label{Ex.AbsInf}
Let $K'$ be a finite Galois extension of $F$ such that $K' \supset K$, and
take
$F'=F$. Then
\eqref{eq.AlgInf} yields an inflation  map 
\begin{equation}\label{Ex.Inf}
\Inf:H^1_{\alg}(\mathcal E,G(K)) \to H^1_{\alg}(\mathcal E',G(K')).
\end{equation}
In this situation we often write $\mathcal E^{\inf}$ instead of $\mathcal
E'$. 
\end{example} 

\begin{example}\label{Ex.AbsLoc} 
Suppose that $K/F$ is a finite Galois extension of global fields. Choose
a place $u$ of $F$ and a place $v$ of $K$ lying over $u$. Take $F',K'$ to be
$F_u$, $K_v$ respectively. The natural map $G(K_v/F_u) \to G(K/F)$
identifies $G(K_v/F_u)$ with the decomposition group of $v$, and 
\eqref{eq.AlgInf} yields a localization  map 
\begin{equation}\label{Ex.Loc}
\Loc:H^1_{\alg}(\mathcal E,G(K)) \to H^1_{\alg}(\mathcal E',G(K_v)).
\end{equation}
\end{example} 

\subsection{Short exact sequences of linear algebraic groups}
\label{sub.SESLAG}

In the remaining subsections of this section   we study the
behavior of $H^1_{\alg}(\mathcal E,G(K))$ with respect to short exact
sequences of linear algebraic groups. 

\begin{lemma}\label{lem.NG'G} 
Let 
$
1 \to N \xrightarrow{i} G' \xrightarrow{p} G \to 1
$
be a short exact sequence of linear algebraic $F$-groups, and assume that
$p:G'(K) \to G(K)$ is surjective. Then 
\[
H^1_{\alg}(\mathcal E,N(K)) \xrightarrow{i} H^1_{\alg}(\mathcal E,G'(K)) 
\xrightarrow{p} H^1_{\alg}(\mathcal E,G(K))
\] 
is an exact sequence of pointed sets, i.e.~the image of $i$ is equal to 
the kernel of $p$. 
\end{lemma}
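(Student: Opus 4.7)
The plan is to establish both inclusions $\im(i) \subseteq \ker(p)$ and $\ker(p) \subseteq \im(i)$. The first is immediate: if $(\nu, x) \in Z^1_{\alg}(\mathcal E, N(K))$, then applying $p$ componentwise yields the pair $(1, 1)$, whose class is the basepoint of $H^1_{\alg}(\mathcal E, G(K))$.

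For the harder inclusion $\ker(p) \subseteq \im(i)$, I start with $(\nu', x') \in Z^1_{\alg}(\mathcal E, G'(K))$ whose image under $p$ represents the trivial class in $H^1_{\alg}(\mathcal E, G(K))$. The $G(K)$-orbit of $(1, 1)$ consists precisely of the pairs $(1, w \mapsto g \cdot w(g)^{-1})$ with $g \in G(K)$, since $\Int(g) \circ 1 = 1$ for every $g$. Hence there exists $g \in G(K)$ with $p \circ \nu' = 1$ and $p(x'_w) = g \cdot w(g)^{-1}$ for all $w \in \mathcal E$. At this point I invoke the surjectivity hypothesis to lift $g$ to some $g' \in G'(K)$; this is the only place where surjectivity enters the proof.

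Now act by $(g')^{-1}$ to replace $(\nu', x')$ with the equivalent cocycle $(\nu'', x'') := \bigl(\Int((g')^{-1}) \circ \nu', \ w \mapsto (g')^{-1} x'_w w(g')\bigr)$. A short computation gives $p \circ \nu'' = \Int(g^{-1}) \circ (p \circ \nu') = 1$ and $p(x''_w) = g^{-1} \cdot p(x'_w) \cdot w(g) = g^{-1} \cdot g w(g)^{-1} \cdot w(g) = 1$, so $\nu''$ factors uniquely as $i \circ \nu$ for some $K$-homomorphism $\nu: D \to N$, and $x''$ takes values in $N(K)$, giving a map $x: \mathcal E \to N(K)$. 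The cocycle identity and both algebraic-cocycle compatibilities for $(\nu, x)$ in $N(K)$ are inherited directly from those of $(\nu'', x'')$ in $G'(K)$, because $i: N(K) \hookrightarrow G'(K)$ is an injective homomorphism and both $\nu''$ and $x''$ already take values in $N$. Thus $(\nu, x) \in Z^1_{\alg}(\mathcal E, N(K))$ and $i[(\nu, x)] = [(\nu'', x'')] = [(\nu', x')]$, as required. I do not anticipate any serious obstacle in this argument: once the surjectivity hypothesis is used to lift $g$ to $g'$, everything else is a matter of unwinding the definition of an algebraic $1$-cocycle and the $G'(K)$-action on it.
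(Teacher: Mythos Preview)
Your proof is correct and is precisely the routine unwinding of definitions that the paper has in mind; the paper's own proof is the single sentence ``This follows easily from the definitions.'' Your argument supplies exactly those details, with the surjectivity hypothesis used at the one necessary point (lifting $g$ to $g'$).
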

\begin{proof}
This follows easily from the definitions. 
\end{proof} 

In the next lemma we are going to consider a short exact sequence 
\[
1 \to Z \xrightarrow{i} G' \xrightarrow{p} G \to 1
\] 
of linear algebraic $F$-groups with $Z$ central in $G'$. Of course $Z$ is
necessarily commutative. In this situation there is a natural action of the
abelian group $H^1_{\alg}(\mathcal E,Z(K))$ on the set $H^1_{\alg}(\mathcal
E,G'(K))$: the class of $(\mu,x) \in Z^1_{\alg}(\mathcal E,Z(K))$
transforms the class of $(\nu,y) \in Z^1_{\alg}(\mathcal E,G'(K))$ into the
class of $(\mu\nu,xy) \in Z^1_{\alg}(\mathcal E,G'(K))$. It is clear that
this action preserves the fibers of the map 
\begin{equation}\label{eq.GG'p}
H^1_{\alg}(\mathcal E,G'(K)) 
\xrightarrow{p} H^1_{\alg}(\mathcal E,G(K)).
\end{equation}

\begin{lemma}\label{lem.ZG'G}
If $p:G'(K) \to G(K)$ is surjective, then $H^1_{\alg}(\mathcal E,Z(K))$
acts transitively on the fibers of the map \eqref{eq.GG'p}. 
\end{lemma}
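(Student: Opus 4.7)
The plan is to unwind the claim at the cocycle level: given two algebraic $1$-cocycles $(\nu_1,y_1)$ and $(\nu_2,y_2)$ in $Z^1_{\alg}(\mathcal E,G'(K))$ whose images agree in $H^1_{\alg}(\mathcal E,G(K))$, I will first reduce to the case where their images agree on the nose as cocycles, and then produce an algebraic $1$-cocycle $(\mu,x)$ of $\mathcal E$ in $Z(K)$ such that $(\mu\nu_1,xy_1)=(\nu_2,y_2)$. This will exhibit the required transitivity.

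For the reduction step, suppose $p(\nu_1,y_1)$ and $p(\nu_2,y_2)$ are $G(K)$-equivalent, via some $g\in G(K)$. The surjectivity hypothesis $p:G'(K)\twoheadrightarrow G(K)$ lets us lift $g$ to $g'\in G'(K)$. Replacing $(\nu_1,y_1)$ by its $g'$-twist $(\Int(g')\circ\nu_1,\, w\mapsto g'y_{1,w}w(g')^{-1})$, which lies in the same class in $H^1_{\alg}(\mathcal E,G'(K))$, we may assume $p\circ\nu_1=p\circ\nu_2$ as $K$-homomorphisms $D\to G$ and $p\circ y_1=p\circ y_2$ as functions $\mathcal E\to G(K)$.

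With this reduction in hand, define $\mu:D\to G'$ by $\mu(d):=\nu_2(d)\nu_1(d)^{-1}$ and $x_w:=y_{2,w}y_{1,w}^{-1}$. Exactness of $1\to Z\to G'\to G\to 1$ ensures that $\mu$ and $x$ take values in $Z$ and $Z(K)$ respectively. Because $Z$ is central in $G'$, one checks using $\nu_2=\mu\cdot\nu_1$ and $y_2=x\cdot y_1$ that $\mu$ is a homomorphism (the relations $\nu_i(dd')=\nu_i(d)\nu_i(d')$ combine to give $\mu(dd')=\mu(d)\mu(d')$) and that $x$ is an abstract $1$-cocycle (from $y_{2,w_1w_2}=y_{2,w_1}w_1(y_{2,w_2})$ one obtains $x_{w_1w_2}=x_{w_1}w_1(x_{w_2})$ after pulling the central factors across the $y_{1,\cdot}$'s).

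The main work, and the only nontrivial point, is verifying the second compatibility $\Int(x_w)\circ\sigma(\mu)=\mu$ in the definition of an algebraic $1$-cocycle. Since $x_w\in Z(K)$ and the image of $\mu$ is in $Z$, $\Int(x_w)$ acts trivially, so the real content is that $\sigma(\mu)=\mu$. This is where I expect to spend the most ink: one expands $\sigma(\nu_i)(d)=y_{i,w}^{-1}\nu_i(d)y_{i,w}$ using the compatibilities for $(\nu_i,y_i)$, substitutes $y_{2,w}=x_w y_{1,w}$, and commutes the central element $x_w$ past $\nu_2(d)$ to collapse the expression to $y_{1,w}^{-1}\mu(d)y_{1,w}$, which in turn equals $\mu(d)$ by one more application of centrality. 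The first compatibility $x_d=\mu(d)$ for $d\in D(K)$ is immediate. Thus $(\mu,x)\in Z^1_{\alg}(\mathcal E,Z(K))$, and by construction its action carries the class of $(\nu_1,y_1)$ to that of $(\nu_2,y_2)$, proving transitivity.
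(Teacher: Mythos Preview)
Your proof is correct and follows exactly the same approach as the paper's: reduce (using surjectivity of $p:G'(K)\to G(K)$) to representatives whose images in $Z^1_{\alg}(\mathcal E,G(K))$ coincide, then take pointwise quotients to obtain the required $(\mu,x)\in Z^1_{\alg}(\mathcal E,Z(K))$. The paper simply says ``it is then easy to check'' at the point where you spell out the verification of $\sigma(\mu)=\mu$, so your version is a faithful expansion of the same argument.
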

\begin{proof}
Suppose that $b',b'' \in H^1_{\alg}(\mathcal E,G'(K))$ 
have the same image in $H^1_{\alg}(\mathcal E,G(K))$. Because $G'(K) \to
G(K)$ is surjective, we can choose  algebraic $1$-cocycles $(\nu',x')$,
$(\nu'',x'')$ in the classes $b',b''$ in such a way that they have the
same image in
$Z^1_{\alg}(\mathcal E,G(K))$. It is then easy to check that there exists
a unique element $(\mu,z) \in Z^1_{\alg}(\mathcal E,Z(K))$ such that
$\mu\nu'=\nu''$ and $zx'=x''$.  
\end{proof} 

\begin{remark}\label{rem.T90}
The surjectivity of $p:G'(K) \to G(K)$ is automatic when $Z$ is an
$F$-torus split by $K$, because then $H^1(K,Z)$ vanishes by Hilbert's
Theorem 90. 
\end{remark}

\subsection{Lemma on extensions of tori} 
The following result is the most basic special case of 
Prop.~7.1.1 in  SGA 3, Tome II,
Exp. XVII.  

\begin{lemma}\label{lem.TorExt} 
Any extension of a torus by a torus is again a torus. 
\end{lemma}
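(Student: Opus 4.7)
Write the given extension as
\[
1 \to T_1 \to G \to T_2 \to 1.
\]
The plan is to show successively that $G$ is affine, smooth, connected, and commutative, and then to invoke the structure theorem for smooth connected commutative affine groups over an algebraically closed field to rule out a unipotent part. First I would note that affineness, smoothness, and connectedness all pass through extensions of algebraic group schemes, so $G$ inherits these properties from $T_1$ and $T_2$. Next I would verify that $T_1$ is central in $G$: conjugation gives a homomorphism $G \to \Aut(T_1)$ which is trivial on the (commutative) subgroup $T_1$, hence factors through $T_2$. But $\Aut(T_1)$ is representable by an \'etale $F$-group scheme (it becomes the constant group scheme $\Aut_{\mathbb Z}(X^*(T_1))$ after base change to $\bar F$), so it has no nontrivial connected subgroup, while $T_2$ is connected. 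Hence the map $T_2 \to \Aut(T_1)$ is trivial, so $T_1$ is central and $G$ is commutative.

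At this stage $G$ is a smooth connected commutative affine $F$-group. Base-changing to $\bar F$, the classical structure theorem produces a direct product decomposition
\[
G_{\bar F} \;\cong\; S \times U
\]
with $S$ a torus and $U$ a smooth connected commutative unipotent $\bar F$-group. I would then compose the inclusion $U \hookrightarrow G_{\bar F}$ with the projection $G_{\bar F} \twoheadrightarrow (T_2)_{\bar F}$: this is a homomorphism from a unipotent group to a torus, hence trivial, so $U$ lies in $(T_1)_{\bar F}$. Since a torus contains no nontrivial unipotent subgroup, $U = 1$, and therefore $G_{\bar F}$ is a torus. A smooth connected affine $F$-group whose base change to $\bar F$ is a torus is itself a torus, so $G$ is a torus over $F$.

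The only nonformal ingredients are the centrality argument (requiring the \'etaleness of $\Aut(T_1)$) and the structure-theorem decomposition over $\bar F$; the main obstacle is arranging these to work in arbitrary characteristic, since the paper admits global function fields. Both results are, however, valid over any perfect base, and $\bar F$ is perfect, so the argument goes through uniformly.
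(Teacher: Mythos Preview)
Your outline is sound, but the step ``so $T_1$ is central and $G$ is commutative'' is a genuine gap: centrality of $T_1$ together with commutativity of $T_2=G/T_1$ does not by itself force $G$ to be commutative (the Heisenberg group is the standard abstract counterexample to that inference, and algebraic Heisenberg groups exist as central extensions of $\mathbb G_a$ by $\mathbb G_a$). What saves you is a \emph{second} application of the very idea you used for centrality: the commutator map $G\times G\to G$ lands in $T_1$ (since $T_2$ is commutative) and, because $T_1$ is central, is bimultiplicative and descends to a pairing $T_2\times T_2\to T_1$; viewing this as a morphism from the connected group $T_2$ to the \'etale group scheme $\underline{\Hom}(T_2,T_1)$, you conclude it is trivial. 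So the hole is fillable with your own tools, but as written it is a hole.

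There is also a smaller slip at the end: in this paper's conventions $\bar F$ denotes a \emph{separable} closure, and for a global function field the separable closure is not perfect (for instance $t$ has no $p$-th root in the separable closure of $\mathbb F_q(t)$). The fix is painless --- base change to an algebraic closure instead, run the structure theorem there, and then observe that a smooth connected affine $F$-group which becomes a torus over \emph{some} extension is already a torus over $F$ (groups of multiplicative type split over the separable closure).

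For comparison, the paper supplies no argument at all: it simply records that this is the most basic case of SGA~3, Tome~II, Exp.~XVII, Prop.~7.1.1. So your direct proof, once the two points above are patched, actually adds content where the paper just cites.
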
 

This result will be needed in the next subsection. 

\subsection{Stronger results under two additional
hypotheses}\label{sub.StrR}
 
Lemma \ref{lem.ZG'G} is especially useful when the  map
\eqref{eq.GG'p} is surjective, for then we may identify
$H^1_{\alg}(\mathcal E,G(K))$ with the quotient of 
$H^1_{\alg}(\mathcal E,G'(K))$ by the natural action of $H^1_{\alg}(\mathcal
E,Z(K))$. We are now going to prove a  result of
this kind, but only for Galois gerbs
$\mathcal E$ satisfying the following two assumptions (which will hold for  
all the specific local and global Galois gerbs studied later in this
paper). \newline

\noindent {\bf Assumption 1.} The group $D$ is a protorus split by $K$.
Equivalently, the group $X^*(D)$ is a $G(K/F)$-module that is torsion-free
as abelian group. \newline 

\noindent {\bf Assumption 2.} For every short exact sequence $1 \to T_1 \to
T_2 \to T_3 \to 1$ of $F$-tori split by $K$ the natural map 
$
H^1_{\alg}(\mathcal E,T_2(K)) \to H^1_{\alg}(\mathcal E,T_3(K)) 
$
is surjective. 

\begin{proposition}\label{prop.Zact}
Suppose that $\mathcal E$ satisfies the two assumptions above. Let 
\begin{equation}\label{SES.ZG'G}
1 \to Z \xrightarrow{i} G' \xrightarrow{p} G \to 1
\end{equation}  
be a short exact sequence of linear algebraic $F$-groups in which $Z$ is a
torus that splits over $K$ and is central in $G'$. Then the natural map 
\begin{equation}\label{eq.GG'pp}
H^1_{\alg}(\mathcal E,G'(K)) 
\xrightarrow{p} H^1_{\alg}(\mathcal E,G(K)) 
\end{equation}
is surjective. Moreover the map \eqref{eq.GG'pp} induces a bijection 
between $H^1_{\alg}(\mathcal E,G(K))$ and the quotient of
$H^1_{\alg}(\mathcal E,G'(K))$ by the action of $H^1_{\alg}(\mathcal
E,Z(K))$.
\end{proposition}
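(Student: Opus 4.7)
Plan. The second claim follows from Lemma \ref{lem.ZG'G} once surjectivity of $p_*$ is known, since $p: G'(K) \to G(K)$ is surjective by Remark \ref{rem.T90} (Hilbert 90 for the $K$-split torus $Z$). The remaining task is to show that every $b = [(\nu, x)] \in H^1_{\alg}(\mathcal E, G(K))$ lies in the image of $p_*$.

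My strategy is to realize $b$ as the pushforward of an algebraic $1$-cocycle in an auxiliary $F$-torus, and then invoke Assumption 2. Using the $F$-form $J_b$ of $G_\nu$ from subsection \ref{sub.J_b}, the homomorphism $\nu$ factors $F$-rationally through the central $F$-torus $T := Z(J_b)^\circ$. Next, I would descend the $K$-subgroup $p^{-1}(G_\nu) \subset G'$ to an $F$-group $\tilde J$ using the descent datum $\sigma \mapsto \Int(\tilde x_w) \circ \sigma$, which is well-defined on automorphisms of $G'$ since any two lifts $\tilde x_w \in G'(K)$ of $x_w$ differ by central elements of $Z(K)$. This produces an SES $1 \to Z \to \tilde J \to J_b \to 1$ of $F$-groups. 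Setting $T' := p^{-1}(T) \subset \tilde J$, Lemma \ref{lem.TorExt} identifies $T'$ as an $F$-torus (enlarging $K$ within the colimit defining $B(F,G)$ to ensure splitting), yielding an SES $1 \to Z \to T' \to T \to 1$ of $K$-split $F$-tori.

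Assumption 2 then produces a surjection $H^1_{\alg}(\mathcal E, T'(K)) \twoheadrightarrow H^1_{\alg}(\mathcal E, T(K))$. I would then identify $b$ with a class $b_T \in H^1_{\alg}(\mathcal E, T(K))$ (via the Newton-fiber parametrization of subsection \ref{sub.1rstCpt} combined with the factorization $\nu: D \to T$, together with the bijection between the fiber through $b$ and $H^1(G(K/F), J_b(K))$), lift $b_T$ to some $b_{T'} \in H^1_{\alg}(\mathcal E, T'(K))$ by Assumption 2, and finally push $b_{T'}$ forward through $T' \hookrightarrow \tilde J$ and then into $G'$ — the last step using a chosen set-theoretic lift $\tilde x$ of $x$ to pass from the twisted $F$-structure on $\tilde J$ back to the untwisted one on $G'$.

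The main obstacle I expect is the cocycle bookkeeping in the final step: verifying that the resulting class in $H^1_{\alg}(\mathcal E, G'(K))$ actually maps under $p_*$ to $b$, rather than to some other class with the same Newton point. Concretely, a direct attempt to lift $\nu$ to a $K$-homomorphism $\nu': D \to G'$ (possible because the pullback extension $1 \to Z \to \nu^*G' \to D \to 1$ of split multiplicative-type $K$-groups splits on character groups, by Assumption 1) leaves an obstruction in $H^1(G(K/F), \Hom_K(D, Z))$ to making $\nu'$ $F$-rational into $\tilde J$, coupled to a compensating $H^2(\mathcal E, Z(K))$-obstruction for the lifted cocycle. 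What Assumption 2 really buys is a simultaneous cancellation of both obstructions, since the lifted class $b_{T'}$ packages together both an $F$-rational lift of $\nu$ and the necessary correcting $1$-cochain.
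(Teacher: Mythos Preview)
There is a genuine gap. The step where you ``identify $b$ with a class $b_T \in H^1_{\alg}(\mathcal E, T(K))$'' does not go through: the Newton-fiber parametrization of subsection~\ref{sub.1rstCpt} identifies $b$ with the \emph{trivial} class in the Galois cohomology set $H^1(G(K/F), J_b(K))$, not with an algebraic $1$-cocycle in $T$ whose first component is $\nu$. An element of $H^1_{\alg}(\mathcal E, T(K))$ with first component $\nu$ exists only if $\nu_*\alpha$ vanishes in $H^2(G(K/F), T(K))$ for the twisted Galois action, and nothing forces this. The identity $x_{s(\sigma)}\,\sigma(x_{s(\tau)})\,x_{s(\sigma\tau)}^{-1} = \nu(a_{\sigma,\tau})$ does trivialize $\nu_*\alpha$ as a class in $G(K)$, but the cochain $\sigma \mapsto x_{s(\sigma)}$ takes values in $G(K)$, not in $T(K)$. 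Even granting a hypothetical $b_{T'} = (\nu', y')$, your untwisting $z'_w := y'_w\,\tilde x_w$ fails to be a cocycle of $\mathcal E$ in $G'(K)$: a direct computation gives $z'_{w_1}\,w_1(z'_{w_2})\,(z'_{w_1 w_2})^{-1}$ equal to the $Z(K)$-valued $2$-cocycle measuring the failure of $\tilde x$ to lift $x$ to a cocycle, so the original obstruction has been recreated rather than cancelled. (A smaller point: the proposition is stated for fixed $K$, so enlarging $K$ ``within the colimit defining $B(F,G)$'' is not available here; taking $T = \im(\nu)$ with its $J_b$-induced $F$-structure would repair the splitness issue via Assumption~1, but the two gaps above remain.)

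The paper's argument avoids factoring $b$ through any $F$-torus. It lifts $\nu$ to a $K$-homomorphism $\nu': D \to G'$, identifies the two obstructions you name (in $H^1(G(K/F), \Hom_K(D,Z))$ and in $H^2(G(K/F), Z(K))$), and kills both simultaneously by pushing out along $Z \hookrightarrow Z'' := R_{K/F}(Z)$: since $X_*(Z'')$ is coinduced, Shapiro's lemma makes both obstruction groups vanish for the enlarged extension $1 \to Z'' \to G'' \to G \to 1$, yielding a lift $b'' \in H^1_{\alg}(\mathcal E, G''(K))$ of $b$. Only then is Assumption~2 invoked, and not on a torus built from $\nu$ but on the sequence $1 \to Z \to Z'' \to C \to 1$, to translate $b''$ by an element of $H^1_{\alg}(\mathcal E, Z''(K))$ so that it lands in the image of $H^1_{\alg}(\mathcal E, G'(K)) \to H^1_{\alg}(\mathcal E, G''(K))$ (via Lemma~\ref{lem.NG'G}).
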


\begin{proof} 

Let $b \in H^1_{\alg}(\mathcal E,G(K))$ and choose $(\nu,x) \in
Z^1_{\alg}(\mathcal E,G(K))$ representing $b$. So $\nu \in \Hom_K(D,G)$ and
$x \in Z^1(\mathcal E,G(K))$ satisfy 
\begin{enumerate}
\item $x_d=d^\nu$ for all $d \in D(K)$, and 
\item $\Int(x_w) \circ \sigma(\nu) =\nu$ for any $\sigma \in G(K/F)$ and
any $w \in \mathcal E$ such that $w \mapsto \sigma$. 
\end{enumerate} 
Here we are writing $d^\nu$ (instead of $\nu(d)$) for the value of $\nu$ on
$d$.

By virtue of Assumption 1 the image of $D$ in $G$ is a split $K$-torus $T$
in $G$. Pulling back the extension \eqref{SES.ZG'G} along the inclusion $T
\hookrightarrow G$, we obtain an extension 
\begin{equation}\label{SES.ZT'T}
1 \to Z \to T' \to T \to 1
\end{equation}  
and an inclusion $T' \hookrightarrow G'$. By Lemma \ref{lem.TorExt} $T'$ is
a torus, and since $Z$ and $T$ are both $K$-split, so too is $T'$.
Therefore  the exact sequence \eqref{SES.ZT'T} splits (non-canonically),
from which it follows that 
 the sequence 
\[
0 \to \Hom_K(D,Z) \to \Hom_K(D,T') \to \Hom_K(D,T) \to 0
\] 
is exact.  

We conclude that there exists $\nu' \in \Hom_K(D,T')$ such that $\nu'
\mapsto \nu$. We would like to lift our $1$-cocycle $x$ to a $1$-cocycle
$x'$ in $G'(K)$, but there is an obvious obstruction to doing so. The
 map $p:G'(K) \to
G(K)$ is surjective (see Remark \ref{rem.T90}), so we may choose a
$1$-cochain $x'$ of $\mathcal E$ in $G'(K)$ such that $x' \mapsto x$. The
coboundary of $x'$ is then a $2$-cocycle of $\mathcal E$ in $Z(K)$. We can
choose $x'$ to be a $1$-cocycle if and only if the cohomology class of this
$2$-cocycle vanishes. Unfortunately 
the group $H^2(\mathcal E,Z(K))$ is hard to work with, so it would be
helpful if we could  choose 
$x'$ in such a way that its coboundary is inflated from a
$2$-cocycle of $G(K/F)$. Implementing this idea  will be our next task,
but in doing  so we will be led  to enlarge the group $G'$.  

For each
$\sigma
\in G(K/F)$ we choose
$\dot\sigma
\in
\mathcal E$ such that $\dot\sigma \mapsto \sigma$, and then we choose 
$x'_{\dot\sigma} \in G'(K)$ such that $x'_{\dot\sigma} \mapsto
x_{\dot\sigma}$. We claim that $\Int(x'_{\dot\sigma}) \circ \sigma(\nu')$
is independent of the choice of $\dot\sigma$ and $x'_{\dot\sigma}$. Indeed,
suppose we replace $\dot\sigma$ by $\sigma^\flat$. Then
$\sigma^\flat=\dot\sigma d$ for some $d \in D(K)$, and so
$x_{\sigma^\flat}=x_{\dot\sigma}\sigma(d^\nu)
=x_{\dot\sigma}\sigma(d)^{\sigma(\nu)}$. Therefore one particular lifting
of
$x_{\sigma^\flat}$ is 
$x'_{\dot\sigma}\sigma(d)^{\sigma(\nu')}$, and any
such lifting is of the form
$zx'_{\dot\sigma}\sigma(d)^{\sigma(\nu')}$ for some
$z \in Z(K)$. From this it is clear that $\Int(x'_{\dot\sigma}) \circ \sigma(\nu')$
is independent of the choice of $\dot\sigma$ and $x'_{\dot\sigma}$. 

Now $\nu'$ is one lifting of $\nu$ to $G'$, and by item (2) above
$\Int(x'_{\dot\sigma}) \circ \sigma(\nu')$ is another such lifting. So
there exists a unique $\lambda_\sigma \in \Hom_K(D,Z)$ such that 
\[
\Int(x'_{\dot\sigma}) \circ \sigma(\nu')=\nu' +\lambda_\sigma,
\] 
and an easy calculation shows that $\lambda$ is a $1$-cocycle of $G(K/F)$
in $\Hom_K(D,Z)$. 

In order to kill the cohomology class of $\lambda$ we are going to enlarge
$G'$. There is a canonical $F$-embedding 
\begin{equation}\label{eq.ZRZ} 
Z \hookrightarrow Z'',
\end{equation}
where $Z'':=R_{K/F}(Z)$. (Here we are applying Weil restriction of scalars
$R_{K/F}$ to the $K$-torus obtained from $Z$ by extension of scalars from
$F$ to $K$.) Pushing out the extension \eqref{SES.ZG'G} along the
inclusion \eqref{eq.ZRZ}, we obtain a commutative diagram 
\begin{equation}\label{CD.ZG'G''}
\begin{CD}
1 @>>> Z @>i>> G' @>p>> G @>>> 1 \\ 
@. @VVV @VVV @| @.\\
1 @>>> Z'' @>j>> G'' @>q>> G @>>> 1. 
\end{CD}
\end{equation}
 Of course $G''$ is a central extension of $G$ by the torus $Z''$. 

Now the $G(K/F)$-module $\Hom_K(D,Z'')=X^*(D) \otimes X_*(Z'')$ is
coinduced from the $\mathbb Z$-module $X^*(D) \otimes X_*(Z)$, so
$H^1(G(K/F),\Hom_K(D,Z''))$ vanishes, which guarantees that there exists
$\mu \in \Hom_K(D,Z'')$ such that $\lambda_\sigma=\sigma(\mu)-\mu$. It is
then clear that $\nu'':=\nu'-\mu$ is a $K$-homomorphism $D \to G''$
lifting $\nu$ such that 
\begin{equation}\label{eq.nu''}
\Int(x'_{\dot\sigma}) \circ \sigma(\nu'') =\nu'' 
\end{equation}
for all $\sigma \in G(K/F)$. 

Next we define a $1$-cochain $x''$ of $\mathcal E$ in $G''(K)$ by putting 
$x''_{d\dot\sigma}:=d^{\nu''} x'_{\dot\sigma}$. Obviously $x''$ is a lifting  
of  $x$  to $G''(K)$, so there exists a unique
$2$-cocycle $z$ of $\mathcal E$ in $Z''(K)$ such that 
\[
x''_{w_1}x''_{w_2} =z_{w_1,w_2}x''_{w_1w_2}
\]
for all $w_1,w_2 \in \mathcal E$. From \eqref{eq.nu''} it follows easily
that $z$ is inflated from a (unique) $2$-cocycle (still call it $z$) of
$G(K/F)$ in $Z''(K)$. But $H^2(G(K/F),Z''(K))$ vanishes by Shapiro's
lemma, and so there exists a $1$-cochain $y$ of $G(K/F)$ in $Z''(K)$ such
that $x'''_w:=x''_wy_\sigma$ (where $\sigma$ is the image of $w$ in
$G(K/F)$) is a $1$-cocycle of $\mathcal E$ in $G''(K)$ such that
$x'''\mapsto x$. By construction we have 
\begin{itemize}
\item $x'''_d=d^{\nu''}$,  
\item $\Int(x'''_w) \circ \sigma(\nu'')=\nu''$ when $w \mapsto \sigma$, 
\end{itemize}
and so $(\nu'',x''')$ is an algebraic $1$-cocycle of $\mathcal E$ in
$G''(K)$ lifting $(\nu,x)$. 

At this point we have constructed an element $b'' \in H^1_{\alg}(\mathcal
E,G''(K))$ such that $b'' \mapsto b$. What we really want is an element $b'
\in H^1_{\alg}(\mathcal E,G'(K))$ such that $b' \mapsto b$. To show that
$b'$ exists we are going to make use of Assumption 2. 

It is easy to see that the inclusion $Z'' \to G''$ induces a canonical 
isomorphism 
$Z''/Z = G''/G'$. To simplify notation we put $C:=Z''/Z = G''/G'$.
Obviously $C$ is an $F$-torus split by $K$, and there is a short exact
sequence 
\begin{equation}\label{SES.ZZ''C}
1 \to Z \to Z'' \to C \to 1. 
\end{equation}
Moreover \eqref{CD.ZG'G''} can be enlarged to a commutative diagram 
\begin{equation}\label{CD.ZG'G''C}
\begin{CD}
1 @>>> Z @>i>> G' @>p>> G @>>> 1 \\ 
@. @VVV @VVV @| @.\\
1 @>>> Z'' @>j>> G'' @>q>> G @>>> 1 \\
@. @VVV @VVV @. @. \\
@. C @= C @. @. 
\end{CD}
\end{equation}
and this gives rise to another commutative diagram 
\begin{equation}\label{CD.HZG'G''C}
\begin{CD}
 H^1_{\alg}(\mathcal E,Z(K)) @>i>> H^1_{\alg}(\mathcal E,G'(K)) @>p>> 
H^1_{\alg}(\mathcal E,G(K))  \\ 
 @VVV @VVV @| @.\\
 H^1_{\alg}(\mathcal E,Z''(K)) @>j>> H^1_{\alg}(\mathcal E,G''(K)) @>q>> 
H^1_{\alg}(\mathcal E,G(K)) \\
 @VVV @VVV @.  \\
H^1_{\alg}(\mathcal E, C(K)) @= H^1_{\alg}(\mathcal E,C(K)) @.  
\end{CD}
\end{equation}

We have constructed $b''$ such that $q(b'')=b$ and we seek $b'$ such that
$p(b')=b$. We want to apply Lemma \ref{lem.NG'G} to the short exact
sequence 
\[
1 \to G' \to G'' \to C \to 1,
\] 
so we need to check that $G''(K) \to C(K)$ is surjective. For this it
suffices to prove that $Z''(K) \to C(K)$ is surjective, and this follows
from Hilbert's Theorem 90 and the exactness of \eqref{SES.ZZ''C}. 
From Lemma \ref{lem.NG'G} we see that it is enough to produce $b''_1 \in 
H^1_{\alg}(\mathcal E,G''(K))$ such that 
\begin{itemize}
\item $q(b''_1)=q(b'')$, and 
\item $b''_1$ has trivial image in $H^1_{\alg}(\mathcal E,C(K))$. 
\end{itemize}

We now apply Assumption 2 to the short exact sequence \eqref{SES.ZZ''C} of
$F$-tori split by $K$. We conclude that there exists $b_2 \in 
H^1_{\alg}(\mathcal E,Z''(K))$ whose image under 
\[
H^1_{\alg}(\mathcal E,Z''(K)) \to H^1_{\alg}(\mathcal E,C(K))
\] 
is equal to the image of $b''$ under 
\[
H^1_{\alg}(\mathcal E,G''(K)) \to H^1_{\alg}(\mathcal E,C(K)).
\] 
It is then clear that $b_1'':=b_2^{-1}b''$ does the job (see subsection
\ref{sub.SESLAG} for a discussion of the natural action of $
H^1_{\alg}(\mathcal E,Z''(K))$ on $H^1_{\alg}(\mathcal E,G''(K))$). 

This finishes the proof that \eqref{eq.GG'pp} is surjective. To prove the
last statement of the proposition, we need only invoke Lemma
\ref{lem.ZG'G}, 
which applies by Remark \ref{rem.T90}. 
\end{proof}

 \section{Key result in an abstract setting}\label{sec.KRAS}
Throughout this section $G$ is a finite group. In our applications it will
be a Galois group. In this section, however, we work in an abstract
setting, which will allow us to prove the key result Lemma \ref{lem.AbsBT}  
in a way that brings out its simple, general nature.   

\subsection{Notation} 
Let $M$ be a $G$-module. There are then 
Tate cohomology groups $\hat H^r(G,M)$ for all $r \in \mathbb Z$. We are
going to simplify notation by  writing $H^r(G,M)$
instead of $\hat H^r(G,M)$. We write $M^G$ for the $G$-invariants in $M$,
and $M_G$ for the $G$-coinvariants of $M$. We write $N:M \to M$ for the map
$m \mapsto \sum_{g\in G} gm$. The map $N$ factors as 
\[
M \twoheadrightarrow M_G \to M^G \hookrightarrow M.
\]
Thus $N$ gives rise to maps $M \to M^G$, $M_G \to M^G$, and $M_G \to M$,
all of which will be denoted simply by $N$. Recall that $H^0(G,M)$ (resp.
$H^{-1}(G,M)$) is the cokernel (resp. kernel) of $N:M_G \to M^G$. 

Let $A$ and $B$ be abelian groups. We  write $\Hom(A,B)$ for the group 
$\Hom_{\mathbb Z}(A,B)$ of
homomorphisms from $A$ to $B$. When $A$, $B$ are $G$-modules, there is a
natural action of $G$ on $\Hom(A,B)$, given by
$(gf)(a)=g\bigl(f(g^{-1}a)\bigr)$, and 
$\Hom(A,B)^G$ coincides with the group  $\Hom_G(A,B)$ of $G$-maps from
$A$ to $B$. We write $A
\otimes B$ for the group $A \otimes_{\mathbb Z} B$. When $A$ and $B$ are
$G$-modules, there is a natural action of $G$ on $A \otimes B$, given by
$g(a\otimes b)=ga \otimes gb$.

\subsection{The extension $E$} 

We consider an extension 

\[
1 \to  A \to E \to G \to 1
\]
of $G$ by an abelian group $A$. As usual, this extension gives rise to a
cohomology class $\alpha$, which we now review. 

There is a unique
$G$-module structure on $A$ for which $ga=waw^{-1}$ for any $w \in E$ such
that $w
\mapsto g$. Choose a set-theoretic section $s:G \to E$ of the surjection
$E \to G$. Define a $2$-cochain $a_{\sigma,\tau}$ of $G$ in $A$ by the
rule 
\[
s(\sigma)s(\tau)=a_{\sigma,\tau}s(\sigma\tau). 
\] 
Then $a_{\sigma,\tau}$ is a $2$-cocycle whose cohomology class is
independent of the choice of section $s$. Let us denote this cohomology
class by $\alpha$. 

The inflation-restriction sequence for a $G$-module $M$ is simpler than
usual, because our normal subgroup $A$ is abelian, and because $A$ is acting
trivially on $M$. The  sequence boils down to 
\[
0 \to H^1(G,M) \xrightarrow{\inf} H^1(E,M) \xrightarrow{\res} \Hom_G(A,M) 
\xrightarrow{\tran} H^2(G,M).
\]
The homomorphism at the right end of this sequence is the transgression
homomorphism.   In this simple situation it coincides with the composed map 
\[
\Hom_G(A,M)=\Hom(A,M)^G \twoheadrightarrow H^0(G,\Hom(A,M))
\xrightarrow{\alpha\smile}H^2(G,M),
\] 
the cup product being formed using the tautological pairing 
$A \otimes \Hom(A,M)
\to M$. 

\subsection{Definition of $H^1_Y(E,M)$} \label{sub.H1Y}
We now
consider a triple
$(M,Y,\xi)$ consisting of a
$G$-module
$M$, a
$G$-module $Y$, and a $G$-map $\xi:Y \to \Hom(A,M)$.    Taking
$G$-invariants, we obtain $\xi^G:Y^G \to \Hom(A,M)^G=
\Hom_G(A,M)$, and we define 
 $H^1_Y(E,M)$ to be the fiber product of $H^1(E,M)$ and $Y^G$ over
$\Hom_G(A,M)$. 
In other words, we are forming the fiber product square 
\begin{equation}\label{CD.FibSqHY}
\begin{CD}
 H_Y^1(E,M) @>{r}>> Y^G 
\\
 @V{\pi}VV @V{\xi^G}VV  \\
 H^1(E,M) @>{\res}>> \Hom_G(A,M),
\end{CD}
\end{equation} 
in which $r$ and $\pi$ are the two
canonical projections. 
\subsection{Alternative description of $H^1_Y(E,M)$ using cocycles} 
It is sometimes convenient to think in terms of $1$-cocycles when working
with $H^1_Y(E,M)$. For this we now introduce groups $Z^1_Y(E,M)$ and 
$B^1_Y(E,M)$ of $1$-cocycles and $1$-coboundaries respectively. 
By definition, an element in $Z^1(E,M)$ is a pair $(y,m)$ consisting of 
an element $y \in Y^G$ and a $1$-cocycle $m$ of $E$ in $M$ such that the
restriction of $m$ to $A$ (which is a $G$-map from $A$ to $M$) 
coincides with the image of $y$ under $\xi^G:Y^G \to \Hom_G(A,M)$. By
definition,
$B^1_Y(E,M)$ is the subgroup of elements in $Z^1_Y(E,M)$ consisting of
pairs $(0,m)$, where $m$ is a $1$-coboundary for the $E$-module~$M$; since
$A$ acts trivially  on $M$, the restriction of $m$ to $A$ is trivial. 
Writing $[m]$ for the cohomology class of $m$, we see that 
\[
(y,m) \mapsto
(y,[m]) \in Y^G \times_{\Hom_G(A,M)}H^1(E,M)
\]  
induces an isomorphism from
$Z^1_Y(E,M)/B^1_Y(E,M)$ to
$H^1_Y(E,M)$.

\begin{remark}
The group  
denoted   by
$H^1_{\alg}(\mathcal E,T(K))$ in subsection \ref{sub.Torus}   can be
identified with
$H^1_Y(E,M)$, where
$E=\mathcal E$, $Y=\Hom_K(D,T)$ and $M=T(K)$. So the results in this section
apply to $H^1_{\alg}(\mathcal E,T(K))$. The advantage of the
 more general theory being developed now is that it can be used for other
groups
$M$ such as $T(\mathbb A_K)$ and $T(\mathbb A_K)/T(K)$. 
\end{remark}

\subsection{Inflation-restriction sequence for  $H^1_Y(E,M)$} 
\label{sub.InfRes} 

 The fiber square \eqref{CD.FibSqHY} occurs in the middle of   a
bigger diagram
\begin{equation*}
\begin{CD}
0 @>>> H^1(G,M) @>{i}>> H_Y^1(E,M) @>{r}>> Y^G 
@>{t}>> H^2(G,M)\\
@. @| @V{\pi}VV @V{\xi^G}VV @| \\
0 @>>> H^1(G,M) @>{\inf}>> H^1(E,M) @>{\res}>> \Hom_G(A,M) 
@>{\tran}>> H^2(G,M)
\end{CD}
\end{equation*}
that is obtained as follows.   The map
$i$ is the unique homomorphism such that  
\begin{itemize}
\item the left square commutes, and
\item $ri=0$.
\end{itemize}
 The map $t$ is the unique one making the right square commute. 

We already
know that the bottom row is exact. The top row will be referred to as the
\emph{inflation-restriction sequence for $H^1_Y(E,M)$}. It is easily seen to
be exact. 

\subsection{Definition of $ c:Y_G \to H^1_Y(E,M)$}\label{sub.Defnc}
Since $A$ has finite index in $E$, there is a corestriction map 
\[
\cor:\Hom(A,M)=H^1(A,M) \to H^1(E,M). 
\]
\begin{lemma} \label{lem.c0}
 There exists a unique map $c_0:Y \to H^1_Y(E,M)$ such that 
\begin{itemize}
\item $rc_0$ is equal to $N:Y \to Y^G$, and 
\item $\pi c_0$ is equal to the composed map $Y \xrightarrow{\xi} \Hom(A,M)
\xrightarrow{\cor} H^1(E,M)$. 
\end{itemize}
\end{lemma}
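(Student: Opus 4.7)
The plan is to exploit the description of $H^1_Y(E,M)$ as the fiber product \eqref{CD.FibSqHY}. By the universal property of fiber products, specifying a map $c_0\colon Y \to H^1_Y(E,M)$ is equivalent to specifying a pair of maps $(r_0, \pi_0)$ with $r_0\colon Y \to Y^G$ and $\pi_0\colon Y \to H^1(E,M)$ that become equal after composition with $\xi^G$ and $\res$ respectively. The two bullet points of the lemma prescribe $r_0 = N$ and $\pi_0 = \cor\circ\xi$. Uniqueness is then immediate from the universal property, since the two projections $r$ and $\pi$ of the fiber square are prescribed.

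For existence, the only thing to verify is the compatibility
\[
\xi^G \circ N \;=\; \res \circ \cor \circ \xi \colon Y \longrightarrow \Hom_G(A,M).
\]
I would break this into two ingredients. First, since $\xi\colon Y \to \Hom(A,M)$ is a map of $G$-modules, it commutes with the norm map: $\xi \circ N = N \circ \xi$, and the value $\xi(N(y))$ lies in $\Hom(A,M)^G = \Hom_G(A,M)$, where it equals $\xi^G(N(y))$.

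Second, I would invoke the standard restriction-corestriction formula for the normal subgroup $A \subset E$ with quotient $G$. Since $A$ is abelian and acts trivially on $M$, we have $H^1(A,M) = \Hom(A,M)$, and for a normal subgroup of finite index the composition $\res \circ \cor\colon H^1(A,M) \to H^1(A,M)$ equals the norm map $N_G$ for the natural $G$-action on $H^1(A,M)$ (this is the double-coset formula, which collapses to a single coset sum when the subgroup is normal). Its image lies in $\Hom(A,M)^G = \Hom_G(A,M)$, which is precisely how $\res$ is written in the bottom row of the inflation-restriction sequence of subsection \ref{sub.InfRes}. Combining both ingredients,
\[
\res\bigl(\cor(\xi(y))\bigr) \;=\; N\bigl(\xi(y)\bigr) \;=\; \xi\bigl(N(y)\bigr) \;=\; \xi^G\bigl(N(y)\bigr),
\]
which gives the desired compatibility.

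The main obstacle, insofar as there is one, is recording the formula $\res \circ \cor = N_G$ on $H^1(A,M)$ in the form required here; everything else is purely formal from the fiber product description and $G$-equivariance of $\xi$.
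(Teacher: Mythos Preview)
Your proposal is correct and follows essentially the same approach as the paper: reduce to the fiber-product universal property, so that the only content is the compatibility $\xi^G\circ N = \res\circ\cor\circ\xi$, and then deduce this from the standard identity $\res\circ\cor = N_G$ on $H^1(A,M)=\Hom(A,M)$ for the normal subgroup $A\lhd E$ (the paper cites this as Lemma~\ref{lem.CorRes}(1)). Your extra line making explicit that the $G$-equivariance of $\xi$ gives $\xi\circ N = N\circ\xi$ is a harmless elaboration of what the paper leaves implicit.
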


\begin{proof} 
Because $H^1_Y(E,M)$ is a fiber product, we just need to check that 
$\res \circ \cor \circ \xi$ coincides with  
$Y \xrightarrow{N} Y^G \xrightarrow{\xi^G}\Hom_G(A,M)$. This is true,
because   the composition 
\[
\Hom(A,M) \xrightarrow{\cor} H^1(E,M) \xrightarrow{\res} \Hom_G(A,M)
\] 
coincides with the norm map 
$
N:\Hom(A,M) \to \Hom_G(A,M)
$ 
 (see  Lemma \ref{lem.CorRes}(1)).
\end{proof}

\begin{lemma}\label{lem.cNew}
The homomorphism $c_0$ in the previous lemma factors through the quotient
$Y_G$ of $Y$.
\end{lemma}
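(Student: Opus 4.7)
The plan is to exploit the fact that $H^1_Y(E,M)$ is a fiber product, so that verifying $c_0(gy)=c_0(y)$ for all $g\in G$ and $y\in Y$ reduces to checking agreement in the two components $r$ and $\pi$.

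For the component $r$, we have $rc_0=N$ by construction, and the norm map $N:Y\to Y^G$ is trivially $G$-invariant (indeed $N(gy)=\sum_{h\in G}hgy=\sum_{h\in G}hy=N(y)$), so $rc_0(gy)=rc_0(y)$.

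For the component $\pi$, we have $\pi c_0=\cor\circ\xi$ with $\xi:Y\to\Hom(A,M)=H^1(A,M)$ a $G$-map, and therefore $\pi c_0(gy)=\cor(g\cdot\xi(y))$. The remaining point is that the corestriction $\cor:H^1(A,M)\to H^1(E,M)$ kills the $G$-action on $H^1(A,M)$ (the action that arises because $A$ is normal in $E$ with quotient $G$). This is the standard fact that, for a normal subgroup of finite index, corestriction equalizes the induced action of the quotient on the cohomology of the subgroup; it should be recorded as another part of the paper's Lemma \ref{lem.CorRes} (in the same spirit as part (1), which supplies $\res\circ\cor=N$).

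The main (minor) obstacle is invoking the correct statement about corestriction and the quotient-group action. If it is not already on hand, it can be verified at the cochain level using the Shapiro-style description of $\cor$: picking a set $S\subset E$ of coset representatives for $E/A$, the corestricted cocycle $(\cor\phi)(w)$ is built from sums involving the $s\in S$, and rewriting the sum after replacing the coset system $S$ by $wS$ (for $w$ a lift of $g$) produces exactly $\cor(g\cdot\phi)=\cor(\phi)$. Once this invariance is in place, combining the two component checks shows $c_0(gy-y)=0$ in $H^1_Y(E,M)$, so $c_0$ descends to the claimed homomorphism $c:Y_G\to H^1_Y(E,M)$.
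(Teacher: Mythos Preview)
Your proposal is correct and follows essentially the same approach as the paper: reduce to the two components $r$ and $\pi$ via the fiber-product description, note that $rc_0=N$ obviously factors through $Y_G$, and for $\pi c_0=\cor\circ\xi$ invoke that corestriction factors through the $G$-coinvariants of $\Hom(A,M)$, which is exactly Lemma~\ref{lem.CorRes}(2).
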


\begin{proof} To show that $c_0$ factors through $Y_G$, it is enough to
show that both $rc_0$ and $\pi c_0$ do so. In the case of $\pi c_0$, this
is because the map $\cor$ factors through the quotient $\Hom(A,M)_G$
of
$\Hom(A,M)$ (see Lemma \ref{lem.CorRes}(2)). In the case of $rc_0$, it is
obvious, because
$rc_0=N$.  
\end{proof}

\begin{definition}\label{def.c} 
We define $c:Y_G \to H^1_Y(E,M)$ to be the unique homomorphism such that 
$c_0$ is equal to the composed map $Y \twoheadrightarrow Y_G
\xrightarrow{c} H^1_Y(E,M)$. 
\end{definition} 
In our next lemma we will see when $c$ is an isomorphism. 

\subsection{A key lemma}
There is a tautological pairing 
$
A  \otimes \Hom(A,M) \to M,
$
which,  combined with our given map $\xi:Y \to \Hom(A,M)$, yields a
pairing 
$
A\otimes Y \to M,
$ 
and this in turn induces cup product pairings $H^i(G,A) \otimes H^j(G,Y) \to
H^{i+j}(G,M)$. 
In particular cup product with $\alpha$ gives maps 
\begin{equation}\label{eq.abTNiso}
H^i(G,Y) \xrightarrow{\alpha\smile} H^{i+2}(G,M).
\end{equation}

\begin{lemma}\label{lem.AbsBT} 
\hfill 
\begin{enumerate} 
\item The diagram 
\begin{equation*}
\begin{CD}
 H^1(G,M) @>{i}>> H_Y^1(E,M)\\
 @A{\alpha\smile}AA  @A{c}AA \\
 H^{-1}(G,Y) @>>> Y_G 
\end{CD}
\end{equation*} 
commutes. 

\item The homomorphism $c:Y_G \to H^1_Y(E,M)$ is an isomorphism if and only
if the map \eqref{eq.abTNiso} 
 is bijective for  $i=-1$ and injective for $i=0$.  
\end{enumerate}
\end{lemma}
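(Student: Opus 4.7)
The plan is to prove (1) first and then deduce (2) by a diagram chase.

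For (1), since $H^1_Y(E,M)$ is the fiber product \eqref{CD.FibSqHY}, the identity $c(y) = i(\alpha \smile y)$ for $y \in H^{-1}(G,Y) \subset Y_G$ can be verified componentwise. Both sides have vanishing $r$-projection: $r(c(y)) = N(y) = 0$ because $y$ lies in $\ker N$ (using $r c_0 = N$ from Lemma \ref{lem.c0}), while $r(i(\alpha \smile y)) = 0$ by construction of $i$ in subsection \ref{sub.InfRes}. The content of (1) therefore reduces to the identity
\[
\cor(\xi(y)) = \inf(\alpha \smile y) \quad \text{in } H^1(E,M)
\]
for any $y \in Y$ satisfying $Ny = 0$. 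I would establish this by explicit cocycle computation: fix a set-theoretic section $s : G \to E$ with $s(1) = 1$, giving a $2$-cocycle $a(\sigma,\tau) \in Z^2(G,A)$ representing $\alpha$; then write $\cor(\xi(y))$ as a standard corestriction $1$-cocycle on $E$ built from $\xi(y)$ and $s$, and write $\inf(\alpha \smile y)$ as the pullback to $E$ of a $1$-cocycle on $G$ built from $a$ and $y$. The condition $Ny = 0$ is precisely the input needed to exhibit the difference of these two cocycles as the coboundary of an explicit $1$-cochain on $E$. This is the classical compatibility between corestriction from $A$, inflation from $G$, and the extension class $\alpha$.

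For (2), part (1), together with the identity $r \circ c = N$ and the identification $t = \tran \circ \xi^G$ from subsection \ref{sub.InfRes}, assembles into the commutative diagram
\[
\begin{CD}
H^{-1}(G,Y) @>>> Y_G @>N>> Y^G @>>> H^0(G,Y)\\
@V{\alpha\smile}VV @VcVV @| @V{\alpha\smile}VV \\
H^1(G,M) @>i>> H^1_Y(E,M) @>r>> Y^G @>t>> H^2(G,M)
\end{CD}
\]
with exact rows. The top row, which extends to $0 \to H^{-1}(G,Y) \to Y_G \to Y^G \to H^0(G,Y) \to 0$, is the canonical four-term Tate cohomology sequence attached to the norm $N: Y_G \to Y^G$; the bottom row is the inflation-restriction sequence for $H^1_Y(E,M)$ from subsection \ref{sub.InfRes}. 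The rightmost square commutes because $\tran$ is by definition cup product with $\alpha$ via the tautological pairing, and composing with $\xi^G$ converts this to cup product with $\alpha$ via the pairing $A \otimes Y \to M$ induced by $\xi$. A standard four-lemma diagram chase, using the identity on the third column, shows that $c$ is bijective if and only if $\alpha\smile : H^{-1}(G,Y) \to H^1(G,M)$ is bijective and $\alpha\smile : H^0(G,Y) \to H^2(G,M)$ is injective.

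The crux of the argument is the cocycle identity in (1); once it is in hand, (2) is purely formal.
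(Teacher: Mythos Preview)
Your approach is essentially the same as the paper's. For (1), both you and the paper reduce to an identity between corestriction and cup product with $\alpha$; the paper first reduces to the universal case $Y=\Hom(A,M)$, $\xi=\id$ and then invokes an explicit cocycle computation (Lemma~\ref{lem.CorCup}, built on Lemma~\ref{lem.cup}) that is exactly the computation you sketch. For (2), the paper writes down the same two-row diagram and applies the $5$-lemma, which is your ``four-lemma diagram chase.''
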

 
\begin{proof} 
 (1) is proved by 
reducing to the case in which
$Y$ is
$\Hom(A,M)$ and
$\xi$ is the identity map. Then one needs to show the equality of two
homomorphisms 
\[
H^{-1}(G,\Hom(A,M)) \to H^1(G,M),
\] 
one being cup product with $\alpha$, the other being induced by 
the restriction of $\cor$ to the kernel of $N$ on $\Hom(A,M)$. For this,
see  Lemma \ref{lem.CorCup}. 

 (2) is proved by 
applying  the $5$-lemma to the diagram   
\begin{equation*}
\begin{CD}
0 @>>> H^1(G,M) @>{i}>> H_Y^1(E,M) @>{r}>> Y^G 
@>{t}>> H^2(G,M)\\
@. @A{\alpha\smile}AA  @A{c}AA @| @A{\alpha\smile}AA \\
0 @>>> H^{-1}(G,Y) @>>> Y_G @>N>> Y^G 
@>>> H^0(G,Y) @>>> 0
\end{CD}
\end{equation*} 
The commutativity of the 
right and middle squares  is clear from the definitions of the maps
involved, and the left square was handled in the first part of this lemma.  
\end{proof} 

\subsection{Naturality with respect to $(M,Y,\xi)$} 
In order to form $H^1_Y(E,M)$ we need 
two $G$-modules $M$,
$Y$ and a $G$-map
$\xi:Y \to \Hom(A,M)$.  
There is an obvious naturality with respect to $(M,Y,\xi)$. 
Suppose we are given two such triples $(M_i,Y_i,\xi_i)$ ($i=1,2$). 

A morphism from the first triple to the second is a pair $(f,g)$ of
homomorphisms $f:M_1 \to M_2$, $g:Y_1 \to Y_2$ such that the square 
\begin{equation}
\begin{CD}
Y_1 @>{\xi_1}>> \Hom(A,M_1) \\
@VgVV @V{f}VV \\
Y_2 @>{\xi_2}>> \Hom(A,M_2)
\end{CD}
\end{equation} 
commutes. The right vertical map is $h \mapsto f\circ h$. It can also be
viewed as the map $H^1(A,M_1) \to H^1(A,M_2)$ induced by $f$. 

When we have such a morphism $(f,g)$,  there is an obvious map  
\[
\psi:H^1_{Y_1}(E,M_1) \to H^1_{Y_2}(E,M_2) 
\]
obtained from the vertical arrows in the commutative diagram 
\begin{equation}
\begin{CD}
Y_1^G @>{\xi_1^G}>> \Hom_G(A,M_1) @<{\res}<< H^1(E,M_1)\\
@V{g^G}VV @V{f^G}VV @V{f}VV \\
Y_2^G @>{\xi_2^G}>> \Hom_G(A,M_2) @<{\res}<< H^1(E,M_2)
\end{CD}
\end{equation}

\begin{lemma}\label{lem.Nat1}
The diagram  
 \begin{equation*}
\begin{CD}
 (Y_1)_G @>{c_1}>>     H^1_{Y_1}(E,M_1)\\
 @V{g_G}VV  @V{\psi}VV  \\
(Y_2)_G @>{ c_2}>> H^1_{Y_2}(E,M_2)
\end{CD}
\end{equation*} 
commutes. 
\end{lemma}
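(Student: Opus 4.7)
The plan is to reduce the claim to checking commutativity after composing with the two canonical projections out of the fiber product $H^1_{Y_2}(E,M_2)$, and then to verify each piece by naturality of norm and corestriction.

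More concretely, by Definition~\ref{def.c}, $c_i$ factors as $Y_i \twoheadrightarrow (Y_i)_G \xrightarrow{c_i} H^1_{Y_i}(E,M_i)$ via $c_{0,i}$. Since the surjection $Y_1 \twoheadrightarrow (Y_1)_G$ is an epimorphism, it suffices to show the outer square
\[
\begin{CD}
Y_1 @>{c_{0,1}}>> H^1_{Y_1}(E,M_1) \\
@VgVV @V{\psi}VV \\
Y_2 @>{c_{0,2}}>> H^1_{Y_2}(E,M_2)
\end{CD}
\]
commutes. The target $H^1_{Y_2}(E,M_2)$ is the fiber product in \eqref{CD.FibSqHY}, so it further suffices to check agreement after applying $r_2$ and $\pi_2$ separately.

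For the $r$-leg: by construction of $\psi$ (from its definition as induced by the vertical arrows of the naturality diagram), $r_2 \circ \psi = g^G \circ r_1$, and by Lemma~\ref{lem.c0}, $r_i \circ c_{0,i} = N : Y_i \to Y_i^G$. Thus $r_2 \circ \psi \circ c_{0,1} = g^G \circ N_{Y_1}$, while $r_2 \circ c_{0,2} \circ g = N_{Y_2} \circ g$. These agree by functoriality of the norm map $N$ applied to the $G$-map $g$.

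For the $\pi$-leg: again from the definition of $\psi$, one has $\pi_2 \circ \psi = f_* \circ \pi_1$, where $f_* : H^1(E,M_1) \to H^1(E,M_2)$ is induced by $f$. By Lemma~\ref{lem.c0}, $\pi_i \circ c_{0,i} = \cor \circ \xi_i$. Therefore $\pi_2 \circ \psi \circ c_{0,1} = f_* \circ \cor \circ \xi_1$, whereas $\pi_2 \circ c_{0,2} \circ g = \cor \circ \xi_2 \circ g = \cor \circ f_* \circ \xi_1$, where the last equality uses the commutativity of the square defining the morphism $(f,g)$ of triples. The two expressions then coincide by naturality of corestriction with respect to the coefficient map $f : M_1 \to M_2$. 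No step here is a real obstacle; the only thing to be careful about is keeping straight which $f_*$ means the map on $\Hom(A,-)$ versus the map on $H^1(E,-)$, but these are identified under the inflation identification $\Hom(A,M) = H^1(A,M)$ used to define $\cor$, and naturality of $\cor$ in $M$ handles both at once.
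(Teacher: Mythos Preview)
Your proof is correct and is exactly the approach the paper takes; the paper's proof simply says ``Unwind the definitions, and then use the naturality of corestriction with respect to $M_1 \to M_2$,'' and your argument is a careful spelling-out of precisely that. The reduction to the two projections $r_2$ and $\pi_2$ of the fiber product, together with the characterization of $c_{0,i}$ from Lemma~\ref{lem.c0}, is the natural way to unwind things, and naturality of the norm and of corestriction are the only nontrivial inputs.
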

\begin{proof}
Unwind the definitions, and then use the naturality of
corestriction with respect to
$M_1 \to M_2$.
\end{proof} 

\subsection{Naturality with respect to $E$}
There is another kind of naturality, this time with respect to the
extension $E$.  Suppose we are given a commutative diagram 
 \begin{equation*}
\begin{CD}
1 @>>> A' @>>> E'     @>>> G @>>> 1 \\
 @. @VhVV  @V{\tilde h}VV @| @. \\
1 @>>> A @>>> E     @>>> G @>>> 1 
\end{CD}
\end{equation*} 
with exact rows. 
In other words we are considering a morphism from the extension in the top 
row to the one in the bottom row.  Let $M$ be a $G$-module. We consider a triple $(M,Y,\xi)$
 of the kind relevant for $E$. So $\xi$ is a $G$-map from $Y$ to
$\Hom(A,M)$. We then obtain a triple $(M,Y,\xi')$ relevant for $E'$ by
setting $\xi'$ equal to the composed map 
\[
Y \xrightarrow{\xi} \Hom(A,M) \xrightarrow{h} \Hom(A',M). 
\]
Moreover, there is an obvious homomorphism 
\[
\psi':H^1_{Y}(E,M) \to H^1_{Y}(E',M)
\]
obtained from the pullback map ${\tilde h}^*:H^1(E,M) \to H^1(E',M)$ together with the
identity map on $Y^G$. 
\begin{lemma}\label{lem.Nat2}
$\psi'$ is an isomorphism and the  
diagram 
 \begin{equation*}
\begin{CD}
 Y_G @>{c}>>     H^1_{Y}(E,M)\\
 @|  @V{\psi'}VV  \\
Y_G @>{ c'}>> H^1_{Y}(E',M)
\end{CD}
\end{equation*} 
commutes. 
\end{lemma}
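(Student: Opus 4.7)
The first task is to show that $\psi'$ is an isomorphism. I would compare the inflation-restriction sequences for $E$ and $E'$ introduced in subsection \ref{sub.InfRes}:
\begin{equation*}
\begin{CD}
0 @>>> H^1(G,M) @>{i}>> H^1_Y(E,M) @>{r}>> Y^G @>{t}>> H^2(G,M) \\
@. @| @V{\psi'}VV @| @| \\
0 @>>> H^1(G,M) @>{i'}>> H^1_Y(E',M) @>{r'}>> Y^G @>{t'}>> H^2(G,M).
\end{CD}
\end{equation*}
Commutativity of the left square follows because $\tilde h$ covers $\id_G$, so $\tilde h^*$ carries the inflation from $G$ to $E$ to the inflation from $G$ to $E'$. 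Commutativity of the middle square is immediate from the definition of $\psi'$. For the right square, $t(y)=\xi(y)_*(\alpha)$ while $t'(y)=\xi'(y)_*(\alpha')=(\xi(y)\circ h)_*(\alpha')=\xi(y)_*(h_*\alpha')$, and $h_*\alpha'=\alpha$ because our data is a morphism of extensions; hence $t=t'$. The five-lemma then forces $\psi'$ to be bijective.

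Next, to verify $\psi'\circ c=c'$, I would use the description of $H^1_Y(E',M)$ as a fiber product and check equality after composing with its two canonical projections $r'$ and $\pi'$. The projection to $Y^G$ of each side equals $N:Y_G\to Y^G$ by the defining properties of $c$ and $c'$ (Definition \ref{def.c}), together with the fact that $\psi'$ is the identity in the $Y^G$-component. The projection to $H^1(E',M)$ of $\psi'\circ c$ is $\tilde h^*\circ\cor_E\circ\xi$, while that of $c'$ is $\cor_{E'}\circ\xi'=\cor_{E'}\circ h^*\circ\xi$. So the problem reduces to the identity
\[
\tilde h^*\circ\cor_E=\cor_{E'}\circ h^*\colon\Hom(A,M)\to H^1(E',M).
\]

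This last identity is the main obstacle: it is the naturality of corestriction with respect to the morphism of finite-index subgroup pairs $(A,E)$ and $(A',E')$, both of index $|G|$. It is a standard property of the transfer; one way to check it is to fix a set-theoretic section $s:G\to E'$ of $E'\twoheadrightarrow G$, so $\tilde h\circ s$ is a compatible section of $E\twoheadrightarrow G$, and then observe that the explicit cocycle formulas for $\cor_{E'}$ using $s$ and for $\cor_E$ using $\tilde h\circ s$ correspond term-by-term under $\tilde h^*$ and $h^*$, because the indexing is governed by $G$ alone and $h$ is applied pointwise to the values in $A$. (Alternatively, one argues that corestriction is the trace map of the coinduction adjunction $\Res_A^E\dashv\Hom(\mathbb Z[E/A],-)$, whose naturality with respect to morphisms of index-$|G|$ pairs is formal.) Combined with the isomorphism statement, this finishes the lemma.
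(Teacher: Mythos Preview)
Your proof is correct and follows the same approach as the paper. The isomorphism argument via the five-lemma applied to the two inflation-restriction sequences is exactly what the paper does, and your reduction of the commutativity to the identity $\tilde h^*\circ\Cor_{E/A}=\Cor_{E'/A'}\circ h^*$ is precisely the content of the paper's Lemma~\ref{lem.CorInf}, whose proof there also proceeds via the explicit cochain formula with compatible sections.
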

\begin{proof}
To see that the square commutes, one uses the functoriality of corestriction
with respect to
$(E,A)$ (see Lemma \ref{lem.CorInf}). To see that $\psi'$ is an
isomorphism, one  applies the $5$-lemma to 
\begin{equation*}
\begin{CD}
0 @>>> H^1(G,M) @>{i}>> H_Y^1(E,M) @>{r}>> Y^G 
@>{t}>> H^2(G,M)\\
@. @| @V{\psi'}VV @| @| \\
0 @>>> H^1(G,M) @>{i'}>> H_{Y}^1(E',M) @>{r'}>> Y^G 
@>{t'}>> H^2(G,M)
\end{CD}
\end{equation*}
\end{proof}

\subsection{Naturality with respect to $G$}  \label{sub.NatRespG} 
Now we want to consider a form of naturality in which $G$ is allowed to
vary.  As usual we start with an extension 
\begin{equation}\label{eq.ExtNatG}
1 \to A \to E \to G \to 1
\end{equation}
and a triple $(M,Y,\xi)$.  We may form $H^1_Y(E,M)$. 

Now suppose  we are given a finite group $G'$, a group homomorphism
$\rho:G'
\to G$, a
$G'$-module $A'$, and a $G'$-module map $h:A \to A'$. First pulling back
\eqref{eq.ExtNatG} along $\rho$ and then pushing forward along $h$, we
obtain a commutative diagram 
\begin{equation}
\begin{CD}
1 @>>> A @>>> E @>>> G @>>> 1 \\
@. @| @AAA @A{\rho}AA @. \\
1 @>>> A  @>>> E'' @>>> G' @>>> 1 \\
@. @VhVV @VVV @| @. \\
1 @>>> A' @>>> E' @>>> G' @>>> 1
\end{CD}
\end{equation} 
with exact rows. 

Consider a triple $(Y',M',\xi')$ of the kind relevant to the extension $E'$
in the bottom row of our commutative diagram. We are going to define a
natural homomorphism  
\begin{equation}
\Phi(f,g,h):H^1_Y(E,M) \to H^1_{Y'}(E',M')
\end{equation}
that depends on the map $h:A \to A'$ we already chose as well as on
$G'$-module maps $f:M \to M'$, $g:Y \to Y'$ that we choose now. The map
$\Phi(f,g,h)$ is defined only when the diagram 
\begin{equation}
\begin{CD}
Y @>{\xi}>> \Hom(A,M) \\
@| @VfVV \\
Y @. \Hom(A,M') \\
@VgVV @AhAA \\
Y' @>{\xi'}>> \Hom(A',M') \\
\end{CD}
\end{equation}
commutes. 

The map $\Phi(f,g,h)$ is induced by the cocycle level map $(\nu,x) \mapsto
(g(\nu),x')$, where $x'$ is the unique $1$-cocycle of $E'$ in $M'$ such
that 
\begin{itemize}
\item the restriction of $x'$ to $A'$ is equal to  the map $\xi'(g(\nu)):A'
\to M'$,  and  
\item 
the pullback of $x'$ to $E''$ (via $E'' \to E'$) is equal to   
the $1$-cocycle obtained by applying $f$ 
 to the pullback of
$x$ to $E''$ (via $E'' \to E$). 
\end{itemize}

\begin{example}\label{Ex.AbsRes}
When $G'$ is a subgroup, $\rho$ is the inclusion, and
$(M',Y',\xi')=(M,Y,\xi)$, we obtain a restriction map 
\begin{equation}
\Res:H^1_Y(E,M) \to H^1_Y(E',M), 
\end{equation}
where $E'$ is the preimage of $G'$ in $E$. 
\end{example}

\begin{example}\label{Ex.Inf2}
When $\rho$ is surjective, 
 $\Phi(f,g,h)$ is a very  
general kind of inflation map. Particular examples will arise in a later
section on inflation in the context of local and global Tate-Nakayama
triples. 
\end{example}

\section{Abstract Tate-Nakayama triples for a finite group $G$} \label{sec.absTN}
We again consider a finite group $G$. 
As we will now see,  groups $H^1_Y(E,M)$ of the type studied 
in the previous section arise naturally in  any setting 
 in which one has Tate-Nakayama
isomorphisms. In fact there are four such settings, one local 
and three global. So, in order to avoid much  tiresome
repetition, we need an axiomatic version of Tate-Nakayama theory.

\subsection{Definition of Tate-Nakayama triples}
Let $X$, $A$ be $G$-modules, and let $\alpha \in H^2(G,\Hom(X,A))$.    We
say that $(X,A,\alpha)$ is a
\emph{weak Tate-Nakayama  triple for
$G$} 
if the following condition holds for every subgroup $G'$ of $G$:   
\begin{itemize}
\item For all $r \in \mathbb Z$ cup product with   $\Res_{G/G'}(\alpha)$
induces isomorphisms 
\[ 
H^r(G', X) \to
H^{r+2}(G', A). 
\] 
\end{itemize}
We say that $(X,A,\alpha)$ is \emph{rigid} if 
\begin{itemize}
\item $H^1(G',\Hom(X,A))$ is trivial for every subgroup $G'$ of $G$. 
\end{itemize} 
 Finally, a \emph{Tate-Nakayama triple} is a weak
Tate-Nakayama triple that is also rigid. 

For any weak Tate-Nakayama triple it is  result of Nakayama \cite{N}
(see
\cite[p.~156]{Se} for a textbook reference) that  cup product with $\alpha$
induces isomorphisms 
\begin{equation}\label{eq.NakIso}
H^r(G,M\otimes X) \to
H^{r+2}(G,M\otimes A) 
\end{equation}
for all $r \in \mathbb Z$ and every $G$-module $M$
that is torsion-free as abelian group.

In sections \ref{sec.LTN} and \ref{sec.GTN} we will review the  standard
examples of Tate-Nakayama triples. For the moment our  goal is merely to explain how 
the theory  in the last section applies to Tate-Nakayama
triples.  
Weak Tate-Nakayama triples will make an appearance  only in 
 Appendix \ref{App.A}, where we develop tools to show  that certain 
weak Tate-Nakayama triples are rigid.

\subsection{The extension $\mathcal E$} 
Given a Tate-Nakayama triple $(X,A,\alpha)$  for $G$, we choose  an  extension 
\begin{equation} \label{eq.calE}
1 \to \Hom(X,A) \to \mathcal E \to G \to 1
\end{equation} 
 whose associated class in
$H^2(G,\Hom (X,A))$ is equal to $\alpha$.  Because our triple is 
assumed to be  rigid, 
the group 
$H^1(G,\Hom(X,A))$ vanishes, and so every automorphism of our
extension (by which we mean an automorphism $\theta$ of $\mathcal E$ making 
\[
\begin{CD}
1 @>>> \Hom(X,A) @>>> \mathcal E @>>> G @>>> 1 \\
@. @| @V{\theta}VV @|  @. \\
1 @>>> \Hom(X,A) @>>> \mathcal E @>>> G @>>> 1 
\end{CD}
\]
commute) is the inner automorphism $\Int(x)$ coming from some element $x$
in the subgroup 
$\Hom(X,A)$ of $\mathcal E$. Such automorphisms are harmless for
our purposes, and so $\mathcal E$ is canonical enough. (The
situation is just like that for the Weil group.)

\subsection{Extending $H^{-1}(G,M \otimes X) \simeq H^1(G,M \otimes A)$ to
$(M \otimes X)_G \simeq H^1_{\alg}(\mathcal E,M \otimes A)$} 

Let $(X,A,\alpha)$ be a Tate-Nakayama triple for $G$, and 
choose $\mathcal E$ as above.  
Let $M$ be a $G$-module.  
There is a tautological pairing $X \otimes \Hom(X,A) \to A$. Tensoring this
with $M$, we obtain 
\[
M \otimes X \otimes \Hom(X,A) \to M\otimes A,
\]
adjoint to which is a homomorphism 
\[
\xi:M\otimes X \to \Hom\bigl(\Hom(X,A),M \otimes A\bigr). 
\]
Applying the discussion in  subsection
\ref{sub.H1Y}  to $\mathcal E$ and 
 the triple $(M\otimes A,M\otimes X,\xi)$,  we may form the group 
$H^1_Y(\mathcal E,M\otimes A)$ with $Y:=M\otimes X$. 
The rigidity of $(X,A,\alpha)$ ensures that this group is 
 independent of the choice of $\mathcal E$, up to canonical 
isomorphism.

We no longer need to choose $Y$ and $\xi$ (as was
the case in subsection 
\ref{sub.H1Y}); they are determined by $A$, $M$ and $X$. For
this reason it is now less useful to retain $Y$ in the notation, and we will
often write
$H^1_{\alg}(\mathcal E,M \otimes A)$ in place of $H^1_Y(\mathcal E,M \otimes
A)$.

Our next result makes use of  the canonical homomorphism 
\begin{equation}\label{eq.TNc}
c:Y_G \to H^1_Y(\mathcal E,M\otimes A)
\end{equation} 
of Definition \ref{def.c}. 

\begin{lemma}\label{lem.AbTN2}
Let $M$ be any $G$-module that is torsion-free as abelian
group. The canonical homomorphism 
\eqref{eq.TNc} 
is then an isomorphism. Moreover, the diagram 
\[
\begin{CD}
(M \otimes X)_G @>c>> H^1_{\alg}(\mathcal E,M\otimes A)\\
@AAA @AAA \\
H^{-1}(G,M \otimes X) @>{\alpha\smile}>> H^1(G,M\otimes A)
\end{CD}
\] 
commutes,  the two vertical maps being the canonical injections. 
\end{lemma}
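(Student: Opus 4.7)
The plan is to deduce this directly from Lemma \ref{lem.AbsBT}, applied to the extension $\mathcal E$ and the triple $(M \otimes A,\, M \otimes X,\, \xi)$, where $\xi$ is the map constructed just before the statement of the lemma. I would first unwind the definition of $\xi$ to confirm that the pairing $\Hom(X,A) \otimes (M \otimes X) \to M \otimes A$ underlying the abstract cup product in Lemma \ref{lem.AbsBT} is precisely $\id_M$ tensored with the evaluation pairing $\Hom(X,A) \otimes X \to A$. In particular, for our triple, the cup product map
\[
\alpha \smile :\ H^i(G, M \otimes X) \to H^{i+2}(G, M \otimes A)
\]
appearing in Lemma \ref{lem.AbsBT}(2) is exactly the map that features in the generalized Nakayama isomorphism \eqref{eq.NakIso}.

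The hypotheses of Lemma \ref{lem.AbsBT}(2) are then immediate. Since $(X, A, \alpha)$ is a Tate-Nakayama triple (hence weak), and $M$ is torsion-free as an abelian group, \eqref{eq.NakIso} tells us that cup product with $\alpha$ is a bijection $H^r(G, M \otimes X) \xrightarrow{\sim} H^{r+2}(G, M \otimes A)$ for every $r \in \mathbb Z$. In particular it is bijective for $r = -1$ and bijective (hence injective) for $r = 0$, so Lemma \ref{lem.AbsBT}(2) yields that $c : (M \otimes X)_G \to H^1_{\alg}(\mathcal E, M \otimes A)$ is an isomorphism.

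The commutative square in the statement is then a direct restatement of Lemma \ref{lem.AbsBT}(1) for the same triple: the right-hand upward arrow, namely the canonical injection $H^1(G, M \otimes A) \hookrightarrow H^1_{\alg}(\mathcal E, M \otimes A)$, is the map $i$ of that lemma, and the left-hand upward arrow is the tautological $H^{-1}(G, M \otimes X) \hookrightarrow (M \otimes X)_G$. There is no real obstacle here; the only step that demands attention is the identification of the abstract pairing used in Lemma \ref{lem.AbsBT} with the Nakayama pairing, and this is forced by the construction of $\xi$ via adjunction.
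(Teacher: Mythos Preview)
Your proposal is correct and matches the paper's approach exactly: the paper's proof is the one-liner ``In view of the Nakayama isomorphism \eqref{eq.NakIso}, this follows from Lemma \ref{lem.AbsBT},'' and you have simply unpacked that sentence. The only thing you add is the explicit identification of the abstract pairing in Lemma \ref{lem.AbsBT} with the Nakayama pairing, which is indeed the one point worth checking and is forced by the construction of $\xi$.
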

\begin{proof}
 In view of the Nakayama isomorphism \eqref{eq.NakIso}, this follows from
Lemma \ref{lem.AbsBT}. 
\end{proof} 

\subsection{Restriction for a subgroup $G'$ of $G$}\label{sub.ResTN} 
Let $(X,A,\alpha)$ be a Tate-Nakayama triple for $G$, and let $\mathcal E$
be an extension of $G$ by $\Hom(X,A)$ with corresponding cohomology class
$\alpha$.  Let 
$G'$ be a subgroup of $G$, and put $\alpha':=\Res_{G/G'}(\alpha) \in
H^2(G',\Hom(X,A))$. It is evident that
$(X,A,\alpha')$ is a Tate-Nakayama triple for $G'$.    For every $G$-module
$M$ there is a restriction map 
\[
\Res:H^1_{\alg}(\mathcal E,M \otimes A) \to H^1_{\alg}(\mathcal E',M \otimes
A), 
\]
where $\mathcal E'$ denotes the preimage of $G'$ under $\mathcal E
\twoheadrightarrow G$ (see Example \ref{Ex.AbsRes}). 

Define a homomorphism $M \otimes X \to (M \otimes X)_{G'}$ by sending $\mu
\in M \otimes X$ to $\sum_{g \in G'\backslash G} g\mu$. This map factors
through the coinvariants of $G$ on $M \otimes X$, yielding a natural
homomorphism 
\begin{equation}\label{eq.CoAv}
(M \otimes X)_G \to (M \otimes X)_{G'}
\end{equation}

\begin{lemma}\label{lem.Trick}
The square 
\begin{equation}
\begin{CD}
(M \otimes X)_G @>{\eqref{eq.CoAv}}>> (M \otimes X)_{G'} \\
@VcVV @VcVV\\
H^1_{\alg}(\mathcal E,M \otimes A) @>{\Res}>> H^1_{\alg}(\mathcal E',M
\otimes A)
\end{CD}
\end{equation} 
commutes. 
\end{lemma}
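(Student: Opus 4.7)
The plan is to reduce to two separate coordinate checks using the fiber product description \eqref{CD.FibSqHY}. Since $H^1_{\alg}(\mathcal E', M \otimes A) = H^1_Y(\mathcal E', M \otimes A)$ is the fiber product of $Y^{G'}$ and $H^1(\mathcal E', M \otimes A)$ over $\Hom_{G'}(\Hom(X,A), M \otimes A)$, it suffices to verify that the two composites around the square agree after projecting via $r'$ (to $Y^{G'}$) and via $\pi'$ (to $H^1(\mathcal E', M \otimes A)$). I will pick a lift $y \in Y = M \otimes X$ and push it around the square, using Lemma \ref{lem.c0} to characterize the values of $c_0$ and $c'_0$ by their two projections.

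The $r'$ component is straightforward: going down then right produces $Ny$, viewed in $Y^{G'}$; going right then down produces $N_{G'}\bigl(\sum_{g \in G' \backslash G} gy\bigr) = \sum_{g \in G} gy = Ny$. These agree on the nose.

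The real content lies in the $\pi'$ component, where I must check
\[
\res_{\mathcal E \to \mathcal E'}\bigl(\cor_{\Hom(X,A) \to \mathcal E}(\xi(y))\bigr)
= \cor_{\Hom(X,A) \to \mathcal E'}\Bigl(\xi\bigl(\textstyle\sum_{g \in G' \backslash G} gy\bigr)\Bigr)
\]
in $H^1(\mathcal E', M \otimes A)$. The tool is the Mackey double-coset formula for $\res \circ \cor$. In our situation $\Hom(X,A)$ is normal in $\mathcal E$ and is contained in $\mathcal E'$, so every intersection $\mathcal E' \cap g\Hom(X,A)g^{-1}$ collapses to $\Hom(X,A)$ and the double coset space $\mathcal E' \backslash \mathcal E / \Hom(X,A)$ reduces to $G' \backslash G$. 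Mackey thus simplifies to $\res \circ \cor_{\Hom(X,A)}^{\mathcal E} = \cor_{\Hom(X,A)}^{\mathcal E'} \circ \sum_{g \in G' \backslash G} c_g$, where $c_g$ denotes the conjugation action on $\Hom\bigl(\Hom(X,A),\, M \otimes A\bigr)$. Since $\xi$ is $G$-equivariant, $c_g(\xi(y)) = \xi(gy)$, and summing yields precisely the identity above.

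The hard part is really just the restricted Mackey formula. Because the situation is so degenerate (one subgroup normal, the other containing it), the cleanest alternative is to verify it directly on cocycles: lift coset representatives of $G' \backslash G$ to elements $\dot g_1, \ldots, \dot g_n \in \mathcal E$, combine them with representatives of $\Hom(X,A) \backslash \mathcal E'$ to obtain representatives of $\Hom(X,A) \backslash \mathcal E$, and then the explicit standard formula for corestriction of the $1$-cocycle $\xi(y)$ on $\Hom(X,A)$ to $\mathcal E$, upon restriction to $\mathcal E'$, reorganizes visibly into $\cor_{\Hom(X,A)}^{\mathcal E'}$ applied to $\sum_g \xi(\dot g_i \cdot y) = \xi(\sum_g gy)$. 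I expect this to be the only non-routine step, after which the commutativity of the square follows.
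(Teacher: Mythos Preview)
Your proof is correct, but it takes a genuinely different route from the paper. You verify both projections of the fiber product directly: the $r'$-projection by a norm identity, and the $\pi'$-projection by the Mackey double-coset formula for $\res_{\mathcal E'}^{\mathcal E}\circ\cor_A^{\mathcal E}$ (which, as you note, collapses since $A=\Hom(X,A)$ is normal in $\mathcal E$ and contained in $\mathcal E'$). The paper instead avoids the $\pi'$-component entirely. It enlarges the square by appending the row $(M\otimes X)^G \hookrightarrow (M\otimes X)^{G'}$ below via $r,r'$; then only the outer rectangle and bottom square need to be checked (both reduce to the identity $r\circ c=N$), and the top square follows \emph{provided} $r'$ is injective. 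That injectivity holds when $H^1(G',M\otimes A)=0$, in particular when $M$ is $\mathbb Z[G]$-free of finite rank; the general case then follows by choosing a $\mathbb Z[G]$-free surjection $M'\twoheadrightarrow M$ and using functoriality plus the surjectivity of $(M'\otimes X)_G\to(M\otimes X)_G$. Your approach is more direct and handles all $M$ at once, at the cost of invoking Mackey (or the explicit cocycle computation you sketch). The paper's trick is more elementary---it uses nothing beyond $r\circ c=N$---and, importantly, the same reduction pattern is reused verbatim in several later proofs (Lemmas~\ref{lem.LocInf}, \ref{CD.GlobInflation}, \ref{lem.KappaTLocCompat}, and those in subsection~\ref{sub.ResAndCor}), so it earns its keep as a template.
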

\begin{proof}
We enlarge the square to a diagram 
\begin{equation}
\begin{CD}
(M \otimes X)_G @>{\eqref{eq.CoAv}}>> (M \otimes X)_{G'} \\
@VcVV @VcVV\\
H^1_{\alg}(\mathcal E,M \otimes A) @>{\Res}>> H^1_{\alg}(\mathcal E',M
\otimes A)\\
@VrVV @V{r'}VV \\
(M \otimes X)^G @>>> (M \otimes X)^{G'},
\end{CD}
\end{equation} 
the bottom horizontal arrow being the obvious inclusion. The bottom square
and outer rectangle are easily seen to commute. Consequently the top square
commutes whenever $r'$ is injective. This is the case when $M$ is free of
finite rank as $\mathbb Z[G]$-module, because then the kernel $H^1(G',M
\otimes A)$ of $r'$ obviously vanishes. 

Since the square we seek to prove commutative is functorial in $M$, to prove
its commutativity for a given $M$, it is sufficient to prove its
commutativity for any $M'$ that dominates $M$ in the sense that there exists
a $G$-map $M' \to M$ for which $(M' \otimes X)_G \to (M \otimes X)_G$ is
surjective. This can obviously be achieved by using a suitable $M'$ that is 
free of
finite rank as $\mathbb Z[G]$-module. 
\end{proof}

\subsection{Naturality} \label{sub.BetterNat}
We are now going to discuss the naturality of the
construction
$(X,A,\alpha) \mapsto H^1_{\alg}(\mathcal E,M\otimes A)$, and in order to do
so we need a suitable notion of morphism. 

\begin{definition}\label{def.TNmor}
A \emph{morphism $(X_2,A_2,\alpha_2) \to (X_1,A_1,\alpha_1)$ of
Tate-Nakayama triples} is a pair $(b,a)$ of $G$-maps $b:X_2  \to X_1$,
$a:A_2 \to A_1$ such that $a(\alpha_2)=b(\alpha_1) \in
H^2(G,\Hom(X_2,A_1))$. 
\end{definition}

Given such a morphism $(b,a)$, we are now going to define a natural map 
\begin{equation}\label{eq.Nat2}
\rho:H^1_{\alg}(\mathcal E_2,M \otimes A_2) \to H^1_{\alg}(\mathcal E_1,M
\otimes A_1) 
\end{equation}
for any $G$-module $M$ that is torsion-free as abelian group. Here of course
we have chosen extensions 
\[
1 \to \Hom(X_i,A_i) \to \mathcal E_i \to G \to 1 \qquad (i=1,2)
\] 
with associated cohomology classes $\alpha_i \in H^2(G,\Hom(X_i,A_i))$. 
In order to define the map \eqref{eq.Nat2} we begin by choosing an
extension 
\[
1 \to \Hom(X_2,A_1) \to \mathcal F \to G \to 1
\] 
with associated cohomology class $a(\alpha_2)=b(\alpha_1)$, and then
choosing  homomorphisms $\tilde a$, $\tilde b$ making the diagram 
\begin{equation}
\begin{CD}
1 @>>> \Hom(X_2,A_2) @>>> \mathcal E_2 @>>> G @>>> 1\\
@. @VaVV @V{\tilde a}VV @| @.\\
1 @>>> \Hom(X_2,A_1) @>>> \mathcal F @>>> G @>>> 1\\
@. @AbAA @A{\tilde b}AA @| @.\\
1 @>>> \Hom(X_1,A_1) @>>> \mathcal E_1 @>>> G @>>> 1\\
\end{CD}
\end{equation}
commute. We have not assumed that $H^1(G,\Hom(X_2,A_1))$ vanishes, so
$\tilde a$, $\tilde b$ are far from unique. Fortunately, however, the
map \eqref{eq.Nat2} we are going to define will turn out to be independent
of the choice of
$\mathcal F$, $\tilde a$, $\tilde b$. 

There is commutative diagram 
\begin{equation*}
\begin{CD}
X_2 \otimes \Hom(X_2,A_2) @>>> A_2 \\
@| @VaVV \\
X_2 \otimes \Hom(X_2,A_2) @>>> A_1 \\
@V{\id \otimes a}VV @| \\
X_2 \otimes \Hom(X_2,A_1) @>>> A_1 \\
@A{\id \otimes b}AA @| \\
X_2 \otimes \Hom(X_1,A_1) @>>> A_1 \\
@V{b \otimes \id}VV @| \\
X_1 \otimes \Hom(X_1,A_1) @>>> A_1  
\end{CD}
\end{equation*}
in which the top, middle and bottom pairings are the tautological ones, and
the other two are the unique ones making the diagram commute. 
Put $Y_i:=M \otimes X_i$. 
Tensoring the
entire diagram with $M$, and then applying adjointness of $\otimes$ and
$\Hom$, we obtain another commutative diagram 
\begin{equation*}
\begin{CD}
Y_2 @>>> \Hom(\Hom(X_2,A_2),M\otimes A_2)  
\\ @| @VVV \\
Y_2 @>>> \Hom(\Hom(X_2,A_2),M\otimes A_1) \\
@| @AAA \\
Y_2 @>>> \Hom(\Hom(X_2,A_1),M\otimes A_1)  \\
@| @VVV \\
Y_2 @>>> \Hom(\Hom(X_1,A_1),M\otimes A_1)  \\
@V{\id_M \otimes b}VV @|  \\
Y_1 @>>> \Hom(\Hom(X_1,A_1),M\otimes A_1).  
\end{CD}
\end{equation*}
From Lemmas \ref{lem.Nat1} and \ref{lem.Nat2} we obtain a commutative
diagram 
\begin{equation*}
\begin{CD}
(Y_2)_G @>c>>  H^1_{Y_2}(\mathcal E_2,M \otimes A_2) 
\\ @|  @VVV\\
(Y_2)_G @>c>>  H^1_{Y_2}(\mathcal E_2,M \otimes A_1)\\
@|  @A{\simeq}AA\\
(Y_2)_G @>c>>  H^1_{Y_2}(\mathcal F,M \otimes A_1) \\
@|  @V{\simeq}VV\\
(Y_2)_G @>c>>  H^1_{Y_2}(\mathcal E_1,M \otimes A_1) \\
@V{\id_M \otimes b}VV  @VVV \\
(Y_1)_G @>c>>  H^1_{Y_1}(\mathcal E_1,M \otimes A_1), 
\end{CD}
\end{equation*}
where the right vertical arrows are as follows (starting from the top): 
\begin{enumerate}
\item $a:H^1_{Y_2}(\mathcal E_2,M \otimes A_2) \to H^1_{Y_2}(\mathcal E_2,M
\otimes A_1)$, induced by the $G$-map $\id_M \otimes a:M \otimes A_2 \to
M\otimes A_1$,  
\item $ {\tilde a}^*: 
H^1_{Y_2}(\mathcal F,M \otimes A_1) 
\to 
H^1_{Y_2}(\mathcal E_2,M \otimes A_1) 
$, 
\item $ {\tilde b}^*:  H^1_{Y_2}(\mathcal F,M \otimes A_1)  \to 
H^1_{Y_2}(\mathcal E_1,M \otimes A_1)$, 
\item $b:H^1_{Y_2}(\mathcal E_1,M \otimes A_1) \to H^1_{Y_1}(\mathcal E_1,M
\otimes A_1)$, induced by  $\id_M \otimes b:Y_2 \to Y_1$. 
\end{enumerate}
Remembering that $H^1_{\alg}(\mathcal E_i,M\otimes A_i)$ stands for 
$H^1_{Y_i}(\mathcal E_i,M\otimes A_i)$, we see that the composition of the
right vertical arrows yields a homomorphism  
\begin{equation}
 \rho:H^1_{\alg}(\mathcal E_2,M \otimes A_2) \to
H^1_{\alg}(\mathcal E_1,M \otimes A_1)
\end{equation}
making the square 
\begin{equation}
\begin{CD}
(Y_2)_G @>c>>  H^1_{\alg}(\mathcal E_2,M \otimes A_2)  \\ 
@V{\id_M \otimes b}VV  @V{\rho}VV\\
(Y_1)_G @>c>>  H^1_{\alg}(\mathcal E_1,M \otimes A_1)
\end{CD}
\end{equation} 
commute. 
Since the horizontal maps $c$ are isomorphisms, we conclude that $\rho$ is
independent of the choice of $\mathcal F$, $\tilde a$, $\tilde b$, and it
is $\rho$ that we take as the  map \eqref{eq.Nat2} we wished to define.

\subsection{Preview} 
The standard situations in which there are Tate-Nakayama isomorphisms are
all associated with Tate-Nakayama triples.   
There is a canonical Tate-Nakayama triple associated with every finite
Galois extension of local fields. It will be discussed in section
\ref{sec.LTN}.  
Associated to every finite Galois  extension $K/F$ of global fields (and a
suitable set $S$ of places of $F$) are three 
Tate-Nakayama triples, and there are canonical morphisms from the third to
second, and from the second to the first. All this, together with a
localization map (global to adelic), will be discussed in section
\ref{sec.GTN}.

\section{Local Tate-Nakayama triples} \label{sec.LTN}
\subsection{Notation} In this section we consider a finite Galois extension
$K/F$ of local fields, whose Galois group we denote by $G$. It is part of
local class field theory that we get a Tate-Nakayama triple $(X,A,\alpha)$
for $G$ by taking 
\begin{itemize}
\item $X$ to be $\mathbb
Z$, with $G$ acting trivially, 
\item $A$ to be $K^\times$, with the natural $G$-action. 
\item $\alpha$ to be the fundamental class in $H^2(G,K^\times)$. 
\end{itemize} 
Observe that the group $\mathcal E$ occurring in the
extension \eqref{eq.calE} is a Weil group for
$K/F$. 

\subsection{The group $H^1_{\alg}(\mathcal E,T(K))$}\label{sub.TNlc}

Let $T$ be a torus over $F$ that splits over $K$. Its group $X_*(T)$
of cocharacters is a $G$-module that is finite free as $\mathbb Z$-module.
Take $M=X_*(T)$ in our abstract theory. We then obtain from Lemma
\ref{lem.AbTN2} the following result,  the nonarchimedean case of which 
gives a slightly different perspective on  the results in 
\cite[\S 8]{IsoII}. 
In the lemma we write $H^1_{\alg}(\mathcal E,T(K))$
in place  of $H^1_Y(\mathcal E,T(K))$, where  $Y$ is    
$M\otimes X=X_*(T)$. 

\begin{lemma}\label{lem.LocTN}
 The diagram 
\[
\begin{CD}
X_*(T)_G @>c>> H^1_{\alg}(\mathcal E,T(K))\\
@AAA @AAA \\
H^{-1}(G,X_*(T)) @>{\alpha\smile}>> H^1(G,T(K))
\end{CD}
\] 
commutes,  the two vertical maps being the canonical injections. Moreover,
the two horizontal maps are isomorphisms, the bottom one being one of the 
Tate-Nakayama isomorphisms.
\end{lemma}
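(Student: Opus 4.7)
The plan is to deduce this lemma directly from the abstract result Lemma \ref{lem.AbTN2}, specialized to the triple $(X,A,\alpha) = (\mathbb Z, K^\times, \alpha)$ from the preceding subsection and to the $G$-module $M := X_*(T)$. To do so I first need to verify that the input to Lemma \ref{lem.AbTN2} is valid, and then translate its output into the statement of the present lemma.

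First I would confirm that $(\mathbb Z, K^\times, \alpha)$ is indeed a Tate-Nakayama triple for $G = G(K/F)$. The bijectivity of cup product with $\Res_{G/G'}(\alpha)$ on all Tate cohomology groups, for every subgroup $G'$, is the classical Tate-Nakayama theorem of local class field theory applied to the intermediate extension $K/K^{G'}$. Rigidity amounts to the vanishing of $H^1(G',\Hom(\mathbb Z,K^\times)) = H^1(G',K^\times)$, which is Hilbert's Theorem 90. Next I would note that $X_*(T)$ is finite free as an abelian group (hence torsion-free), which is exactly the hypothesis on $M$ in Lemma \ref{lem.AbTN2}.

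The identifications $M \otimes X = X_*(T) \otimes \mathbb Z = X_*(T)$ and $M \otimes A = X_*(T) \otimes K^\times \xrightarrow{\sim} T(K)$ are immediate; the second, sending $\lambda \otimes k$ to $\lambda(k)$, is a $G$-equivariant isomorphism precisely because $T$ splits over $K$. Under these identifications the group $H^1_Y(\mathcal E, M \otimes A)$ of the abstract setting coincides with $H^1_{\alg}(\mathcal E, T(K))$ as defined in subsection \ref{sub.Torus}: one reads off the match by comparing the cocycle description of subsection \ref{sub.H1Y} with the cartesian-square description of subsection \ref{sub.Torus}, noting that a $K$-homomorphism $\mathbb G_m \to T$ is exactly an element of $X_*(T) = M \otimes X$.

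With these translations in hand, Lemma \ref{lem.AbTN2} directly yields both the commutativity of the square and the bijectivity of $c$; that the bottom arrow $\alpha\smile$ is the usual Tate-Nakayama isomorphism is the Nakayama isomorphism \eqref{eq.NakIso} for $r=-1$ and $M = X_*(T)$. I do not anticipate any substantial obstacle: the real work was carried out in Lemmas \ref{lem.AbsBT} and \ref{lem.AbTN2}. The only step requiring care is matching the abstract $H^1_Y(\mathcal E, M \otimes A)$ with the torus version $H^1_{\alg}(\mathcal E, T(K))$, but this identification is precisely what subsection \ref{sub.Torus} was set up to provide.
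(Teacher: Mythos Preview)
Your proposal is correct and follows essentially the same approach as the paper: both deduce the lemma directly from Lemma~\ref{lem.AbTN2} by specializing to $M = X_*(T)$ and observing that $M \otimes A = X_*(T) \otimes K^\times \cong T(K)$. The paper's proof is the one-line remark ``We just need to notice that $M\otimes A$ works out to $T(K)$,'' while you have spelled out the supporting verifications (Tate--Nakayama triple axioms, torsion-freeness of $X_*(T)$, identification of $H^1_Y$ with $H^1_{\alg}$) that the paper takes as already established in the preceding discussion.
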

\begin{proof}
We just need to notice that $M\otimes A$ works out to $T(K)$. 
\end{proof}

 \section{Global Tate-Nakayama triples} \label{sec.GTN}

\subsection{Notation} \label{sub.TNnot}
For any global field $F$ we write $V_F$ for the set of places of $F$. For
any finite extension $E/F$ of global fields there is a natural map
$f_{E/F}:V_E \to V_F$ sending a place of $E$ to the unique place of $F$
below it.

In this section
$K/F$ is a finite Galois extension of global fields, with Galois group~$G$. 
For any set $S$ of places of $F$ and any finite extension $E/F$, we write
$S_E$ for the preimage of
$S$ under $f_{E/F}$.

In this section we work with a set $S$  of places of $F$ satisfying
the following conditions:
\begin{itemize}
\item $S$ contains all archimedean places. 
\item $S$ contains all finite places that ramify in $K$.
\item For every intermediate field $E$ of $K/F$,  every ideal class of $E$
contains an ideal with support in $S_E$.  
\end{itemize}

\subsection{Three global Tate-Nakayama triples} \label{sub.GlobTNT}
We need to recall the  constructions  Tate \cite{T} used to prove global 
Tate-Nakayama isomorphisms for tori over $F$. We will see that they yield
Tate-Nakayama triples $(X_i,A_i,\alpha_i)$ (for $i=1,2,3$) as well as
morphisms 
\[
(X_3,A_3,\alpha_3) \to (X_2,A_2,\alpha_2) \to (X_1,A_1,\alpha_1).
\]

 Tate considers two  short exact
sequences of
$G$-modules. The first is 
\begin{equation}\label{eq.TateA}
1 \to A_3 \xrightarrow{a'} A_2 \xrightarrow{a}
A_1\to 1, \tag{A}
\end{equation} 
where 
\begin{itemize}
\item 
$A_3$ is the group of $S_K$-units in $K^\times$, that is, elements of
$K^\times$ that are units at all places not in $S_K$. 
 \item $A_2$ is the group  of $S_K$-ideles of $K$,
that is, ideles whose $v$-component is a unit for each place $v$ not in
$S_K$.  
\item $A_1=A_2/A_3$ is the group of $S_K$-idele classes
of $K$.
\end{itemize} 
Our third assumption on $S$ tells us 
that the inclusion of $A_2$ in $\mathbb A_K^\times$ induces an isomorphism 
$A_2/A_3 \simeq \mathbb A_K^\times/K^\times$, so $A_1$ is in fact the
group of idele classes of $K$.  

\begin{lemma}\label{lem.H1Van}
Let $G'$ be any subgroup of $G$.  Then
$H^1(G',A_1)$, $H^1(G',A_2)$ and $H^1(G',A_3)$  vanish, and the sequence 
\begin{equation}
1 \to A_3^{G'} \to A_2^{G'} \to A_1^{G'}  \to 1
\end{equation}
is short exact. 
\end{lemma}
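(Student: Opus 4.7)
The plan is to reduce each assertion to a combination of local Hilbert 90, cohomology of units in unramified extensions, and condition (c) of subsection \ref{sub.TNnot}, with the last vanishing coming from global class field theory. Throughout, put $E:=K^{G'}$, so that $G'=G(K/E)$; the intermediate field $E$ inherits a set $S_E$ satisfying the three properties of $S$ (now for the extension $K/E$). I would prove the assertions in the order: $H^1(G',A_2)=0$; surjectivity of $A_2^{G'}\to A_1^{G'}$; then $H^1(G',A_3)=0$; and finally $H^1(G',A_1)=0$.

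First, for $H^1(G',A_2)=0$, I would decompose $A_2=\prod_{u\in V_E}A_2^{(u)}$, where for each $u\in V_E$ the factor $A_2^{(u)}:=\prod_{v\mid u}M_v$ has $M_v=K_v^\times$ when $u\in S_E$ and $M_v=\mathcal{O}_{K_v}^\times$ when $u\notin S_E$. Each $A_2^{(u)}$ is a $G'$-module coinduced from $G'_v$ (for any choice of $v\mid u$), so by Shapiro's lemma $H^1(G',A_2^{(u)})=H^1(G'_v,M_v)$. For $u\in S_E$ this vanishes by local Hilbert 90. For $u\notin S_E$, the hypothesis on $S$ forces the place $f_{E/F}(u)$ of $F$ to be unramified in $K/F$, so $K_v/E_u$ is unramified, and the standard computation via the valuation sequence $1\to\mathcal{O}_{K_v}^\times\to K_v^\times\xrightarrow{v}\mathbb Z\to 0$ yields $H^1(G'_v,\mathcal{O}_{K_v}^\times)=0$. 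Since $G'$ is finite, $H^1$ commutes with the direct product, giving $H^1(G',A_2)=0$.

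Next, for the surjectivity of $A_2^{G'}\to A_1^{G'}$, I would use the exact sequence $1\to K^\times\to\mathbb A_K^\times\to C_K\to 1$ together with $H^1(G',K^\times)=0$ (Hilbert 90) to lift any $c\in A_1^{G'}=C_K^{G'}$ to some $a\in(\mathbb A_K^\times)^{G'}=\mathbb A_E^\times$. Let $\mathfrak a$ be the fractional ideal of $E$ attached to $a$; by condition (c) on $S$, applied to the field $E$, there is $\beta\in E^\times$ with $(\beta)\mathfrak a$ supported on the finite places in $S_E$. Then $\beta a$ is a unit at every finite place outside $S_E$ and so lies in the $S_E$-ideles, i.e.\ in $A_2^{G'}$; since $\beta\in E^\times\subset K^\times$, it still maps to $c$. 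Feeding this surjectivity into the long exact cohomology sequence of \eqref{eq.TateA} and combining with $H^1(G',A_2)=0$ yields both $H^1(G',A_3)=0$ and the asserted short exactness of the $G'$-invariants.

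Finally, $H^1(G',A_1)=H^1(G(K/E),C_K)=0$ is the ``Hilbert 90 for idele classes'' of global class field theory; equivalently, it is the injectivity of $\mathrm{Br}(K/E)\hookrightarrow\bigoplus_u\mathrm{Br}(K_v/E_u)$ supplied by the Albert-Brauer-Hasse-Noether theorem. I would invoke it as a known input (cf.\ \cite{T}). This is the one genuinely deep step; everything else is formal once one has local Hilbert 90, the vanishing of unit cohomology in unramified extensions, and condition (c) on $S$.
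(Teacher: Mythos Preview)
Your proof is correct and follows essentially the same approach as the paper's: both arguments set $E=K^{G'}$, prove $H^1(G',A_2)=0$ via Shapiro and local computations, identify $A_2^{G'}\to A_1^{G'}$ with the surjection of $S_E$-ideles onto the idele class group of $E$ (using the third condition on $S$), and then read off $H^1(G',A_3)=0$ from the long exact sequence, while invoking global class field theory for $H^1(G',A_1)=0$. Your write-up simply unpacks more of the details that the paper leaves to Tate's article.
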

\begin{proof} Let $E$ be the fixed field of $G'$ on
$K$. 
The vanishing of $H^1(G',A_1)$ is part of global class field theory for
$K/E$. The vanishing of $H^1(G',A_2)$ follows from Hilbert's theorem 90 for
local fields, together with the vanishing of the first Galois cohomology of 
$\mathbb G_m$ and $\mathbb G_a$ over  finite fields. See Tate's article for
more details. 
 Since $H^1(G',A_2)$ vanishes, there is an exact sequence 
\begin{equation}
1 \to A_3^{G'} \to A_2^{G'} \to A_1^{G'}  \to H^1(G',A_3) \to 1.
\end{equation}
Now $A_1^{G'}$ is the idele class group for $E$, and $A_2^{G'}$ is the
group of $S_E$-ideles of $E$. So  our third assumption on $S$
tells us that $A_2^{G'} \to A_1^{G'}$ is surjective, and hence that
$H^1(G',A_3)$ vanishes. 
\end{proof}

 The second short exact sequence considered by Tate is 
\begin{equation}
0 \to X_3 \xrightarrow{b'} X_2 \xrightarrow{b} X_1 \to 0, \tag{X}
\end{equation}\label{eq.TateX} 
where \begin{itemize} 
\item $X_1$ is the group of integers, with $G$ acting trivially. 
\item $X_2$ is the free abelian group on the set $S_K$, the
$G$-action on $X_2$ being induced by the natural $G$-action on $S_K$. (Thus
$X_2=\mathbb Z[S_K]$). 
\item The homomorphism $b$ maps $\sum_{v \in S_K} n_v v \in X_2$ to
$\sum_{v
\in S_K} n_v$. 
\item $X_3$ is the kernel of $b$, and $b'$ the canonical inclusion. (Thus
$X_3=\mathbb Z[S_K]_0$).
\end{itemize}

Next Tate constructs a commutative diagram 
\begin{equation}\label{CD.Tate}
\begin{CD}
\dots @>>>  H^r(G,X_3) @>>>  H^r(G,X_2) @>>>  H^r(G, X_1) @>>>
\dots 
\\ @. @V{\alpha_3^r}VV @V{\alpha_2^r}VV @V{\alpha_1^r}VV @. \\
\dots @>>>  H^{r+2}(G,A_3) @>>>  H^{r+2}(G,A_2) @>>> 
H^{r+2}(G,A_1) @>>>
\dots 
\end{CD}
\end{equation}
 in which the vertical arrows are isomorphisms given by cup product with 
certain cohomology classes  $\alpha_i \in  H^2(G,\Hom(X_i,A_i))$.  
The two rows in the commutative diagram are the long exact Tate-cohomology
sequences for the short exact sequences (A) and (X). 

Global class field theory is encoded in the arrows $\alpha_1^r$, and in fact
 $\alpha_1$ is nothing but the global fundamental class. Similarly, local
class field theory is encoded in the arrows
$\alpha_2^r$, and in fact $\alpha_2$ is built up from  the various local
fundamental classes. We need to be more precise about this.

Let $v \in S_K$. Then $K_v^\times$ is, in an obvious way, a
direct factor of $A_2$.  The
canonical injection
$i_v:K_v^\times \hookrightarrow A_2$ and canonical projection $\pi_v:A_2
\to K_v^\times$ are both $G_v$-equivariant, where $G_v$ denotes the
stabilizer in $G$ of $v$ (in other words, the decomposition group of $v$).

In order to specify $\alpha_2$, Tate uses the following lemma
(see page 714 of his article). 

\begin{lemma}\label{lem.TateX2}
For any $G$-module $M$ there is a canonical isomorphism 
\[
H^r(G,\Hom(X_2,M)\bigr) \to \prod_{u \in S} H^r(G_v,M),
\]
where,   for each  place
$u \in S$, we choose a place $v$ of $K$ above $u$, and write
$G_v$ for its decomposition group. The $u$-th component of this isomorphism
is the composed map 
\begin{equation}\label{eq.TateLemM}
H^r(G,\Hom(X_2,M)) \xrightarrow{\Res_{G/G_v}} H^r(G_v,\Hom(X_2,M))
\xrightarrow{\eval_v} H^2(G_v,M),
\end{equation} 
where $\eval_v$ is the $G_v$-map sending $f \in \Hom(X_2,M)$ to its value
at the basis element $v \in X_2$. 
\end{lemma}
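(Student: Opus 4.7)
The plan is to reduce this lemma to Shapiro's lemma by decomposing $X_2$ as a $G$-module. First I would observe that the fibers of the natural map $f_{K/F} : S_K \to S$ are exactly the $G$-orbits in $S_K$: for each $u \in S$, the preimage $f_{K/F}^{-1}(u)$ is a single $G$-orbit, and the chosen place $v$ lying over $u$ has stabilizer $G_v$ in $G$. Consequently there is a canonical decomposition
\[
X_2 = \mathbb Z[S_K] = \bigoplus_{u \in S} \mathbb Z[G \cdot v] \cong \bigoplus_{u \in S} \mathbb Z[G/G_v]
\]
as $G$-modules, where the isomorphism $\mathbb Z[G \cdot v] \cong \mathbb Z[G/G_v]$ sends the basis element $v$ to the coset $G_v$.

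Next I would apply $\Hom(-,M)$ to turn the direct sum into a product, obtaining a canonical $G$-module isomorphism
\[
\Hom(X_2,M) \;\cong\; \prod_{u \in S} \Hom(\mathbb Z[G/G_v],M).
\]
Each factor $\Hom(\mathbb Z[G/G_v],M)$ is precisely the coinduced module from $G_v$ to $G$. Passing to cohomology, the product decomposition gives
\[
H^r\bigl(G, \Hom(X_2,M)\bigr) \;\cong\; \prod_{u \in S} H^r\bigl(G, \Hom(\mathbb Z[G/G_v], M)\bigr),
\]
and Shapiro's lemma (valid in all degrees of Tate cohomology for finite groups) identifies each factor on the right with $H^r(G_v, M)$.

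The remaining point is to verify that the isomorphism given by Shapiro's lemma agrees with the composition $\eval_v \circ \Res_{G/G_v}$ in \eqref{eq.TateLemM}. This is the standard explicit form of the Shapiro isomorphism: the $G_v$-equivariant evaluation map $\eval_v : \Hom(\mathbb Z[G/G_v], M) \to M$ sending $f \mapsto f(v)$ (where $v$ corresponds to the distinguished coset $G_v$) induces a map on $H^r(G_v,-)$, and precomposing with $\Res_{G/G_v}$ yields the Shapiro isomorphism. I expect the only mildly delicate point in the proof is the bookkeeping required to confirm that the explicit map $\eval_v \circ \Res_{G/G_v}$ coincides with the usual Shapiro isomorphism (as opposed to, say, its composition with an automorphism of $H^r(G_v,M)$); once one unwinds the definitions on cocycle representatives, this is routine. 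Note there appears to be a typographical slip in the displayed formula: the target of \eqref{eq.TateLemM} should be $H^r(G_v, M)$ rather than $H^2(G_v, M)$, to match the cohomological degree on the left-hand side.
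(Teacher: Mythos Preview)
Your proposal is correct and matches the paper's approach exactly: the paper does not prove this lemma in place (it cites Tate's article), but in Appendix~A it states and proves the general version (Lemma~\ref{lem.ShapVar}) by the same route---decompose $\mathbb Z[S_K]$ into orbit summands, convert the direct sum to a product via $\Hom(-,M)$, use that Tate cohomology commutes with products, and apply Shapiro's lemma in its explicit form $\pi_s \circ \Res_{G/G_s}$. Your observation about the typo ($H^2$ should be $H^r$) is also correct.
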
 

To define $\alpha_2$ Tate applies the lemma with $M=A_2$ and $r=2$.
According to that lemma, giving $\alpha_2$ is the same as giving a family
of elements $\alpha_2(u) \in H^2(G_v,A_2)$, one for each $u \in S$. 
Tate takes $\alpha_2(u)$ to be the image under $i_v:K^\times_v
\hookrightarrow A_2$ of the local fundamental class $\alpha(K_v/F_u) \in
H^2(G_v,K^\times_v)$.

  Tate constructs $\alpha_3$ as follows. 
First he remarks that in
order to produce $\alpha_1$, $\alpha_2$, $\alpha_3$ making  
diagram \eqref{CD.Tate} commute, it would be enough to produce an element
$\alpha 
\in H^2(G,\Hom(X,A))$ whose image under the $i$-th projection
$\pi_i:\Hom(X,A) \to \Hom(X_i,A_i)$ is equal to $\alpha_i$. Here
$\Hom(X,A)$ denotes the subgroup of $\Hom(X_3,A_3) \times \Hom(X_2,A_2)
\times \Hom(X_1,A_1)$ consisting of all triples $(h_3,h_2,h_1)$ such that 
\[
\begin{CD}
X_3 @>>> X_2 @>>> X_1 \\
@Vh_3VV @Vh_2VV @Vh_1VV \\
A_3 @>>> A_2 @>>> A_1 
\end{CD}
\] 
commutes. 

The following lemma is proved in the course of the 
discussion on page 716 of Tate's article. 
\begin{lemma}\label{lem.cart} 
The diagrams
\[
\begin{CD}
\Hom(X,A) @>{\pi_1}>> \Hom(X_1,A_1) \\
@V{\pi_2}VV @VbVV  \\
\Hom(X_2, A_2) @>a>> \Hom(X_2,A_1) 
\end{CD}
\] 
and 
\[
\begin{CD}
H^2(G,\Hom(X,A)) @>{\pi_1}>> H^2(G,\Hom(X_1,A_1)) \\
@V{\pi_2}VV @VbVV  \\
H^2(G,\Hom(X_2, A_2)) @>a>> H^2(G,\Hom(X_2,A_1)) 
\end{CD}
\] 
are cartesian. 
\end{lemma}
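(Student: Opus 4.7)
The plan is to handle the two squares in turn, reducing the cohomological one to the set-theoretic one via a long exact sequence whose only nontrivial input is a vanishing already established in this section.

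For the top square, a pair $(h_1,h_2) \in \Hom(X_1,A_1) \times \Hom(X_2,A_2)$ satisfying $h_1 \circ b = a \circ h_2$ must be completed uniquely to a triple $(h_3,h_2,h_1) \in \Hom(X,A)$; equivalently one must produce a unique $G$-map $h_3:X_3 \to A_3$ with $a' \circ h_3 = h_2 \circ b'$. Since $b \circ b' = 0$ by exactness of (X), the compatibility hypothesis gives $a \circ h_2 \circ b' = h_1 \circ b \circ b' = 0$, so $h_2 \circ b'$ factors through $\ker(a) = \im(a')$, and injectivity of $a'$ produces the unique $h_3$. Its $G$-equivariance is inherited from $h_2$ and $b'$. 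This shows the first square is cartesian.

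Packaging this as a short exact sequence of $G$-modules yields
\[
0 \to \Hom(X,A) \to \Hom(X_1,A_1) \oplus \Hom(X_2,A_2) \xrightarrow{\delta} \Hom(X_2,A_1) \to 0,
\]
where $\delta(h_1,h_2) := h_1 \circ b - a \circ h_2$. Exactness in the middle is the cartesian property just established. For surjectivity of $\delta$, one uses that $a:A_2 \twoheadrightarrow A_1$ is surjective (sequence (A)) and that $X_2$ is $\mathbb Z$-free, so any $g \in \Hom(X_2,A_1)$ admits some $\mathbb Z$-linear lift $\tilde g \in \Hom(X_2,A_2)$ (no $G$-equivariance needed), and $\delta(0,-\tilde g) = g$. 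All structure maps are manifestly $G$-equivariant.

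Applying $H^\bullet(G,-)$ to this sequence, the induced map
\[
H^2(G,\Hom(X,A)) \to H^2(G,\Hom(X_1,A_1)) \oplus H^2(G,\Hom(X_2,A_2))
\]
has image equal to the kernel of the map induced by $\delta$, which is exactly the fiber product in the second square. Cartesianness thus reduces to injectivity of this map, and by the long exact sequence it suffices that $H^1(G,\Hom(X_2,A_1))$ vanish. By Lemma \ref{lem.TateX2} (with $M = A_1$, $r = 1$) this group is isomorphic to $\prod_{u \in S} H^1(G_v,A_1)$, and each factor vanishes by Lemma \ref{lem.H1Van}. This completes the proof. The only non-formal ingredient is the final vanishing; everything else is diagram chasing, and no serious obstacle arises.
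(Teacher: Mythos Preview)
Your proof is correct and follows exactly the approach the paper attributes to Tate (and later reproduces for the analogous claim in the proof of Lemma~\ref{lem.tildePiExists}): verify the first square is cartesian directly, repackage it as a short exact sequence with $\delta$ surjective because $X_2$ is $\mathbb Z$-free, and then deduce the cohomological square from the vanishing of $H^1(G,\Hom(X_2,A_1))$ via Lemmas~\ref{lem.TateX2} and~\ref{lem.H1Van}. One cosmetic slip: the maps $h_i$ comprising an element of $\Hom(X,A)$ are arbitrary abelian-group homomorphisms, not $G$-maps, so your remark about the $G$-equivariance of $h_3$ is unnecessary (though harmless).
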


Tate observes that $a(\alpha_2)=b(\alpha_1)$; this boils down to the
statement that, for $u \in S$ and a place $v$ of $K$ above $u$, the
restriction of the global fundamental class $\alpha_1$ to the subgroup
$G_v$ is the image under 
\[
K^\times_v \xrightarrow{i_v} A_2 \xrightarrow{a} A_1 
\]
of the local fundamental class $\alpha(K_v/F_u)$. 
 From  Lemma \ref{lem.cart} he
concludes that there exists unique $\alpha \in H^2(G,\Hom(X,A))$ such that 
$\pi_i(\alpha)=\alpha_i$ for $i=1,2$, and  he then defines $\alpha_3 \in
H^2(G,\Hom(X_3,A_3))$ to be $\pi_3(\alpha)$.

\subsection{Proof that the three triples
$(X_i,A_i,\alpha_i)$ are Tate-Nakayama triples} 

The maps 
\begin{equation}\label{eq.GlobTNisom}
H^r(G', X_i) \xrightarrow{\Res_{G/G'}(\alpha_i)\smile} H^{r+2}(G',  A_i)
\end{equation}
are isomorphisms for every subgroup $G'$ of $G$ (see Tate's article). 
Therefore
the triples
$(X_i,A_i,\alpha_i)$ are weak  Tate-Nakayama
triples.  
Here we used the following result of Tate (see the lemma on page 717 of
his article).  

\begin{lemma}[Tate] \label{lem.TateRes}
Let $G'$ be a subgroup of $G$, and let $E$ denote the fixed field of $G'$ on
$K$. Then the
canonical class $\alpha \in H^2(G,\Hom(X,A))$  restricts to the canonical
class
$\alpha' \in H^2(G',\Hom(X,A))$ for $K/E$. 
Therefore, for
$i=1,2,3$ the class $\alpha_i$ for $K/F$ restricts to the analogous class
for $K/E$.  
\end{lemma}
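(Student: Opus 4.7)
My plan is to deduce the lemma from the cartesian property in Lemma \ref{lem.cart} together with the classical restriction behavior of global and local fundamental classes. The whole point of Tate's construction of $\alpha$ is that $\alpha$ is \emph{characterized} by its images under $\pi_1$ and $\pi_2$; so if I can check the corresponding statement for $\alpha_1$ and $\alpha_2$, the statement for $\alpha$ (and then for $\alpha_3$) will follow.

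\medskip

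\textbf{Step 1 (Reduction via Lemma \ref{lem.cart}).} Let $\alpha'$ denote the analogous canonical class in $H^2(G',\Hom(X,A))$ constructed for $K/E$ (using the set $S_E$ in place of $S$, which satisfies the three assumptions because $S$ does). Restriction from $G$ to $G'$ is natural with respect to the projections $\pi_1,\pi_2$, so
\[
\pi_i\bigl(\Res_{G/G'}(\alpha)\bigr)=\Res_{G/G'}(\alpha_i),\qquad \pi_i(\alpha')=\alpha_i'.
\]
By the cartesian square in the second half of Lemma \ref{lem.cart} (applied to $G'$ rather than $G$), to prove $\Res_{G/G'}(\alpha)=\alpha'$ it suffices to prove the two equalities $\Res_{G/G'}(\alpha_1)=\alpha_1'$ and $\Res_{G/G'}(\alpha_2)=\alpha_2'$. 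Once $\alpha$ restricts to $\alpha'$, applying $\pi_3$ gives the statement for $\alpha_3$ as well; this handles the second assertion of the lemma.

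\medskip

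\textbf{Step 2 (The case $i=1$).} Here $\alpha_1$ is the global fundamental class in $H^2(G,C_K)$ for $K/F$, and $\alpha_1'$ is the global fundamental class in $H^2(G',C_K)$ for $K/E$ (both using the same $G'$-module $A_1=C_K$). That the global fundamental class restricts to the global fundamental class in a subextension is a standard result of global class field theory, so no work is required here.

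\medskip

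\textbf{Step 3 (The case $i=2$, the main point).} Using Lemma \ref{lem.TateX2} with $M=A_2$, giving $\alpha_2$ is equivalent to giving, for each $u\in S$ and a choice of place $v$ of $K$ over $u$, an element of $H^2(G_v,A_2)$; Tate's recipe specifies this element to be $(i_v)_*\alpha(K_v/F_u)$. Similarly, applying Lemma \ref{lem.TateX2} to the $G'$-module structure on $\Hom(X_2,A_2)$, giving $\alpha_2'$ amounts to specifying, for each place $w\in S_E$ and a place $v'$ of $K$ over $w$, an element of $H^2(G'_{v'},A_2)$, taken by Tate to be $(i_{v'})_*\alpha(K_{v'}/E_w)$. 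To compare $\Res_{G/G'}(\alpha_2)$ with $\alpha_2'$, I would fix $w\in S_E$, let $u\in S$ be its restriction, and trace through the two \emph{a priori} different evaluation isomorphisms: one evaluates at the basis vector $v\in X_2$ after restricting from $G$ to the intermediate group $G_v$, then further to $G_v\cap G'$; the other evaluates at $v'\in X_2$ after restricting from $G'$ to $G'_{v'}$. Writing $v'=\sigma v$ with $\sigma\in G'$ so that $G'_{v'}=\sigma (G_v\cap G')\sigma^{-1}$, a direct cocycle-level computation identifies the two restrictions up to the isomorphism $\Int(\sigma)$; combined with the compatibility of local fundamental classes under conjugation and under restriction to decomposition subgroups, this gives $\Res_{G/G'}(\alpha_2)=\alpha_2'$.

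\medskip

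\textbf{Step 4 (Conclusion).} Steps 2 and 3 give $\pi_i\Res_{G/G'}(\alpha)=\pi_i(\alpha')$ for $i=1,2$; by Step 1 this yields $\Res_{G/G'}(\alpha)=\alpha'$, whence $\Res_{G/G'}(\alpha_i)=\alpha_i'$ for $i=1,2,3$. The hardest part of the argument is the careful bookkeeping in Step 3: reindexing from $S$ to $S_E$, choosing and comparing places of $K$ above these, and verifying that the local fundamental classes match under the two different evaluation isomorphisms. Everything else is either formal (Step 1, Step 4) or classical (Step 2).
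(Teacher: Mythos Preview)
Your strategy is correct and is the natural one: reduce to $i=1,2$ via the cartesian square of Lemma~\ref{lem.cart} (applied with $G'$ in place of $G$, which is legitimate since Tate's proof of that lemma only uses the vanishing of $H^1(G',\Hom(X_2,A_1))$, a consequence of Lemmas~\ref{lem.TateX2} and~\ref{lem.H1Van}), invoke classical global class field theory for $i=1$, and unwind the Shapiro-type decomposition for $i=2$. The paper itself gives no proof, simply citing the lemma on page~717 of Tate's article; your argument is essentially the one Tate gives there.

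There is one slip in Step~3. For a place $w\in S_E$ over $u\in S$, the chosen $v'$ over $w$ need not lie in the $G'$-orbit of the $v$ you fixed over $u$; you only have $v'=\sigma v$ with $\sigma\in G$, so the formula $G'_{v'}=\sigma(G_v\cap G')\sigma^{-1}$ is not available. The clean fix is to factor through $G_{v'}$ rather than $G_v$: since $G'_{v'}\subset G_{v'}$, the $w$-component of $\Res_{G/G'}(\alpha_2)$ under the isomorphism of Lemma~\ref{lem.TateX2} for $G'$ is
\[
\Res_{G_{v'}/G'_{v'}}\bigl(\eval_{v'}\Res_{G/G_{v'}}(\alpha_2)\bigr).
\]
The inner expression equals $(i_{v'})_*\alpha(K_{v'}/F_u)$: this is Tate's definition of $\alpha_2$ at $v$, transported to $v'$ via the canonical conjugation isomorphism $H^2(G_v,A_2)\simeq H^2(G_{v'},A_2)$ (inner automorphisms act trivially, and $\sigma$ carries the direct factor $K_v^\times$ of $A_2$ isomorphically to $K_{v'}^\times$ and matches the local fundamental classes). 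Finally, restriction of the local fundamental class for $K_{v'}/F_u$ to $G'_{v'}=G(K_{v'}/E_w)$ gives that for $K_{v'}/E_w$, which is the $w$-component of $\alpha_2'$. With this adjustment your Step~3 goes through as intended.
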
  

To verify rigidity of our weak Tate-Nakayama triples we must  prove the
following lemma. 

\begin{lemma}\label{lem.TNcond2}
$H^1\bigl(G',\Hom(X_i,A_i)\bigr)=0$ for $i=1,2,3$ and every subgroup $G'$ of $G$.
\end{lemma}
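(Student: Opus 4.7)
The plan is to dispatch the three cases $i = 1, 2, 3$ in turn, the first two being immediate and the third requiring a short diagram chase. For $i = 1$, note that $\Hom(X_1, A_1) = A_1$ since $X_1 = \mathbb{Z}$, so the vanishing is a direct instance of Lemma~\ref{lem.H1Van}. For $i = 2$, letting $E = K^{G'}$, I would first check that $S_E$ inherits from $S$ the three conditions of subsection~\ref{sub.TNnot} relative to the extension $K/E$; each follows at once from the corresponding property of $S$. Then Lemma~\ref{lem.TateX2}, applied to $K/E$ with $M = A_2$ and $r = 1$, identifies $H^1(G', \Hom(X_2, A_2))$ with $\prod_{u' \in S_E} H^1(G'_{v'}, A_2)$, and each factor vanishes by Lemma~\ref{lem.H1Van} applied to the subgroup $G'_{v'} \leq G$.

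For $i = 3$, my plan is to reduce the vanishing to an injectivity statement about restriction maps. Applying $\Hom(-, A_3)$ to the short exact sequence $0 \to X_3 \to X_2 \to X_1 \to 0$, which remains exact because $X_1 \cong \mathbb{Z}$ is free, produces the short exact sequence
\[
0 \to A_3 \to \Hom(X_2, A_3) \to \Hom(X_3, A_3) \to 0.
\]
Taking $G'$-cohomology and using $H^1(G', A_3) = 0$ (Lemma~\ref{lem.H1Van}) together with $H^1(G', \Hom(X_2, A_3)) \cong \prod_{u'} H^1(G'_{v'}, A_3) = 0$ (Lemmas~\ref{lem.TateX2} and~\ref{lem.H1Van}), the group $H^1(G', \Hom(X_3, A_3))$ is identified with the kernel of the product of restriction maps
\[
\rho \colon H^2(G', A_3) \longrightarrow \prod_{u' \in S_E} H^2(G'_{v'}, A_3).
\]

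To prove $\rho$ injective, I would compare with the analogous map for $A_2$ via the short exact sequence $0 \to A_3 \to A_2 \to A_1 \to 0$: since $H^1(G', A_1) = H^1(G'_{v'}, A_1) = 0$ (Lemma~\ref{lem.H1Van}), both the map $H^2(G', A_3) \to H^2(G', A_2)$ and the product of its local analogues are injective, so commutativity of the resulting square reduces the problem to showing that $H^2(G', A_2) \to \prod_{u'} H^2(G'_{v'}, A_2)$ is injective. Writing $A_2 = \bigoplus_{u \in V_F} A_{2, u}$ with $A_{2, u} \cong \mathrm{Ind}_{G_v}^{G} X_v$ (where $X_v = K_v^\times$ for $v \in S_K$ and $X_v = \mathcal{O}_{K_v}^\times$ otherwise), Shapiro's lemma together with the vanishing of $H^n$ ($n \geq 1$) of units in unramified local extensions yields $H^n(G', A_2) = \bigoplus_{u' \in S_E} H^n(G'_{v'}, K_{v'}^\times)$. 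The key final point is that, for each $u'_0 \in S_E$, restriction from $G'$ to the decomposition subgroup $G'_{v'_0}$ is injective on the $u'_0$-summand, since its composition with the natural projection $A_{2, u'_0} \to K_{v'_0}^\times$ is precisely the Shapiro isomorphism. The main obstacle is this last Shapiro-theoretic step: one must carefully leverage the $G'$-module decomposition of $A_2$ (via $G'$-orbits on $V_K$) and track the behavior of each summand under further restriction to the decomposition subgroup.
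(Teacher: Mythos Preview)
Your proof is correct but follows a different route from the paper. The paper invokes the general rigidity criterion of Lemma~\ref{lem.rigid} from Appendix~\ref{App.A}: since each $X_i$ is of the form $\mathbb Z[S]$ or $\mathbb Z[S]_0$, that lemma reduces the vanishing of $H^1(G',\Hom(X_i,A_i))$ to the vanishing of $H^{-1}(G',\Hom(X_i,X_i))$, via the Tate--Nakayama cup-product isomorphism $H^{-1}(G',\Hom(X_i,X_i))\simeq H^1(G',\Hom(X_i,A_i))$; the degree $-1$ group is then handled by the purely combinatorial Corollary~\ref{lem.STvan} and Lemma~\ref{lem.H-10S}. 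For $i=3$ one must also feed in $H^{-1}(G',X_3)=0$, which the paper obtains from $H^1(G',A_3)=0$ by another use of the Tate--Nakayama isomorphism.

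Your argument instead stays entirely on the $A$-side: you use the long exact sequence for $\Hom(-,A_3)$ applied to $0\to X_3\to X_2\to X_1\to 0$, together with Shapiro's lemma and the local structure of $A_2$, to reduce $i=3$ to the injectivity of the total restriction map on $H^2(G',A_3)$. This is more elementary in that it never invokes the cup-product isomorphisms or the class $\mathcal C$ of Definition~\ref{def.calC}; the price is that you need the cohomological triviality of $\mathcal O_v^\times$ for unramified $v$ in degree $2$ (not just degree $1$), and your argument is tailored to this specific lemma rather than yielding the stronger statement $X_i\in\mathcal C(X_i,A_i,\alpha_i)$. The paper's approach, by contrast, sets up machinery (the class $\mathcal C$) that is reused later, e.g.\ in the proof of Lemma~\ref{lem.tildePiExists}.
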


\begin{proof}
We apply Lemma \ref{lem.rigid}. Our
goal is to establish the rigidity of $(X_i,A_i,\alpha_i)$ for $i=1,2,3$. In
the notation of that lemma,
$X_2=\mathbb Z[S_K]$, 
$X_3=\mathbb Z[S_K]_0$, and of course $X_1$ is of the form
$\mathbb Z[T]$ with
$T$ any set having exactly one element. When $i=1,2$ Lemma \ref{lem.rigid}
shows directly that $(X_i,A_i,\alpha_i)$ is rigid. When $i=3$, we need to
check that $H^{-1}(G',X_3)$ vanishes for every subgroup $G'$ of $G$. In
view of the isomorphism \eqref{eq.GlobTNisom} it is the same to check the
vanishing of $H^1(G',A_3)$, and this was done in Lemma \ref{lem.H1Van}. 
\end{proof}

\begin{remark}
It follows from Lemma \ref{lem.TateRes} that
$\Cor_{E/F}(\alpha_i(K/E))=[E:F]\alpha_i(K/F)$ for $i=1,2,3$, but we will
have no occasion to use this.  
\end{remark}

\subsection{The morphisms $(X_3,A_3,\alpha_3) \to (X_2,A_2,\alpha_2) \to
(X_1,A_1,\alpha_1)$}
It is evident  that 
\begin{itemize}
\item $(b',a')$ is a morphism from $(X_3,A_3,\alpha_3)$ to
$(X_2,A_2,\alpha_2)$, and 
\item  $(b,a)$ is a morphism from $(X_2,A_2,\alpha_2)$ to
$(X_1,A_1,\alpha_1)$.
\end{itemize} 
Here the notion of morphism is the one in Definition 
\ref{def.TNmor}. 

\subsection{Main global result}\label{sub.MainGlobal}
 We now apply the general results in   
section \ref{sec.absTN} to the three Tate-Nakayama triples
$(X_i,A_i,\alpha_i)$ and the
$G$-module $M$ obtained as the cocharacter group of a torus $T$ over $F$
that is split by $K$. We choose
an extension $\mathcal E_i$ corresponding to the class $\alpha_i \in
H^2(G,\Hom(X_i,A_i))$.  Observe that $\mathcal E_1$ is a global Weil
group for $K/F$. As in section \ref{sec.absTN} we form the group 
\[
H^1_{\alg}(\mathcal E_i,M \otimes A_i):=H^1_{Y_i}(\mathcal E_i,M
\otimes A_i) 
\]
with $Y_i=X_*(T) \otimes X_i$.

Our general results on Tate-Nakayama triples then yield a commutative
diagram 
\begin{equation} \label{CD.main}
\begin{CD}
(X_*(T) \otimes X_3)_G @>>> (X_*(T) \otimes X_2)_G @>>> X_*(T) _G
\\
 @VVV @VVV @VVV \\
H^1_{\alg}(\mathcal E_3,M \otimes A_3) @>>> H^1_{\alg}(\mathcal
E_2,M \otimes A_2) @>>> H^1_{\alg}(\mathcal E_1,M \otimes A_1) 
\end{CD}
\end{equation} 
in which the vertical arrows are the isomorphisms $c$ in Definition
\ref{def.c}, and the bottom horizontal maps are obtained by naturality 
(see subsection \ref{sub.BetterNat}) from
the morphisms of Tate-Nakayama triples that we  discussed in the previous
subsection. 

\subsection{Discussion of the top row in \eqref{CD.main}}
The first two groups in the top row of \eqref{CD.main} can be better
understood by remembering that the exact sequence (X) was defined to be 
\[
0 \to X_3 \to \bigoplus_{v \in S_K}\mathbb Z \to \mathbb Z \to 0. 
\]
Tensoring with  $X_*(T)$ preserves exactness, and
yields 
\[
0 \to X_*(T)\otimes X_3 \to \bigoplus_{v \in S_K} X_*(T) \to X_*(T) \to 0. 
\] 
Taking coinvariants for $G$ is right exact, so the top row of
\eqref{CD.main} is part of the exact sequence 
\begin{equation}\label{eq.Homol}
 \bigl(X_*(T)\otimes X_3\bigr)_G \to \bigoplus_{u \in S} X_*(T)_{G_v} \to
X_*(T)_G
\to 0, 
\end{equation} 
where   $G_v$ is again the decomposition
group of a place $v$ of $K$ that lies over $u$. 

\begin{lemma}
The kernel of the map at the left end of \eqref{eq.Homol} is canonically
isomorphic to $\ker[H^1(G,M \otimes A_3) \to H^1(G,M \otimes A_2)]$. 
\end{lemma}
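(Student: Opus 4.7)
The plan is to deduce the lemma from the long exact Tate-cohomology sequence associated to (X), combined with the Nakayama isomorphism \eqref{eq.NakIso}. Since $M = X_*(T)$ is $\mathbb{Z}$-free and (X) is exact, tensoring with $M$ produces a short exact sequence
$$0 \to M \otimes X_3 \to M \otimes X_2 \to M \otimes X_1 \to 0$$
of $G$-modules, from which \eqref{eq.Homol} is obtained by the right-exact functor of $G$-coinvariants.

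The first step is to contemplate the commutative diagram with exact rows
$$\begin{CD}
0 @>>> H^{-1}(G, M \otimes X_3) @>>> (M \otimes X_3)_G @>N>> (M \otimes X_3)^G \\
@. @VVV @VVV @VVV \\
0 @>>> H^{-1}(G, M \otimes X_2) @>>> (M \otimes X_2)_G @>N>> (M \otimes X_2)^G
\end{CD}$$
in which all vertical arrows are induced by $b'\colon X_3 \hookrightarrow X_2$. The rightmost vertical arrow is injective, since $\id_M \otimes b'$ is injective ($M$ is $\mathbb{Z}$-free) and the functor of $G$-invariants is left exact. A short diagram chase then exhibits the leftmost vertical inclusion as inducing a bijection
$$\ker[H^{-1}(G, M \otimes X_3) \to H^{-1}(G, M \otimes X_2)] \xrightarrow{\sim} \ker[(M \otimes X_3)_G \to (M \otimes X_2)_G];$$
indeed, given $y$ in the right-hand kernel, $N(y) \in (M \otimes X_3)^G$ vanishes in $(M \otimes X_2)^G$, hence $N(y) = 0$ by injectivity of the third column, so $y$ already lies in $H^{-1}(G, M \otimes X_3)$.

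Finally, the Nakayama isomorphism \eqref{eq.NakIso} with $r = -1$ gives canonical isomorphisms $H^{-1}(G, M \otimes X_i) \xrightarrow{\sim} H^1(G, M \otimes A_i)$ for $i = 2, 3$, both given by cup product with $\alpha_i$. The identity $a'(\alpha_3) = b'(\alpha_2)$ built into the morphism $(b', a')$ of Tate-Nakayama triples (see Definition \ref{def.TNmor}), together with the bifunctoriality of cup product, forces the square
$$\begin{CD}
H^{-1}(G, M \otimes X_3) @>{\simeq}>> H^1(G, M \otimes A_3) \\
@VVV @VVV \\
H^{-1}(G, M \otimes X_2) @>{\simeq}>> H^1(G, M \otimes A_2)
\end{CD}$$
to commute, matching the two kernels. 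The only step requiring real attention is this last compatibility, but it is a purely formal consequence of the standard naturality of cup product in both arguments.
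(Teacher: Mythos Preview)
Your proof is correct and takes a route that is different from, though closely parallel to, the paper's. The paper first uses the vertical isomorphisms $c$ in diagram \eqref{CD.main} to transport the kernel in question to $\ker[H^1_{\alg}(\mathcal E_3,M\otimes A_3)\to H^1_{\alg}(\mathcal E_2,M\otimes A_2)]$, and then invokes the inflation--restriction sequences of subsection \ref{sub.InfRes}, together with the injectivity of $Y_3^G\to Y_2^G$, to cut this down to $\ker[H^1(G,M\otimes A_3)\to H^1(G,M\otimes A_2)]$. You bypass the $H^1_{\alg}$ groups entirely, working instead with the sequences $0\to H^{-1}(G,M\otimes X_i)\to (M\otimes X_i)_G\xrightarrow{N}(M\otimes X_i)^G$ and then applying the Nakayama isomorphisms directly, with the commutativity of the final square supplied by the morphism $(b',a')$ of Tate--Nakayama triples. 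Via Lemma \ref{lem.AbTN2} (and the relation $rc=N$) the two exact sequences are identified term by term, so the two arguments are really the same diagram chase in different coordinates; your version is self-contained in Tate cohomology, while the paper's version shows how the $H^1_{\alg}$ formalism packages the same information.
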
 

\begin{proof}
The vertical isomorphisms in the commutative diagram \eqref{CD.main} yield a
canonical isomorphism from $\ker[\bigl(X_*(T)\otimes X_3\bigr)_G \to
\bigoplus_{u} X_*(T)_{G_v}]$ to $\ker[H^1_{\alg}(\mathcal E_3,M \otimes A_3)
\to H^1_{\alg}(\mathcal E_2,M \otimes A_2)]$. One sees that this last
kernel coincides with $\ker[H^1(G,M \otimes A_3) \to H^1(G,M \otimes A_2)]$
by using  the inflation-restriction sequences (see subsection
\ref{sub.InfRes}) for both $H^1_{\alg}(\mathcal E_3,M \otimes A_3)$ and
$H^1_{\alg}(\mathcal E_2,M \otimes A_2)$, bearing in mind that $Y_3^G \to
Y_2^G$ is an injective map. 
\end{proof} 

\begin{corollary}\label{cor.CoinX}
The result of applying the functor of $G$-coinvariants to the short exact
sequence $(X)$ is a short exact sequence 
\begin{equation}
0 \to (X_3)_G \to (X_2)_G \to (X_1)_G \to 0.
\end{equation}
\end{corollary}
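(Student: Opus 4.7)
The plan is to specialize the preceding lemma to the torus $T=\mathbb G_m$. Right exactness of the $G$-coinvariants functor, applied to the short exact sequence $(X)$, immediately yields exactness of $(X_3)_G \to (X_2)_G \to (X_1)_G \to 0$, so the only point at issue is injectivity of the leftmost map $(X_3)_G \hookrightarrow (X_2)_G$.

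To handle this, I would take $T=\mathbb G_m$ in the preceding lemma, which is trivially split by $K$. Then $M=X_*(\mathbb G_m)=\mathbb Z$ with trivial $G$-action, so $M\otimes A_i=A_i$ for each $i$. The usual Shapiro/projection-formula identification
\[
\bigl(X_*(T)\otimes \mathbb Z[S_K]\bigr)_G \;\simeq\; \bigoplus_{u\in S} X_*(T)_{G_v}
\]
specializes, for $T=\mathbb G_m$, to a canonical isomorphism $(X_2)_G \simeq \bigoplus_{u\in S}\mathbb Z$, and under this identification the second arrow in \eqref{eq.Homol} is precisely the map $(X_2)_G \to (X_1)_G=\mathbb Z$ induced by $b:X_2\to X_1$. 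Consequently the preceding lemma gives a canonical isomorphism
\[
\ker\!\bigl[(X_3)_G \to (X_2)_G\bigr] \;\simeq\; \ker\!\bigl[H^1(G,A_3)\to H^1(G,A_2)\bigr].
\]

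By Lemma \ref{lem.H1Van}, both $H^1(G,A_3)$ and $H^1(G,A_2)$ vanish, so the right-hand kernel is trivially zero, which gives the required injectivity. The only real bookkeeping step — and the place where one must be a bit careful — is checking that after the Shapiro identification the map coming from \eqref{eq.Homol} agrees with the map $(X_2)_G\to(X_1)_G$ induced by $b$; this is straightforward from how $b$ and the Shapiro isomorphism are defined (both send the class of $v\in S_K$ to the generator $1$ of the $u$-summand $\mathbb Z$, where $u=f_{K/F}(v)$). I do not expect any other obstacle: all the hard work has been absorbed into the preceding lemma and into the vanishing result of Lemma \ref{lem.H1Van}.
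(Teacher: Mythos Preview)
Your proof is correct and follows essentially the same approach as the paper: specialize the preceding lemma to $T=\mathbb G_m$ (so $M=\mathbb Z$), and then invoke the vanishing of $H^1(G,A_3)$ from Lemma~\ref{lem.H1Van}. Your additional bookkeeping about the Shapiro identification and the compatibility with the map induced by $b$ is a welcome elaboration, but the paper leaves this implicit.
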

\begin{proof}
This can be viewed as the special case of the previous lemma in which
$T=\mathbb G_m$ and $M=\mathbb Z$. The vanishing of $H^1(G,M\otimes
A_3)=H^1(G,A_3)$ is part of Lemma \ref{lem.H1Van}.  
\end{proof}

\subsection{The special case in which $S=V_F$} 
The special case in which $S=V_F$ is especially important. The case in
which $S$ is finite (but sufficiently large) is useful as well, but plays a
more technical role. When $S=V_F$ it seems more natural to express things
differently.  For
any torus $T$ we then have 
$M\otimes A_3=T(K)$, $M \otimes A_2=T(\mathbb A_K)$ and $M \otimes
A_1=T(\mathbb A_K)/T(K)$ (with $M=X_*(T)$, as usual). 

\subsection{Restriction} \label{sub.ResTN2}
It follows from Lemma \ref{lem.TateRes} and subsection \ref{sub.ResTN}
that there are natural restriction maps 
\begin{equation}
\Res_{G/G'}: H^1_{\alg}(\mathcal E_i,M\otimes A_i) \to H^1_{\alg}(\mathcal
E'_i,M \otimes A_i)
\end{equation}
for $i=1,2,3$ and any subgroup $G'$ of $G$. Here $\mathcal E'_i$ is the
analog for $K/E$ of $\mathcal E_i$ for $K/F$, where $E$ is the fixed field
of
$G'$ on
$K$. 
  
\subsection{Discussion of the localization map (global to adelic)}
\label{sub.LocMapTori} 

The discussion of naturality   in subsection \ref{sub.BetterNat} yielded 
 the maps in the bottom row of diagram \eqref{CD.main}. In
particular, when $S=V_F$ we have constructed a localization map (global to
adelic)
\[
H^1_{\alg}(\mathcal E_3,T(K)) \to H^1_{\alg}(\mathcal E_2,T(\mathbb A_K)).
\] 
 We will see later that there is a more direct way to define localization
maps (from global all the way to local), and this can even be done in a
more general situation in which the torus $T$ is replaced by a linear
algebraic group  over $F$. 

\subsection{Preliminary discussion of inflation} The canonical isomorphism
$c$ from
$(X_*(T) \otimes X_3)_G$ to $H^1_{\alg}(\mathcal E_3,T(K))$ is a
satisfying generalization of the Tate-Nakayama isomorphism. The reader may
be troubled, however, that both the source and target of this isomorphism 
seem to depend on the choice  of $K$, which can be any finite Galois
extension of
$F$ that splits $T$. Fortunately, the choice of $K$ is unimportant, in the
following  sense. Let $K'$ be a finite Galois extension of $F$ such
that
$K'
\supset K$. Eventually we will see that  there is a commutative diagram 
\begin{equation}
\begin{CD}
(X_*(T) \otimes X_3)_G @>c>> H^1_{\alg}(\mathcal E_3,T(K)) \\
@A{\simeq}AA @V{\simeq}VV \\
(X_*(T) \otimes X'_3)_{G'} @>c'>> H^1_{\alg}(\mathcal E'_3,T(K'))
\end{CD}
\end{equation}
in which the bottom row is the analog for $K'$ of the top row, and the
vertical arrows are natural isomorphisms that we will define later.

\section{Localization $H^1_{\alg}(\mathcal E_3(K/F),G(K)) \to
H^1_{\alg}(\mathcal E(K_v/F_u),G(K_v))$}\label{sec.Loc} 

\subsection{Notation}\label{sub.LocNotation}
For the most part we retain the notation of section
\ref{sec.GTN}. There are a few differences however. In section \ref{sec.GTN}
we worked with an arbitrary set $S$ of places of $F$ satisfying the
conditions imposed in subsection \ref{sub.TNnot}, but in this section we
will keep things simple by considering only the case in which $S$ is the
set $V_F$ of all places of $F$. Another difference is that we now denote
the groups $\mathcal E_i$ ($i=1,2,3$) of subsection \ref{sub.MainGlobal} by
$\mathcal E_i(K/F)$. Moreover, we fix a place $u$ of $F$ and a place $v$ of
$K$ above $u$, and we now denote the local group
$\mathcal E$ (see section
\ref{sub.TNlc}) attached to $K_v/F_u$   by $\mathcal
E(K_v/F_u)$. Finally, the Galois group of $K/F$ (resp., $K_v/F_u$) will be
denoted by $G(K/F)$ (resp., $G(K_v/F_u)$).  

We need some additional notation. We write $E$ for the fixed field on $K$
of the decomposition group of $v$, and we write $\tilde u$ for the unique
place of $E$ under $v$. Then $v$ is the unique place of $K$ over $\tilde
u$. Moreover, $E_{\tilde u}=F_u$ and $G(K/E)=G(K_v/F_u)$. We write $\mathbb
A=\mathbb A_F$ for the adele ring of $F$. 

In the terminology of section \ref{sec.GalGerb} the extension $\mathcal
E(K_v/F_u)$ is a Galois gerb for $K_v/F_u$, bound by $\mathbb G_m$.
Similarly, the extension $\mathcal E_3(K/F)$ is a Galois gerb for $K/F$,
bound by the protorus $\mathbb T_{K/F}$ over $F$ whose character group is
$X_3$.  We write $\tilde{\mathbb T}_{K/F}$ for the protorus whose character
group is $X_2$. 
From the short exact sequence 
\[
0 \to X_3 \xrightarrow{b'} X_2 \xrightarrow{b}  X_1 \to 0 
\] 
of Galois modules, we obtain the short exact sequence  
\begin{equation}
1 \to \mathbb G_m \xrightarrow{b} \tilde{\mathbb T}_{K/F} \xrightarrow{b'} 
\mathbb T_{K/F} \to 1
\end{equation}
 of protori.

For every place $w$ of $K$ there are homomorphisms $\lambda_w:\mathbb Z \to
X_2$ and $\mu_w:X_2 \to \mathbb Z$ defined by $\lambda_w(n)=nw$ and
$\mu_w(\sum_{v \in V_K} n_v v)=n_w$. Dually, we have homomorphisms 
\[
\mathbb G_m \xrightarrow{\mu_w} \tilde{\mathbb T}_{K/F}
\xrightarrow{\lambda_w} \mathbb G_m,
\]
defined over the fixed field of the decomposition group of $w$. In
particular, $\mu_v$ and $\lambda_v$ are defined over $E$. 
We denote by $\mu_v'$ the composition  
\begin{equation}\label{eq.DefMu'}
\mu_v':\mathbb G_m \xrightarrow{\mu_v} \tilde{\mathbb T}_{K/F}
\xrightarrow{b'} \mathbb T_{K/F}. 
\end{equation}

\subsection{Goal of this section} 
In this section $G$ denotes a linear algebraic group over $F$. 
As in section \ref{sec.GalGerb} we may then consider the pointed set 
$
H^1_{\alg}(\mathcal E_3(K/F),G(K))
$
as well as its local analog
$H^1_{\alg}(\mathcal
E(K_v/F_u),G(K_v))
$. 
The goal of this section is to define a localization map 
\begin{equation}\label{eq.Loc} 
\ell^F_u:H^1_{\alg}(\mathcal E_3(K/F),G(K)) \to
H^1_{\alg}(\mathcal E(K_v/F_u),G(K_v)).
\end{equation}  
This map will be defined as the composition of two other maps. The first is
very easy to define, the second a little less so.

\subsection{Construction of the first map} As in Example 
\ref{Ex.AbsLoc}, there is a localization map 
\begin{equation}\label{eq.FirstMap}
\Loc:H^1_{\alg}(\mathcal E_3(K/F),G(K)) \to H^1_{\alg}(\mathcal
E_3^v(K/F),G(K_v)),
\end{equation}
where $\mathcal E_3^v(K/F)$ is obtained from $\mathcal E_3(K/F)$ by first
pulling back along $G(K_v/F_u) \hookrightarrow G(K/F)$ and then pushing
forward using $\mathbb T_{K/F}(K) \to \mathbb T_{K/F}(K_v)$. 
The arrow \eqref{eq.FirstMap} is the first of the two maps we need to
define. 

\subsection{Construction of the second map} The second map we need to define
is of the type considered in subsection \ref{sub.ChBnd}; it involves a change
in band. The Galois gerb $\mathcal E_3^v(K/F)$ for $K_v/F_u$ is bound by
the protorus $\mathbb T_{K/F}$, but now viewed  over $F_u$
rather than $F$. The local Galois gerb $\mathcal E(K_v/F_u)$ is bound by
$\mathbb G_m$; the associated cohomology class is the fundamental 
class $\alpha(K_v/F_u)$.  We have already defined (see \eqref{eq.DefMu'}) a canonical
$F_u$-homomorphism  
$\mu_v':\mathbb G_m \to \mathbb T_{K/F}$. In order to invoke subsection
\ref{sub.ChBnd} we need to extend $\mu_v'$ to $\tilde\mu_v'$, as in the next
lemma.

\begin{lemma}\label{lem.LocWorksWell}
\hfill 
\begin{enumerate}
\item  The groups $H^1(G(K_v/F_u),{\mathbb T}_{K/F}(K_v))$ and  
$H^1(G(K_v/F_u),\tilde{\mathbb T}_{K/F}(K_v))$ vanish.

\item There 
exists a homomorphism $\tilde\mu_v'$ making the diagram 
\begin{equation}
\begin{CD}
1 @>>> \mathbb G_m(K_v) @>>> \mathcal E(K_v/F_u)@>>> G(K_v/F_u) @>>> 1 \\ 
@. @V{\mu_v'}VV @V{\tilde\mu_v'}VV @| @.\\
1 @>>> \mathbb T_{K/F}(K_v) @>>> \mathcal E^v_3(K/F)@>>> G(K_v/F_u) @>>> 1
\end{CD}
\end{equation}
commute, and $\tilde\mu_v'$ is unique up to conjugation by $\mathbb
T_{K/F}(K_v)$.
\end{enumerate}
 
\end{lemma}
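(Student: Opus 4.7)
The plan is to prove parts (1) and (2) in order, since the vanishing in (1) is what yields the uniqueness statement in (2), while the existence in (2) is a cohomological computation routed through Tate's construction of $\alpha_3$.

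For part (1), the key observation is that the place $v$ is fixed by its decomposition group $G_v := G(K_v/F_u)$, so $v$ is a $G_v$-fixed element of $X_2 = \mathbb Z[V_K]$. This produces a $G_v$-equivariant splitting $X_2 = X_3 \oplus \mathbb Z \cdot v$, which dualizes to a $G_v$-equivariant splitting $\tilde{\mathbb T}_{K/F}(K_v) = \mathbb T_{K/F}(K_v) \oplus K_v^\times$. It therefore suffices to show $H^1(G_v, \tilde{\mathbb T}_{K/F}(K_v)) = 0$; Hilbert 90 then kills the $K_v^\times$ summand and forces $H^1(G_v, \mathbb T_{K/F}(K_v)) = 0$ as well. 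To obtain the vanishing for $\tilde{\mathbb T}_{K/F}$, decompose $X_2$ as a $G_v$-module into a direct sum of permutation modules $\mathbb Z[G_v/H_w]$ indexed by the $G_v$-orbits on $V_K$; dually, $\tilde{\mathbb T}_{K/F}(K_v)$ becomes a direct product of induced modules, each with trivial $H^1$ by Shapiro's lemma together with Hilbert 90. Cohomology commutes with this (possibly infinite) product because $G_v$ is finite.

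For part (2), uniqueness is a direct consequence of part (1): any two lifts $\tilde\mu_v', \tilde\mu_v''$ as in the diagram agree on $\mathbb G_m(K_v)$, so their pointwise quotient descends to a $1$-cocycle of $G_v$ in $\mathbb T_{K/F}(K_v)$. By part (1) this cocycle is a coboundary $g \mapsto g(t)t^{-1}$ for some $t \in \mathbb T_{K/F}(K_v)$, and a short calculation in the extension yields $\tilde\mu_v'' = \Int(t^{-1}) \circ \tilde\mu_v'$.

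The main obstacle is existence, which amounts to the identity $(\mu_v')_*\alpha(K_v/F_u) = j_*\Res_{G/G_v}(\alpha_3)$ in $H^2(G_v, \mathbb T_{K/F}(K_v))$, where $j : K^\times \hookrightarrow K_v^\times$ is the natural inclusion. The plan is to route the comparison through the middle protorus $\tilde{\mathbb T}_{K/F}$, using the relation $a'(\alpha_3) = b'(\alpha_2)$ in $H^2(G, \Hom(X_3, A_2))$; this holds because both sides equal the image of the single class $\alpha \in H^2(G, \Hom(X,A))$ constructed via Lemma \ref{lem.cart}. Applying $\pi_{v,*} \circ \Res_{G/G_v}$ to this equality, the left-hand side becomes $j_*\Res(\alpha_3)$ because $\pi_v \circ a' = j$, while the right-hand side becomes $b'_*\bigl(\pi_{v,*}\Res(\alpha_2)\bigr)$. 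The crux is therefore the identification $\pi_{v,*}\Res(\alpha_2) = (\mu_v)_*\alpha(K_v/F_u)$ in $H^2(G_v, \tilde{\mathbb T}_{K/F}(K_v))$. For this, apply Tate's Lemma \ref{lem.TateX2} to the $G_v$-action on $X_2$ to decompose the target as $\prod_{\mathcal O} H^2(G_v \cap G_{w_\mathcal O}, K_v^\times)$ over $G_v$-orbits on $V_K$: Tate's characterization of $\alpha_2$ gives the $\{v\}$-component as $\alpha(K_v/F_u)$, while for every other orbit the component vanishes because $\pi_v \circ i_w$ is trivial for $w \neq v$. This matches the corresponding decomposition of $(\mu_v)_*\alpha(K_v/F_u)$. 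Applying $b'_*$ and using $\mu_v' = b' \circ \mu_v$ completes the identification.
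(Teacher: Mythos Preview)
Your proof is correct and follows essentially the same approach as the paper's. Both arguments split the short exact sequence via $\lambda_v$ to reduce part (1) to the vanishing of $H^1(G_v,\tilde{\mathbb T}_{K/F}(K_v))$, which is handled by the Shapiro-type decomposition of $\Hom(X_2,-)$ together with Hilbert 90; and both reduce the existence in part (2) to the identity $\mu_v(\alpha(K_v/F_u))=\pi_v\alpha_2$ in $H^2(G_v,\tilde{\mathbb T}_{K/F}(K_v))$, verified componentwise via Lemma~\ref{lem.TateX2} and the relation $b'\alpha_2=a'\alpha_3$. The only cosmetic difference is that the paper first replaces $F$ by the decomposition field $E$ so that $G(K/F)=G_v$ and the $G$-orbits on $V_K$ coincide with the $G_v$-orbits, whereas you work directly with $\Res_{G/G_v}$; the computations are otherwise identical.
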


\begin{proof}
(1) 
When we view 
$
0 \to X_3 \xrightarrow{b'} X_2 \xrightarrow{b} X_1 \to 0
$ 
as a short exact sequence of $G(K_v/F_u)$-modules, it has a canonical
splitting, namely the  homomorphism $\lambda_v$ defined in subsection 
\ref{sub.LocNotation}.

From this we conclude that 
$H^1(G(K_v/F_u),\mathbb T_{K/F}(K_v))$ is a direct summand of  
$H^1(G(K_v/F_u),\tilde{\mathbb T}_{K/F}(K_v))$, a group that vanishes by
Lemma 
\ref{lem.TateX2} and Hilbert's Theorem~90. 

(2) The uniqueness assertion regarding  $\tilde \mu_v'$
follows from part (1) of this lemma. In proving  the existence
statement it is harmless to replace $F$ by $E$, and so we may assume that 
$F=E$ (and hence that $G(K/F)=G(K_v/F_u)$). This is convenient notationally,
because  we may then write
$\alpha_i$ without having to specify whether we are referring to $K/F$ or
$K/E$. We do this for the rest of the proof, and, because the
linear algebraic group is irrelevant at the moment, we temporarily 
 revert to denoting $G(K/F)$ by $G$,
and the decomposition group of a place $w \in V_K$ by $G_w$. 

The existence of $\tilde \mu_v'$ is equivalent to the equality 
\begin{equation}\label{eq.MuClass79} 
\mu'_v(\alpha(K_v/F_u))=\pi_va'\alpha_3, 
\end{equation} 
where $\pi_v$ is (induced by) the projection of $\mathbb A_K$ on its direct factor
$K_v$. 
We claim that \eqref{eq.MuClass79} is a consequence of 
\begin{equation}\label{eq.MuClass} 
\mu_v(\alpha(K_v/F_u))=\pi_v\alpha_2. 
\end{equation}  
Indeed, one obtains the first equation from the second by applying $b'$ to both sides, 
bearing in mind that $b'$ commutes with $\pi_v$ and that $b'\alpha_2=a'\alpha_3$.

It remains to prove \eqref{eq.MuClass}. 
 By Lemma
\ref{lem.TateX2}, in order to show that
$\pi_v\alpha_2$ and
$\mu_v(\alpha(K_v/F_u))$ are equal, it suffices to show that, for every
place $w$ of $K$, they have the same image under 
\begin{equation*} 
\rho_w:H^2(G,\tilde {\mathbb T}_{K/F}(K_v))  \xrightarrow{\Res_{G/G_w}}
H^2(G_w,\tilde {\mathbb T}_{K/F}(K_v)) \xrightarrow{\lambda_w}
H^2(G_w,K_v^\times). 
\end{equation*}
There is a similarly defined map $\rho_w$ with $\mathbb A_K$ replacing
$K_v$, as well as a  commutative diagram 
\begin{equation*}
\begin{CD}
H^2(G,\tilde {\mathbb T}_{K/F}(\mathbb A_K))  @>{\rho_w}>>
H^2(G_w,\mathbb A_K^{\times})\\
@V{\pi_v}VV @V{\pi_v}VV \\
H^2(G,\tilde {\mathbb T}_{K/F}(K_v)) 
@>{\rho_w}>> H^2(G_w,K_v^\times),
\end{CD}
\end{equation*}
so we conclude that $\rho_w\pi_v\alpha_2=\pi_v\rho_w\alpha_2$.

 It follows  (see the discussion at  the bottom of page 714 in \cite{T}) from the 
definition of $\alpha_2$ that $\rho_w\alpha_2=i_w(\alpha(K_w/F_w))$,
where
$F_w$ is the completion of $F$ at the unique place of $F$ lying under $w$,
and $i_w$ is the obvious inclusion of $K_w^\times$ as a direct factor of
$\mathbb A_K^\times$.  Therefore 
\[
\rho_w\pi_v\alpha_2=\pi_v i_w \alpha(K_w/F_w)=
\begin{cases}
\alpha(K_v/F_u) &\text{ if $w= v$,} \\ 
0 &\text{ if $w\ne v$.}
\end{cases}
\]
Furthermore 
\[
\rho_w \mu_v(\alpha(K_v/F_u))=
\begin{cases}
\alpha(K_v/F_u) &\text{ if $w= v$,} \\ 
0 &\text{ if $w\ne v$,}
\end{cases}
\]
because $\lambda_w\mu_v$ is the identity map when $w=v$ and is $0$
otherwise. This concludes the proof.  
\end{proof}

We use the homomorphism  $\tilde \mu_v'$ in the lemma to obtain 
\begin{equation}\label{eq.SecondMap}
(\tilde\mu_v')^*:H^1_{\alg}(\mathcal E_3^v(K/F),G(K_v)) \to
H^1_{\alg}(\mathcal E(K_v/F_u),G(K_v))
\end{equation}
as in subsection \ref{sub.ChBnd}.

\subsection{End of the definition of the localization map $\ell^F_u$} 
We now define the localization map \eqref{eq.Loc} to be the
composition of \eqref{eq.FirstMap} and \eqref{eq.SecondMap}. 
\begin{remark} 
It follows immediately from the equality \eqref{eq.MuClass} that there
exists a homomorphism $\tilde\mu_v$ making the diagram 
\begin{equation}
\begin{CD}
1 @>>> \mathbb G_m(K_v) @>>> \mathcal E(K_v/F_u)@>>> G(K_v/F_u) @>>> 1 \\ 
@. @V{\mu_v}VV @V{\tilde\mu_v}VV @| @.\\
1 @>>> \tilde{\mathbb T}_{K/F}(K_v) @>>> \mathcal E^v_2(K/E)@>>> G(K/E)
@>>> 1
\end{CD}
\end{equation}
commute, where the bottom row is obtained by  pushforward from 
the extension $\mathcal E_2(K/E)$ of $G(K/E)$ by $\tilde{\mathbb
T}_{K/F}(\mathbb A_K)$.  Moreover, by the first part of the lemma, 
$\tilde\mu_v$ is unique up to conjugation by
$\tilde{\mathbb T}_{K/F}(K_v)$. Using $\tilde\mu_v$ one can easily construct
a localization map 
\begin{equation}\label{AdelLoc}
H^1_{\alg}(\mathcal E_2(K/F),G(\mathbb A_K)) \to H^1_{\alg}(\mathcal
E(K_v/F_u),G(K_v)) 
\end{equation}
once one takes the trouble to define the set $H^1_{\alg}(\mathcal
E_2(K/F),G(\mathbb A_K))$. Then one can go on to construct a commutative
diagram 
\begin{equation}
\begin{CD}
H^1_{\alg}(\mathcal E_3(K/F),G(K)) @>>>H^1_{\alg}(\mathcal
E_2(K/F),G(\mathbb A_K)) \\
@| @V{\eqref{AdelLoc}}VV \\
H^1_{\alg}(\mathcal E_3(K/F),G(K)) @>{\eqref{eq.Loc}}>> H^1_{\alg}(\mathcal
E(K_v/F_u),G(K_v)).
\end{CD}
\end{equation}
We omit the details, as we will not make use of this diagram. 
\end{remark}

\subsection{Compatibility of localization with the Newton map} 
 The next result gives the compatibility
between  localization and the Newton map 
\eqref{eq.NuCpt}.  
\begin{lemma} \label{lem.NewtLocGlob}
Let $G$ be any linear algebraic group over $F$. Then 
the diagram 
\begin{equation}
\begin{CD}
H^1_{\alg}(\mathcal E_3(K/F),G(K)) @>{\eqref{eq.Loc}}>>
H^1_{\alg}(\mathcal E(K_v/F_u),G(K_v)) \\
@V{\eqref{eq.NuCpt}}VV @V{\eqref{eq.NuCpt}}VV \\
[\Hom_K(\mathbb T_{K/F},G)/G(K)]^{G(K/F)} @>{\mu_v'}>> [\Hom_{K_v}(\mathbb
G_m,G)/G(K_v)]^{G(K_v/F_u)}
\end{CD}
\end{equation}
commutes.
\end{lemma}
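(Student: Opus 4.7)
My plan is to unwind the definition of the localization map $\ell^F_u$ on cocycles and observe that its effect on the Newton component is precisely precomposition with $\mu_v'$. Recall that $\ell^F_u$ is defined as the composition
\[
H^1_{\alg}(\mathcal E_3(K/F),G(K)) \xrightarrow{\Loc} H^1_{\alg}(\mathcal E_3^v(K/F),G(K_v)) \xrightarrow{(\tilde\mu_v')^*} H^1_{\alg}(\mathcal E(K_v/F_u),G(K_v)),
\]
where the first arrow comes from Example \ref{Ex.AbsLoc} (the framework of subsection \ref{sub.ChGalExt}), and the second arrow is the change-of-band map of subsection \ref{sub.ChBnd} associated to the pair $(\mu_v',\tilde\mu_v')$ furnished by Lemma \ref{lem.LocWorksWell}(2).

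At the level of cocycles, the recipe of subsection \ref{sub.ChGalExt} sends $(\nu,x)$ to $(\nu, x')$: the Newton component $\nu \in \Hom_K(\mathbb T_{K/F},G)$ is left unchanged, now viewed over $K_v$ via the inclusion $K \hookrightarrow K_v$, while the abstract $1$-cocycle part is altered by the prescribed formula $x'_{w'} = \nu(d) x_w$. The recipe of subsection \ref{sub.ChBnd} then sends $(\nu,x')$ to $(\nu \circ \mu_v', x' \circ \tilde\mu_v')$. Thus the composition transports $(\nu,x)$ to an algebraic $1$-cocycle whose Newton component is $\nu \circ \mu_v' \in \Hom_{K_v}(\mathbb G_m, G)$.

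Tracing the other way around the square, the Newton map \eqref{eq.NuCpt} sends the class of $(\nu,x)$ to the class of $\nu$ in $[\Hom_K(\mathbb T_{K/F},G)/G(K)]^{G(K/F)}$, and the bottom horizontal arrow labelled $\mu_v'$ is by construction precomposition with $\mu_v'$, producing the class of $\nu \circ \mu_v'$. These two outputs agree, and the square commutes. The argument requires nothing beyond chasing the definitions; in particular the explicit formula for $x'$ plays no role, and the non-uniqueness of $\tilde\mu_v'$ noted in Lemma \ref{lem.LocWorksWell}(2) is absorbed by passing to $G(K_v)$-conjugacy classes. The only mildly delicate point, and the reason I spelled out both cocycle-level recipes above, is keeping straight the direction of composition in the two constructions (the band map $\mu_v'$ enters on the left of $\nu$, since the Newton component is contravariant in $D$).
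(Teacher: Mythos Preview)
Your proof is correct and is precisely the definition-chase the paper has in mind; the paper's own proof consists of the single word ``Easy.'' Your explicit tracking of the Newton component through the two constituent maps of $\ell^F_u$ (unchanged under the \ref{sub.ChGalExt} step, then precomposed with $\mu_v'$ under the \ref{sub.ChBnd} step) is exactly the intended verification. One trivial wording slip: in the composition $\nu \circ \mu_v'$ the homomorphism $\mu_v'$ appears to the \emph{right} of $\nu$, not the left, though your meaning (that $\mu_v'$ is applied first) is clear.
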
 
\begin{proof}
Easy.  
\end{proof} 

\subsection{A commutative square involving the localization map for tori}\label{sub.LocForTorComm}
Now consider an $F$-torus $T$ split by $K$, and write $M$ for $X_*(T)$. 
Then form the square 
\begin{equation}\label{CD.LocKappaT}
\begin{CD}
(M \otimes X_3)_{G(K/F)} @>>> M_{G(K_v/F_u)}\\
@VcV{\simeq}V @VcV{\simeq}V \\
H^1_{\alg}(\mathcal E_3(K/F),T(K)) @>{\eqref{eq.Loc}}>>
H^1_{\alg}(\mathcal E(K_v/F_u),T(K_v)) 
\end{CD}
\end{equation} 
The top arrow is obtained as follows. We begin with  the composed map 
 \[
X_3 \hookrightarrow X_2  \twoheadrightarrow \mathbb Z[V_u],
\]
where $V_u$ denotes the set of places of  $K$ lying over $u$, and the second arrow 
is projection onto the direct summand $\mathbb Z[V_u]$ of $X_2=\mathbb Z[V_K]$.  
   Tensoring with $M$ and 
forming $G(K/F)$-coinvariants, we obtain the composed map 
\[
(M \otimes X_3)_{G(K/F)}  \to  (M \otimes X_2)_{G(K/F)}  \twoheadrightarrow 
(M \otimes \mathbb Z[V_u])_{G(K/F)} \simeq M_{G(K_v/F_u)},
\]
and it is this that we take as the top arrow in our square. More explicitly, the top arrow 
is induced by the map sending  
\[
\sum_{w \in V_K} m_w w \in M \otimes X_3 \hookrightarrow M \otimes X_2 = \bigoplus_{w \in V_K} M 
\]
 to  the element 
$\sum_{\sigma \in G(K_v/F_u)\backslash G(K/F)} \sigma(m_{\sigma^{-1}v}) \in M_{G(K_v/F_u)}$. 

\begin{lemma}\label{lem.KappaTLocCompat}
The square \eqref{CD.LocKappaT} commutes.
\end{lemma}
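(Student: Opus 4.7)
The strategy is to factor both the top and bottom arrows of \eqref{CD.LocKappaT} into compatible pieces, then verify commutativity of the resulting sub-squares using Lemma \ref{lem.Trick} together with the naturality results of subsection \ref{sub.BetterNat}.

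The bottom map \eqref{eq.Loc} is by definition $(\tilde\mu_v')^* \circ \Loc$, where $\Loc$ is the localization of Example \ref{Ex.AbsLoc}. The map $\Loc$ itself is the composition of the restriction (Example \ref{Ex.ResMpp}) from $\mathcal E_3(K/F)$ to its pullback $\mathcal E_3^{\tilde v}(K/F)$ over the decomposition subgroup $G(K_v/F_u)$, followed by the pushforward of coefficients along $\mathbb T_{K/F}(K) \to \mathbb T_{K/F}(K_v)$. On the top, I factor the arrow through $(M \otimes X_3)_{G(K_v/F_u)}$: the first piece is the sum-over-cosets map
\[
[\mu] \mapsto \Bigl[\sum_{\tau \in G(K_v/F_u)\backslash G(K/F)} \tau\mu\Bigr],
\]
and the second piece is $\id_M \otimes \mu_v'$, using that $\mu_v' : X_3 \to \mathbb Z$ is $\sum n_w w \mapsto n_v$ (from subsection \ref{sub.LocNotation}). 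A direct computation matches this composition with the explicit formula given just before the statement of Lemma \ref{lem.KappaTLocCompat}.

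The upper sub-square commutes by combining Lemma \ref{lem.Trick} (for the restriction step) with Lemmas \ref{lem.Nat1} and \ref{lem.Nat2} (for the coefficient pushforward and the concomitant change of extension). For the lower sub-square, the key observation is that the pair $(\mu_v', \eval_v)$, where $\eval_v: K^\times \hookrightarrow K_v^\times$ is the natural embedding, is a morphism of Tate-Nakayama triples for $G(K_v/F_u)$, going from the restricted global triple $(X_3, K^\times, \Res\alpha_3)$ to the local triple $(\mathbb Z, K_v^\times, \alpha(K_v/F_u))$, in the sense of Definition \ref{def.TNmor}. The required equality $\eval_v(\Res\alpha_3) = \mu_v'(\alpha(K_v/F_u))$ in $H^2(G(K_v/F_u), \Hom(X_3, K_v^\times))$ is precisely the identity \eqref{eq.MuClass79} established in the proof of Lemma \ref{lem.LocWorksWell}. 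Applying the naturality of subsection \ref{sub.BetterNat} to this morphism yields a commutative square involving the abstract naturality map $\rho$.

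The main obstacle is identifying $\rho$ with the concrete composition of $(\tilde\mu_v')^*$ with the coefficient pushforward. For this one unpacks the definition of $\rho$, which proceeds through an auxiliary extension $\mathcal F$, and exploits the uniqueness of $\tilde\mu_v'$ up to $\mathbb T_{K/F}(K_v)$-conjugation provided by Lemma \ref{lem.LocWorksWell}(2) to match the two cocycle-level recipes. Once this identification is in place, the two sub-squares assemble to give the desired commutativity of \eqref{CD.LocKappaT}.
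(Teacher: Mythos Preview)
Your approach is correct in outline but takes a substantially different route from the paper, and is considerably more laborious. You factor the localization map into pieces and match each against the abstract machinery: Lemma~\ref{lem.Trick} for the restriction to the decomposition subgroup, and the naturality construction of subsection~\ref{sub.BetterNat} for the passage from the restricted global Tate--Nakayama triple $(X_3,K^\times,\Res\alpha_3)$ to the local triple $(\mathbb Z,K_v^\times,\alpha(K_v/F_u))$ via the morphism $(\mu_v',\eval_v)$. This works, but the ``main obstacle'' you flag---identifying the abstract map $\rho$ of subsection~\ref{sub.BetterNat} with the concrete composite $(\tilde\mu_v')^*\circ\Loc$---requires unpacking the four-step definition of $\rho$ through the auxiliary extension $\mathcal F$ (which you may take to be $\mathcal E_3^v(K/F)$) and checking it matches the cocycle-level recipes of subsections~\ref{sub.ChGalExt} and~\ref{sub.ChBnd}. (Your exposition is also a bit inconsistent about which sub-square the coefficient pushforward belongs to; it should sit with the morphism of Tate--Nakayama triples, not with the restriction step.)

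The paper bypasses all of this. It simply reuses the method of the proof of Lemma~\ref{lem.Trick}: enlarge the square~\eqref{CD.LocKappaT} by appending a third row of invariants via the maps $r$ (so $rc=N$), check directly that the outer rectangle and the lower square commute (the lower square is essentially Lemma~\ref{lem.NewtLocGlob}, and the outer rectangle is the elementary computation that $N_{K_v/F_u}\circ(\text{top arrow})=(\text{$v$-component})\circ N_{K/F}$), and conclude that the desired square commutes whenever the local map $r$ is injective. That injectivity holds when $M=X_*(T)$ is free as a $\mathbb Z[G(K/F)]$-module, and the general case follows by functoriality in $M$ exactly as in Lemma~\ref{lem.Trick}. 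This avoids ever having to identify $\rho$ with anything concrete. Your approach, by contrast, gives a more explicit understanding of why the maps line up, at the cost of more bookkeeping.
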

\begin{proof}
Use the same method as in the proof of Lemma \ref{lem.Trick}, the point being that the diagram 
 \begin{equation}
\begin{CD}
(M \otimes X_3)_{G(K/F)} @>>>   M_{G(K_v/F_u)} \\
@VVV @VVV\\
(M \otimes X_3)^{G(K/F)} @>>>   M^{G(K_v/F_u)}
\end{CD}
\end{equation}
commutes. Here the left (resp.~ right) vertical arrow is the global (resp.~local) norm map. 
The top arrow is  the same as the top arrow in \eqref{CD.LocKappaT}. The bottom arrow 
sends  $G(K/F)$-invariant  $\sum_{w \in V_K} m_w w \in M \otimes X_3$ to its $v$-component $m_v$. 
\end{proof}

\section{Inflation $H^1_{\alg}(\mathcal E(K/F),G(K)) \to H^1_{\alg}(\mathcal E(L/F),G(L))$} \label{sec.InflationLocGlob}
When we studied the four examples of Tate-Nakayama triples (one local, 
three global),  we always fixed the Galois extension $K$ of $F$. 
 Now we need to see what
happens to
$H^1_{\alg}$ when we enlarge $K$ to $L$. 
The essential
point is the relationship between the various fundamental classes for the
two layers $K/F$ and $L/F$.  

\subsection{Local theory} 
We consider local fields  $L\supset K \supset F$ with both $L$ and $K$ 
finite 
Galois over $F$. We keep track of which Galois group and fundamental
class we are talking about by labeling them  with $K/F$ or $L/F$, as
appropriate. We then have (see section \ref{sec.LTN}) the local
Tate-Nakayama triple $(\mathbb Z,K^\times,\alpha(K/F))$ for $K/F$, as well
as the Galois gerb for $K/F$ (and bound by $\mathbb G_m$) 
\begin{equation} \label{eq.EKF}
1 \to \mathbb G_m(K) \to \mathcal E(K/F) \to G(K/F) \to 1. 
\end{equation}
 When $F$ is
$\mathbb Q_p$, this is the Dieudonn\'e gerb of \cite{LR} attached to $K$.
When $K/F$ is $\mathbb C/\mathbb R$, it is the weight gerb of \cite{LR}. 

Now let $G$ be a linear algebraic group over $F$. We may then consider 
the set
$H^1_{\alg}(\mathcal E(K/F),G(K))$, as well as its analog for $L/F$.  
Our goal is to  
define  
a natural map  
\begin{equation}\label{eq.arr3} 
H^1_{\alg}(\mathcal E(K/F),G(K)) \to H^1_{\alg}(\mathcal E(L/F),G(L)), 
\end{equation}
and then to show it is an isomorphism when $G$ is a torus split by $K$.

We can inflate
the Galois gerb $\mathcal E(K/F)$ to $L/F$ (see
subsection \ref{sub.ChGalExt}), obtaining 
\begin{equation} \label{eq.infEKF}
1 \to \mathbb G_m(L) \to \mathcal E(K/F)^{\inf} \to G(L/F) \to 1.
\end{equation} 
 As in Example \ref{Ex.AbsInf}, we  
then have an inflation map 
\begin{equation}\label{eq.arr1}
H^1_{\alg}(\mathcal E(K/F),G(K)) \to H^1_{\alg}(\mathcal E(K/F)^{\inf},G(L)).
\end{equation}

Define a homomorphism $p_{L/K}:\mathbb G_m \to \mathbb G_m$ by $x \mapsto
x^{[L:K]}$. The fundamental classes $\alpha(K/F)
\in H^2(G(K/F),K^\times)$ and $\alpha(L/F) \in H^2(G(L/F),L^\times)$ are
related by the equation 
\begin{equation}\label{eq.LocFundInf}
\inf(\alpha(K/F))=[L:K]\alpha(L/F),  
\end{equation} 
and therefore there exists a homomorphism $\eta_{L/K}:\mathcal E(L/F)
\to \mathcal E(K/F)^{\inf}$ making the diagram 
\begin{equation}\label{eq.arr2}
\begin{CD}
1 @>>> \mathbb G_m(L) @>>> \mathcal E(L/F) @>>> G(L/F) @>>> 1 \\ 
@. @V{p_{L/K}}VV @V{\eta_{L/K}}VV @| @.\\
1 @>>> \mathbb G_m(L) @>>> \mathcal E(K/F)^{\inf} @>>> G(L/F) @>>> 1
\end{CD}
\end{equation} 
commute. 
We then obtain the induced map 
\begin{equation}\label{eq.arr4}
\eta_{L/K}^*: H^1_{\alg}(\mathcal E(K/F)^{\inf}, G(L)) \to
H^1_{\alg}(\mathcal E(L/F),G(L))
\end{equation}
defined in subsection \ref{sub.ChBnd}. 
The composition of 
\eqref{eq.arr1} and \eqref{eq.arr4} is the  map \eqref{eq.arr3} that 
we wanted to define.

Now take $G$ to be an $F$-torus $T$ split by $K$ and consider the
diagram  
\begin{equation}\label{CD.LcInf}
\begin{CD}
X_*(T)_{G(K/F)} @>{c}>> H^1_{\alg}(\mathcal E(K/F),T(K)) @>r>>
X_*(T)^{G(K/F)} \\ 
@AAA @V{\eqref{eq.arr3}}VV @V[L:K]VV\\
X_*(T)_{G(L/F)} @>{c}>> H^1_{\alg}(\mathcal E(L/F),T(L)) @>r>>
X_*(T)^{G(L/F)}
\end{CD}
\end{equation} 
with $c$ as in subsection
\ref{sub.TNlc}, 
$r$ as in subsection \ref{sub.InfRes}, and where  
the left vertical arrow is the obvious isomorphism
(the one induced by the identity map on $X_*(T)$).

\begin{lemma}\label{lem.LocInf}
The diagram \eqref{CD.LcInf} commutes, and all four arrows in the left square
are isomorphisms. 
\end{lemma}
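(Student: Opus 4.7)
The lemma asserts four things: both horizontal $c$-maps in \eqref{CD.LcInf} are isomorphisms, the left vertical arrow is an isomorphism, and both squares commute; the isomorphism of \eqref{eq.arr3} will then follow formally from the commutativity of the left square together with the other three isomorphism claims. For the easy parts, Lemma \ref{lem.LocTN} applies both to $K/F$ (as $T$ splits over $K$) and to $L/F$ (as $T$ then splits over $L$), making both horizontal $c$'s isomorphisms; and the left vertical arrow is an isomorphism because $G(L/K)$ acts trivially on $X_*(T)$, so the $G(L/F)$-coinvariants of $X_*(T)$ coincide with its $G(K/F)$-coinvariants via the identity on $X_*(T)$.

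For the right square I unravel the cocycle-level recipe for \eqref{eq.arr3}. By the construction in subsection \ref{sub.ChGalExt}, the inflation map \eqref{eq.arr1} sends an algebraic $1$-cocycle $(\nu,x)$ of $\mathcal E(K/F)$ in $T(K)$ to a cocycle of $\mathcal E(K/F)^{\inf}$ in $T(L)$ with the same Newton component $\nu$. The subsequent map \eqref{eq.arr4}, defined via $\eta_{L/K}$ and the change-of-band formalism of subsection \ref{sub.ChBnd}, replaces $\nu$ by $\nu\circ p_{L/K}=[L:K]\cdot\nu$ in $X_*(T)$, since $p_{L/K}:\mathbb G_m\to\mathbb G_m$ is the $[L:K]$-th power map. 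Thus $r\circ \eqref{eq.arr3}=[L:K]\cdot r$, which is the commutativity of the right square.

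For the left square I exploit the fiber-product description of $H^1_{\alg}(\mathcal E(L/F),T(L))$ from subsection \ref{sub.Torus}: an element is uniquely determined by its images under $r$ and under the projection $\pi$ to $H^1(\mathcal E(L/F),T(L))$. Under $r$, both composites out of $X_*(T)_{G(L/F)}$ compute to $N_{L/F}$ on $X_*(T)$, using the right square together with Lemma \ref{lem.AbsBT}(1) and the identity $N_{L/F}=[L:K]\cdot N_{K/F}$ (holding because $G(L/K)$ acts trivially). Under $\pi$, using Lemma \ref{lem.c0}, which gives $\pi\circ c=\cor\circ\xi$, the claim reduces to showing that the inflation of $\cor_{\mathcal E(K/F)/K^\times}(\xi_K(\nu))$ along $\mathcal E(L/F)\to\mathcal E(K/F)^{\inf}\to\mathcal E(K/F)$, combined with the inclusion $T(K)\hookrightarrow T(L)$, coincides with $\cor_{\mathcal E(L/F)/L^\times}(\xi_L(\nu))$. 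Since $\xi_L(\nu)$ restricts to $\xi_K(\nu)$ along $K^\times\hookrightarrow L^\times$, this is an inflation--corestriction compatibility, whose content is captured by the functoriality of corestriction (cf.~Lemma \ref{lem.CorInf}) applied to the commutative ladder relating $\mathcal E(K/F)$, $\mathcal E(K/F)^{\inf}$, and $\mathcal E(L/F)$.

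The main obstacle is the $\pi$-side of the left square. One must reconcile the $[L:K]$ factor arising from the change of band via $p_{L/K}$ with the index ratio $[\mathcal E(L/F):L^\times]/[\mathcal E(K/F):K^\times]=[L:K]$ implicit in the two corestrictions; the two factors of $[L:K]$ must cancel correctly. Once both the $r$- and $\pi$-checks are settled, the left square commutes, and together with the three isomorphism claims this forces \eqref{eq.arr3} to be the fourth isomorphism, completing the lemma.
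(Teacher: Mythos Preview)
Your treatment of the isomorphism claims and of the right square matches the paper. The divergence is in how you handle the left square, and there is a genuine gap in your $\pi$-check.

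You assert that the $\pi$-side reduces to pulling back $\cor_{\mathcal E(K/F)/K^\times}(\xi_K(\nu))$ along a composite $\mathcal E(L/F)\to\mathcal E(K/F)^{\inf}\to\mathcal E(K/F)$, but no homomorphism $\mathcal E(K/F)^{\inf}\to\mathcal E(K/F)$ exists: the inflated gerb is the pushout of the pullback $\mathcal E''$ along $K^\times\hookrightarrow L^\times$, and a pushout does not map back. Moreover, Lemma~\ref{lem.CorInf} only compares corestrictions for two extensions of the \emph{same} quotient group, so it handles $\eta_{L/K}:\mathcal E(L/F)\to\mathcal E(K/F)^{\inf}$ but says nothing about the passage from $\mathcal E(K/F)$ to $\mathcal E(K/F)^{\inf}$. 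There is a further subtlety you elide: at the level of abstract $H^1$, the inflation \eqref{eq.arr1} does not give a map $H^1(\mathcal E(K/F),T(K))\to H^1(\mathcal E(K/F)^{\inf},T(L))$ independent of the Newton component, since the cocycle $x'$ is extended to $L^\times\setminus K^\times$ using $\nu$. So the ``two factors of $[L{:}K]$ cancel'' is not mere bookkeeping; it encodes a cohomological identity you have not established.

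The paper avoids the $\pi$-check entirely. Since the right square and the outer rectangle both commute (your $r$-check is exactly this), the left square commutes automatically whenever the bottom $r$ is injective, i.e.\ whenever $H^1(G(L/F),T(L))=0$. By Shapiro and Hilbert~90 this holds when $X_*(T)$ is free as a $\mathbb Z[G(K/F)]$-module. The general case is then reduced to this one by choosing a surjection $M\twoheadrightarrow X_*(T)$ with $M$ free over $\mathbb Z[G(K/F)]$ and using that the left square is functorial in $T$ and that $M_{G(L/F)}\to X_*(T)_{G(L/F)}$ is surjective. This reduction trick recurs throughout the paper (see Lemma~\ref{lem.Trick} and the proof of Lemma~\ref{CD.GlobInflation}); it is both shorter and more robust than a direct $\pi$-verification.
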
 
\begin{proof}
Since three of the four arrows in the left square are already known to be
isomorphisms, our only real task is to prove that the diagram commutes. 

The right square commutes, as one sees easily from the definition of
the arrow \eqref{eq.arr3}, the point being   that $p_{L/K}:\mathbb G_m \to
\mathbb G_m$ induces multiplication by $[L:K]$ on cocharacter groups. 
Moreover, we know (see Lemmas  
\ref{lem.c0} and \ref{lem.cNew}) that the composition $rc$ in the top row
(resp., bottom row) is the norm map for $G(K/F)$ (resp., $G(L/F)$). It is
therefore clear that the outer rectangle commutes.  We  conclude  that the
left square
 commutes whenever the restriction map $r$ in the bottom
row is injective, and, by the inflation-restriction sequence 
of subsection \ref{sub.InfRes}, this happens
if and only if
$H^1(G(L/F),T(L))$ vanishes. 

In particular, by Shapiro's lemma and Hilbert's Theorem 90, the left square
does commute when
$X_*(T)$ is free of finite rank as $\mathbb Z[G(K/F)]$-module. For general
$T$, we choose a free $\mathbb Z[G(K/F)]$-module $M$ and a surjective
$G(K/F)$-module map $f:M \twoheadrightarrow X_*(T)$. We then obtain $f:T_M
\to T$, where $T_M$ is the torus with cocharacter group $M$. The
 left square is functorial in $T$, so its commutativity
for $T$ follows from that for $T_M$,  because 
$f:M_{G(L/F)} \to X_*(T)_{G(L/F)}$ is surjective. 
\end{proof}

\subsection{New system of notation for our three global
Tate-Nakayama triples} Next we are going to study inflation for our three
global Tate-Nakayama triples $(X_i,A_i,\alpha_i)$, so it is no longer
feasible to omit the extension $K/F$ from the notation. We write $G(K/F)$
for the Galois group of $K/F$, and 
we now denote our three Tate-Nakayama triples by 
$(X_i(K),A_i(K),\alpha_i(K/F))$. In this more elaborate system of
notation the Tate class $\alpha \in H^2(G,\Hom(X,A))$ discussed near 
the end of
subsection~\ref{sub.GlobTNT} becomes 
\[
\alpha(K/F) \in H^2\bigl(G(K/F),\Hom(X(K),A(K)\bigr). 
\] 
We also need to remember that $A_2$ and $A_3$ depend on a choice of subset
$S \subset V_F$, and that the conditions imposed on $S$ become more and
more stringent the bigger the top field $K$ gets. 

The relevant extensions are now 
denoted by 
\begin{equation}\label{eq.extK/F}
1 \to \Hom(X_i(K),A_i(K)) \to \mathcal E_i(K/F) \to G(K/F) \to 1
\end{equation} 
 and the relevant cohomology groups by $H^1_{\alg}(\mathcal E_i(K/F),M
\otimes A_i(K))$, where $M$ is the cocharacter group of an $F$-torus $T$
that is split by $K$. When $S=V_F$ and $i=3$ the cohomology group is 
$H^1_{\alg}(\mathcal E_3(K/F),T(K))$ and is of the type considered in
section 
\ref{sec.GalGerb}, the relevant group of multiplicative type being the
protorus $\mathbb T_{K/F}$ whose group of characters is $X_3(K/F)$. 

The norm map for the finite group $G(K/F)$ will now 
be denoted by   $N_{K/F}$.  
We recall that there are functorial maps 
\begin{equation}\label{eq.GlobalPreInf}
(M \otimes X_i(K))_{G(K/F)} \xrightarrow{c} H^1_{\alg}(\mathcal E_i(K/F),M
\otimes A_i(K)) \xrightarrow{r} (M \otimes X_i(K))^{G(K/F)}.  
\end{equation}
Here $c$ is the  isomorphism of subsection \ref{sub.MainGlobal},  
and $r$ is the restriction map of subsection \ref{sub.InfRes}. Moreover 
$rc=N_{K/F}$ by  Lemmas  
\ref{lem.c0} and \ref{lem.cNew}.

\subsection{The setup in which to discuss inflation in the global
situation} \label{sub.InfSetUp}

We now consider a finite Galois extension $L$ of $F$ with $L
\supset K$. We then have a short exact sequence 
\[
1 \to G(L/K) \to G(L/F) \to G(K/F) \to 1.
\]
 We seek an analog of Lemma \ref{lem.LocInf} for our three global
Tate-Nakayama triples, in which $S \subset V_F$ is assumed to be big enough
to satisfy the conditions (see subsection \ref{sub.TNnot})  needed for the
extension
$L/F$; it is then automatic that $S$ also satisfies these conditions for 
$K/F$.  In order to get started, we need maps relating the groups
$X_i(K)$,
$A_i(K)$ to the parallel objects for
$L/F$. In the case of $A_i$, this is  straightforward: there is an obvious
isomorphism of $A_i(K)$ with $A_i(L)^{G(L/K)}$. 

In the case of $X_i$ there are maps in both directions. In fact we are
going to define two commutative diagrams 
\begin{equation}
\begin{CD}
0 @>>> X_3(L) @>>> X_2(L) @>>> X_1(L) @>>> 0 \\
@. @V{j_3}VV @V{j_2}VV @V{j_1}VV @. \\
0 @>>> X_3(K) @>>> X_2(K) @>>> X_1(K) @>>> 0
\end{CD}
\end{equation}
and 
\begin{equation}
\begin{CD}
0 @>>> X_3(L) @>>> X_2(L) @>>> X_1(L) @>>> 0 \\
@. @A{p_3}AA @A{p_2}AA @A{p_1}AA @. \\
0 @>>> X_3(K) @>>> X_2(K) @>>> X_1(K) @>>> 0.
\end{CD}
\end{equation} 
All we really need to do is to define maps $j_2$, $j_1$, $p_2$, $p_1$
making  the two right squares commute;  we are then forced to define $j_3$,
$p_3$ by restriction. Now $X_1(L) =\mathbb Z=X_1(K)$. We take 
$j_1$ to be the identity map on $\mathbb Z$, and $p_1$ to be multiplication
by $[L:K]$. 

Recall that $X_2(K)$ is the free abelian group on the set
$S_K$ of places of $K$ that lie over some place
in $S$.   The value of $j_2$ on the basis element $w \in S_L$
of $X_2(L)$ is defined to be $v$, where $v$ is the unique place of $K$
lying under $w$. The value of $p_2$ on the basis element $v \in S_K$ of
$X_2(K)$ is defined to be 
\[
p_2(v):=\sum_{w|v} [L_w:K_v] w. 
\]
The desired commutativity of the two right squares is clear. 

\begin{lemma}\label{lem.pandj}
The following statements hold for $i=1,2,3$.  
\begin{enumerate} 
\item $p_i \circ j_i=N_{L/K}$.
\item $j_i \circ  p_i=[L:K]$. Consequently $p_i$ is injective. 
\item The map $j_i:X_i(L) \twoheadrightarrow X_i(K)$ factors through the
coinvariants of
$G(L/K)$ on
$X_i(L)$, inducing an isomorphism 
\[
\gamma_i:X_i(L)_{G(L/K)} \to X_i(K). 
\]
\end{enumerate}
\end{lemma}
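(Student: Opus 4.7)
The plan is to treat (1), (2), (3) in order. For (1), the case $i=1$ is trivial since $X_1(L)=X_1(K)=\mathbb Z$ with trivial action. For $i=2$, I would compute $p_2(j_2(w))=p_2(v)=\sum_{w'|v}[L_{w'}:K_v]w'$ (where $v$ is the place under $w$) and recognize this as $N_{L/K}(w)$ via the orbit--stabilizer theorem: the $G(L/K)$-orbit of $w$ is $\{w'\mid v\}$ and each $w'$ has stabilizer $G(L_{w'}/K_v)$ of order $[L_{w'}:K_v]$. For $i=3$, restrict the $i=2$ identity to $X_3(L)\subset X_2(L)$. For (2), $j_i(p_i(v))=\sum_{w|v}[L_w:K_v]v=[L:K]v$ by the fundamental identity $\sum_{w|v}[L_w:K_v]=[L:K]$; injectivity of $p_i$ follows at once, since $X_i(K)$ is torsion-free.

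For (3), the case $i=1$ is immediate ($G(L/K)$ acts trivially on $X_1(L)=\mathbb Z$ and $j_1=\id$), and for $i=2$ the $G(L/K)$-orbits on $S_L$ are precisely the fibers of $j_2\colon S_L\to S_K$, giving $\gamma_2$ directly as descent of $j_2$. For $i=3$, I would apply the functor of $G(L/K)$-coinvariants to the short exact sequence $0\to X_3(L)\to X_2(L)\to X_1(L)\to 0$ and use the associated long exact sequence of group homology
\[
H_1(G(L/K),X_2(L))\to H_1(G(L/K),X_1(L))\to X_3(L)_{G(L/K)}\to X_2(L)_{G(L/K)}\to X_1(L)_{G(L/K)}\to 0.
\]
Using the already-established $\gamma_2,\gamma_1$, the last two arrows become the augmentation $X_2(K)\twoheadrightarrow X_1(K)=\mathbb Z$, whose kernel is $X_3(K)$. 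Exactness at $X_2(L)_{G(L/K)}$ then gives surjectivity of $\gamma_3\colon X_3(L)_{G(L/K)}\to X_3(K)$.

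The main obstacle is injectivity of $\gamma_3$: its kernel equals the image of the connecting map $H_1(G(L/K),X_1(L))\to X_3(L)_{G(L/K)}$, which by the LES vanishes iff $H_1(G(L/K),X_2(L))\to H_1(G(L/K),X_1(L))$ is surjective. By Shapiro's lemma applied to $X_2(L)=\bigoplus_{v\in S_K}\mathbb Z[G(L/K)/G(L_w/K_v)]$, this map is
\[
\bigoplus_{v\in S_K} G(L_w/K_v)^{\ab}\to G(L/K)^{\ab}
\]
induced by the abelianized inclusions of decomposition subgroups; equivalently, the subgroups $G(L_w/K_v)$ for $v\in S_K$ must generate $G(L/K)$ modulo the commutator subgroup. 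I would verify this via class field theory: let $M/K$ be the maximal abelian subextension of $L/K$ in which every $v\in S_K$ splits completely; it suffices to show $M=K$. By the second condition on $S$ (applied to $L/F$), $L/K$, and hence $M/K$, is unramified outside $S_K$. By Artin reciprocity $G(M/K)$ is a quotient of $\mathbb A_K^\times/(K^\times\cdot U)$ where $U:=\prod_{v\in S_K}K_v^\times\cdot\prod_{v\notin S_K}\mathcal O_{K_v}^\times$, and $\mathbb A_K^\times/U$ is the free abelian group on $V_K\setminus S_K$. The third condition on $S$ (applied to $E=K$) says precisely that every ideal of $K$ is equivalent modulo $K^\times$ to one supported in $S_K$, which is the surjectivity of $K^\times\to\mathbb A_K^\times/U$. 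Hence $M=K$.
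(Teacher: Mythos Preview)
Your proof is correct. For parts (1), (2), and the cases $i=1,2$ of (3), you argue exactly as the paper does (which dismisses these as routine). For the nontrivial case $i=3$ of (3), however, you take a genuinely different route.

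The paper appeals to Corollary~\ref{cor.CoinX} applied to the extension $L/K$: that corollary identifies the kernel of $(X_3)_{G(L/K)}\to(X_2)_{G(L/K)}$ with $\ker\bigl[H^1(G(L/K),A_3)\to H^1(G(L/K),A_2)\bigr]$ via the isomorphisms $c$ in diagram~\eqref{CD.main}, and then invokes the vanishing of $H^1(G(L/K),A_3)$ from Lemma~\ref{lem.H1Van}. Thus the paper routes the question through the Tate--Nakayama machinery already built in section~\ref{sec.GTN}. You instead work directly with the group-homology long exact sequence, reducing injectivity of $\gamma_3$ to surjectivity of $\bigoplus_{v\in S_K} G(L_w/K_v)^{\ab}\to G(L/K)^{\ab}$, and prove this by Artin reciprocity together with the third hypothesis on~$S$. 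Both arguments ultimately rest on that same hypothesis (every ideal class of $K$ has a representative supported in $S_K$), but yours is self-contained and bypasses the $c$-isomorphisms entirely, at the price of a direct class-field-theory computation; the paper's route has the advantage of reusing infrastructure already in place and of making the link to $H^1(G',A_3)=0$ explicit, which is needed again elsewhere.
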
 
\begin{proof}
The only thing that might not be obvious is that $\gamma_i$ is an
isomorphism. This follows readily from the definitions when $i=1,2$. To
handle $i=3$ one appeals to Corollary \ref{cor.CoinX}, applied to $L/K$.  
\end{proof}

\subsection{Definition of  global inflation maps for tori} 
As before we write $M$ for the cocharacter group of an $F$-torus $T$ split
by $K$. We need to define global inflation maps (for $i=1,2,3$) 
\begin{equation}\label{>1}
H^1_{\alg}(\mathcal E_i(K/F),M \otimes A_i(K)) \to H^1_{\alg}(\mathcal
E_i(L/F),M \otimes A_i(L)).
\end{equation} 
As in the local case, we will do so in three steps, first defining two auxiliary maps 
and then taking their composition as the definition of  \eqref{>1}. 

{\bf Step 1.} We  
 use $G(L/F) \twoheadrightarrow G(K/F)$ and
$A_i(K)=A_i(L)^{G(L/K)}$ to define 
\begin{equation}\label{>2}
H^1_{\alg}(\mathcal E_i(K/F),M \otimes A_i(K)) \to H^1_{Y_i(K)}(\mathcal
E_i(K/F)^{\inf},M \otimes A_i(L)),
\end{equation} 
where $Y_i(K):=M \otimes X_i(K)$, and $\mathcal
E_i(K/F)^{\inf}$ is obtained from $\mathcal E_i(K/F)$ by first pulling back
along $G(L/F) \twoheadrightarrow G(K/F)$ and then pushing out along 
the inclusion of 
$\Hom(X_i(K),A_i(K))$ as the set of $G(L/K)$-fixed points in 
$ \Hom(X_i(K),A_i(L))$. Thus our inflated extension sits in an exact
sequence  
\begin{equation}\label{eq.extK/Finf}
1 \to \Hom(X_i(K),A_i(L)) \to \mathcal E_i(K/F)^{\inf} \to G(L/F) \to 1.
\end{equation} 
The map $\xi':Y_i(K) \to \Hom\bigl(\Hom(X_i(K),A_i(L)),M\otimes
A_i(L)\bigr)$  used to form the group $H^1_{Y_i(K)}(\mathcal
E_i(K/F)^{\inf},M \otimes A_i(L))$ is the
obvious tautological one. 

 In the special case $S=V_F$ and $i=3$,
the  map
\eqref{>2} is an instance of the inflation map 
in Example \ref{Ex.AbsInf}. In all cases it is an instance of the very
general inflation map in Example \ref{Ex.Inf2}; more precisely it is of the
form $\Phi(f,g,h)$ where  
\begin{itemize}
\item $f:M \otimes A_i(K) \to M \otimes A_i(L)$ is induced by $A_i(K)
\hookrightarrow A_i(L)$, 
\item $g$ is the identity map on $Y_i(K)$, and 
\item $h$ is the inclusion $\Hom(X_i(K),A_i(K))\hookrightarrow
\Hom(X_i(K),A_i(L))$ that we used to form the pushout.  
\end{itemize}

{\bf Step 2.}
From $p_i:X_i(K) \hookrightarrow X_i(L)$ we obtain an induced map 
\[
p_i:\Hom(X_i(L),A_i(L)) \to 
\Hom(X_i(K),A_i(L))
\]
We want to choose a homomorphism $\tilde p_i$ making the diagram 
\begin{equation}
\begin{CD} 
1 @>>> \Hom(X_i(L),A_i(L)) @>>> \mathcal E_i(L/F) @>>> G(L/F)
@>>> 1\\ 
@. @V{p_i}VV @V{\tilde p_i}VV @| @.\\
1 @>>> \Hom(X_i(K),A_i(L)) @>>> \mathcal E_i(K/F)^{\inf} @>>> G(L/F)
@>>> 1
\end{CD}
\end{equation}
commute. 

\begin{lemma}\label{lem.tildePiExists}
For $i=1,2,3$ such a homomorphism $\tilde p_i$ exists and is unique 
up to conjugation by an element in the subgroup $\Hom(X_i(K),A_i(L))$. 
\end{lemma}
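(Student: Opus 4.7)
\emph{Plan.} I split the proof of Lemma \ref{lem.tildePiExists} into existence and uniqueness of $\tilde p_i$.

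\emph{Uniqueness.} Two lifts $\tilde p_i,\tilde p_i'$ of $p_i$ differ by a function $w\mapsto \tilde p_i'(w)\tilde p_i(w)^{-1}$ that vanishes on $\Hom(X_i(L),A_i(L))\subset\mathcal E_i(L/F)$ (both restrictions agree with $p_i$), hence descends to a $1$-cocycle $\phi$ of $G(L/F)$ in $\Hom(X_i(K),A_i(L))$. Conjugation by an element $x\in\Hom(X_i(K),A_i(L))$ replaces $\phi$ by $\phi+dx$, so uniqueness up to conjugation is equivalent to
$$H^1\bigl(G(L/F),\Hom(X_i(K),A_i(L))\bigr)=0 \qquad (i=1,2,3).$$
For $i=1$ this is $H^1(G(L/F),A_1(L))=0$ by Lemma \ref{lem.H1Van}. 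For $i=2$, Lemma \ref{lem.TateX2} (with $G=G(L/F)$ acting on $X_2(K)=\mathbb Z[S_K]$ through $G(K/F)$) identifies the group with $\prod_{u\in S}H^1(H_u,A_2(L))$, where $H_u\subset G(L/F)$ is the stabilizer of a chosen $v\in S_K$ above $u$; each factor vanishes by Lemma \ref{lem.H1Van}. For $i=3$, the short exact sequence $0\to X_3(K)\to X_2(K)\to X_1(K)\to 0$ stays exact under $\Hom(-,A_3(L))$ (because $X_1(K)=\mathbb Z$ is free), and a chase of the associated long exact sequence reduces the desired vanishing to the already-proved $i=1,2$ vanishings together with injectivity of $H^2(G(L/F),A_3(L))\to\prod_u H^2(H_u,A_3(L))$, a Hasse-principle statement of the kind handled in Appendix~\ref{App.A}.

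\emph{Existence.} Existence of $\tilde p_i$ is equivalent to the identity
$$p_i^*(\alpha_i(L/F))=\tilde\alpha_i(K/F)\in H^2\bigl(G(L/F),\Hom(X_i(K),A_i(L))\bigr),$$
where $\tilde\alpha_i(K/F)$ denotes the class of $\mathcal E_i(K/F)^{\inf}$, i.e.\ the image of $\alpha_i(K/F)$ under inflation along $G(L/F)\twoheadrightarrow G(K/F)$ and pushforward along $A_i(K)\hookrightarrow A_i(L)$. For $i=1$ this is the classical global inflation formula $\inf\alpha_1(K/F)=[L:K]\cdot\alpha_1(L/F)$, combined with $p_1=[L:K]$. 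For $i=2$, Lemma \ref{lem.TateX2} decomposes the identity place-by-place: at $u\in S$, writing $p_2(v)=n_v\sum_{w\mid v}w$ with $n_v=[L_w:K_v]$, the Shapiro identification translates $\eval_{p_2(v)}$ as $n_v\cdot\Cor_{H_u/G(L_w/F_u)}\circ\eval_w$, and the identity at $u$ becomes
$$n_v\cdot\Cor_{H_u/G(L_w/F_u)}\bigl(i_w\alpha(L_w/F_u)\bigr)=\inf_{H_u\to G(K_v/F_u)}\bigl(i_v\alpha(K_v/F_u)\bigr)\in H^2(H_u,A_2(L)),$$
which follows from the local inflation formula $\inf\alpha(K_v/F_u)=[L_w:K_v]\alpha(L_w/F_u)$. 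For $i=3$, the identity follows automatically from the $i=1,2$ cases by a cartesian-diagram argument analogous to Lemma \ref{lem.cart} (applied with coefficient module $A(L)$ in place of $A(K)$): the class $\tilde\alpha_3(K/F)$ is determined by its images under $a$ and $b'$ in the ambient group for $(X_2,A_1)$, and these images agree with those of $p_3^*\alpha_3(L/F)$ by the $i=1,2$ identities.

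\emph{Main obstacle.} The most delicate step is the $i=2$ case of existence: the Shapiro bookkeeping between $p_2(v)=\sum_{w\mid v}[L_w:K_v]w\in X_2(L)$ and the local fundamental classes $\alpha(L_w/F_u)$ must be carried out carefully at each place to reduce to the local inflation formula. Uniqueness for $i=3$ relies on the Hasse-type injectivity provided by Appendix~\ref{App.A}, while uniqueness for $i=1,2$ and existence for $i=3$ are formal consequences of the cases already proved.
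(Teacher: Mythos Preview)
Your existence argument matches the paper's closely: both reduce to $i=1,2$ via a cartesian-square claim for $H^2(G(L/F),\Hom(X(K),A(L)))$ analogous to Lemma~\ref{lem.cart}, and both reduce the $i=2$ identity to the local inflation formula through a Shapiro decomposition (the paper spells this out via a large diagram and Lemma~\ref{lem.IndCor}, which you are implicitly invoking when you pass from $\Cor$ to $\inf$). For uniqueness with $i=1,2$ you agree with the paper as well.

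The gap is in your uniqueness argument for $i=3$. Dualizing $0\to X_3(K)\to X_2(K)\to X_1(K)\to 0$ into $A_3(L)$ does give a long exact sequence, and the term $H^1(G(L/F),\Hom(X_2(K),A_3(L)))\cong\prod_u H^1(H_u,A_3(L))$ does vanish by Lemma~\ref{lem.H1Van} (note: this is \emph{not} the ``already-proved $i=1,2$ vanishing'', which concerns $A_1(L)$ and $A_2(L)$, but a direct application of Lemma~\ref{lem.H1Van} to the subgroups $H_u$). The real problem is the next step: you need injectivity of
\[
H^2(G(L/F),A_3(L))\longrightarrow \prod_{u\in S} H^2(H_u,A_3(L)),
\]
where $H_u\subset G(L/F)$ is the preimage of a decomposition group $G(K/F)_v$. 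This is not a statement proved in Appendix~\ref{App.A}; that appendix computes $H^{-1}$ of modules of the form $\Hom(\mathbb Z[S],\mathbb Z[T])$ and $\Hom(\mathbb Z[S]_0,\mathbb Z[T]_0)$, not $H^2$-injectivity for $A_3(L)$ along restriction to a specific family of subgroups. Nor is it an instance of the classical Brauer--Hasse--Noether principle, since the targets involve $A_3(L)$ (global $S$-units) and the subgroups $H_u$ are not decomposition groups for $L/F$. You have not supplied an argument, and the conditions imposed on $S$ in subsection~\ref{sub.TNnot} do not obviously force this injectivity.

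The paper avoids this difficulty by a different route: it observes that $X_3(K)$ lies in the class $\mathcal C(X_3(L),A_3(L),\alpha_3(L/F))$ of Definition~\ref{def.calC} (using parts (2) and (5) of Lemma~\ref{lem.calC}), so cup product with $\alpha_3(L/F)$ gives
\[
H^{-1}\bigl(G(L/F),\Hom(X_3(K),X_3(L))\bigr)\;\cong\;H^{1}\bigl(G(L/F),\Hom(X_3(K),A_3(L))\bigr),
\]
and then invokes Lemma~\ref{lem.H-10S}(3) (with $\epsilon:S_L\twoheadrightarrow S_K$) to kill the left-hand side. This replaces your unproved $H^2$-injectivity by a concrete combinatorial vanishing in degree $-1$.
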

\begin{proof}
The existence of $\tilde p_i$ is equivalent to the statement that 
\begin{equation}\label{eq.weaker}
p_i(\alpha_i(L/F))=\inf(\alpha_i(K/F)) \in
H^2(G(L/F),\Hom(X_i(K),A_i(L)) ). 
\end{equation}
In fact, we will prove a slightly
stronger statement involving the Tate classes 
\begin{itemize}
\item $\alpha(K/F) \in H^2(G(K/F),\Hom(X(K),A(K)))$, 
\item $\alpha(L/F)\in H^2(G(L/F),\Hom(X(L),A(L)))$. 
\end{itemize}

The slightly stronger statement is that 
\begin{equation}\label{eq.stronger}
p(\alpha(L/F))=\inf(\alpha(K/F)) \in H^2(G(L/F),\Hom(X(K),A(L))).
\end{equation}
As the notation suggests, $\Hom(X(K),A(L))$ is defined as the group of
triples
$(h_3,h_2,h_1) $ making 
\[
\begin{CD}
X_3(K) @>>> X_2(K) @>>> X_1(K) \\
@Vh_3VV @Vh_2VV @Vh_1VV \\
A_3(L) @>>> A_2(L) @>>> A_1(L) 
\end{CD}
\] 
commute, and the map $p$ is induced by $(p_3,p_2,p_1)$.   Because the   
image of
$\alpha(K/F)$ under the
$i$-th projection is
$\alpha_i(K/F)$, the new statement does in fact imply the three old
statements.

Lemma \ref{lem.cart} exhibits 
$H^2(G(K/F),\Hom(X(K),A(K)))$ as a fiber product, and the same goes
with $K/F$ replaced by $L/F$. We are now going to prove an analog of 
Lemma \ref{lem.cart} 
for $H^2(G(L/F),\Hom(X(K),A(L)))$.

{\bf Claim.} The diagrams  
\begin{equation}\label{CD.car1}
\begin{CD}
\Hom(X(K),A(L)) @>{\pi_1}>> \Hom(X_1(K),A_1(L)) \\
@V{\pi_2}VV @VbVV  \\
\Hom(X_2(K), A_2(L)) @>a>> \Hom(X_2(K),A_1(L)) 
\end{CD}
\end{equation}
and 
\begin{equation}\label{CD.car2}
\begin{CD}
H^2(G(L/F),\Hom(X(K),A(L))) @>{\pi_1}>> H^2(G(L/F),\Hom(X_1(K),A_1(L))) \\
@V{\pi_2}VV @VbVV  \\
H^2(G(L/F),\Hom(X_2(K), A_2(L))) @>a>> H^2(G(L/F),\Hom(X_2(K),A_1(L))) 
\end{CD}
\end{equation}
are cartesian. 

We prove the claim by imitating Tate's argument. It is clear that
\eqref{CD.car1} is cartesian. In other words $\Hom(X(K),A(L))$ is the
equalizer of the two obvious maps 
\[
\Hom(X_1(K),A_1(L)) \oplus \Hom(X_2(K), A_2(L)) \to
\Hom(X_2(K),A_1(L)). 
\]
Equivalently, $\Hom(X(K),A(L))$ is the
kernel of the difference $\delta$ of these two obvious maps. Now $\delta$
is surjective, as follows from the surjectivity of the  bottom horizontal arrow
in \eqref{CD.car1} (itself a consequence of the fact that $X_2(K)$ is a free
abelian group). The fact that \eqref{CD.car2} is cartesian now follows from
the long exact $G(L/F)$-cohomology sequence for the short exact sequence 
\[
0 \to \ker(\delta) \to \source(\delta) \to \target(\delta) \to 0,
\]
together with the vanishing of $H^1(G(L/F),\Hom(X_2(K),A_1(L)))$, an easy
consequence of  Lemmas \ref{lem.TateX2} and \ref{lem.H1Van}. This finishes
the proof of the claim. 

The claim shows that in order to prove \eqref{eq.stronger} (and hence prove
\eqref{eq.weaker} for $i=3$), it is enough to prove \eqref{eq.weaker} for
$i=1,2$. Now for $i=1$ \eqref{eq.weaker} reduces to the (standard) fact
that the inflation to $L/F$ of the global fundamental class for $K/F$ is
$[L:K]$ times the global fundamental class for $L/F$.

To handle $i=2$ we are going to reduce 
to the local case. 
We need to prove that 
\begin{equation}\label{eq.weaker'}
p_2(\alpha_2(L/F))=\inf(\alpha_2(K/F)) \in
H^2(G(L/F),\Hom(X_2(K),\mathbb A_L^\times) ). 
\end{equation} 
The first step is to use Lemma \ref{lem.ShapVar} in order to analyze the 
maps 
\begin{equation}\label{Map.1}
H^2(K/F,\Hom(X_2(K), \mathbb A_K^\times)) \xrightarrow{\inf}
H^2(L/F,\Hom(X_2(K), \mathbb A_L^\times)) 
\end{equation} 
and 
\begin{equation}\label{Map.2} 
H^2(L/F,\Hom(X_2(L), \mathbb A_L^\times))
\xrightarrow{p_2}
H^2(L/F,\Hom(X_2(K), \mathbb A_L^\times)). 
\end{equation} 

From that lemma we have identifications 
\begin{itemize}
\item $H^2(K/F,\Hom(X_2(K), \mathbb A_K^\times))=\prod_{u \in V_F}
H^2(G(K/F)_v,
\mathbb A_K^\times)$,
\item $H^2(L/F,\Hom(X_2(K), \mathbb A_L^\times))= \prod_{u \in V_F}
H^2(G(L/F)_v,
\mathbb A_L^\times)$,
\item $H^2(L/F,\Hom(X_2(L), \mathbb A_L^\times))= \prod_{u \in V_F}
H^2(G(L/F)_w,
\mathbb A_L^\times)$,  
\end{itemize} 
where, for each $u \in V_F$, we have first chosen a
place
$v$ of $K$ lying over $u$ and then chosen a place $w$ of $L$ lying over
$v$. 
With these identifications, the map \eqref{Map.1} becomes the product (over
 $u\in V_F$) of the inflation maps
\begin{equation}\label{Map.3}
H^2(G(K/F)_v, \mathbb A_K^\times) \xrightarrow{\inf} 
H^2(G(L/F)_v, \mathbb A_L^\times)
\end{equation}
coming from  $G(L/F)_v \twoheadrightarrow G(K/F)_v$. 
From the definition of $p_2$ it follows fairly easily that the map 
\eqref{Map.2} becomes the product (over $u\in V_F$) of the maps 
\begin{equation}\label{Map.4}
H^2(G(L/F)_w, \mathbb A_L^\times) \xrightarrow{[L_w:K_v]\Cor} 
H^2(G(L/F)_v, \mathbb A_L^\times),
\end{equation} 
where $\Cor$ is the corestriction map for the subgroup $G(L/F)_w$ of
$G(L/F)_v$. 

At this point it is helpful to consider the diagrams (one for each $u \in
V_F$)  
\begin{equation*}
\begin{CD}
H^2(G(K/F)_v, \mathbb A_K^\times) @>{\inf}>> 
H^2(G(L/F)_v, \mathbb A_L^\times) @<{[L_w:K_v]\Cor}<< H^2(G(L/F)_w,
\mathbb A_L^\times)
\\
@A{i_K}AA@A{i_L}AA @A{i_L}AA \\
H^2(G(K/F)_v, K_v^\times) @>{\inf}>> 
H^2(G(L/F)_v, L_v^\times) @<{[L_w:K_v]\Cor}<< H^2(G(L/F)_w,
L_v^\times) \\
@| @V{Sh}VV @AjAA \\
H^2(G(K/F)_v, K_v^\times) @>{\inf}>> H^2(G(L/F)_w, L_w^\times)
@<{[L_w:K_v]}<< H^2(G(L/F)_w, L_w^\times)
\end{CD}
\end{equation*}
where $L_v:=L \otimes_K K_v=\prod_{w'|v}L_{w'}$, the map $Sh$ is the Shapiro
isomorphism, and the maps $i_K$, $i_L$, $j$ are (induced by) the obvious
inclusions $K_v^\times \hookrightarrow \mathbb A_K^\times$, 
$L_v^\times \hookrightarrow \mathbb A_L^\times$,  
$L_w^\times \hookrightarrow L_v^\times$, respectively.

Unwinding the definition of the adelic fundamental class $\alpha_2$, we see
that \eqref{eq.weaker'} is equivalent to the equality, for all $u \in V_F$,
of the elements 
\[
\beta'_u,\beta''_u \in 
H^2(G(L/F)_v, \mathbb A_L^\times)
\]
defined by  $\beta'_u:=\inf (i_K(\alpha(K_v/F_u)))$ and 
$\beta''_u:=[L_w:K_v]\Cor (i_Lj(\alpha(L_w/F_u)))$. Now it is part of local
classfield theory that $[L_w:K_v]\alpha(L_w/F_u)$ is equal to the inflation
of $\alpha(K_v/F_u)$. So \eqref{eq.weaker'} follows from the
commutativity of the big diagram above. The commutativity of the left top
square comes from the naturality of inflation. The commutativity of the
right top square comes from the naturality of corestriction. The
commutativity of the left bottom square is easy to check,  using that the
Shapiro isomorphism is given by restriction (for $G(L/F)_w \subset
G(L/F)_v$) followed by the natural projection $L_v^\times
\twoheadrightarrow L_w^\times$. The commutativity of the right lower square
follows from Lemma \ref{lem.IndCor}. So we are done proving the  equality 
\eqref{eq.weaker'} for $i=2$.

We have finished the proof that the maps $\tilde p_i$ exist. Now we need to
establish the uniqueness statement asserted in the statement of the lemma. 
For this we just need to prove the vanishing of
$H^1(G(L/F),\Hom(X_i(K),A_i(L)))$ for $i=1,2,3$. 

For $i=1$  we just need  the vanishing of $H^1(G(L/F),
\mathbb A_L^\times/L^\times)$, and this is one of the standard results of
global classfield theory (see Lemma  \ref{lem.H1Van}).  
For $i=2$ the desired vanishing follows from 
 Lemmas \ref{lem.HrZS} and \ref{lem.H1Van}. 

For $i=3$ we must prove the vanishing of $H^1(G(L/F),\Hom(X_3(K),A_3(L))$. 
We claim that cup product with $\alpha_3(L/F)$ yields  isomorphisms
\[
H^r(G(L/F),\Hom(X_3(K),X_3(L))) \simeq H^{r+2}(G(L/F),\Hom(X_3(K),A_3(L)))
\] 
for all $r \in \mathbb Z$. Indeed, $X_3(K)$ lies in the class 
$\mathcal C(X_3(L),A_3(L),\alpha_3(L/F))$ of Definition \ref{def.calC}, as 
one sees from parts (2) and  (5) of Lemma
\ref{lem.calC}.  

So we just need to check that $H^{-1}(G(L/F),\Hom(X_3(K),X_3(L)))$
vanishes,  
and this follows from 
 Lemma \ref{lem.H-10S}(3). In
that lemma we  take $\epsilon$ to be the obvious surjection $S_L
\twoheadrightarrow S_K$. To apply the lemma, we need the vanishing of 
$H^{-1}(G',X_3(L))$ for every subgroup $G'$ of $G(L/F)$, and this  follows
from   Tate's isomorphism (cup product with the restriction to $G'$ 
of $\alpha_3(L/F)$) 
\[
H^{-1}(G',X_3(L)) \simeq H^{1}(G',A_3(L))
\]
 together with Lemma \ref{lem.H1Van}. The proof of Lemma \ref{lem.tildePiExists}
 is finally complete. 
\end{proof}

As a consequence of Lemma \ref{lem.tildePiExists}, we obtain a well-defined map 
\begin{equation}  \label{>3}
\tilde p_i^*:H^1_{Y_i(K)}(\mathcal
E_i(K/F)^{\inf},M \otimes A_i(L)) \to  H^1_{\alg}(\mathcal E_i(L/F),M
\otimes A_i(L)). 
\end{equation}
When $S=V_F$ and  $i=3$, this is an instance of the map \eqref{eq.NewBnd}.
In general one uses the map $\id_M \otimes p_i:Y_i(K) \to Y_i(L)$ to define
the cocycle-level map 
\[
(\nu,x) \mapsto \bigl((\id_M \otimes p_i) (\nu), x \circ \tilde p_i\bigr).
\]

{\bf Step 3.}
Define the arrow \eqref{>1}  as the composition of
\eqref{>2} and \eqref{>3}.

\subsection{Global inflation isomorphisms} Now we are in a position to
prove our main result on inflation in the global situation. 
We consider the diagram  
\begin{equation}\label{CD.GlobInf}
\begin{CD}
(M \otimes X_i(K))_{G(K/F)} @>c>> H^1_{\alg}(\mathcal E_i(K/F),M \otimes
A_i(K)) @>r>> (M \otimes X_i(K))^{G(K/F)}
\\ @A{\id_M
\otimes j_i}AA @V{\eqref{>1}}VV @V{\id_M
\otimes p_i}VV\\
(M \otimes X_i(L))_{G(L/F)} @>c>> H^1_{\alg}(\mathcal E_i(L/F),M \otimes
A_i(L)) @>r>> (M \otimes X_i(L))^{G(L/F)}
\end{CD}
\end{equation}
in which the rows are instances of \eqref{eq.GlobalPreInf}.

\begin{lemma}\label{CD.GlobInflation}
For $i=1,2,3$ the diagram \eqref{CD.GlobInf} commutes, and all four arrows in
the left square are isomorphisms.  
\end{lemma}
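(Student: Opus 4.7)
First I would note that since the two horizontal $c$'s are isomorphisms by Lemma \ref{lem.AbTN2} and the left vertical $\id_M\otimes j_i$ is a bijection (because $G(L/K)$ acts trivially on $M=X_*(T)$, as $T$ is split by $K$, and $\gamma_i\colon X_i(L)_{G(L/K)}\simeq X_i(K)$ by Lemma \ref{lem.pandj}(3)), it suffices to prove that the left square commutes; the isomorphism property of \eqref{>1} then follows formally. The plan, modeled on Lemma \ref{lem.LocInf}, is (i) to verify the right square by a cocycle inspection, (ii) to check the outer rectangle by computing norms, and (iii) to extract left-square commutativity after reducing to an $M$ for which the bottom $r$ is injective.

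For (i), I would unwind the two steps defining \eqref{>1} at cocycle level. The map \eqref{>2} leaves the $\nu$-part unchanged, merely reinterpreting $\nu\in Y_i(K)^{G(K/F)}$ as an element of $Y_i(K)^{G(L/F)}$ (using that $G(L/K)$ acts trivially on $Y_i(K)=M\otimes X_i(K)$), while \eqref{>3} sends $(\nu,x)$ to $((\id_M\otimes p_i)(\nu),x')$ for an appropriate $1$-cocycle $x'$. Since $r$ reads off only the $\nu$-component, the identity $r_L\circ\eqref{>1}=(\id_M\otimes p_i)\circ r_K$ is immediate.

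For (ii), Lemmas \ref{lem.c0} and \ref{lem.cNew} give $r\circ c=N_{K/F}$ in the top row and $r\circ c=N_{L/F}$ in the bottom row, so outer commutativity reduces to
\[
(\id_M\otimes p_i)\circ N_{K/F}\circ(\id_M\otimes j_i)=N_{L/F}.
\]
Choosing coset representatives for $G(L/F)/G(L/K)=G(K/F)$ and invoking $p_i\circ j_i=N_{L/K}$ from Lemma \ref{lem.pandj}(1) converts this into the standard factorization $N_{L/F}=\bigl(\sum_{\sigma}\sigma\bigr)\circ N_{L/K}$, which holds because $G(L/K)$ acts trivially on $M$.

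Combining (i) and (ii), the left square commutes whenever $r_L$ is injective, i.e.\ whenever $H^1(G(L/F),M\otimes A_i(L))=0$, by the inflation--restriction sequence of subsection \ref{sub.InfRes}. If $M$ is free of finite rank over $\mathbb Z[G(K/F)]$, then, inflated to $G(L/F)$, it is a direct sum of copies of $\mathrm{Ind}_{G(L/K)}^{G(L/F)}\mathbb Z$, so Shapiro's lemma together with Lemma \ref{lem.H1Van} delivers the vanishing. For general $M$, I would choose a surjection $M'\twoheadrightarrow M$ of $G(K/F)$-modules with $M'$ free of finite rank over $\mathbb Z[G(K/F)]$ and exploit the functoriality of the whole diagram in $M$: right-exactness of coinvariants gives a surjection $(M'\otimes X_i(L))_{G(L/F)}\twoheadrightarrow(M\otimes X_i(L))_{G(L/F)}$, so commutativity for $M'$ transfers to $M$. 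The only genuine obstacle I anticipate is bookkeeping --- making the cocycle-level recipe for \eqref{>1} in step (i) explicit enough that the transformation $\nu\mapsto(\id_M\otimes p_i)(\nu)$ of the $\nu$-component is manifestly transparent.
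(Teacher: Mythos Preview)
Your proof is correct and follows essentially the same approach as the paper's: reduce to showing the left square commutes, verify the right square and outer rectangle directly (the latter via $p_i\circ j_i=N_{L/K}$), deduce left-square commutativity when $r_L$ is injective, establish this injectivity for $M$ free of finite rank over $\mathbb Z[G(K/F)]$, and then pass to general $M$ by functoriality and a surjection from such a free module. Your treatment is in fact slightly more explicit than the paper's in justifying the $H^1$-vanishing via Shapiro and Lemma~\ref{lem.H1Van} and in spelling out the norm factorization for the outer rectangle.
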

\begin{proof}
The two horizontal arrows $c$ are isomorphisms. So too is the left vertical
arrow, because it is obtained by applying the functor of
$G(K/F)$-coinvariants to the isomorphism 
\[
(M\otimes
X_i(L))_{G(L/K)} = M \otimes X_i(L)_{G(L/K)} \xrightarrow{\id_M
\otimes\gamma_i} M \otimes X_i(K)
\] 
(recall that $\gamma_i$ was defined
in  Lemma \ref{lem.pandj}(3)). Since three of the four arrows in the
left square are isomorphisms, we will know that the fourth one is too, once
we have proved that the diagram commutes. 

We are going to prove that the diagram commutes by the method we used in 
the local case. We must show that \eqref{CD.GlobInf}
commutes for all
$G(K/F)$-modules $M$ that are free of finite rank as $\mathbb Z$-modules. 
One sees directly that the right square commutes, and it follows easily from 
Lemma \ref{lem.pandj}(1) that the outer rectangle also commutes. It remains
to show that the left square commutes.  We are going to use that all the maps
in the left square
 are functorial in
$M$.  First let us treat the special case  
 when $M$ is free of finite rank as $\mathbb Z[G(K/F)]$-module. 

In this special case it is easily checked that $H^1(G(K/F),M
\otimes A_i(K))$ vanishes, and the same is true with $K/F$ replaced by
$L/F$. Therefore   the  horizontal arrows $r$ (coming from the
inflation-restriction sequence discussed in subsection \ref{sub.InfRes})
are both injective. So the commutativity of the
right square and outer rectangle  implies that of the left square in this
special case.

 In the general case we choose a finite free 
$\mathbb Z[G(K/F)]$-module $M'$ and a surjective Galois-equivariant map $M'
\twoheadrightarrow M$.  The desired commutativity for $M$ follows
from the known commutativity for $M'$, simply because the natural
map 
\[
(M' \otimes X_i(L))_{G(L/F)} \to (M \otimes X_i(L))_{G(L/F)}
\] 
is surjective. 
\end{proof}

\subsection{Definition of the groups $B_i(F,T)$ } \label{sub.DefBiT}
Let $T$ be a torus defined over $F$. We choose a separable algebraic closure
$\bar F$ of $F$.  For
$i=1,2,3$ we  put 
\begin{equation}\label{def.BiT}
B_i(F,T):=\injlim_{\mathcal K} H^1_{\alg}(\mathcal E_i(K/F),M \otimes
A_i(K)),
\end{equation}
the direct limit being taken over the set $\mathcal K$ of finite Galois
extensions
$K$ of
$F$ in $\bar F$ such that $T$ splits over $K$.

As an immediate consequence of Lemma \ref{CD.GlobInflation}, we obtain for
$i=1,2,3$ a canonical isomorphism 
\begin{equation}\label{eq.BiT}
\projlim_{\mathcal K} (M \otimes X_i(K))_{G(K/F)} = B_i(F,T),
\end{equation} 
with $\mathcal K$ as before. 
 As we saw in that lemma, the transition morphisms in the
projective system are all isomorphisms. So the real content of
\eqref{eq.BiT} is that $B_i(F,T)$ can be identified with any of the groups 
$(M \otimes X_i(K))_{G(K/F)}$ (for $K \in \mathcal K$), these all being
canonically isomorphic to each other.  

\subsection{Definition of global inflation maps for linear algebraic groups} 

We have already defined three global inflation maps 
\begin{equation}\label{>1NewEq}
H^1_{\alg}(\mathcal E_i(K/F),M \otimes A_i(K)) \to H^1_{\alg}(\mathcal
E_i(L/F),M \otimes A_i(L)) 
\end{equation} 
for any $F$-torus $T$  split by $K$ (with $M=X_*(T)$). We did so for any sufficiently 
big subset $S$ of $V_F$.  
In this subsection we take $S=V_F$. For $i=3$ the inflation map \eqref{>1NewEq} 
then becomes 
\begin{equation}\label{>1NewEq2}
H^1_{\alg}(\mathcal E_3(K/F),T(K)) \to H^1_{\alg}(\mathcal
E_3(L/F),T(L)). 
\end{equation} 

We are now going to generalize \eqref{>1NewEq2} to an inflation map 
\begin{equation}\label{>1NewEq3}
H^1_{\alg}(\mathcal E_3(K/F),G(K)) \to H^1_{\alg}(\mathcal
E_3(L/F),G(L)) 
\end{equation} 
defined for any linear algebraic $F$-group $G$. This is easy to do; all 
the real work was done in proving in Lemma \ref{lem.tildePiExists}.  
Just as in the local case the global inflation map \eqref{>1NewEq3} is defined as 
the composed map 
\begin{equation}\label{>1NewEq4}
H^1_{\alg}(\mathcal E_3(K/F),G(K)) \to H^1_{\alg}(\mathcal E_3(K/F)^{\inf},G(L)) 
\xrightarrow{\tilde p_3^*} H^1_{\alg}(\mathcal E_3(L/F),G(L)) 
\end{equation} 
with the first arrow as in Example \ref{Ex.AbsInf} and the second one as in 
subsection \ref{sub.ChBnd}.

\section{The natural transformations $\kappa_G$ and $\bar\kappa_G$ for $H^1_{\alg}$} 
\subsection{Assumptions and notation for this section}\label{sub.NatKapAsmpt}
Let $K/F$ be a finite Galois extension of fields. In this section we
consider a Tate-Nakayama triple $(X,A,\alpha)$ for the Galois group
$G(K/F)$ such that 
\begin{itemize}
\item $X$ is torsion-free as abelian group, and 
\item $A$ is the $G(K/F)$-module $K^\times$. 
\end{itemize} 
When $F$ is local or global we have already seen that there exists
a canonical such Tate-Nakayama triple. (The $G(K/F)$-module $X$ is $\mathbb
Z$ in the local case and  $\mathbb Z[V_K]_0$ in the global case.) The point
of working with $(X,A,\alpha)$ as above is that it allows us to treat the
local and global cases simultaneously.

We write $D_X$ for the $F$-group of multiplicative type with $X^*(D_X)=X$.
Then $\Hom(X,A)=D_X(K)$, so $\alpha \in H^2(G(K/F),D_X(K))$ provides us with a
Galois gerb 
\[
1 \to D_X(K) \to \mathcal E \to G(K/F) \to 1,
\]
and  the pointed
set 
$H^1_{\alg}(\mathcal E, G(K))$ is defined for each linear algebraic
$F$-group
$G$. 

\subsection{Review of the algebraic fundamental group} 
Let $G$ be a
  connected reductive $F$-group  split by $K$.  
We write $\Lambda_G$ for
Borovoi's \cite{B2} algebraic fundamental group of $G$. For any maximal
$F$-torus $T$  in $G$ there is a canonical identification
$\Lambda_G=X_*(T)/X_*(T_{\ssc})$ of $G(K/F)$-modules. 

For any torus $T$ split by $K$, we have $\Lambda_T=X_*(T)$.
When the derived group of $G$ is simply connected, the natural map
$\Lambda_G
\to \Lambda_D$ is an isomorphism, where $D$ denotes the quotient of $G$ by
its derived group. When 
$
1 \to Z \xrightarrow{i} G' \xrightarrow{p} G \to 1
$ 
is a $z$-extension, the sequence 
$
0 \to \Lambda_Z \xrightarrow{i} \Lambda_{G'} \xrightarrow{p} \Lambda_{G}
\to 0
$
is easily seen to be exact. 

\subsection{Construction of $\kappa_G$} 
For any $F$-torus $T$ split by $K$, the inverse of the map $c$ appearing in
Lemma \ref{lem.AbTN2} provides us with a canonical isomorphism 
\begin{equation}\label{eq.NatT}
\kappa_T:H^1_{\alg}(\mathcal E, T(K)) \to (\Lambda_T \otimes X)_{G(K/F)}.
\end{equation}
In the next result we consider both $H^1_{\alg}(\mathcal E, G(K)) $ and 
$(\Lambda_G \otimes X)_{G(K/F)}$ as functors from the category of connected
reductive $F$-groups $G$ split by $K$ to the category of pointed sets. 

\begin{proposition}\label{prop.NatG}
There exists a unique natural transformation 
\begin{equation}\label{eq.NatG}
\kappa_G:H^1_{\alg}(\mathcal E, G(K)) \to (\Lambda_G \otimes X)_{G(K/F)}
\end{equation}
that agrees with \eqref{eq.NatT} for $F$-tori split by $K$. 
\end{proposition}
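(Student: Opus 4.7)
The plan is to reduce to the torus case through $z$-extensions, in two stages. First, for $G$ with simply connected derived group, write $D := G/G_{\der}$ and define
\[
\kappa_G := \kappa_D \circ \pi_*
\]
where $\pi\colon G \twoheadrightarrow D$; here $D$ is a torus split by $K$, and one uses the canonical isomorphism $\Lambda_G \cong \Lambda_D$ to identify the targets. This extends \eqref{eq.NatT} since $\pi = \id$ when $G = T$ is already a torus, and naturality within the subcategory of groups with simply connected derived group is immediate from the torus case together with the functoriality of $G \mapsto G/G_{\der}$.

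Second, for arbitrary connected reductive $G$ split by $K$, pick a $z$-extension
\[
1 \to Z \to G' \xrightarrow{p} G \to 1
\]
with $G'_{\der}$ simply connected and $Z$ an induced $K$-split torus. By Proposition \ref{prop.Zact} the map $p_*\colon H^1_{\alg}(\mathcal E, G'(K)) \to H^1_{\alg}(\mathcal E, G(K))$ is surjective with fibers the orbits of $H^1_{\alg}(\mathcal E, Z(K))$. I would then set $\kappa_G(b) := \bar p_*\bigl(\kappa_{G'}(b')\bigr)$ for any lift $b'$ of $b$, where $\bar p_*$ is induced by $p$. Well-definedness comes down to the identity
\[
\kappa_{G'}(z \cdot b') = i_*\bigl(\kappa_Z(z)\bigr) + \kappa_{G'}(b') \qquad \bigl(z \in H^1_{\alg}(\mathcal E, Z(K)),\ i\colon Z \hookrightarrow G'\bigr).
\]
To prove it, project through $\pi'\colon G' \twoheadrightarrow D' := G'/G'_{\der}$: the cocycle-level action $(\mu,y) \cdot (\nu',x') = (\mu\nu', yx')$ of subsection \ref{sub.SESLAG} becomes the ordinary group operation on the abelian $H^1_{\alg}(\mathcal E, D'(K))$, where the additivity of $\kappa_{D'} = c^{-1}$ (Lemma \ref{lem.AbTN2}) is built in. Transporting via $\Lambda_{G'} \cong \Lambda_{D'}$ gives the identity, and exactness of $\Lambda_Z \xrightarrow{i_*} \Lambda_{G'} \xrightarrow{\bar p_*} \Lambda_G \to 0$ then kills the $i_*\kappa_Z(z)$ contribution. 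Independence of the chosen $z$-extension is obtained by the standard fiber-product argument: two choices $G'_1 \to G$ and $G'_2 \to G$ have a common refinement $G'_1 \times_G G'_2$, and first-stage naturality forces the two candidate definitions of $\kappa_G$ to agree with the one computed from the refinement.

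For naturality under a morphism $f\colon G_1 \to G_2$, choose compatible $z$-extensions: start from a $z$-extension $G'_2 \to G_2$, pull back to a central extension of $G_1$ by the induced torus in $G'_2$, and refine to a $z$-extension $G'_1 \to G_1$ that lifts $f$; this reduces the compatibility to Stage 1, which is routine. Uniqueness follows by running the construction in reverse: naturality applied to $\pi\colon G \to D$ combined with $\Lambda_G \cong \Lambda_D$ forces $\kappa_G = \kappa_D \circ \pi_*$ whenever $G_{\der}$ is simply connected, and then naturality applied to a $z$-extension $p\colon G' \to G$ together with the surjectivity of $p_*$ pins down $\kappa_G$ in general.

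The main obstacle will be the additivity identity underlying well-definedness in the second stage. The subtlety is that $H^1_{\alg}(\mathcal E, G'(K))$ carries no natural abelian group structure, so the formula cannot be derived formally; it must be extracted by pushing down to $D'$, where $\kappa_{D'}$ is a genuine group homomorphism and additivity is automatic. The identification $\Lambda_{G'} = \Lambda_{D'}$, valid precisely because $G'_{\der}$ is simply connected, is what enables the transfer back, and is the structural reason for insisting on a $z$-extension in the first place.
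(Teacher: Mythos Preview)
Your proposal is correct and follows essentially the same two-stage approach as the paper: first define $\kappa_G$ via $D=G/G_{\der}$ when $G_{\der}$ is simply connected, then descend through a $z$-extension using Proposition~\ref{prop.Zact} and the equivariance of $\kappa_{G'}$ under $H^1_{\alg}(\mathcal E, Z(K))$. The one small omission is that invoking Proposition~\ref{prop.Zact} requires checking its two standing assumptions on $\mathcal E$ (that $X$ is torsion-free and that $H^1_{\alg}$ is right-exact on short exact sequences of $K$-split tori), which the paper verifies explicitly using the isomorphism \eqref{eq.NatT} and right-exactness of $M\mapsto (M\otimes X)_{G(K/F)}$; you should note this as well.
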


\begin{proof}
As usual we construct $\kappa_G$ in two stages. In the first stage we
consider only those $G$ whose derived group is simply connected. We are 
then forced to define $\kappa_G$ as the unique map making 
\begin{equation}\label{CD.UnwindKappa1}
\begin{CD}
H^1_{\alg}(\mathcal E, G(K)) @>{\kappa_G}>> (\Lambda_G \otimes X)_{G(K/F)}
\\ @VVV @| \\
H^1_{\alg}(\mathcal E, D(K)) @>{\kappa_D}>> (\Lambda_D \otimes X)_{G(K/F)}
\end{CD}
\end{equation}
commute, where $D$ is the quotient of $G$ by its derived group. It is easy
to see that $\kappa_G$ is functorial in $G$ (for homomorphisms between
groups with simply connected derived group). 

In the second stage we use $z$-extensions. As is well-known, for any
connected reductive $F$-group $G$ split by $K$, there exists an
extension 
\[
1 \to Z \xrightarrow{i} G' \xrightarrow{p} G \to 1 
\] 
such that 
\begin{itemize}
\item $Z$ is a central torus in $G'$, 
\item $Z$ is obtained by Weil restriction of scalars from a split
$K$-torus, and 
\item $G'_{\der}$ is simply connected. 
\end{itemize}
This is true even when $F$ is not perfect. Indeed it is very easy to
construct an extension as above if one only asks that $Z$ be a central
subgroup of multiplicative type (take $G'$ to be the product of $G_{\ssc}$
and the biggest central torus in $G$). Then push out along some embedding $Z
\hookrightarrow Z'$ such that  
\begin{itemize}
\item $Z'$ is  a torus whose character group is a finitely
generated free module over $\mathbb Z[G(K/F)]$, and 
\item  $X^*(Z') \to X^*(Z)$ is 
surjective 
\end{itemize}
in order to obtain the desired $z$-extension. 

We contend that 
\begin{itemize}
\item The  map  
\[
p:H^1_{\alg}(\mathcal E, G'(K)) \to H^1_{\alg}(\mathcal E, G(K))
\]
 identifies $H^1_{\alg}(\mathcal E, G(K))$ 
with the quotient of $H^1_{\alg}(\mathcal E, G'(K))$ by
the action of  $H^1_{\alg}(\mathcal E, Z(K))$. 

\item The map $(\Lambda_{G'} \otimes X)_{G(K/F)} 
\xrightarrow{p}
(\Lambda_{G} \otimes X)_{G(K/F)}$ identifies $(\Lambda_{G} \otimes
X)_{G(K/F)}$ with the quotient of 
$(\Lambda_{G'} \otimes X)_{G(K/F)}$ by the action of 
$(\Lambda_Z \otimes X)_{G(K/F)}$. 

\item 
The map $\kappa_{G'}$ constructed in the
first stage is  equivariant with respect to the group $H^1_{\alg}(\mathcal
E, Z(K))=(\Lambda_Z
\otimes X)_{G(K/F)}$.
\end{itemize}

The first item 
will  follow from Proposition \ref{prop.Zact} once we check that $\mathcal
E$ satisfies the two assumptions made in subsection \ref{sub.StrR}. It is
clear that Assumption 1 holds, because we are assuming in this section that
$X^*(D_X)=X$ is torsion-free.  Assumption 2 holds due to 
the isomorphism \eqref{eq.NatT} and the right-exactness of the functor $M
\mapsto (M
\otimes X)_{G(K/F)}$. 

The second item is a restatement of the exactness of 
\[
 (\Lambda_Z \otimes X)_{G(K/F)} 
\xrightarrow{i} (\Lambda_{G'} \otimes X)_{G(K/F)} 
\xrightarrow{p}
(\Lambda_{G} \otimes X)_{G(K/F)}
\to 0,
\] 
itself a consequence of the exactness of 
 $
0 \to \Lambda_Z \xrightarrow{i} \Lambda_{G'} \xrightarrow{p} \Lambda_{G}
\to 0. 
$ 
The third item is evident from the way $\kappa_{G'}$ was defined.

The three items we just verified imply that there exists a unique
bottom horizontal arrow making   the diagram 
\begin{equation}\label{CD.UnwindKappa2}
\begin{CD}
H^1_{\alg}(\mathcal E, G'(K)) @>{\kappa_{G'}}>> (\Lambda_{G'} \otimes
X)_{G(K/F)} \\ @VpVV @VpVV \\
H^1_{\alg}(\mathcal E, G(K)) @>>> (\Lambda_{G} \otimes
X)_{G(K/F)} 
\end{CD}
\end{equation}
commute, and clearly we are forced to take $\kappa_G$ to be this
arrow.  
 It follows easily from Lemma 2.4.4 in
\cite{CTT} that
$\kappa_G$ is well-defined (independent of the choice of $z$-extension $G'$)
and that it is functorial in $G$. 
\end{proof}

\subsection{Construction of $\bar\kappa_G$}
We continue with $(X,A,\alpha)$ as in subsection 
\ref{sub.NatKapAsmpt}.  
For any intermediate field $E$ (for $K/F$) we obtain 
a Tate-Nakayama triple $(X,A,\alpha_E)$ for $K/E$, where $\alpha_E$ denotes the 
restriction of $\alpha$ to the subgroup $G(K/E)$ of $G(K/F)$. We denote 
by $\mathcal E'$ the preimage of $G(K/E)$ under $\mathcal E \twoheadrightarrow G(K/F)$. 
We again  consider a connected reductive $F$-group $G$ split by $K$.

For the definition of the restriction map occurring as the left vertical map in the next lemma see 
Example \ref{Ex.ResMpp}. 

\begin{lemma}\label{lem.KapEnF}
The square 
\begin{equation}\label{CD.KappaEandF}
\begin{CD}
H^1_{\alg}(\mathcal E, G(K)) @>{\kappa_{G}}>> (\Lambda_{G} \otimes
X)_{G(K/F)} \\ @V{\Res}VV @VVV \\
H^1_{\alg}(\mathcal E', G(K)) @>{\kappa_{G}}>> (\Lambda_{G} \otimes
X)_{G(K/E)} 
\end{CD}
\end{equation} 
commutes. Here the right vertical arrow sends the class of 
$y \in \Lambda_G \otimes X$ to the class of $\sum_{\sigma \in G(K/E)\backslash 
G(K/F)} \sigma y$. 
\end{lemma}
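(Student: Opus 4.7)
The plan is to reduce the commutativity of \eqref{CD.KappaEandF} to the case of a torus, exactly mirroring the two-stage construction of $\kappa_G$ in Proposition \ref{prop.NatG}. The key observation is that both vertical arrows in \eqref{CD.KappaEandF} are natural in $G$: the restriction map $\Res$ on $H^1_{\alg}$ is functorial in the coefficient group, and the map $y \mapsto \sum_{\sigma \in G(K/E)\backslash G(K/F)}\sigma y$ on $(\Lambda_G \otimes X)$-coinvariants is likewise functorial in $\Lambda_G$. So if the square commutes for the torus $D = G/G_{\der}$, then using \eqref{CD.UnwindKappa1} it commutes for any connected reductive $G$ whose derived group is simply connected. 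Then, given an arbitrary $G$ and a $z$-extension $1\to Z \to G' \to G \to 1$, the commutativity of \eqref{CD.UnwindKappa2}, together with the fact (proved in Proposition \ref{prop.NatG}) that $H^1_{\alg}(\mathcal E, G'(K)) \twoheadrightarrow H^1_{\alg}(\mathcal E, G(K))$ and $(\Lambda_{G'} \otimes X)_{G(K/F)} \twoheadrightarrow (\Lambda_G \otimes X)_{G(K/F)}$ are both surjective, propagates the commutativity from $G'$ (to which the simply connected case applies) to $G$. Naturality of $\Res$ with respect to the map $p:G'\to G$ (Example \ref{Ex.ResMpp} is a special case of the general construction of subsection \ref{sub.NatRespG}, which is manifestly functorial in the coefficient group) is what allows this reduction to go through cleanly.

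Thus everything reduces to the case where $G = T$ is a torus split by $K$. In that case $\kappa_T = c^{-1}$ by the definition preceding \eqref{eq.NatT}, so the commutativity of \eqref{CD.KappaEandF} is equivalent to the commutativity of the square
\[
\begin{CD}
(\Lambda_T \otimes X)_{G(K/F)} @>c>> H^1_{\alg}(\mathcal E, T(K)) \\
@VVV @V{\Res}VV \\
(\Lambda_T \otimes X)_{G(K/E)} @>c>> H^1_{\alg}(\mathcal E', T(K))
\end{CD}
\]
in which the left vertical map sends $y$ to $\sum_{\sigma \in G(K/E)\backslash G(K/F)}\sigma y$. But this is exactly Lemma \ref{lem.Trick} applied with $M = X_*(T) = \Lambda_T$ and the subgroup $G(K/E) \subset G(K/F)$ (noting that $\mathcal E'$ is indeed the preimage of $G(K/E)$ in $\mathcal E$, by definition of the restriction setup in subsection \ref{sub.ResTN}). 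So the torus case is already in hand.

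The only point that needs a little care is the reduction step through the $z$-extension: one must verify that in \eqref{CD.UnwindKappa2}, the vertical arrow $p$ on the right intertwines the two coset-sum maps for $G'$ and for $G$, and similarly that $\Res$ for $G'$ maps to $\Res$ for $G$ under $p$. The first is immediate from the definition of the coset-sum map and the $G(K/F)$-equivariance of $p:\Lambda_{G'}\to\Lambda_G$; the second follows from the functoriality of $\Res$ in the coefficient group. I expect no genuine obstacle here — the whole argument is a formal diagram chase once Lemma \ref{lem.Trick} provides the torus case and the naturality of $\kappa$ provides the reductions.
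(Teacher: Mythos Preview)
Your proposal is correct and follows essentially the same approach as the paper: reduce to the simply connected derived group case via the $z$-extension diagram \eqref{CD.UnwindKappa2}, then to the torus case via \eqref{CD.UnwindKappa1}, and finally invoke Lemma \ref{lem.Trick} for tori. The paper's proof is terser but structurally identical; your additional remarks on naturality of $\Res$ and the coset-sum map simply make explicit what the paper leaves implicit.
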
 

\begin{proof} Using 
\eqref{CD.UnwindKappa2} (and its analog for $\mathcal E'$), we reduce to the case in which the derived group of $G$ 
is simply connected. Next, using \eqref{CD.UnwindKappa1} (for $\mathcal E$ and $\mathcal E'$), 
 we reduce to the case in which $G$ is a torus. Finally, tori are handled by Lemma \ref{lem.Trick}.  
\end{proof} 

In the extreme case when the intermediate field $E$ is $K$, the bottom horizontal arrow 
in \eqref{CD.KappaEandF} is a map 
\begin{equation}\label{eq.KappaZeroPrelim}
\Hom_K(D_X,G)/G(K) \to \Lambda_G \otimes X.
\end{equation} 
We claim that the map \eqref{eq.KappaZeroPrelim} is $G(K/F)$-equivariant.  Indeed, the strategy of the 
proof of the previous lemma reduces us to the case of tori, for which the claim is obvious. We now 
define 
\begin{equation}\label{eq.KappaZero}
\bar\kappa_G:\bigl[\Hom_K(D_X,G)/G(K)\bigr]^{G(K/F)} \to (\Lambda_G \otimes X)^{G(K/F)}
\end{equation} 
to be the map obtained by applying the functor of $G(K/F)$-invariants to  
\eqref{eq.KappaZeroPrelim}. 

\subsection{Compatibility of $\kappa_G$ and $\bar\kappa_G$}

As an immediate consequence of Lemma \ref{lem.KapEnF} we obtain the following result.  

\begin{lemma}\label{lem.NewtonKappaPrelim}
The square 
\begin{equation}\label{CD.KappaEandFNew}
\begin{CD}
H^1_{\alg}(\mathcal E, G(K)) @>{\kappa_{G}}>> (\Lambda_{G} \otimes
X)_{G(K/F)} \\ @V{Newton}VV @VNVV \\
\bigl[\Hom_K(D_X,G)/G(K)\bigr]^{G(K/F)} @>{\bar\kappa_{G}}>> (\Lambda_{G} \otimes
X)^{G(K/F)} 
\end{CD}
\end{equation} 
commutes. The right vertical arrow $N$ is the norm map  for $K/F$. 
\end{lemma}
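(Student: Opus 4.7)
The plan is to deduce this from Lemma \ref{lem.KapEnF} by specializing to the extreme case $E=K$. When $E=K$ the subgroup $G(K/E)$ is trivial, so the preimage $\mathcal E'$ of $G(K/E)$ in $\mathcal E$ is precisely $D_X(K)$. For such an $\mathcal E'$, an algebraic $1$-cocycle $(\nu,x)$ in $G(K)$ is entirely determined by its first component $\nu$: the compatibility $x_d=\nu(d)$ for $d\in D_X(K)$ pins down $x$, and there is no remaining Galois group on which to impose a further cocycle condition. Consequently there is a canonical identification
\[
H^1_{\alg}(\mathcal E',G(K))=\Hom_K(D_X,G)/G(K),
\]
and under this identification the restriction map $\Res$ is exactly the Newton map of \eqref{eq.NuCpt}, while the right vertical map in \eqref{CD.KappaEandF} becomes the map $y\mapsto \sum_{\sigma\in G(K/F)}\sigma y$ on $(\Lambda_G\otimes X)_{G(K/F)}$ (the sum is now over all of $G(K/F)$, since $G(K/E)\backslash G(K/F)=G(K/F)$).

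Next I would observe that both vertical arrows of Lemma \ref{lem.KapEnF} (for $E=K$) automatically land in the $G(K/F)$-fixed points of their targets. For the left arrow this is the content of the definition of the Newton map in \eqref{eq.NuCpt}. For the right arrow it is immediate, because $\sum_\sigma\sigma y$ is visibly $G(K/F)$-invariant for any representative $y\in\Lambda_G\otimes X$. Hence Lemma \ref{lem.KapEnF} at $E=K$ gives a commutative square
\[
\begin{CD}
H^1_{\alg}(\mathcal E,G(K)) @>{\kappa_G}>> (\Lambda_G\otimes X)_{G(K/F)}\\
@V{\mathrm{Newton}}VV @VNVV\\
\bigl[\Hom_K(D_X,G)/G(K)\bigr]^{G(K/F)} @>{\bar\kappa_G^{\ast}}>> (\Lambda_G\otimes X)^{G(K/F)}
\end{CD}
\]
where $N$ is the norm map (realized by $y\mapsto \sum_\sigma\sigma y$ on coinvariants, landing in invariants) and $\bar\kappa_G^{\ast}$ is the bottom horizontal arrow of \eqref{CD.KappaEandF} with $E=K$, restricted to Galois fixed points.

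The only remaining verification is that $\bar\kappa_G^{\ast}$ agrees with the map $\bar\kappa_G$ defined in \eqref{eq.KappaZero}. By construction $\bar\kappa_G$ is the $G(K/F)$-invariants of \eqref{eq.KappaZeroPrelim}, which in turn is the bottom horizontal arrow of \eqref{CD.KappaEandF} specialized at $E=K$; so this is a tautology given that we already checked $G(K/F)$-equivariance of \eqref{eq.KappaZeroPrelim} in the discussion just preceding \eqref{eq.KappaZero}. No step here is a genuine obstacle: the only point that requires a moment's care is the bookkeeping that identifies $H^1_{\alg}(D_X(K),G(K))$ with $\Hom_K(D_X,G)/G(K)$ and matches the two descriptions of the bottom arrow, and this is immediate from the definitions.
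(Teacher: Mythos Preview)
Your proof is correct and follows exactly the approach the paper takes: the paper's proof is the single sentence ``This follows from Lemma \ref{lem.KapEnF},'' and you have simply unpacked that sentence by specializing Lemma \ref{lem.KapEnF} to $E=K$, identifying $H^1_{\alg}(D_X(K),G(K))$ with $\Hom_K(D_X,G)/G(K)$, and checking that the resulting maps are the Newton map, the norm, and $\bar\kappa_G$ respectively. The bookkeeping you flag (matching the bottom arrow with $\bar\kappa_G$) is exactly what the paper handles in the paragraph defining \eqref{eq.KappaZeroPrelim} and \eqref{eq.KappaZero} just before the lemma.
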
 

\section{The set $B(F,G)$}\label{sec.DiscSetBFG}

\subsection{Notation} 
In this section $F$ is a local or global field. We fix a separable closure
$\bar F$ of $F$ and put $\Gamma=\Gal(\bar F/F)$.  For any finite Galois
extension
$K/F$ inside $\bar F$ there is a canonical Tate-Nakayama triple
$(X,A,\alpha)$ in which
$A$ is the
$G(K/F)$-module
$K^\times$. In this section we   denote this triple by 
$(X(K),K^\times,\alpha(K/F))$  in order to keep track of its
dependence on
$K/F$, and we write $\mathbb D_{K/F}$ for the $F$-group of multiplicative
type with character group $X(K)$.

  In the local case the triple is the one considered
in section
\ref{sec.LTN}, so
$X(K)=\mathbb Z$, $\mathbb D_{K/F}=\mathbb G_m$ and $\alpha(K/F)$ is the
fundamental class in
$H^2(K/F,\mathbb G_m(K))$. In the global case  $X(K)$, $\alpha(K/F)$
are  the objects
$X_3$,  $\alpha_3$ from subsection \ref{sub.GlobTNT}, with $S$
chosen to be the set of all places of $F$. So, in the global case, 
$X(K)=\mathbb Z[V_K]_0$, $\mathbb D_{K/F}=\mathbb T_{K/F}$ and
$\alpha(K/F)$ is Tate's canonical class  $\alpha_3 \in H^2(K/F,\mathbb
T_{K/F}(K))$. 

We choose an extension  
 \[
1 \to \mathbb D_{K/F}(K) \to \mathcal E(K/F) \to G(K/F) \to 1 
\] 
whose associated cohomology class is $\alpha(K/F)$. 
In this section we are using a unified system of notation for the local and
global cases, so that we can give uniform statements and proofs.  In the
global case   
$\mathcal E(K/F)$ was previously denoted by $\mathcal E_3(K/F)$.

\subsection{Definition of $B(F,G)$} 

For any linear algebraic $F$-group $G$ we  
 define a pointed set $B(F,G)$ 
by 
\[
B(F,G):=\injlim_K H^1_{\alg}(\mathcal E(K/F), G(K)).
\]
The colimit is taken over the set of finite Galois extensions $K$ of
$F$ in
$\bar F$. For $L \supset K$, the transition map is  the inflation
map 
\begin{equation}\label{eq.CurrentInf}
H^1_{\alg}(\mathcal E(K/F), G(K)) \to H^1_{\alg}(\mathcal E(L/F), G(L))
\end{equation}
defined in  section \ref{sec.InflationLocGlob}  
(see \eqref{eq.arr3} in the local case and
\eqref{>1NewEq3} in the global case). 
The transition maps are easily seen to be transitive.  

In order to define $B(F,G)$ we had to choose a separable closure $\bar F$. 
Just as for Galois cohomology, this choice is unimportant: an isomorphism 
$\phi$ from $\bar F$ to another separable closure $\bar F'$ induces an isomorphism 
$\phi_*$ 
from the set $B(F,G)$ formed using $\bar F$ to the one formed using $\bar F'$, 
and this induced isomorphism is independent of the choice of $\phi$. 
(To see that $\phi_*$ is well-defined one uses the vanishing of the groups 
$H^1(G(K/F),\mathbb D_{K/F}(K))$, and to see that $\phi_*$ is independent of the 
choice of $\phi$, one uses that inner automorphisms by elements in $\mathcal E(K/F)$  
induce trivial automorphisms of  $H^1_{\alg}(\mathcal E(K/F),G(K))$.) 

For a torus $T$ over a global field $F$, it is clear from the definitions
that 
\begin{equation}
B(F,T)=B_3(F,T).
\end{equation}
(The group $B_3(F,T)$ was defined in subsection \ref{sub.DefBiT}.) 

\subsection{The maps $p$ and $j$}\label{sub.defp&j} 

Let $K$ and $L$ be finite Galois extensions of $F$ in $\bar F$ with $K
\subset L$. We are now going to define a canonical injection $p:X(K) \to
X(L)$ and a canonical surjection $j:X(L) \to X(K)$. In the local case
$X(K)=X(L)=\mathbb Z$, and we take 
\begin{itemize}
\item $p$ to be multiplication by $[L:K]$, 
\item $j$ to be the identity map. 
\end{itemize}
In the global case we take $p$, $j$ to be the maps $p_3$, $j_3$ defined in
subsection \ref{sub.InfSetUp}. 

\begin{lemma}\label{lem.pandjNew}
The following statements hold.  
\begin{enumerate} 
\item $p \circ j=N_{L/K}$.
\item $j \circ  p=[L:K]$.  
\item The map $j:X(L) \twoheadrightarrow X(K)$ factors through the
coinvariants of
$G(L/K)$ on
$X(L)$, inducing an isomorphism 
\[
\gamma:X(L)_{G(L/K)} \to X(K). 
\]
\end{enumerate}
\end{lemma}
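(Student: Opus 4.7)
The plan is to reduce the three assertions to cases that have already been treated, by splitting into the local and global cases according to the definitions of $p$ and $j$ given in subsection~\ref{sub.defp&j}.

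In the local case, both $X(K)$ and $X(L)$ are equal to $\mathbb Z$ with trivial Galois action, the map $j$ is the identity, and $p$ is multiplication by $[L:K]$. Then (1) reduces to the identity $[L:K]\cdot 1=N_{L/K}(1)$, valid because $|G(L/K)|=[L:K]$ and the action is trivial; (2) is the tautology $[L:K]\cdot 1=[L:K]$; and (3) is clear because $X(L)_{G(L/K)}=\mathbb Z$ (trivial action) and $\gamma$ is the identity map $\mathbb Z \to \mathbb Z$.

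In the global case, $p$ and $j$ are by definition the maps $p_3$ and $j_3$ of subsection~\ref{sub.InfSetUp}, and $X(K)=X_3(K)$, $X(L)=X_3(L)$. Thus all three assertions are precisely the content of Lemma~\ref{lem.pandj} for the index $i=3$. In particular, for assertion~(3), the isomorphism $\gamma:X_3(L)_{G(L/K)}\to X_3(K)$ was established there by appealing to Corollary~\ref{cor.CoinX} applied to the extension $L/K$ (noting that the top field is now $L$ rather than $K$), which gave the right-exactness needed to pass the isomorphism $X_2(L)_{G(L/K)}\simeq X_2(K)$ to the subobject $X_3$.

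The only point that is worth checking carefully is that the normalization conventions from subsection~\ref{sub.defp&j} agree with those of subsection~\ref{sub.InfSetUp} in the global case, so that the statements match exactly; this is immediate from the respective definitions. There is no substantive obstacle, as the hard work in the global case has already been done in Lemma~\ref{lem.pandj}.
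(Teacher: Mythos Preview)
Your proposal is correct and follows exactly the same approach as the paper: split into local and global cases, observe the local case is immediate from the definitions, and in the global case invoke Lemma~\ref{lem.pandj} (for $i=3$). The paper's proof is a one-liner saying precisely this, so your additional unpacking of why each item holds is fine but not needed.
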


\begin{proof}
The local case is obvious and the global case is part of Lemma
\ref{lem.pandj}. 
\end{proof} 

\subsection{The protorus $\mathbb D_F$ over $F$} 
We define $\mathbb D_F$ to be the protorus over $F$ whose
character group is 
\[
X^*(\mathbb D_F):=\injlim_K \, X(K),
\]
with transition maps  $p:X(K) \hookrightarrow X(L)$. 
Thus $\mathbb D_F=\projlim_K \mathbb D_{K/F}$. 

\subsection{Concrete description  of $X^*(\mathbb
D_F)$ in the local case}

In this subsection $F$ is a local field. 
We then have $X^*(\mathbb D_F)= \Br^*(F)$, where 
\[
\Br^*(F):= 
\begin{cases}
\mathbb Q &\text{ if $F$ is nonarchimedean,} \\
\frac{1}{2}\mathbb Z &\text{ if $F$ is $\mathbb R$,} \\
\mathbb Z &\text{ if $F$ is $\mathbb C$.} 
\end{cases}
\]
In all cases $\Br^*(F)$ is an extension of $\Br(F)$ by $\mathbb Z$. In
other words, there are natural short exact sequences 
\[
0 \to \mathbb Z \to \Br^*(F) \to \Br(F) \to 0. 
\] 
 
\subsection{Concrete description  of $X^*(\mathbb
D_F)$ in the global case} 

In this subsection $F$ is a global field. 
For  each place $u$ of $F$ we have $X^*(\mathbb
D_{F_u})=\Br^*(F_u)$. 
We are going to introduce an analogous global group $\Br^*(F)$. 
First we recall that $\Br(F)$ can be identified with 
\[
\ker\bigl[\bigoplus_{u \in V_F} \Br(F_u) \to \mathbb
Q/\mathbb Z\bigr], 
\] 
the map to $\mathbb Q/\mathbb Z$ sending $(x_u)_{u \in V_F}$ to 
$\sum_{u \in V_F} x_u$. 
This suggests defining $\Br^*(F)$ as  
\[
\Br^*(F):=\ker\bigl[\bigoplus_{u \in V_F} \Br^*(F_u) \to \mathbb Q\bigr],
\]
the map to $\mathbb Q$ sending $(x_u)_{u \in V_F}$ to 
$\sum_{u \in V_F} x_u$. 
There is then an obvious short exact sequence 
\[
0 \to X(F) \to \Br^*(F) \to \Br(F) \to 0. 
\]
We usually consider $X(K)$ for some finite Galois extension $K/F$, but
in the exact sequence above we are working with $K=F$. However, we can now
apply these definitions to any finite Galois extension $K$ of $F$,
obtaining 
 \[
0 \to X(K) \to \Br^*(K) \to \Br(K) \to 0. 
\]

Now we need to see what happens when we vary $K$. Suppose we have $F
\subset K \subset L \subset \bar F$, with both $K$ and $L$ finite Galois
over $F$. There is then a commutative diagram 
\begin{equation}
\begin{CD}
0 @>>> X(K) @>>> \Br^*(K) @>>> \Br(K) @>>> 0 \\ 
@. @V{p}VV @V{q}VV @V{\Res}VV @. \\
 0 @>>> X(L) @>>> \Br^*(L) @>>>
\Br(L) @>>> 0 .
\end{CD}
\end{equation}
Here $p$ is  our usual map, and $\Res$ is the restriction map for the
extension $L/K$. The map $q$ sends $(x_v)_{v \in V_K} \in \Br^*(K)$ to
$(y_w)_{w \in V_L} \in \Br^*(L)$, with $y_w=[L_w:K_v]x_v$ when $w$ lies
over $v$.

Now pass to the colimit over $K$. Since any element in $\Br(K)$ dies in
$\Br(L)$ for sufficiently large $L$, the colimit of the groups $\Br(K)$
(with restriction maps as transition morphisms) is trivial. 
So we obtain a canonical isomorphism 
\[
X^*(\mathbb D_F)=\injlim_{K} X(K) \simeq \injlim_K \Br^*(K). 
\]
It is easy to check that the natural map $\Br^*(K) \to X^*(\mathbb D_F)$ is
injective, and that its image consists of the fixed points of $\Gal(\bar
F/K)$ on $X^*(\mathbb D_F)$. Similarly, for any finite extension $E$ of $F$ 
in $\bar F$, there is a natural identification of $\Br^*(E)$ with the fixed 
points of $\Gal(\bar F/E)$ on $X^*(\mathbb D_F)$. 

\subsection{Newton map}\label{sub.NewtonMapping} 
The Newton maps \eqref{eq.NuCpt} 
\[ 
H^1_{\alg}(\mathcal E(K/F),G(K)) \to [\Hom_K(\mathbb
D_{K/F},G)/G(K)]^{G(K/F)}
\]
fit together to give a Newton map 
\begin{equation}\label{eq.SlopeMap} 
 B(F,G) \to [\Hom_{\bar F}(\mathbb D_F,G)/G(\bar F)]^{\Gamma}. 
\end{equation} 
The image of $b \in B(F,G)$ under the Newton map is called the 
\emph{Newton point} of $b$. 

The maps $H^1(G(K/F),G(K)) \hookrightarrow H^1_{\alg}(\mathcal E(K/F),G(K))$
fit together to give an injective map 
\begin{equation}
H^1(F,G) \hookrightarrow B(F,G),
\end{equation}
whose image is the kernel of the Newton map \eqref{eq.SlopeMap}.

\subsection{Basic elements} We denote by $Z(G)$ the center of $G$. The
inclusion of $Z(G)$ in $G$ induces an injection  
\begin{equation}\label{eq.TarNewt}
\Hom_F(\mathbb D_F,Z(G)) \hookrightarrow [\Hom_{\bar F}(\mathbb D_F,G)/G(\bar
F)]^{\Gamma}.
\end{equation}

\begin{definition}
We say that $b \in B(F,G)$ is
\emph{basic} if its Newton point 
lies in the image of \eqref{eq.TarNewt}.  We write $B(F,G)_{\bsc}$ for the
set of basic elements in $B(F,G)$. 
\end{definition}

\subsection{Localization} 
Now suppose that $F$ is global, and consider a place $u$ of $F$. In order to 
define $B(F,G)$ and $B(F_u,G)$ we have to choose separable closures $\bar F$ 
and $\bar F_u$, although, as we have already mentioned, this choice is 
of no real importance.  Now we choose some embedding $\bar F \hookrightarrow \bar F_u$ 
over $F$. 
This embedding gives us, for each finite 
Galois extension $K$ of $F$ in $\bar F$, a place $v$ of $K$ lying over $u$.

The localization maps obtained using these places $v$ 
 are compatible with inflation and therefore yield a 
map  
\begin{equation}  \label{eq.BLCzero} 
B(F,G) \to  B(F_u,G). 
\end{equation} 
Just as for ordinary Galois cohomology, the map 
\eqref{eq.BLCzero} is actually 
independent of the choice  of $F$-embedding 
  $\bar F \hookrightarrow \bar F_u$.

When $G$ is connected,  Corollary \ref{cor.ResDirProd} 
tells us that the components of any element in the image of the
total localization map
\begin{equation}  \label{eq.BLC} 
B(F,G) \to \prod_{u \in V_F} B(F_u,G). 
\end{equation}
 are trivial for all but finitely many  $u \in V_F$. 

The choice of $F$-embedding $\bar F \hookrightarrow \bar F_u$ 
yields an $F_u$-homomorphism $\mu':\mathbb D_{F_u} \to \mathbb D_F$
(assembled out of the maps $\mu'_v$ in subsection \ref{sub.LocNotation}), 
and the induced map 
\[
[\Hom_{\bar F}(\mathbb D_F,G)/G(\bar
F)]^{\Gamma} \to 
[\Hom_{\bar F_u}(\mathbb D_{F_u},G)/G(\bar
F_u)]^{\Gamma(u)}
\] 
is easily seen to be independent of the choice of $F$-embedding 
$\bar F \hookrightarrow \bar F_u$. Here we are writing $\Gamma(u)$ for 
the Galois group of $\bar F_u/F_u$. Moreover the Newton map is compatible 
with localization: the diagram 
\begin{equation}
\begin{CD}
B(F,G) @>>>  B(F_u,G) \\
@VVV@VVV \\
[\Hom_{\bar F}(\mathbb D_F,G)/G(\bar
F)]^{\Gamma} @>>>
[\Hom_{\bar F_u}(\mathbb D_{F_u},G)/G(\bar
F_u)]^{\Gamma(u)}
\end{CD}
\end{equation} 
commutes.

\begin{lemma}\label{lem.BscLocGlobal}
An element  $b\in B(F,G)$ is basic if and only if its image $b_u$ in
$B(F_u,G)$ is basic for every place $u$ of $F$. 
\end{lemma}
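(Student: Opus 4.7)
The forward direction is immediate from functoriality: if $b$ is basic with Newton point represented by a central $F$-homomorphism $\mu: \mathbb{D}_F \to Z(G)$, then by the compatibility of the Newton map with localization (the commutative square displayed just above), the Newton point of $b_u$ is represented by $\mu_{\bar F_u} \circ \mu'$, which is again central and defined over $F_u$.

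For the converse, assume $b_u$ is basic for every $u$, and pick a representative $\tilde\nu: \mathbb{D}_{\bar F} \to G_{\bar F}$ of the Newton point of $b$. The aim is to prove $\tilde\nu$ factors through $Z(G)_{\bar F}$; once this holds, centrality collapses the $G(\bar F)$-conjugacy class of $\tilde\nu$ to the singleton $\{\tilde\nu\}$, whence $\Gamma$-invariance of the class upgrades to $\Gamma$-invariance of $\tilde\nu$ itself, and $\tilde\nu$ descends to an element of $\Hom_F(\mathbb{D}_F, Z(G))$ representing the Newton point, so $b$ is basic. Assuming $G$ is connected reductive, the plan is to fix a maximal torus $\mathcal{T} \subseteq G_{\bar F}$ containing the image of $\tilde\nu$; then $\tilde\nu$ factors through $Z(G)_{\bar F}$ if and only if $\alpha \circ \tilde\nu = 0 \in X^*(\mathbb{D}_F)$ for every root $\alpha$ of $(G, \mathcal{T})$. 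Let $H \subseteq X^*(\mathbb{D}_F)$ be the subgroup generated by these characters $\alpha \circ \tilde\nu$.

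Two observations are central. First, $H$ is $\Gamma$-invariant: by $\Gamma$-invariance of the Newton class, $\sigma(\tilde\nu) = \Int(g_\sigma) \circ \tilde\nu$ for some $g_\sigma \in G(\bar F)$, so $\sigma(\alpha \circ \tilde\nu) = (\sigma(\alpha) \circ \Int(g_\sigma)) \circ \tilde\nu$, and $\sigma(\alpha) \circ \Int(g_\sigma)$ is again a root of $(G, \mathcal{T})$. Second, the local hypothesis forces $\tilde\nu_{\bar F_u} \circ \mu'$ itself to factor through $Z(G)_{\bar F_u}$ (since centrality is conjugation-invariant and hence a property of the whole class), and since roots vanish on the center, each $(\mu'_u)^*: X^*(\mathbb{D}_F) \to X^*(\mathbb{D}_{F_u})$ vanishes on $H$. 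The main step is then the character-theoretic lemma that any $\Gamma$-invariant subgroup $H \subseteq X^*(\mathbb{D}_F)$ on which every $(\mu'_u)^*$ vanishes must be zero. The proof is: take any $\eta \in H$, represented by $\sum_v n_v v \in X(K)$ for some finite Galois $K/F$; the translate $\sigma(\eta) = \sum_w n_{\sigma^{-1}w} w$ also lies in $H$, so the chosen-place coefficient $n_{\sigma^{-1}(v_u)}$ vanishes for every $\sigma \in G(K/F)$ and every $u \in V_F$. As $\sigma$ ranges over $G(K/F)$, $\sigma^{-1}(v_u)$ sweeps out the entire Galois orbit of $v_u$, i.e., every place of $K$ above $u$; letting $u$ vary over $V_F$ exhausts $V_K$, forcing $\eta = 0$.

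The main obstacle is conceptual rather than technical: the naive statement that $\bigcap_u \ker((\mu'_u)^*) = 0$ in $X^*(\mathbb{D}_F)$ is false in general (for three primes $v_1, v_2, v_3$ of $K$ above a common place $u$ of $F$ with chosen place $v_u = v_3$, the element $v_1 - v_2 \in X(K)$ is a nonzero class in the intersection). One must therefore thread the $\Gamma$-invariance of the Newton class through the character argument and exploit the transitivity of the Galois action on places above each $u$.
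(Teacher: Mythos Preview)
Your argument is correct for connected reductive $G$, but it takes a different route from the paper's proof, and the restriction to connected reductive groups is a genuine loss of generality: the lemma is stated (and later used, e.g.\ in the proof of Proposition~\ref{prop.ZactB}) for arbitrary linear algebraic $G$.

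The paper's proof is more direct. It works at a finite level: represent $b$ by an algebraic $1$-cocycle $(\nu,x)$ with $\nu\colon \mathbb T_{K/F}\to G$ over $K$, and observe that $\nu$ is central if and only if the composite $\tilde{\mathbb T}_{K/F}\to\mathbb T_{K/F}\xrightarrow{\nu}G$ is central (the first map being surjective). Now $\tilde{\mathbb T}_{K/F}$ has character group $\mathbb Z[V_K]$, so the cocharacters $\mu_v$ for \emph{all} $v\in V_K$ jointly detect centrality of any homomorphism out of $\tilde{\mathbb T}_{K/F}$. Since $b_u$ basic forces $\nu\circ\mu'_v$ central for each $v$, one is done. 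No maximal torus, no roots, and no $\Gamma$-invariance bookkeeping are needed; in particular the argument works for any linear algebraic $G$.

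Your approach trades the lift to $\tilde{\mathbb T}_{K/F}$ for the $\Gamma$-invariance argument: rather than having all places $v$ available directly, you start with only the chosen places $v_u$ and use Galois transitivity on places over $u$ to reach the rest. This is a valid workaround, and the character-theoretic lemma you isolate (a $\Gamma$-stable subgroup of $X^*(\mathbb D_F)$ killed by every $(\mu'_u)^*$ must vanish) is a pleasant standalone fact. One small point you glossed over: to conclude that $\sigma(\alpha)\circ\Int(g_\sigma)$ is again a root of $(G,\mathcal T)$, you need $\Int(g_\sigma)$ to carry $\mathcal T$ to $\sigma(\mathcal T)$, which is not automatic from $\sigma(\tilde\nu)=\Int(g_\sigma)\circ\tilde\nu$ alone; one must adjust $g_\sigma$ by an element of the centralizer of $\operatorname{im}\tilde\nu$ (connected, since $G$ is reductive) to achieve this.
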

\begin{proof} 
It is clear that a globally basic element is locally basic everywhere.  So our 
real task is to prove the converse, and we now suppose that $b_u$ is basic for every place 
$u$ of $F$.  
There exists a finite Galois extension $K/F$ such that $b$ is represented
by an algebraic $1$-cocycle 
 $(\nu,x)$ of $\mathcal E_3(K/F)$ in  $G(K)$. 
The Newton point 
 for $b$ is  the $G(\bar
F)$-conjugacy class of the $\bar F$-homomorphism 
\[
\mathbb D_F \twoheadrightarrow \mathbb T_{K/F} \xrightarrow{\nu} G.
\]
To show that $b$ is basic we must show that  $\nu$ is central. 

Now let $u$ be a place of $F$, and let $v$ be any place 
of $K$ lying over $u$.  Choose an $F_u$-embedding $K_v \hookrightarrow \bar F_u$. 
By Lemma \ref{lem.NewtLocGlob} the Newton point  for
$b_u$ is the $G(\bar F_u)$-conjugacy class of 
\[
\mathbb D_{F_u} \twoheadrightarrow \mathbb G_m \xrightarrow{\nu_v} G,
\]
where $\nu_v$ is the composed homomorphism 
\[
\mathbb G_m \xrightarrow{\mu_v} \tilde{\mathbb T}_{K/F} \to \mathbb T_{K/F}
\xrightarrow{\nu} G.
\] 
Because $b_u$ is basic, the homomorphism 
$\nu_v$ is central.  
To see that $b$ is basic, we now use the 
 following observations. 
\begin{itemize}
\item $\mathbb T_{K/F} \xrightarrow{\nu} G$ is
central if and only if $\tilde{\mathbb T}_{K/F} \to \mathbb T_{K/F}
\xrightarrow{\nu} G$ is central. 
\item A homomorphism $\tilde{\mathbb T}_{K/F} \to G$ is central if and
only if its composition with $\mu_v:\mathbb  G_m \to \tilde{\mathbb
T}_{K/F}$ is central for every place
$v$ of
$K$. 
\end{itemize}
\end{proof}

\subsection{Central extensions by tori} 

For any linear algebraic $F$-group $G'$ and central subgroup $Z$, there is an
obvious action of $B(F,Z)$ on $B(F,G')$. It comes from the actions discussed
just before Lemma \ref{lem.ZG'G}.

\begin{proposition}\label{prop.ZactB}
 Let 
\begin{equation}\label{SES.ZG'GB}
1 \to Z \xrightarrow{i} G' \xrightarrow{p} G \to 1
\end{equation}  
be a short exact sequence of linear algebraic $F$-groups in which $Z$ is a
torus that  is central in $G'$. Then the natural map 
\begin{equation}\label{eq.GG'ppB}
p:B(F,G') \to B(F,G) 
\end{equation}
is surjective. Moreover the map \eqref{eq.GG'ppB} induces a bijection 
between $B(F,G)$ and the quotient of
$B(F,G')$ by the action of $B(F,Z)$. Similarly \eqref{eq.GG'ppB} induces a
bijection  between $B(F,G)_{\bsc}$ and the quotient of
$B(F,G')_{\bsc}$ by the action of $B(F,Z)$.
\end{proposition}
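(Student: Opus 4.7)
The plan is to reduce to Proposition \ref{prop.Zact} applied at each finite level $\mathcal E(K/F)$, and then pass to the colimit over $K$. First I would choose $K$ to be a finite Galois extension of $F$ splitting $Z$, and verify that the gerb $\mathcal E(L/F)$ satisfies the two assumptions of subsection \ref{sub.StrR} for each finite Galois $L \supseteq K$ in $\bar F$. Assumption 1 is immediate, since $\mathbb D_{L/F}$ is a protorus split by $L$ (its character group $X(L)$ is torsion-free by construction). Assumption 2 follows from Lemma \ref{lem.AbTN2}: for every $F$-torus $T$ split by $L$, the canonical isomorphism $c$ identifies $H^1_{\alg}(\mathcal E(L/F), T(L))$ with $(X_*(T) \otimes X(L))_{G(L/F)}$, and the functor $M \mapsto (M \otimes X(L))_{G(L/F)}$ is right exact, so turns short exact sequences of $L$-split $F$-tori into surjections.

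With these assumptions in hand, Proposition \ref{prop.Zact} furnishes at each finite level $L \supseteq K$ both the surjectivity of $p\colon H^1_{\alg}(\mathcal E(L/F), G'(L)) \to H^1_{\alg}(\mathcal E(L/F), G(L))$ and the identification of the target with the quotient of the source by the natural action of $H^1_{\alg}(\mathcal E(L/F), Z(L))$. These are compatible with the inflation maps of Section \ref{sec.InflationLocGlob} (a routine unwinding of the functoriality of the constructions in subsections \ref{sub.SESLAG} and \ref{sub.ChGalExt}), so passing to the colimit over $L$ produces both the surjectivity of \eqref{eq.GG'ppB} and the bijection $B(F,G')/B(F,Z) \xrightarrow{\sim} B(F,G)$.

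Two of the implications needed for the basic statement are immediate. The map $p$ sends basic to basic, since $p(Z(G')) \subseteq Z(G)$ (the image under a surjective homomorphism of a central element is central, using that $G'(\bar F) \to G(\bar F)$ is surjective by Hilbert 90 for the torus $Z$), so Newton points in $\Hom_F(\mathbb D_F, Z(G'))$ map into $\Hom_F(\mathbb D_F, Z(G))$. The $B(F,Z)$-action preserves the basic locus, because $Z \subseteq Z(G')$ ensures the induced translation of Newton points stays inside $\Hom_F(\mathbb D_F, Z(G'))$. Together with the non-basic bijection already proven, these give the injectivity of the induced map $B(F,G')_{\bsc}/B(F,Z) \to B(F,G)_{\bsc}$.

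The main obstacle is the remaining surjectivity: every basic element of $B(F,G)$ must admit a basic preimage in $B(F,G')$. The concrete statement is that if $b' \in p^{-1}(b)$ has Newton point represented by $\nu'\colon \mathbb D_F \to G'_{\bar F}$ with $p \circ \nu'$ conjugate to an $F$-homomorphism into $Z(G)$, then some $B(F,Z)$-translate of $b'$ has Newton point in $\Hom_F(\mathbb D_F, Z(G'))$. My plan is to revisit the construction in the proof of Proposition \ref{prop.Zact}, this time starting from a cocycle representative $(\nu,x)$ of $b$ whose first component $\nu$ already factors through a $K$-subtorus $T \subseteq Z(G)$. The pullback $T' := p^{-1}(T) \subseteq G'$ is a torus by Lemma \ref{lem.TorExt}, and its failure to be central in $G'$ is measured by the characters $g' \mapsto [t',g']$ from $G'$ to $Z$, which are well defined because $T \subseteq Z(G)$ forces such commutators to lie in $Z$. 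After passing to the enlarged group $G'' = G' \cup_Z R_{K/F}(Z)$ and correcting by a coboundary $\mu \in \Hom_K(\mathbb D_{K/F}, R_{K/F}(Z))$ exactly as in the original proof, I expect to be able to arrange that the adjusted first component $\nu''$ factors through $Z(G'')$, using the coinduced nature of $R_{K/F}(Z)$ and the same Shapiro-type vanishing statements that drove the original argument. Descending from $G''$ back to $G'$ via Assumption 2 would then produce a basic lift of $b$, and the identification $p(Z(G'')) \supseteq p(Z(G')) = $ basic part then delivers the required basic preimage in $B(F,G')$.
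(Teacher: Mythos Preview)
Your treatment of the first two statements is correct and matches the paper: reduce to Proposition~\ref{prop.Zact} at each finite level $\mathcal E(L/F)$ (verifying Assumptions~1 and~2 exactly as you do), then pass to the colimit.

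For the basic statement, however, you are making the problem much harder than it is, and your proposed workaround is incomplete. The paper's argument is that \emph{every} preimage $b'\in B(F,G')$ of a basic $b\in B(F,G)$ is already basic; no $B(F,Z)$-translation or re-entry into the proof of Proposition~\ref{prop.Zact} is needed. The reason is a purely algebraic lemma (stated immediately after Proposition~\ref{prop.ZactB} in the paper): if $T\subset Z(G)$ is a central torus, then $T':=p^{-1}(T)$ is a \emph{central} torus in $G'$. You correctly note that $T'$ is a torus and that the commutator pairing $G'\times T'\to G'$ lands in $Z$, but you stop there and speak of ``its failure to be central.'' In fact this bimultiplicative pairing $G'\times T'\to Z$ corresponds to a morphism from the finite-type scheme $G'$ to the torsion-free constant group $\Hom(X^*(Z),X^*(T'))$, which is forced to be trivial; hence $T'$ is central. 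Once you know this, the Newton point $\nu'$ of any lift $b'$ of $b$ factors through $T'$ (since $p\circ\nu'$ is central in $G$, hence equal to, not merely conjugate to, a map into $T$), so $b'$ is basic. Surjectivity of $B(F,G')_{\bsc}\to B(F,G)_{\bsc}$ then follows immediately from surjectivity of $B(F,G')\to B(F,G)$.

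Your proposed alternative, adjusting $\nu''$ inside $G''$ and then descending, is only sketched (``I expect to be able to arrange\ldots'') and would need a separate argument that the basic property survives the descent from $G''$ to $G'$; this is exactly the step that the centrality lemma handles cleanly.
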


\begin{proof}
The first two statements  follow from the analogous ones in
Proposition  \ref{prop.Zact}. The statement concerning basic elements uses
the additional  fact that an element  $b' \in B(F,G')$ is basic if and only
if its image $b$ in $B(F,G)$ is basic. Indeed, it is clear that $b$ is basic
if $b'$ is, and to prove the converse one just applies the next lemma to the
central torus $T$ in $G$ obtained as the image of the Newton homomorphism
$\mathbb D_F \to Z(G)$ for $b \in B(F,G)_{\bsc}$. 
\end{proof}

In the next lemma $k$ is an arbitrary  field. 
\begin{lemma}
 Let 
\begin{equation}\label{SES.ZG'GBTech}
1 \to Z \xrightarrow{i} G' \xrightarrow{p} G \to 1
\end{equation}  
be a short exact sequence of linear algebraic $k$-groups in which $Z$ is a
torus that  is central in $G'$. Let $T$ be a central torus in $G$. Then the
preimage $T'$ of $T$ under $p:G' \twoheadrightarrow G$ is a central torus in
$G'$. 
\end{lemma}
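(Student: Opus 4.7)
The plan is as follows. First, $T'$ fits into a short exact sequence $1 \to Z \to T' \to T \to 1$ of $k$-group schemes. Since both $Z$ and $T$ are tori, Lemma~\ref{lem.TorExt} shows that $T'$ is itself a torus; in particular $T'$ is smooth and commutative.

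To prove that $T'$ is central in $G'$, I will show that the conjugation morphism $\rho : G' \to \Aut(T')$ is trivial, where $\Aut(T')$ denotes the automorphism group scheme of the torus $T'$ (which is étale over $k$). Because $Z$ is central in $G'$, $\rho$ factors through $G = G'/Z$; because $T'$ is commutative, $T'$ itself acts trivially on $T'$ by conjugation, so $\rho$ factors further through $G'/T' \cong G/T$, producing a morphism $\bar\rho : G/T \to \Aut(T')$.

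The decisive step is to control the image of $\bar\rho$. For any $g' \in G'$ the restriction of $\Int(g')$ to $T'$ is the identity on $Z$ (since $Z$ is central in $G'$) and induces the identity on the quotient $T = T'/Z$ (since $T$ is central in $G$). Let $E \subset \Aut(T')$ be the subgroup scheme cut out by these two closed conditions. I will check that $E$ is canonically isomorphic to $\Hom(T,Z)$ via the assignment sending $\phi \in E$ to the homomorphism $t \mapsto \phi(\tilde t)\tilde t^{-1}$, where $\tilde t \in T'$ is any lift of $t$ (the value lies in $Z$ because $\phi$ induces the identity on $T$, and is independent of the lift because $\phi$ fixes $Z$ pointwise; biadditivity uses that $T'$ is commutative). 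Thus $\bar\rho$ lands in $\Hom(T,Z) \subset \Aut(T')$.

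Finally, any morphism of $k$-group schemes $G/T \to \Hom(T,Z)$ must be trivial. Indeed, $\Hom(T,Z)$ is étale, so the identity component $(G/T)^0$ maps to the identity point, and $\bar\rho$ descends to a morphism $\pi_0(G/T) \to \Hom(T,Z)$ of étale group schemes over $k$; on $\bar k$-points this is a group homomorphism from the finite group $\pi_0(G/T)(\bar k)$ into the finitely generated free abelian group $\Hom(X^*(Z),X^*(T))$, hence zero. Therefore $\rho$ is trivial and $T'$ is central in $G'$. The only step requiring genuine verification is the identification of $E$ with $\Hom(T,Z)$ as étale group schemes; once that is in hand, the rest of the argument is formal.
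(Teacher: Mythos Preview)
Your proof is correct and follows essentially the same approach as the paper. Both arguments show $T'$ is a torus via Lemma~\ref{lem.TorExt}, then produce a homomorphism from $G'$ (or a quotient) into a discrete torsion-free abelian group scheme---the paper via the bimultiplicative commutator pairing $G'\times T'\to Z$ viewed as $G'\to\Hom(X^*(Z),X^*(T'))$, you via $\Aut(T')$ and the identification $E\cong\Hom(T,Z)$---and conclude by a finiteness argument (the paper's noetherian count of clopen subsets is equivalent to your $\pi_0$ reasoning).
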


\begin{proof} 
 
This is clear when $G'$ is connected reductive, but the general case
requires an argument.  It follows from Lemma \ref{lem.TorExt} that $T'$ is a
torus. In proving the lemma it is harmless to assume that $k$ is
algebraically closed. Because $T$ is central in $G$, the commutator morphism
$G' \times T'
\to G'$ actually takes values in $Z \hookrightarrow G'$, and a simple
computation shows that the  morphism $G' \times T' \to Z$ (given by
$(g',t')\mapsto g't'g'^{-1}t'^{-1}$) is bimultiplicative. In other words we
have a
$Z$-valued pairing between $G'$ and $T'$, and, to show that $T'$ is central,
we must show that the pairing is trivial. 

This pairing can be viewed as a homomorphism $f$ 
from $G'$ to the constant group scheme over $k$ obtained from the abstract
abelian group $A:=\Hom(X^*(Z),X^*(T'))$. 
 For this we used Cor.~1.5 in  SGA
3, Tome II, Expos\'e VIII. Of course $A$ is a free abelian group of finite
rank, and is therefore torsion-free.  
 We just need to show that $f$ is trivial, and
this follows from the fact that $G'$ is a scheme of finite type over $k$.
Indeed, if $f^{-1}(a)$ were
nonempty for some nonzero $a \in A$, then $f^{-1}(na)$ ($n \in \mathbb Z$)
would be an infinite disjoint collection of nonempty open and
closed subsets of
$G'$, contradicting the fact that $G'$ is noetherian.  
\end{proof}

\section{The natural transformations $\kappa_G$ and $\bar\kappa_G$ for $B(F,G)$} 

In this section we  retain the notation of the previous one. We consider only 
connected reductive $F$-groups $G$. 

\subsection{A preliminary discussion of  $\kappa_G$}  

For a given  finite Galois extension $K/F$
Proposition \ref{prop.NatG} provides a functorial map 
\begin{equation}\label{eq.KappaG}
\kappa_G:H^1_{\alg}(\mathcal E(K/F), G(K)) \to (\Lambda_{G} \otimes
X(K))_{G(K/F)}
\end{equation}
for  any connected reductive $F$-group $G$
split by $K$. Our next task is to show that \eqref{eq.KappaG} 
is compatible with  inflation and then to define and study a map 
\begin{equation}\label{eq.BA}
\kappa_G:B(F,G) \to A(F,G)
\end{equation} 
obtained from \eqref{eq.KappaG} by passing to the limit over $K$.

\subsection{Compatibility of \eqref{eq.KappaG} with inflation} 
Once more let $L \supset K$ be two finite Galois extensions of $F$ in $\bar
F$.  
\begin{lemma}\label{lem.KappaInf} 
Let $G$ be a connected reductive group over $F$ that splits over $K$. Then
the diagram  
\begin{equation}
\begin{CD}
H^1_{\alg}(\mathcal E(K/F), G(K)) @>{\kappa_G}>> (\Lambda_{G} \otimes
X(K))_{G(K/F)} \\
@VVV @A{\id \otimes j}AA \\
H^1_{\alg}(\mathcal E(L/F), G(L)) @>{\kappa_G}>> (\Lambda_{G} \otimes
X(L))_{G(L/F)}
\end{CD}
\end{equation}
commutes and the right vertical arrow is an isomorphism. Here the left
vertical arrow is the  inflation map \eqref{eq.CurrentInf}. 
\end{lemma}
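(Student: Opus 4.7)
The plan is to handle the two assertions separately: first show the right vertical arrow $\id \otimes j$ is an isomorphism, then establish commutativity by reducing to the torus case where everything has already been done in Sections~\ref{sec.LTN} and~\ref{sec.InflationLocGlob}.

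For the isomorphism claim, I would exploit that $G$ splits over $K$, so $G(L/K) \subset G(L/F)$ acts trivially on $\Lambda_G$. By Lemma~\ref{lem.pandjNew}(3), the map $j$ induces an isomorphism $\gamma: X(L)_{G(L/K)} \xrightarrow{\sim} X(K)$; tensoring with $\Lambda_G$ (over $\mathbb Z$) and using that $G(L/K)$ acts trivially on $\Lambda_G$ gives $(\Lambda_G \otimes X(L))_{G(L/K)} \xrightarrow{\sim} \Lambda_G \otimes X(K)$. Applying the right-exact functor of $G(K/F)$-coinvariants then yields the desired isomorphism $(\Lambda_G \otimes X(L))_{G(L/F)} \xrightarrow{\sim} (\Lambda_G \otimes X(K))_{G(K/F)}$.

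For the commutativity, I would follow the two-stage construction of $\kappa_G$ in Proposition~\ref{prop.NatG}. First reduce to the case where $G_{\der}$ is simply connected by choosing a $z$-extension $1 \to Z \to G' \to G \to 1$ with $Z$ split by $K$: naturality of inflation with respect to $p:G' \to G$ (see Section~\ref{sec.GalGerb}) together with the surjectivity in Proposition~\ref{prop.ZactB} reduces the question for $G$ to the question for $G'$. Next, using the defining square \eqref{CD.UnwindKappa1} and functoriality of inflation with respect to $G' \twoheadrightarrow D' := G'/G'_{\der}$, reduce further to the case of the torus $D'$. For a torus $T$ split by $K$, the commutativity sought is precisely the outer rectangle of diagram~\eqref{CD.LcInf} in the local case (Lemma~\ref{lem.LocInf}) and of diagram~\eqref{CD.GlobInf} with $i=3$ in the global case (Lemma~\ref{CD.GlobInflation}), once one remembers that $\kappa_T$ is by definition the inverse of the isomorphism $c$.

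The main obstacle is bookkeeping in the reduction: one must verify that both the inflation map $B(F,\cdot) \to B(F,\cdot)$ for $L/K$ (as constructed in Section~\ref{sec.InflationLocGlob}) and the map $\id \otimes j$ on $(\Lambda_{(\cdot)} \otimes X(\cdot))_{G(\cdot/F)}$ are natural in a $z$-extension in a way compatible with the defining squares \eqref{CD.UnwindKappa1} and~\eqref{CD.UnwindKappa2}. This is straightforward but requires carefully tracking the cocycle-level descriptions of inflation (via $\tilde p$, as in Lemma~\ref{lem.tildePiExists}) through each step; nothing deeper is needed since the two genuinely nontrivial identities were already proved for tori in the cited lemmas.
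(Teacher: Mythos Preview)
Your proposal is correct and follows essentially the same approach as the paper: establish the isomorphism via Lemma~\ref{lem.pandjNew}(3), then reduce commutativity to the torus case through the two-stage construction of $\kappa_G$ (simply connected derived group via the quotient $D$, general case via a $z$-extension), invoking Lemmas~\ref{lem.LocInf} and~\ref{CD.GlobInflation} for tori. Two small corrections: the surjectivity you need at the finite $K/F$ level is Proposition~\ref{prop.Zact} rather than Proposition~\ref{prop.ZactB}, and for tori the relevant commutativity is that of the \emph{left square} (not the outer rectangle) of \eqref{CD.LcInf} and \eqref{CD.GlobInf}, since $\kappa_T=c^{-1}$.
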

\begin{proof}
Lemma \ref{lem.pandjNew}(3) shows that the right vertical arrow is an
isomorphism, so we need only prove that the diagram commutes. In other
words we must show that $\kappa_G=\kappa_G'$, where $\kappa_G'$ is the map
defined by going the long way around the square. We prove that
$\kappa_G=\kappa_G'$ in three steps.  

When $G$ is a torus, we simply appeal to Lemmas \ref{lem.LocInf} 
and \ref{CD.GlobInflation}. When the derived group  $G_{\der}$ is simply
connected, we write $D$ for the torus obtained as the quotient
$G/G_{\der}$. 
Then the square 
\begin{equation}
\begin{CD}
H^1_{\alg}(\mathcal E(K/F), G(K)) @>{\kappa_G}>> (\Lambda_{G} \otimes
X(K))_{G(K/F)} \\
@VVV @| \\
H^1_{\alg}(\mathcal E(K/F), D(K)) @>{\kappa_D=\kappa_D'}>> (\Lambda_{D}
\otimes X(K))_{G(K/F)}
\end{CD}
\end{equation}
commutes, and the same is true with 
$\kappa_G$ replaced by $\kappa_G'$. Therefore $\kappa_G'=\kappa_G$ when
$G_{\der}$ is simply connected.  

In the general case we choose a $z$-extension $G'\twoheadrightarrow G$
whose kernel is a torus split by $K$.  Then the square 
\begin{equation}
\begin{CD}
H^1_{\alg}(\mathcal E(K/F), G'(K)) @>{\kappa_{G'}=\kappa'_{G'}}>>
(\Lambda_{G'}
\otimes X(K))_{G(K/F)} \\
@VVV @VVV \\
H^1_{\alg}(\mathcal E(K/F), G(K)) @>{\kappa_G}>> (\Lambda_{G}
\otimes X(K))_{G(K/F)}
\end{CD}
\end{equation}
commutes, and the same is true with 
$\kappa_G$ replaced by $\kappa_G'$. Using that the left vertical arrow is
surjective (Proposition \ref{prop.Zact}), we conclude that
$\kappa_G'=\kappa_G$.

\end{proof}

\subsection{Discussion of $C(G)$} 
In this section  
we are concerned only with connected reductive $F$-groups $G$. So 
the center $Z(G)$ is  an $F$-group of multiplicative type. The biggest
torus in $Z(G)$ will be denoted by $C(G)$; it is the subgroup of $Z(G)$
corresponding to the quotient of $X^*(Z(G))$ by its
torsion subgroup. The inclusion $C(G) \hookrightarrow G$ induces a  
natural injection  
\begin{equation}\label{eq.CGL}
\Lambda_{C(G)} \to \Lambda_G.
\end{equation}

\subsection{Discussion of $A(F,G)$}
Now consider any $\Gamma$-module $\Lambda$ on which some open subgroup of
$\Gamma$ acts trivially. Let $K \subset L$ be finite Galois extensions of
$F$ contained in $\bar F$.  
\begin{lemma}\label{lem.LambdaUpDown} 
Suppose that the action of $\Gamma$ on
$\Lambda$ factors through $G(K/F)$.  
Then the square 
\begin{equation}
\begin{CD} 
(\Lambda \otimes X(K))_{G(K/F)}
 @>{N_{K/F}}>> (\Lambda \otimes
X(K))^{G(K/F)} \\
 @A{\id \otimes j}AA @V{\id \otimes p}VV \\
 (\Lambda
\otimes X(L))_{G(L/F)} @>{N_{L/F}}>> (\Lambda
\otimes X(L))^{G(L/F)}
\end{CD}
\end{equation}
commutes, and the left vertical arrow is an isomorphism. 
\end{lemma}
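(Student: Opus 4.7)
The plan is to verify the two assertions separately, with the commutativity reduced to a direct computation on representatives.

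For the isomorphism claim, I would use that since $\Gamma$ acts on $\Lambda$ through $G(K/F)$, the normal subgroup $G(L/K) \subset G(L/F)$ acts trivially on $\Lambda$. This permits pulling $G(L/K)$-coinvariants through the tensor product:
\[
(\Lambda \otimes X(L))_{G(L/K)} = \Lambda \otimes X(L)_{G(L/K)} \xrightarrow[\simeq]{\id \otimes \gamma} \Lambda \otimes X(K),
\]
with $\gamma:X(L)_{G(L/K)} \xrightarrow{\simeq} X(K)$ the isomorphism of Lemma \ref{lem.pandjNew}(3) (which is induced by $j$). Taking $G(K/F)$-coinvariants of both sides, and using that iterated coinvariants for a normal subgroup and the quotient compute the coinvariants of the ambient group, yields
\[
(\Lambda \otimes X(L))_{G(L/F)} \xrightarrow{\simeq} (\Lambda \otimes X(K))_{G(K/F)},
\]
and this isomorphism is exactly $\id \otimes j$ on the coinvariant level.

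For the commutativity, I would compute both compositions on a typical representative $\lambda \otimes y$, with $\lambda \in \Lambda$ and $y \in X(L)$. Choose for each $\tau \in G(K/F)$ a lift $\tilde\tau \in G(L/F)$. Using that $p:X(K) \to X(L)$ is $G(L/F)$-equivariant (where $G(L/F)$ acts on $X(K)$ via $G(L/F) \twoheadrightarrow G(K/F)$) and that $p \circ j = N_{L/K}$ by Lemma \ref{lem.pandjNew}(1), the upper path gives
\[
\sum_{\tau \in G(K/F)} \tau\lambda \otimes p(\tau j(y)) \;=\; \sum_{\tau} \tau\lambda \otimes \tilde\tau N_{L/K}(y) \;=\; \sum_{\tau \in G(K/F)}\sum_{\rho \in G(L/K)} \tau\lambda \otimes \tilde\tau\rho\, y.
\]
Exploiting the trivial $G(L/K)$-action on $\Lambda$, we have $\tau\lambda = \tilde\tau\rho\,\lambda$, and since $\{\tilde\tau\rho\}$ enumerates $G(L/F)$ exactly once as $(\tau,\rho)$ ranges over $G(K/F) \times G(L/K)$, the sum collapses to $\sum_{\sigma \in G(L/F)} \sigma(\lambda \otimes y) = N_{L/F}(\lambda \otimes y)$, which is the lower path.

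There is no real conceptual obstacle; the argument is a bookkeeping verification. The two points that require some care are (a) confirming $G(L/F)$-equivariance of $p$, which is automatic in the local case and needs a one-line check in the global case using the explicit formula $p_2(v) = \sum_{w \mid v}[L_w:K_v]\,w$ from subsection \ref{sub.InfSetUp}; and (b) making sure that the trivial action of $G(L/K)$ on $\Lambda$ is used at exactly the right moment to pair $\tilde\tau\rho$ on the two tensor factors together, which is precisely what forces the hypothesis on $\Lambda$ into the proof.
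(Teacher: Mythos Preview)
Your proof is correct and follows exactly the approach the paper intends: the paper's own proof is the single line ``This follows directly from Lemma \ref{lem.pandjNew},'' and your argument is a careful unpacking of that sentence, using part (3) for the isomorphism and part (1) (together with the trivial $G(L/K)$-action on $\Lambda$) for the commutativity.
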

\begin{proof}
This follows directly from Lemma \ref{lem.pandjNew}. 
\end{proof}

We are interested in the $\Gamma$-module $\Lambda_G$ with $G$
connected reductive over $F$. Let $\mathcal K$ be the set of finite Galois
extensions $K$ of $F$ in $\bar F$ such that the action of $\Gamma$ on
$\Lambda_G$ factors through $G(K/F)$. We put 
\begin{equation}
A(F,G):=\projlim_{K \in \mathcal K} \, (\Lambda_G \otimes X(K))_{G(K/F)},
\end{equation} 
where the transition maps are the isomorphisms $\id \otimes j$
appearing in the lemma above. Because these transition maps are
isomorphisms, we can equally well say that 
\begin{equation}\label{eq.Ndef1}
A(F,G) = \injlim_{K \in \mathcal K} \,(\Lambda_G \otimes X(K))_{G(K/F)},
\end{equation} 
where the transition maps are now the inverses of the isomorphisms $\id
\otimes j$ in the lemma. 

Recall that $\mathbb D_F$ is the $F$-group of multiplicative type whose
character group is 
\[
X^*(\mathbb D_F):=\injlim_{K \in \mathcal K} \, X(K)
\]
with transition maps  $p:X(K) \hookrightarrow X(L)$. 
 Observe that 
\begin{equation}\label{eq.Ndef2}
(\Lambda_G \otimes X^*(\mathbb D_F))^{\Gamma}=\injlim_{K \in \mathcal K} \,
(\Lambda_G
\otimes X(K))^{G(K/F)},
\end{equation}
and that, for any torus $T$, there is a canonical bijection   
\begin{equation}\label{eq.NewtT}
(\Lambda_T \otimes X^*(\mathbb D_F))^{\Gamma}=\Hom_F(\mathbb D_F,T). 
\end{equation}

\begin{definition}
For any connected reductive $F$-group $G$ we define 
\begin{equation}\label{eq.DefOfMapN} 
N:A(F,G) \to (\Lambda_G \otimes X^*(\mathbb D_F))^{\Gamma}
\end{equation}
to be the map resulting from taking the injective limit (over $K \in \mathcal
K$) of the norm maps
$N_{K/F}$ appearing in Lemma \ref{lem.LambdaUpDown}. Here we are using 
\eqref{eq.Ndef1}  and \eqref{eq.Ndef2} to view the source and
target of $N$ as injective limits.   
\end{definition} 

We will need the map $N$ in the next proposition. 

\subsection{The canonical map $\kappa_G:B(F,G) \to A(F,G)$} \label{sec.KappaGforB}
Lemma  \ref{lem.KappaInf} shows that, in the injective limit over 
$K \in \mathcal K$,  the natural transformations
$\kappa_G$ of  Proposition \ref{prop.NatG} fit together to give a natural
transformation 
\[
\kappa_G:B(F,G) \to A(F,G).  
\]

\subsection{The canonical map $\bar\kappa_G$} 
For each $K \in \mathcal K$ there is a map (see \eqref{eq.KappaZero})  
\begin{equation}\label{eq.KappaZeroAgain}
\bar\kappa_G:\bigl[\Hom_K(\mathbb D_{K/F},G)/G(K)\bigr]^{G(K/F)} \to (\Lambda_G \otimes X(K))^{G(K/F)}
\end{equation} 

We claim that the maps \eqref{eq.KappaZero} are compatible as $K$ 
 varies through $\mathcal K$. Indeed, 
using the usual procedure involving $z$-extensions, we reduce to the case of tori, 
for which the claim is obvious. So the maps \eqref{eq.KappaZeroAgain} fit together to 
give a map 
\begin{equation}\label{eq.KappaZeroBG}
\bar\kappa_G:\bigl[\Hom_{\bar F}(\mathbb D_F,G)/G(\bar F)\bigr]^{\Gamma} \to (\Lambda_G 
\otimes X^*(\mathbb D_F))^{\Gamma}. 
\end{equation}

\subsection{A relation between $\kappa_G(b)$ and  the  Newton point of  $b$}\label{sub.NewtonAndKappa} 

The two propositions in this subsection were  inspired by an  exchange of email  with  
T.~Kaletha and M.~Rapoport, who pointed out to me 
that such results might hold in the framework of this paper. 

The next proposition generalizes  part of Theorem 1.15 in \cite{RapRich}, for which a 
reference to \cite{IsoI} is given. The proof given here should make it clear which 
results from \cite{IsoI}  justify the relevant part of Theorem 1.15 of 
Rapoport-Richartz. The propositions make use of the map $N$ (see \eqref{eq.DefOfMapN}).

\begin{proposition}\label{prop.NewtKappaBG}
The square 
\begin{equation}
\begin{CD}
B(F,G) @>{\kappa_G}>> A(F,G)\\
@V{Newton}VV @VNVV \\
\bigl[\Hom_{\bar F}(\mathbb D_F,G)/G(\bar F)\bigr]^{\Gamma} @>{\bar\kappa_G}>> (\Lambda_G \otimes
X^*(\mathbb D_F))^{\Gamma}
\end{CD}
\end{equation}
commutes. 
\end{proposition}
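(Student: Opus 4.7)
The plan is to deduce this from the finite-level analog, Lemma \ref{lem.NewtonKappaPrelim}, by a direct-limit argument. Given $b \in B(F,G)$, I would first pick a finite Galois extension $K/F$ in $\bar F$ belonging to $\mathcal K$ (so that $G$ splits over $K$ and $\Gamma$ acts on $\Lambda_G$ through $G(K/F)$) and large enough that $b$ is the image of a class $b_K \in H^1_{\alg}(\mathcal E(K/F), G(K))$. Applied to $b_K$, Lemma \ref{lem.NewtonKappaPrelim} yields the identity
\[
N_{K/F}\bigl(\kappa_G(b_K)\bigr) = \bar\kappa_G\bigl(\mathrm{Newton}(b_K)\bigr)
\]
in $(\Lambda_G \otimes X(K))^{G(K/F)}$. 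The task is then to transfer this identity from level $K$ to the infinite level, which amounts to checking that each of the four arrows in the square of the proposition is the direct limit of its $K$-level counterpart along the canonical maps induced by the inclusion $K \subset \bar F$.

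These four compatibilities are all supplied by prior constructions. For $\kappa_G$, it is Lemma \ref{lem.KappaInf}, which is how the map $\kappa_G:B(F,G)\to A(F,G)$ was built. For $N$, the definition \eqref{eq.DefOfMapN} of $N$ as an injective limit of norm maps $N_{K/F}$, together with Lemma \ref{lem.LambdaUpDown}, says exactly that the $K$-level restriction of $N$ is $N_{K/F}$. For $\bar\kappa_G$, the compatibility is tautological, since \eqref{eq.KappaZeroBG} was defined as the direct limit of the finite-level maps \eqref{eq.KappaZeroAgain}. For the Newton map, one needs only to note that if $(\nu, x)$ represents $b_K$ then the Newton point of $b$ in $[\Hom_{\bar F}(\mathbb D_F, G)/G(\bar F)]^{\Gamma}$ is the class of the composition $\mathbb D_F \twoheadrightarrow \mathbb D_{K/F} \xrightarrow{\nu} G$; in other words, the $K$-level Newton point is carried to the global Newton point by the canonical map $[\Hom_K(\mathbb D_{K/F},G)/G(K)]^{G(K/F)} \to [\Hom_{\bar F}(\mathbb D_F,G)/G(\bar F)]^{\Gamma}$ dual to $X(K) \hookrightarrow X^*(\mathbb D_F)$.

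The only point that is not an explicit invocation of an earlier lemma is that the finite-level Newton map commutes with inflation: passing from $\mathcal E(K/F)$ to $\mathcal E(L/F)$ along the inflation maps of section \ref{sec.InflationLocGlob} replaces $(\nu,x)$ by a cocycle whose first component is $\nu \circ (p^*)$, where $p^*:\mathbb D_{L/F}\to \mathbb D_{K/F}$ is dual to $p:X(K)\hookrightarrow X(L)$. Since the projection $\mathbb D_F \twoheadrightarrow \mathbb D_{K/F}$ factors through $\mathbb D_F \twoheadrightarrow \mathbb D_{L/F} \xrightarrow{p^*} \mathbb D_{K/F}$, the Newton points of $b_K$ and its inflation to level $L$ induce the same element of $[\Hom_{\bar F}(\mathbb D_F,G)/G(\bar F)]^{\Gamma}$, establishing the required limit compatibility. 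With all four limits reconciled, the infinite-level square commutes at $b$, and since $b$ was arbitrary, the proposition follows. The main conceptual work has already been done in Lemma \ref{lem.NewtonKappaPrelim}; the present proposition is a routine passage to the limit, so the principal obstacle is bookkeeping rather than any new idea.
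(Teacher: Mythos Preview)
Your proposal is correct and follows the same approach as the paper, which simply says ``This follows from Lemma \ref{lem.NewtonKappaPrelim}.'' You have unpacked the direct-limit bookkeeping that the paper leaves implicit, and your compatibility checks for the four arrows are accurate.
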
 

\begin{proof}
This follows from Lemma \ref{lem.NewtonKappaPrelim}. 
\end{proof}

Before stating the next proposition, we observe that the Newton point of a basic
element in $B(F,G)$ lies in $\Hom_F(\mathbb D_F,Z(G))=\Hom_F(\mathbb
D_F,C(G))$, a group that the isomorphism \eqref{eq.NewtT} identifies with 
$(\Lambda_{C(G)} \otimes X^*(\mathbb D_F))^{\Gamma}$. The proposition 
makes use of the inclusion  
$i:\Lambda_{C(G)}
\hookrightarrow \Lambda_G$ (see \eqref{eq.CGL}).  

\begin{proposition}\label{prop.NewtKappa}
The square 
\begin{equation}\label{CD.NewtKappaABC}
\begin{CD}
B(F,G)_{\bsc} @>{\kappa_G}>> A(F,G)\\
@V{Newton}VV @VNVV \\
(\Lambda_{C(G)} \otimes X^*(\mathbb D_F))^{\Gamma} @>i>> (\Lambda_G \otimes
X^*(\mathbb D_F))^{\Gamma}
\end{CD}
\end{equation}
commutes.  Moreover the bottom arrow in the square is injective, so the 
square lets us read off the Newton 
point of $b \in B(F,G)_{\bsc}$  from  $\kappa_G(b)$.  
\end{proposition}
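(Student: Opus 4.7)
The plan is to deduce \eqref{CD.NewtKappaABC} as the restriction to basic elements of the commutative square already established in Proposition~\ref{prop.NewtKappaBG}, exploiting the naturality of $\bar\kappa$ with respect to the inclusion $\iota\colon C(G) \hookrightarrow G$, and to obtain injectivity of $i$ from a direct cocharacter-lattice calculation.

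First I would restrict the outer square of Proposition~\ref{prop.NewtKappaBG} to $B(F,G)_{\bsc}$. By definition the Newton point of a basic element factors through $\Hom_F(\mathbb D_F, Z(G))$; since $\mathbb D_F$ is connected this equals $\Hom_F(\mathbb D_F, C(G))$, which via \eqref{eq.NewtT} is identified with $(\Lambda_{C(G)} \otimes X^*(\mathbb D_F))^{\Gamma}$. The missing ingredient is then the identity
\[
\bar\kappa_G \circ \iota_* \;=\; i \colon (\Lambda_{C(G)} \otimes X^*(\mathbb D_F))^{\Gamma} \to (\Lambda_G \otimes X^*(\mathbb D_F))^{\Gamma}.
\]
Naturality in $G$ of $\bar\kappa_G$ (inherited from naturality of $\kappa_G$, Proposition~\ref{prop.NatG}, via the construction preceding \eqref{eq.KappaZeroBG}) factors $\bar\kappa_G \circ \iota_*$ through $\bar\kappa_{C(G)}$; since $C(G)$ is commutative, the $G(\bar F)$-conjugation action on $\Hom_{\bar F}(\mathbb D_F, C(G))$ is trivial, and this source is just $\Hom_F(\mathbb D_F, C(G))$. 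The key remaining point, which is likely the main technical obstacle, is the verification that $\bar\kappa_T$ for an $F$-torus $T$ split by $K$ coincides with the tautological identification $\Hom_K(\mathbb D_{K/F}, T) = \Lambda_T \otimes X(K)$. Unwinding \eqref{eq.KappaZeroPrelim} in the torus case, this reduces to the observation that, for the trivial gerb $\mathcal E' = \mathbb D_{K/F}(K)$ that arises in Lemma~\ref{lem.KapEnF} at $E=K$, the isomorphism $c$ of Lemma~\ref{lem.AbTN2} degenerates to the identity on $X_*(T) \otimes X(K)$. Passing to the limit over $K$ then yields the claim.

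For the injectivity of $i$, fix a maximal $F$-torus $T \subset G$ containing $C(G)$, so that $\Lambda_G = X_*(T)/X_*(T_\ssc)$. The kernel of $X_*(C(G)) \hookrightarrow X_*(T) \twoheadrightarrow \Lambda_G$ is contained in $X_*\bigl(C(G) \cap G_\der\bigr)$, which vanishes because $C(G) \cap G_\der$ is finite; thus $\Lambda_{C(G)} \to \Lambda_G$ is injective, with finite cokernel since $C(G) \cdot G_\der \to G$ is an isogeny on identity components. The group $X^*(\mathbb D_F)$ is torsion-free (in the local case a subgroup of $\mathbb Q$; in the global case the direct limit of the torsion-free groups $\mathbb Z[V_K]_0$), so $\mathrm{Tor}^1$ against the finite cokernel vanishes and the tensored map $\Lambda_{C(G)} \otimes X^*(\mathbb D_F) \to \Lambda_G \otimes X^*(\mathbb D_F)$ remains injective; passing to $\Gamma$-invariants preserves injectivity, completing the argument.
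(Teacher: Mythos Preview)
Your proposal is correct and follows essentially the same approach as the paper. The paper's own proof simply says commutativity ``follows easily from the previous proposition'' and that injectivity holds because $i:\Lambda_{C(G)}\to\Lambda_G$ is injective (referenced from \eqref{eq.CGL}), tensoring with the torsion-free group $X^*(\mathbb D_F)$ preserves injectivity, and so does taking $\Gamma$-invariants; you have merely spelled out the details behind these two sentences (in particular the identification $\bar\kappa_G\circ\iota_*=i$ via the torus case, and the reason $\Lambda_{C(G)}\hookrightarrow\Lambda_G$). Your detour through the finite cokernel and $\mathrm{Tor}^1$ is unnecessary---flatness of $X^*(\mathbb D_F)$ alone suffices---but it is not wrong.
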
 

\begin{proof}

It follows easily from the previous proposition that the diagram commutes. 
Now we prove that the bottom arrow in the square is injective. We have already 
mentioned that $i:\Lambda_{C(G)} \to \Lambda_G$ is injective. Tensoring with the 
torsion-free abelian group $X^*(\mathbb D_F)$ preserves injectivity, and so does taking 
$\Gamma$-invariants. So the bottom arrow is indeed injective.  
\end{proof} 

\begin{remark}\label{rem.im=A0}
It is clear from Proposition  \ref{prop.NewtKappa} that 
\begin{equation}\label{eq.ImKappa} 
\im[B(F,G)_{\bsc}\xrightarrow{\kappa_G} A(F,G)] \subset A_0(F,G),
\end{equation} 
where $A_0(F,G)$ denotes the preimage under $N$ of the subset 
$(\Lambda_{C(G)} \otimes X^*(\mathbb D_F))^{\Gamma}$ of $(\Lambda_G \otimes
X^*(\mathbb D_F))^{\Gamma}$. 
In Propositions \ref{prop.LocImKap} and \ref{prop.ImKapGlob} it will be
shown that  the inclusion
\eqref{eq.ImKappa} is in fact an equality. 
\end{remark}

\subsection{Compatibility of $\kappa_G$ with localization}

In this subsection we consider a global field $F$, a place $u$ of $F$, and 
a connected reductive $F$-group $G$. For convenience we  
 fix an $F$-embedding $\bar F \to 
\bar F_u$ of separable closures of $F$ and $F_u$. 

There is a natural localization map 
\begin{equation}\label{eq.MapLocForA}
A(F,G) \to A(F_u,G), 
\end{equation}
defined as follows. 
Once again let  $\mathcal K$ be the set of finite Galois extensions $K$ of $F$ in 
$\bar F$ such that $\Gal(\bar F/K)$ acts trivially on $\Lambda_G$. 
For $K \in \mathcal K$ there is a natural homomorphism 
\begin{equation}\label{eq.MapLocForAA}
(\Lambda_G \otimes X(K))_{G(K/F)} \to (\Lambda_G)_{G(K_v/F_u)}, 
\end{equation}
where $v$ is the place of $K$ determined by our chosen embedding 
$\bar F \to \bar F_u$. When $G$ is a torus $T$ split by $K$, then $\Lambda_G$ is 
the cocharacter group $M$, and the map \eqref{eq.MapLocForAA} 
was defined in  subsection \ref{sub.LocForTorComm}. In general 
we define  \eqref{eq.MapLocForAA} in exactly the same way, simply 
replacing $M$ by $\Lambda_G$ everywhere. We then obtain 
\eqref{eq.MapLocForA} by taking the colimit over $\mathcal K$ 
(see \eqref{eq.Ndef1})  of the maps 
\eqref{eq.MapLocForAA}. 

We are now going to prove the following compatibility between $\kappa_G$ 
and localization. 

\begin{lemma}\label{lem.KapLocCompat}
For every connected reductive $F$-group $G$ the square  
\begin{equation}
\begin{CD}
B(F,G)  @>{\eqref{eq.BLCzero}}>> B(F_u,G) \\
@V{\kappa_G}VV @V{\kappa_G}VV \\
A(F,G)  @>{\eqref{eq.MapLocForA}}>> A(F_u,G)
\end{CD}
\end{equation}
commutes. 
\end{lemma}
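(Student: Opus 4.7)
The plan is to carry out the standard two-step reduction used throughout the paper: first reduce general connected reductive groups to those with simply connected derived group via $z$-extensions, then reduce those to tori via the quotient $G/G_{\der}$.

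First I would unwind the definitions so as to work at finite level. Both $B(F,G)$ and $A(F,G)$ are colimits over the set $\mathcal K$ of finite Galois extensions $K/F$ in $\bar F$ splitting $G$; the localization maps on both sides, as well as the finite-level $\kappa_G$ of Proposition \ref{prop.NatG}, are compatible with enlarging $K$ — the compatibility for $\kappa_G$ is Lemma \ref{lem.KappaInf}, and the one for localization is unwound from the definitions in subsection \ref{sec.KappaGforB} and the construction of $\eqref{eq.MapLocForAA}$. So it is enough to fix $K \in \mathcal K$ that also splits $G$, let $v$ be the place of $K$ over $u$ determined by the chosen embedding $\bar F \hookrightarrow \bar F_u$, and prove commutativity of
\begin{equation*}
\begin{CD}
H^1_{\alg}(\mathcal E_3(K/F), G(K)) @>{\kappa_G}>> (\Lambda_G \otimes X_3(K))_{G(K/F)}\\
@V{\ell^F_u}VV @VVV \\
H^1_{\alg}(\mathcal E(K_v/F_u), G(K_v)) @>{\kappa_G}>> (\Lambda_G)_{G(K_v/F_u)},
\end{CD}
\end{equation*}
where the right vertical arrow is $\eqref{eq.MapLocForAA}$.

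When $G$ is an $F$-torus $T$ split by $K$, one has $\Lambda_T = X_*(T)$, and the finite-level $\kappa_T$ is (by construction) the inverse of the canonical isomorphism $c$ of Lemma \ref{lem.AbTN2}. The commutativity of the square is then exactly Lemma \ref{lem.KappaTLocCompat}. Next, suppose $G_{\der}$ is simply connected, and set $D := G/G_{\der}$. The natural map $\Lambda_G \to \Lambda_D$ is an isomorphism of $G(K/F)$-modules. Diagram \eqref{CD.UnwindKappa1} tells us that both $\kappa_G$'s in the square factor through the corresponding map for the torus $D$, and both localization maps (on $H^1_{\alg}$ and on $(\Lambda_{\bullet} \otimes \cdots)$) are functorial with respect to $G \to D$. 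A short diagram chase then deduces the square for $G$ from the already-handled square for~$D$.

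Finally, in the general case, choose a $z$-extension $1 \to Z \to G' \to G \to 1$ with $G'_{\der}$ simply connected, as in the proof of Proposition \ref{prop.NatG}. By Proposition \ref{prop.ZactB} the map $B(F,G') \to B(F,G)$ is surjective. Diagram \eqref{CD.UnwindKappa2} together with the functoriality of $\Lambda$ and of localization with respect to $G' \to G$ shows that the square for $G'$ maps onto the square for $G$ via vertical arrows induced by $p$, with commuting side faces. Surjectivity of $p$ upstairs then forces the $G$-square to commute as soon as the $G'$-square does, and the $G'$-square was handled in the previous step.

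I expect the main obstacle to be administrative rather than conceptual: one must carefully verify that the target localization map \eqref{eq.MapLocForAA} really is functorial with respect to the morphisms $G \to D$ and $G' \to G$ invoked in the two reductions, and that it agrees, for a torus $T$ split by $K$ with $M = \Lambda_T$, with the map appearing in the square \eqref{CD.LocKappaT}. Both verifications amount to observing that the explicit recipe defining \eqref{eq.MapLocForAA} depends only on the $G(K/F)$-module $\Lambda_G$ and is therefore manifestly functorial in $\Lambda_G$, hence in $G$.
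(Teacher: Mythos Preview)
Your proposal is correct and follows essentially the same approach as the paper: reduce first to groups with simply connected derived group via $z$-extensions, then to tori via $G \to D = G/G_{\der}$, and invoke Lemma~\ref{lem.KappaTLocCompat} for the torus case. The paper compresses these two reductions into a single sentence referring to the pattern of Lemma~\ref{lem.KappaInf} and Proposition~\ref{prop.NewtKappa}, but the content is the same as what you have spelled out.
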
 

\begin{proof}
All four maps in the square are functorial in $G$.  Therefore we may reduce 
 to the case in 
which the derived group is simply connected, and from there to the case of  a torus. 
(The two reduction steps follow the same pattern as in the proofs of 
Lemma \ref{lem.KappaInf}  and Proposition \ref{prop.NewtKappa}.) 
Tori can be handled using Lemma \ref{lem.KappaTLocCompat}. 
\end{proof}

\section{A generalization of Shapiro's lemma}\label{sec.Shapiro} 
In this section we are going to prove a version of Shapiro's lemma 
for sets $H^1_Y(E,M)$ like the ones studied before, but
with $M$ now allowed to be nonabelian. Then we will give applications
involving 
$B(F,G)$, including a discussion of corestriction maps in the case of 
tori.

\subsection{Two definitions} 
The two definitions that follow are standard in 
the theory of nonabelian cohomology for a group $G$. 
\begin{itemize}
\item A \emph{$G$-group} $M$ is a group $M$ equipped with an action
of $G$ by automorphisms of $M$. 
\item A \emph{$G$-action of a $G$-group $M$ on a $G$-set $Y$} is an action
of 
$M$ on $Y$ such that the action map $M \times Y \to Y$ is $G$-equivariant. 
\end{itemize}
 Giving a $G$-action of a $G$-group $M$ on $Y$ is the same as giving an
action of
$M
\rtimes G$ on $Y$. 
 
\subsection{The set $H^1_Y(E,M)$ 
for nonabelian $G$-groups $M$}\label{sub.NonAbH1Y}

In subsection \ref{sub.H1Y} we defined sets $H^1_Y(E,M)$. Now we want
to generalize the definition by allowing $M$ to be nonabelian. As before
our starting point is an extension 
\[
1 \to A \to E \to G \to 1
\] 
of $G$ by a $G$-module $A$. We still insist that $A$ be abelian, but we are
going to consider an arbitrary (possibly nonabelian) $G$-group $M$, which
we also view as an $E$-group, with $A$ acting trivially.

We  regard $\Hom(A,M)$ as a
$G$-set in the usual way:  $\sigma \in G$ acts on $f\in
\Hom(A,M)$ by the rule  $(\sigma f)(a)=\sigma(f(\sigma^{-1}a))$. 
 There are natural $G$-actions  of the $G$-group $M$ on itself and on the
$G$-set
$\Hom(A,M)$. Equivalently, there are natural
actions of
$M
\rtimes G$ on both
$M$ and
$\Hom(A,M)$. These actions are spelled out in the following definition. 

\begin{definition}\label{def.2actions}
\hfill 
\begin{enumerate}
\item The $G$-group $M$ acts on itself by conjugation. The corresponding
action of
$M\rtimes G$ on $M$ is as follows: $m\sigma \in M\rtimes G$ transforms $m_1
\in M$ into $m\sigma(m_1)m^{-1}$. 
\item The $G$-group $M$ acts on the $G$-set $\Hom(A,M)$ through its action
on $M$. The corresponding action of $M\rtimes G$ is as follows: 
$m\sigma \in M\rtimes G$ transforms $f \in \Hom(A,M)$ 
 into $\Int(m) \circ \sigma(f)$. 
\end{enumerate} 
We will also use the canonical surjective homomorphism $M \rtimes E
\twoheadrightarrow M \rtimes G$ (given by the identity on $M$ and the
canonical surjection $E \twoheadrightarrow G$ on $E$) to make 
$M\rtimes E$ act on 
$M$ and
$\Hom(A,M)$. 
\end{definition}

In order to define $H^1_Y(E,M)$ we need two more ingredients, namely an 
$(M\rtimes G)$-set $Y$ and an 
$(M\rtimes G)$-map $\xi:Y \to \Hom(A,M)$. 
We require that $(Y,\xi)$ satisfy the following condition: 
\begin{equation}\label{eq.CondXiY}
\xi(y) (A) \subset M_y \text{ for all $y \in Y$}.
\end{equation}
Here we are writing $M_y$ for the stabilizer of $y$ in $M$. 

\begin{example}
Suppose that $M$ is abelian. As in subsection \ref{sub.H1Y}, let us 
consider a $G$-module $Y$ 
and $G$-module map $\xi:Y \to \Hom(A,M)$. Because $M$ is abelian, 
it acts  trivially on $\Hom(A,M)$, and so $\xi$ becomes an $(M \rtimes G)$-map 
if we make $M$ act trivially on $Y$. The condition \eqref{eq.CondXiY} is then 
automatically satisfied.  
\end{example}

\begin{example}
Consider a Galois gerb 
$
1 \to D(K) \to \mathcal E \to G(K/F) \to 1
$ 
(see subsection \ref{sub.RvGaGe}). 
 Let $G$ be a linear algebraic
group over $F$. Then  
\begin{itemize}
\item take $M=G(K)$, 
\item take $Y=\Hom_K(D,G)$, and  
\item take  $\xi$ to  be the natural map $\Hom_K(D,G) \to \Hom(D(K),G(K))$. 
\end{itemize} 
The  condition \eqref{eq.CondXiY} is 
automatically satisfied.  
\end{example} 

Returning to the general discussion, we now want to define $H^1_Y(E,M)$ in
such a way that it agrees with the previously (in subsection \ref{sub.H1Y})
defined  notion in the first
example  and with $H^1_{\alg}(\mathcal E,G(K))$ in the second example. It
is clear how to do this. We begin by defining suitable $1$-cocycles, the
set of which will be denoted by $Z^1_Y(E,M)$. By definition, an element in
$Z^1_Y(E,M)$ is a pair $(\nu,x)$ consisting of $\nu \in Y$ and $x \in
Z^1(E,M)$ satisfying the following two conditions: 
\begin{enumerate}
\item The restriction $x_0$ of $x$ to $A$ is the homomorphism $A \to M$
obtained as the image of $\nu$ under $\xi$. 
\item $x_w  \sigma(\nu)=\nu$ for any $w \in E$, with $\sigma$
denoting the image of $w$ under $E \twoheadrightarrow G$. 
\end{enumerate}

Use the $1$-cocycle $x$ to define a homomorphism $\varphi_x:E \to M\rtimes
E$  (thus $\varphi_x(w):=x_w w$ for all $w \in E$).  The $1$-cocycle
condition for
$x$ shows that the element $x_0 \in \Hom(A,M)$ is fixed by the subgroup
$\varphi_x(E)$ of
$M \rtimes E$. 
Observe that (2) can be reformulated as the condition that $\nu$ be
fixed by $\varphi_x(E) \subset M\rtimes E$, with $M \rtimes E$ acting on $Y$
through the canonical surjection $M \rtimes E \twoheadrightarrow M\rtimes G$
defined earlier.  

The group $M$ acts on  $Z^1_Y(E,M)$ in the obvious way (the
action of
$m
\in M$  sends 
$(\nu,x)$ to $(m \nu, w \mapsto mx_w w(m)^{-1})$), and
$H^1_Y(E,M)$ is by definition the quotient of $Z^1_Y(E,M)$ by the action
of $M$. 

\begin{remark} 
We now comment on the significance of the 
 condition  \eqref{eq.CondXiY} we  imposed on $(Y,\xi)$. 
Suppose for a moment that we did not impose it. 
We could still define $Z^1_Y(E,M)$ and $H^1_Y(E,M)$, in exactly 
the same way. Now, for any $\nu \in Y$ for which there exists 
$x \in Z^1(E,M)$ with $(\nu,x) \in Z^1_Y(E,M)$, the conditions 
 (1) and (2) would force the inclusion 
 $\xi(\nu) (A) \subset M_\nu$  to hold.   In other 
words, elements $\nu \in Y$ for which $\xi(\nu)(A)$ is not contained in $M_\nu$ 
are irrelevant when forming $H^1_Y(E,M)$. 
So  we lose nothing by imposing 
condition  \eqref{eq.CondXiY}, and in fact we even gain something,
 because doing so  will make 
  the  discussion of the maps $\Phi(f,g,\tilde h)$ in subsection \ref{sub.MapPhifgh} 
 a bit simpler. This is the main reason for imposing  \eqref{eq.CondXiY}. 
\end{remark}

There is an abstract Newton map 
\[
H^1_Y(E,M) \to (M\backslash Y)^G
\] 
in this context (induced by $(\nu,x) \mapsto \nu$), and it would be easy
enough to analyze its fibers (as we did before in the special case of
$H^1_{\alg}(\mathcal E,G(K))$. However, in proving a version of Shapiro's
lemma for
$H^1_Y(E,M)$, it is more useful to analyze the fibers of the  map 
\begin{equation}\label{eq.algabs} 
H^1_Y(E,M) \to H^1(E,M)
\end{equation}
induced by $(\nu,x) \mapsto x$. 

\subsection{Fibers of the map \eqref{eq.algabs}} 
Let $x \in Z^1(E,M)$, and let $[x]$ denote its class in $H^1(E,M)$. As above
we use
$x$ to define
$x_0 \in \Hom(A,M)$ and a homomorphism $\varphi_x:E \to M \rtimes E$. 
For any $(M \rtimes G)$-set $X$  we denote by $X_*$ the twisted $E$-set
obtained  by making $E$ act on
$X$ through the homomorphism 
\[
 E \xrightarrow{\varphi_x} M \rtimes E \twoheadrightarrow M \rtimes G. 
\]
The examples we have in mind for $X$ are   $M$ and 
$\Hom(A,M)$  (with the actions in  Definition \ref{def.2actions}), as well
as
$Y$.  We may view $\xi$ as an $E$-map 
$\xi:Y_* \to \Hom(A,M)_*$. Observe that $x_0$ lies in the fixed-point set
$(\Hom(A,M)_*)^E$, so the fiber $\xi^{-1}(x_0)$ is stable under the action
of $E$ on $Y_*$. 
\begin{lemma}\label{lem.FiberH1}
 The fiber of
\eqref{eq.algabs} over the class $[x]$ of 
$x$ is equal to the quotient set 
$M_x\backslash Y_x$, where  
\begin{itemize}
\item $M_x:=(M_*)^E$,
\item $Y_x:=(\xi^{-1}(x_0))^E \subset Y_*$. 
\end{itemize}
\end{lemma}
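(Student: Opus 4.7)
The plan is to analyze the fiber by a standard two-step argument: first, within each $M$-orbit of algebraic $1$-cocycles mapping to $[x]$, choose a representative whose second component equals $x$ on the nose; second, identify the set of such representatives and the residual equivalence.

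For the first step, given any algebraic $1$-cocycle $(\nu,x') \in Z^1_Y(E,M)$ with $[x']=[x]$, pick $m \in M$ with $x'_w = m\, x_w\, w(m)^{-1}$. Acting by $m^{-1}$ on $(\nu,x')$ produces $(m^{-1}\nu, x)$ in the same $M$-orbit, so every class in the fiber is represented by a pair whose second component is exactly $x$. For the second step, I would unwind the two conditions defining $Z^1_Y(E,M)$ when the second component is fixed to be $x$. Condition (1) becomes $\xi(\nu)=x_0$, i.e.\ $\nu \in \xi^{-1}(x_0)$. Condition (2), namely $x_w\,\sigma(\nu)=\nu$ for every $w\in E$ with image $\sigma$, is literally the statement that $\nu$ is fixed by the element $\varphi_x(w) = x_w w \in M\rtimes E$ acting via the surjection $M\rtimes E \twoheadrightarrow M\rtimes G$ on the $(M\rtimes G)$-set $Y$. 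So these $\nu$ are exactly the elements of $Y_x = (\xi^{-1}(x_0))^E$.

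Next I would compute when two such pairs $(\nu_1,x)$ and $(\nu_2,x)$ lie in the same $M$-orbit. The $M$-action carries $(\nu_1,x)$ to $(m\nu_1,\,w\mapsto mx_w w(m)^{-1})$, so the second components agree iff $m x_w w(m)^{-1}=x_w$ for all $w$, equivalently $m = x_w w(m) x_w^{-1}$, which says precisely that $m$ is fixed by the twisted $E$-action on $M_*$ (conjugation by $\varphi_x(w)$ inside $M\rtimes E$). Hence the stabilizer of $x$ in $M$ is $M_x=(M_*)^E$, and two representatives $(\nu_1,x),(\nu_2,x)$ are equivalent iff they differ by the $M_x$-action on first components.

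The only point requiring verification is that $M_x$ genuinely acts on $Y_x$, i.e.\ that $m \in M_x$ and $\nu\in Y_x$ imply $m\nu\in Y_x$. Stability under the twisted $E$-action is automatic because $M\rtimes G$ acts on $Y$ and the twisted $E$-action is obtained by restricting along $\varphi_x$, so $w\cdot(m\nu) = (w\cdot m)(w\cdot \nu) = m\nu$. Preservation of the condition $\xi(\nu)=x_0$ is where I would be slightly careful: specializing condition (2) for $m\in M_x$ to $w=a\in A$ (which acts trivially on $M$) gives $m=x_0(a)\,m\,x_0(a)^{-1}$, so $m$ centralizes $x_0(A)$, and therefore $\xi(m\nu)=\Int(m)\circ\xi(\nu)=\Int(m)\circ x_0 = x_0$, using condition \eqref{eq.CondXiY} together with $M$-equivariance of $\xi$. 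Combining these pieces identifies the fiber with $M_x\backslash Y_x$, which is the desired conclusion; the sole potential obstacle, matching the various actions and checking that $M_x$ preserves $\xi^{-1}(x_0)$, is handled by the centralizer observation just made.
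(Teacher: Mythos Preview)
Your proof is correct and follows essentially the same approach as the paper's, which simply observes that the $\nu$ with $(\nu,x)\in Z^1_Y(E,M)$ are exactly $Y_x$, and that two such pairs are cohomologous precisely when they differ by an element of $M_x$. Your additional verification that $M_x$ preserves $Y_x$ is fine but could be shortened: once you know $M$ acts on $Z^1_Y(E,M)$ and that $m\in M_x$ fixes the second component $x$, the resulting pair $(m\nu,x)$ lies in $Z^1_Y(E,M)$ automatically, so $m\nu\in Y_x$; the appeal to condition~\eqref{eq.CondXiY} is not actually needed here.
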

\begin{proof}
From the definition of $Z^1_Y(E,M)$, we see that 
the set of $\nu \in Y$ such that $(\nu,x) \in
Z^1_Y(E,M)$ is equal to $Y_x$. 
Two such pairs $(\nu,x),(\nu',x)$ are cohomologous if and only
if there exists $m\in M$ such that $m\nu=\nu'$ and $mx_ww(m)^{-1}=x_w$, and
the second of these equalities just says that $m$ is fixed by the
(twisted) action of $E$.  
\end{proof}

\subsection{Naturality of $H^1_Y(E,M)$ with respect to $(M,Y,\xi)$} 
 Let $(M',Y',\xi')$ be another triple like $(M,Y,\xi)$, and suppose we have 
a
$G$-homomorphism 
$f:M
\to M'$ and an
$(M\rtimes G)$-map
$g:Y \to Y'$ such that 
\begin{equation}
\begin{CD}
Y @>\xi>> \Hom(A,M) \\
@VgVV @VfVV \\
Y' @>\xi'>> \Hom(A,M')
\end{CD}
\end{equation}
commutes. Then there is an induced map 
\begin{equation}
H^1_Y(E,M) \to H^1_{Y'}(E,M')
\end{equation}
sending the class of $(\nu,x)$ to that of $(g(\nu),f(x))$. 

\subsection{Restriction maps for $H^1_Y(E,M)$} \label{sub.ResAbsNA}
Let $H$ be a subgroup of $G$, and form 
an extension 
\[
1 \to A \to E' \to H \to 1
\]
by taking $E'$ to be the preimage of $H$ under $E \twoheadrightarrow G$.
We then obtain a  restriction map 
\begin{equation}
\Res:H^1_Y(E,M) \to H^1_Y(E',M)
\end{equation}
by sending the class of $(\nu,x)$ to the class of $(\nu,x')$, where $x'$ is
the restriction of $x$ to the subgroup $E'$. 

\subsection{More forms of naturality}
The next three subsections will study more general forms of naturality 
in which $E$ is allowed to vary.  These will be needed in 
 section \ref{sec.Finite}.

\subsection{The map $\Phi(f,g,\tilde h)$}\label{sub.MapPhifgh} 
This subsection 
is  a generalization to nonabelian $M$ of  subsection \ref{sub.NatRespG}. 
All the maps in that subsection generalize easily, but, to keep things 
a little simpler, here we consider only the situation in which the homomorphism $\rho:G' \to G$ in 
subsection \ref{sub.NatRespG} is the identity map on $G$. This special case 
suffices for the needs of section \ref{sec.Finite}.

We consider $1 \to A \to E \to G \to 1$, a $G$-group $M$, 
an $(M \rtimes G)$-set $Y$ 
 and an $(M \rtimes G)$-map 
$\xi:Y \to \Hom(A,M)$ satisfying \eqref{eq.CondXiY}. We may then form 
the pointed set $H^1_Y(E,M)$.  
In addition we consider another such collection of objects: $1 \to A' \to E' \to G \to 1$, a $G$-group $M'$, 
an $(M' \rtimes G)$-set $Y'$ 
 and an $(M' \rtimes G)$-map 
$\xi':Y' \to \Hom(A',M')$ satisfying \eqref{eq.CondXiY}. We may then form 
the pointed set $H^1_{Y'}(E',M')$. 

Given some additional data  $f$, $g$, $\tilde h$, we are going 
to define a map 
\[
\Phi(f,g,\tilde h):H^1_Y(E,M) \to H^1_{Y'}(E',M').
\] 
These data are as follows: 
\begin{itemize}
\item a $G$-homomorphism $f:M  \to M'$, 
\item an $(M \rtimes G)$-map $g:Y \to Y'$, where we are using $f$ to view $Y'$ as $M$-set,
\item a homomorphism $\tilde h:E \to E'$ of extensions,
\end{itemize}
satisfying the requirement that the diagram 
\begin{equation}\label{CD.gXifh}
\begin{CD}
Y @>{\xi}>> \Hom(A,M) \\
@| @VfVV \\
Y @. \Hom(A,M') \\
@VgVV @AhAA \\
Y' @>{\xi'}>> \Hom(A',M')
\end{CD}
\end{equation}
commute, where $h$ is the unique map $A \to A'$ such that 
\begin{equation}\label{CD.ExtWithEandE'}
\begin{CD}
1 @>>> A @>>> E @>>> G @>>> 1 \\
@. @VhVV @V{\tilde h}VV @| @. \\
1 @>>> A' @>>> E' @>>> G @>>> 1
\end{CD}
\end{equation}  
commutes.  

We define $\Phi(f,g,\tilde h)$ to be the map 
 sending the class of $(\nu,x)$ to the class of 
$(g(\nu),x')$,  where $x'$ is the unique $1$-cocycle of $E'$ in $M'$ such that 
\begin{itemize}
\item the restriction of $x'$ to $A'$ is equal to  the map $\xi'(g(\nu)):A'
\to M'$, and 
\item 
the pullback of $x'$ to $E$ (via $\tilde h$) is equal to   $f(x)$. 
\end{itemize} 
In checking that $(g(\nu),x')$ satisfies condition (2) in 
the definition of $Z^1_{Y'}(E',M')$, one needs to use 
 \eqref{eq.CondXiY} (for $(Y',\xi')$) in addition to  the fact that $(\nu,x)$ 
satisfies (2). 

It is easy to see that the map $\Phi(f,g,\tilde h)$ depends only on the 
$A'$-conjugacy class of $\tilde h$. So, when $H^1(G,A')$ vanishes, 
the dependence of $\Phi(f,g,\tilde h)$ on $\tilde h$ is through $h$. 

When $H^1(G,A)$, $H^1(G,A')$ both vanish, the sets $H^1_Y(E,M)$,  
 $H^1_{Y'}(E',M')$ depend (up to canonical isomorphism) only on the 
cohomology classes $\alpha \in H^2(G,A)$, $\alpha' \in H^2(G,A')$ 
associated to $E$, $E'$, and, whenever we have $f,g,h$ such that 
\begin{itemize}
\item 
\eqref{CD.gXifh} commutes, and 
\item 
$h(\alpha)=\alpha'$,
\end{itemize}
we obtain a well-defined map 
\[
\Phi(f,g, h):H^1_Y(E,M) \to H^1_{Y'}(E',M')
\] 
by putting $\Phi(f,g, h):=\Phi(f,g,\tilde h)$ for any homomorphism $\tilde h$ making 
\ref{CD.ExtWithEandE'} commute. 

The next lemma concerns compositions of maps of type $\Phi$. We consider triples 
$(f_1,g_1,\tilde h_1)$ and $(f_2,g_2,\tilde h_2)$  such that 
$\Phi(f_1,g_1,\tilde h_1):H^1_Y(E,M) \to H^1_{Y'}(E',M')$ and $\Phi(f_2,g_2,\tilde h_2): H^1_{Y'}(E',M') \to  H^1_{Y''}(E'',M'')$ 
are defined.  
It is easy to check that 
the triple 
$(f_2 \circ f_1,g_2 \circ g_1,\tilde h_2 \circ \tilde h_1)$
satisfies the requirement needed in order to define the map 
$\Phi(f_2 \circ f_1,g_2 \circ g_1,\tilde h_2 \circ \tilde h_1)$.

\begin{lemma}\label{lem.CompPhi}
The composed map 
\[
H^1_Y(E,M) \xrightarrow{\Phi(f_1,g_1,\tilde h_1)} H^1_{Y'}(E',M') \xrightarrow{\Phi(f_2,g_2,\tilde h_2)} H^1_{Y''}(E'',M'')
\] 
is equal to $\Phi(f_2 \circ f_1,g_2 \circ g_1,\tilde h_2 \circ \tilde h_1)$.  
\end{lemma}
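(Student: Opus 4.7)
The plan is to verify the lemma at the level of $1$-cocycles, exploiting the uniqueness built into the definition of $\Phi(f,g,\tilde h)$. First I would fix a representative $(\nu, x) \in Z^1_Y(E,M)$ for a class in $H^1_Y(E,M)$ and chase it through both sides of the asserted equality. Applying $\Phi(f_1,g_1,\tilde h_1)$ gives a representative $(g_1(\nu), x')$ in $Z^1_{Y'}(E', M')$, where $x'$ is the unique $1$-cocycle of $E'$ in $M'$ whose restriction to $A'$ equals $\xi'(g_1(\nu))$ and whose pullback along $\tilde h_1$ equals $f_1(x)$. Applying $\Phi(f_2,g_2,\tilde h_2)$ to this representative then yields $(g_2(g_1(\nu)), x'')$, with $x''$ characterized analogously using $\xi''$, $\tilde h_2$, and $f_2(x')$.

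Next I would write down what $\Phi(f_2 \circ f_1, g_2 \circ g_1, \tilde h_2 \circ \tilde h_1)$ produces from $(\nu, x)$: a pair $((g_2 \circ g_1)(\nu), x''')$, where $x'''$ is the unique $1$-cocycle of $E''$ in $M''$ whose restriction to $A''$ equals $\xi''((g_2 \circ g_1)(\nu))$ and whose pullback along $\tilde h_2 \circ \tilde h_1$ equals $(f_2 \circ f_1)(x)$. The first components of the two pairs are visibly equal, so the task reduces to showing $x'' = x'''$. By the uniqueness clause in the definition of $\Phi$, it suffices to verify that $x''$ satisfies the two defining conditions for $x'''$.

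The first condition on $x''$ is immediate from its construction: its restriction to $A''$ is $\xi''(g_2(g_1(\nu)))$, which is exactly $\xi''((g_2 \circ g_1)(\nu))$. For the second condition, I would compute the pullback of $x''$ along $\tilde h_2 \circ \tilde h_1$ as the pullback along $\tilde h_1$ of the pullback along $\tilde h_2$; the inner pullback is $f_2(x')$ by construction of $x''$, and then naturality of pullback with respect to the group homomorphism $f_2$ converts this to $f_2$ applied to the pullback of $x'$ along $\tilde h_1$, which is $f_2(f_1(x)) = (f_2 \circ f_1)(x)$, as required. Since both conditions hold, uniqueness forces $x'' = x'''$, and passing to cohomology classes gives the lemma.

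There is essentially no obstacle here beyond bookkeeping; the only thing worth double-checking is that the composite triple $(f_2 \circ f_1,\, g_2 \circ g_1,\, \tilde h_2 \circ \tilde h_1)$ indeed satisfies the hypotheses needed to define $\Phi$, i.e.\ that the relevant analog of diagram \eqref{CD.gXifh} commutes for the composite data. This follows by vertically stacking the two commutative diagrams for the individual triples, using that $g_2$ is $(M' \rtimes G)$-equivariant (where $M$ acts on $Y'$ through $f_1$), so that $g_2 \circ g_1$ is $(M \rtimes G)$-equivariant with $M$ acting on $Y''$ through $f_2 \circ f_1$.
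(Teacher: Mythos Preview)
Your proposal is correct and is precisely the routine cocycle-level verification the paper has in mind; the paper's own proof consists of the single word ``Easy.'' Your unwinding of the uniqueness characterization of $x''$ and $x'''$ is the natural way to make this explicit, and your remark that the composite triple satisfies the hypotheses for $\Phi$ is exactly what the paper notes just before stating the lemma.
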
 

\begin{proof}
Easy. 
\end{proof}

\subsection{The map $\Psi(g,\tilde h)$}\label{sub.DefnPsiMap}
We continue to  consider 
 $1 \to A \to E \to G \to 1$, $M$, 
 $Y$  and 
$\xi:Y \to \Hom(A,M)$, as well as  their primed versions.  
In this subsection, however, we 
make the further assumption that $M'$ coincides with $M$.

 Given some additional data   $g$, $\tilde p$, we are going 
to define a pullback map 
\[
\Psi(g,\tilde p):H^1_Y(E,M) \to H^1_{Y'}(E',M).
\] 
These data are as follows: 
\begin{itemize}
\item an $(M \rtimes G)$-map $g:Y \to Y'$, 
\item a homomorphism $\tilde p:E' \to E$ of extensions,
\end{itemize}
satisfying the requirement that the diagram 
\begin{equation}\label{CD.gXifhNew}
\begin{CD}
Y @>\xi>> \Hom(A,M) \\
@VgVV @VpVV \\
Y' @>{\xi'}>> \Hom(A',M)
\end{CD}
\end{equation}
commute, where $p$ is the unique map $A' \to A$ such that 
\begin{equation}\label{CD.ExtWithEandE'New}
\begin{CD}
1 @>>> A @>>> E @>>> G @>>> 1 \\
@. @ApAA @A{\tilde p}AA @| @. \\
1 @>>> A' @>>> E' @>>> G @>>> 1
\end{CD}
\end{equation}  
commutes.  

We define $\Psi(g,\tilde p)$ to be the map 
 sending the class of $(\nu,x)$ to the class of 
$(g(\nu),x')$,  where $x'$ is 
the pullback of $x$ to $E'$ (via $\tilde p$).   
It is easy to see that the map $\Psi(g,\tilde p)$ depends only on the 
$A$-conjugacy class of $\tilde p$;  checking this involves using 
\eqref{eq.CondXiY} for $(Y,\xi)$.  
So, when $H^1(G,A)$ vanishes, 
the dependence of $\Psi(g,\tilde p)$ on $\tilde p$ is through $p$.

When $H^1(G,A)$, $H^1(G,A')$ both vanish, the sets $H^1_Y(E,M)$,  
 $H^1_{Y'}(E',M)$ depend (up to canonical isomorphism) only on the 
cohomology classes $\alpha \in H^2(G,A)$, $\alpha' \in H^2(G,A')$ 
associated to $E$, $E'$,
 and, whenever we have $g,p$ such that 
\begin{itemize}
\item 
\eqref{CD.gXifhNew} commutes, and 
\item 
$p(\alpha')=\alpha$,
\end{itemize}
we obtain a well-defined map 
\[
\Psi(g, p):H^1_Y(E,M) \to H^1_{Y'}(E',M)
\] 
by putting $\Psi(g, p):=\Psi(g,\tilde p)$ for any homomorphism $\tilde p$ making 
\ref{CD.ExtWithEandE'New} commute. 

The next lemma concerns compositions of maps of type $\Psi$. We consider pairs 
$(g_1,\tilde p_1)$ and $(g_2,\tilde p_2)$  such that 
$\Psi(g_1,\tilde p_1):H^1_Y(E,M) \to H^1_{Y'}(E',M)$ and $\Psi(g_2,\tilde p_2): H^1_{Y'}(E',M) \to  H^1_{Y''}(E'',M)$ 
are defined. It is easy to check that 
$(g_2 \circ g_1,\tilde p_1 \circ \tilde p_2)$ 
is such that $\Psi(g_2 \circ g_1,\tilde p_1 \circ \tilde p_2)$ is defined.

\begin{lemma}\label{lem.CompPsi}
The composed map 
\[
H^1_Y(E,M) \xrightarrow{\Psi(g_1,\tilde p_1)} H^1_{Y'}(E',M) \xrightarrow{\Psi(g_2,\tilde p_2)} H^1_{Y''}(E'',M)
\] 
is equal to $\Psi(g_2 \circ g_1,\tilde p_1 \circ \tilde p_2)$.  
\end{lemma}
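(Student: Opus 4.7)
The plan is to verify the identity at the level of representative cocycles, which is essentially immediate once we unwind the definition given in subsection \ref{sub.DefnPsiMap}. By construction, $\Psi(g,\tilde p)$ is induced by the cocycle-level rule
\[
(\nu,x) \mapsto \bigl(g(\nu),\, \tilde p^{*}(x)\bigr),
\]
where $\tilde p^{*}(x)$ denotes the $1$-cocycle of the pulled-back extension obtained by precomposing $x:E \to M$ with $\tilde p$.

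First I would choose a representative $(\nu,x) \in Z^1_Y(E,M)$ for an arbitrary class in $H^1_Y(E,M)$ and apply $\Psi(g_1,\tilde p_1)$, obtaining the class of $(g_1(\nu),\, \tilde p_1^{*}(x)) \in Z^1_{Y'}(E',M)$. Then I would apply $\Psi(g_2,\tilde p_2)$ to this representative, producing the class of $\bigl(g_2(g_1(\nu)),\, \tilde p_2^{*}(\tilde p_1^{*}(x))\bigr)$. Since pullback of cocycles is contravariantly functorial, $\tilde p_2^{*}\circ \tilde p_1^{*} = (\tilde p_1 \circ \tilde p_2)^{*}$, so the output coincides with
\[
\bigl((g_2 \circ g_1)(\nu),\, (\tilde p_1 \circ \tilde p_2)^{*}(x)\bigr),
\]
which is precisely the cocycle-level formula for $\Psi(g_2 \circ g_1,\, \tilde p_1 \circ \tilde p_2)$ applied to $(\nu,x)$.

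The only remaining bookkeeping is to confirm that the pair $(g_2 \circ g_1,\, \tilde p_1 \circ \tilde p_2)$ really does satisfy the compatibility required in subsection \ref{sub.DefnPsiMap} for $\Psi$ to be defined (so that the statement makes sense): namely, that $g_2 \circ g_1$ is an $(M \rtimes G)$-map $Y \to Y''$, that $\tilde p_1 \circ \tilde p_2$ is a morphism of extensions $E'' \to E$, and that the induced diagram analogous to \eqref{CD.gXifhNew} commutes. All three are formal consequences of the corresponding properties assumed for $(g_1,\tilde p_1)$ and $(g_2,\tilde p_2)$, by pasting the two commuting diagrams vertically. There is no real obstacle here; the lemma is essentially a restatement of the contravariant functoriality of pullback on $1$-cocycles combined with the covariant functoriality of the $Y$-component.
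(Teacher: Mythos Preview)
Your proof is correct and is exactly the routine verification the paper has in mind when it writes ``Easy'': unwind the cocycle-level formula for $\Psi$ and observe that both the $Y$-component and the pullback of the $1$-cocycle compose in the expected way. The paper also records, just before the lemma, that $(g_2\circ g_1,\tilde p_1\circ\tilde p_2)$ satisfies the requirement for $\Psi$ to be defined, which matches your final paragraph of bookkeeping.
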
 

\begin{proof}
Easy. 
\end{proof}

\subsection{A compatibility between maps of type $\Phi$ and $\Psi$} 
In the next lemma we suppose that we are given a commutative diagram 
\begin{equation}
\begin{CD}
 E @<{\tilde p}<< E_1   \\
 @V{\tilde h}VV @V{\tilde h_1}VV   \\
 E' @<{\tilde p'}<< E'_1 
\end{CD}
\end{equation} 
of extensions, 
a $G$-homomorphism $f:M \to M'$, and a commutative diagram 
\begin{equation}
\begin{CD}
 Y @>g'>> Y_1   \\
 @V{g}VV @V{g_1}VV   \\
Y' @>g''>> Y'_1 
\end{CD}
\end{equation} 
in which the top arrow is a map of $(M \rtimes G)$-sets, the bottom arrow 
is a map of $(M' \rtimes G)$-sets, and the two vertical arrows are maps 
of $(M \rtimes G)$-sets. We further assume that the triples 
$(f,g,\tilde h)$ and $(f,g_1,\tilde h_1)$ satisfy the requirements needed 
to define $\Phi(f,g,\tilde h)$ and $\Phi(f,g_1,\tilde h_1)$. Finally, we 
assume that the pairs $(g',\tilde p)$ and $(g'',\tilde p')$ satisfy the requirements 
needed to define  $\Psi(g',\tilde p)$ and $\Psi(g'',\tilde p')$. 

\begin{lemma}\label{lem.PhiPsi}
Under the assumptions above  the square 
\begin{equation}
\begin{CD}
H^1_Y( E,M) @>{\Psi(g',\tilde p)}>> H^1_{Y_1}( E_1,M)   \\
 @V{\Phi(f,g,\tilde h)}VV @VV{\Phi(f,g_1,\tilde h_1)}V   \\
H^1_{Y'}( E',M')  @>{\Psi(g'',\tilde p')}>> H^1_{Y'_1}(E'_1,M')
\end{CD}
\end{equation} 
commutes. 
\end{lemma}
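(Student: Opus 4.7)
The plan is to verify the commutativity of the square by a direct cocycle-level computation, comparing the two composed maps on a representative $(\nu,x) \in Z^1_Y(E,M)$.

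First I would compute the image along the top-right path. Applying $\Psi(g',\tilde p)$ produces the class of $(g'(\nu), x\circ\tilde p) \in Z^1_{Y_1}(E_1, M)$. Then $\Phi(f,g_1,\tilde h_1)$ sends this to the class of $(g_1(g'(\nu)), x_1')$, where $x_1'$ is the unique $1$-cocycle of $E_1'$ in $M'$ whose restriction to $A_1'$ is $\xi_1'(g_1(g'(\nu)))$ and whose pullback via $\tilde h_1$ equals $f(x\circ\tilde p)=f(x)\circ\tilde p$. Going the other way, $\Phi(f,g,\tilde h)$ sends $(\nu,x)$ to the class of $(g(\nu), x')$, where $x'$ is characterized by $\res_{A'}(x')=\xi'(g(\nu))$ and $x'\circ\tilde h=f(x)$; then $\Psi(g'',\tilde p')$ sends this to the class of $(g''(g(\nu)), x'\circ\tilde p')$.

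The first components of the two outputs agree by the given commutativity $g_1\circ g' = g''\circ g$. For the second components, the strategy is to show that $x'\circ\tilde p'$ satisfies the two properties that uniquely characterize $x_1'$. The pullback property is easy: by commutativity of the extension diagram, $(x'\circ\tilde p')\circ\tilde h_1 = x'\circ\tilde h\circ\tilde p = f(x)\circ\tilde p$, as required. For the restriction-to-$A_1'$ property, I would compute $\res_{A_1'}(x'\circ\tilde p') = \xi'(g(\nu))\circ p'$ and then invoke the hypothesis that $(g'',\tilde p')$ satisfies the requirement for defining $\Psi$, which gives the commutative square $\xi_1'\circ g'' = p'\circ\xi'$; applied to $g(\nu)$ and using $g_1(g'(\nu))=g''(g(\nu))$, this yields exactly $\xi_1'(g_1(g'(\nu)))$.

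No step should pose a real obstacle: both $\Phi$ and $\Psi$ are defined by explicit, essentially transparent cocycle-level formulas, and the required identities are immediate consequences of the hypothesized commutative squares in the data. The only mild bookkeeping issue is keeping track of which of the two characterizing properties of $x_1'$ uses which hypothesis (the extension square for the pullback property, the $Y$-square together with the $\Psi$-compatibility of $(g'',\tilde p')$ for the restriction property), after which uniqueness of $x_1'$ forces $x_1' = x'\circ\tilde p'$ and the lemma follows.
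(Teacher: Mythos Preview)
Your proposal is correct and is precisely the direct cocycle-level verification that the paper has in mind; the paper's own proof consists of the single word ``Easy.'' Your write-up simply unpacks that word, and every step checks out against the definitions of $\Phi$ and $\Psi$ given in subsections \ref{sub.MapPhifgh} and \ref{sub.DefnPsiMap}.
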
 

\begin{proof}
Easy. 
\end{proof}

\subsection{Coinduction for $H$-sets and $H$-groups}
Let $G$ be a group and $H$ a subgroup. The forgetful functor
from $G$-$\mathbf{Sets}$ to $H$-$\mathbf{Sets}$ has
a left adjoint $L$ and a  right adjoint $R$. It is customary to
refer to $L$ as induction and $R$ as coinduction.  The values of these two
functors on an $H$-set $Y$ are given by 
\begin{itemize}
\item $L(Y):=G\overset{H}{\times} Y$, 
\item $R(Y)=\{f:G \to Y:f(\tau\sigma)=\tau(f(\sigma)) \quad \forall \,
\sigma
\in G, \tau
\in H\}$. 
\end{itemize}

Here $G\overset{H}{\times} Y$ is  the quotient of $G\times Y$ by the
$H$-action
$\tau(\sigma,y):=(\sigma\tau^{-1},\tau y)$, and  $\sigma_1 \in G$ acts by
$\sigma_1(\sigma,y):=(\sigma_1\sigma,y)$.   The action of $\sigma_1$ on
$R(Y)$ is given by right-translation,
i.e.~$(\sigma_1f)(\sigma):=f(\sigma\sigma_1)$. The more useful of the two
adjunction morphisms for $R$ is the $H$-map
$\epsilon:R(Y) \to Y$ given by evaluation at the identity element of $G$.
Clearly $\epsilon$ restricts to a bijection 
\begin{equation}\label{eq.fpbij} 
R(Y)^G \to Y^H
\end{equation}
between fixed-point sets. 

Now suppose that $M$ is an $H$-group. Then of course $M$ is an $H$-set and
so we may form the
$G$-set $R(M)$. In fact $R(M)$ becomes a $G$-group for the group structure
given by pointwise multiplication of maps:
$(ff')(\sigma):=f(\sigma)f'(\sigma)$, the product on the right being taken
in the group $M$. The functor $R$ from
$H$-$\mathbf{Groups}$ to $G$-$\mathbf {Groups}$  is right adjoint to the
forgetful functor. When $G$ is a Galois group $G(K/F)$,  groups
coinduced from $H=G(K/E)$ appear naturally in the context of Weil
restriction of scalars from $E$ to $F$, where $E$ is an intermediate field
for $K/F$ that is finite over $F$.

\subsection{Shapiro's lemma for $H^1_Y(E,M)$} 
Again let $H$ be a subgroup of $G$ and form $E' \subset E$ as in  
 subsection \ref{sub.ResAbsNA}. Before we discuss Shapiro's lemma, we need
to analyze the following situation. Suppose we are given an $H$-action 
\begin{equation}\label{eq.HAct}
M\times X \to X
\end{equation}
of an $H$-group $M$ on an $H$-set $X$. Applying the functor
$R$ (of coinduction from $H$ to $G$) to the action map \eqref{eq.HAct}, 
we obtain a $G$-action 
\begin{equation}\label{eq.GAct}
R(M)\times R(X) \to R(X)
\end{equation}
of the $G$-group $R(M)$ on the $G$-set $R(X)$. (Here we used that
coinduction preserves products. Indeed, because it is a right adjoint, it
preserves all small limits.) In other words,
$R(X)$ is  an $(R(M) \rtimes G)$-set. The $H$-action of $M$ on $X$ can be
viewed as an action of $M \rtimes H$ on $X$, and the natural map $R(M)
\rtimes H
\to M
\rtimes H$ (given by $f\tau \mapsto \epsilon(f)\tau$)  lets us view $X$ as
an $(R(M)\rtimes H)$-set. 

Suppose further that we are given a $1$-cocycle $x$ of $E$ in $R(M)$. 
We then obtain a $1$-cocycle $y$ of $E'$ in $M$ by putting 
\[
y_{w'}:=\epsilon(x_{w'}). 
\]
The map $x \mapsto y$ on $1$-cocycles induces the classical Shapiro
isomorphism 
\[
H^1(E,R(M)) \to H^1(E',M). 
\] 
We use $x$ (resp.~$y$) to form a homomorphism $\varphi_x:E \to R(M)\rtimes
G$ (resp.~$\varphi_y:E' \to M\rtimes H)$. Using these homomorphisms, 
we obtain a twisted $E$-set $R(X)_*$ and a twisted $E'$-set $X_*$. 

\begin{lemma}\label{lem.ShAid}
The $E$-sets  $R(X)_*$ and
$R^E_{E'}(X_*)$ are canonically isomorphic. Here we are denoting
coinduction from $E'$ to $E$ by $R^E_{E'}$ in order to distinguish it from
coinduction from $H$ to $G$. Under this isomorphism $f_1 \in R(X)_*$
corresponds to $f_2 \in R^E_{E'}(X_*)$ when
$f_2(w)=\epsilon(x_w)f_1(\sigma)$, with $\sigma$ denoting the image of $w$
under $E \twoheadrightarrow G$. 
\end{lemma}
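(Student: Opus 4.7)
The plan is to define the candidate isomorphism $\Phi\colon R(X)_* \to R^E_{E'}(X_*)$ by the explicit formula $\Phi(f_1)(w):=\epsilon(x_w)\cdot f_1(\sigma)$ (where $\sigma$ is the image of $w$ in $G$), verify that it lands in $R^E_{E'}(X_*)$, check that it is $E$-equivariant for the two twisted actions, and then construct a two-sided inverse. All of this is a direct verification, resting on two basic facts: the cocycle identity for $x$ combined with the defining property $g(\tau\sigma)=\tau(g(\sigma))$ of elements $g\in R(M)$, and the analogous property for elements of $R(X)$.

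The key computation I would do first is the formula
\[
\epsilon(x_{w_1w_2})=\epsilon(x_{w_1})\cdot x_{w_2}(\sigma_1),
\]
where $\sigma_i$ is the image of $w_i$ in $G$. This is immediate: the cocycle identity gives $x_{w_1w_2}=x_{w_1}\cdot\sigma_1(x_{w_2})$, and evaluation at the identity (using that multiplication in $R(M)$ is pointwise and the $G$-action is right translation) yields the formula. Specializing the second factor using $x_{w_2}(\sigma_1)=\sigma_1(\epsilon(x_{w_2}))$ when $\sigma_1\in H$ (which holds whenever $w_1\in E'$) will be what makes everything work.

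Using this, I would verify the $E'$-equivariance condition for $\Phi(f_1)$: for $w'\in E'$ with image $\tau\in H$ and arbitrary $w\in E$ with image $\sigma\in G$, a short calculation gives
\[
\Phi(f_1)(w'w)=\epsilon(x_{w'})\cdot\tau(\epsilon(x_w))\cdot\tau(f_1(\sigma))=y_{w'}\cdot\tau\bigl(\epsilon(x_w)\cdot f_1(\sigma)\bigr)=w'_{*}\,\Phi(f_1)(w),
\]
where in the middle step I use both that $f_1\in R(X)$ satisfies $f_1(\tau\sigma)=\tau(f_1(\sigma))$ and that the $H$-action on $M$ respects the action map $M\times X\to X$. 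This is exactly the requirement that $\Phi(f_1)\in R^E_{E'}(X_*)$. For the $E$-equivariance of $\Phi$ itself, I would compute $\Phi(w_1\cdot f_1)(w)$ for $w_1\in E$ using the semidirect-product action $(w_1\cdot f_1)(\sigma)=x_{w_1}(\sigma)\cdot f_1(\sigma\sigma_1)$ of $R(M)\rtimes G$ on $R(X)$; the displayed cocycle identity again collapses $\epsilon(x_w)\cdot x_{w_1}(\sigma)$ to $\epsilon(x_{ww_1})$, matching the right-translation action of $w_1$ on $R^E_{E'}(X_*)$.

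Finally, for bijectivity I would write down the inverse $\Psi(f_2)(\sigma):=\epsilon(x_w)^{-1}\cdot f_2(w)$ for any lift $w$ of $\sigma$. Well-definedness (independence of the choice of $w$) is the main obstacle but reduces again to the same two identities: the ambiguity is by an element $a\in A$, and both $\epsilon(x_{aw})=\epsilon(x_a)\epsilon(x_w)$ and $f_2(aw)=\epsilon(x_a)\cdot f_2(w)$ follow from the cocycle identity together with the fact that $A$ maps trivially into $G$ (so $a$ acts trivially on $R(M)$) combined with the $E'$-equivariance of $f_2$. A parallel calculation shows $\Psi(f_2)\in R(X)$, and $\Phi\circ\Psi$ and $\Psi\circ\Phi$ are visibly the identity from the formulas. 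The whole argument is a bookkeeping exercise; the only place where real care is needed is making sure the pointwise multiplication in $R(M)$, the right-translation $G$-action, and the twisted $\varphi_x$-action are all handled consistently, which is why I would set up the cocycle identity $\epsilon(x_{w_1w_2})=\epsilon(x_{w_1})\cdot x_{w_2}(\sigma_1)$ at the outset.
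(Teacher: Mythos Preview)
Your proposal is correct and is precisely the direct verification the paper has in mind; the paper's own proof is just the word ``Easy,'' and you have carefully spelled out the bookkeeping it omits. Your key identity $\epsilon(x_{w_1w_2})=\epsilon(x_{w_1})\cdot x_{w_2}(\sigma_1)$ and the subsequent checks of $E'$-equivariance, $E$-equivariance, and well-definedness of the inverse are all accurate.
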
 
\begin{proof}
Easy. 
\end{proof}

Now we are ready to tackle Shapiro's lemma. We start with a triple
$(M,Y,\xi)$ relevant to $E'$ rather than $E$. So $M$ is an $H$-group, $Y$
is an $H$-set equipped with an $H$-action of $M$, and $\xi:Y \to \Hom(A,M)$
is
$(M\rtimes H)$-equivariant.  We may therefore form the set 
$
H^1_Y(E',M).
$

Applying the functor $R$ of coinduction from $H$ to  $G$ to  
the map $\xi$, we obtain an $(R(M) \rtimes G)$-equivariant map 
\[
R(\xi):R(Y) \to R(\Hom(A,M)).
\]
Now observe that $R(\Hom(A,M))\simeq\Hom(A,R(M))$ as $(R(M) \rtimes
G)$-sets.  Here $f_1 \in R(\Hom(A,M))$ corresponds to $f_2 \in \Hom(A,R(M))$
when
\[
f_1(\sigma)(a)=f_2(\sigma^{-1}(a))(\sigma). 
\] 
Therefore we may equally well regard $R(\xi)$ as an $(R(M)\rtimes G)$-map 
\[
R(\xi):R(Y) \to \Hom(A,R(M))),
\]
and so we may form the set 
$
H^1_{R(Y)}(E,R(M)). 
$

We then have a restriction map 
\begin{equation}\label{eq.ShRes}
H^1_{R(Y)}(E,R(M)) \to H^1_{R(Y)}(E',R(M)).
\end{equation}
Moreover, naturality with respect to the commutative diagram 
\begin{equation*}
\begin{CD}
R(Y) @>R(\xi)>> R(\Hom(A,M)) \\
@V{\epsilon}VV @V{\epsilon}VV \\
Y @>{\xi}>> \Hom(A,M)
\end{CD}
\end{equation*}
provides us with a map 
\begin{equation}\label{eq.ShFunct}
 H^1_{R(Y)}(E',R(M)) \to H^1_Y(E',M).
\end{equation}

The next result is our generalized version of Shapiro's lemma.  
\begin{lemma}
The composed map 
\[
H^1_{R(Y)}(E,R(M)) \xrightarrow{\eqref{eq.ShRes}} H^1_{R(Y)}(E',R(M))
\xrightarrow{\eqref{eq.ShFunct}} H^1_Y(E',M)
\]
 is bijective. 
\end{lemma}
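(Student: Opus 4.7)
The plan is to reduce the statement to the classical Shapiro isomorphism $H^1(E,R(M)) \xrightarrow{\simeq} H^1(E',M)$ combined with the fiber analysis provided by Lemma \ref{lem.FiberH1}. Both $H^1_{R(Y)}(E,R(M))$ and $H^1_Y(E',M)$ project (via $(\nu,x)\mapsto x$ and $(\nu,y)\mapsto y$) to the underlying nonabelian cohomology sets $H^1(E,R(M))$ and $H^1(E',M)$, and the composed map of the lemma sits over the classical Shapiro map on these base sets. Since that classical map is already known to be a bijection of pointed sets, it suffices to show that our composed map restricts to a bijection between corresponding fibers.

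To analyze the fibers, fix $x \in Z^1(E,R(M))$, and let $y \in Z^1(E',M)$ be its classical Shapiro image, so $y_{w'}:=\epsilon(x_{w'})$ for $w' \in E'$. By Lemma \ref{lem.FiberH1}, the fiber over $[x]$ in $H^1_{R(Y)}(E,R(M))$ equals $R(M)_x\backslash R(Y)_x$, while the fiber over $[y]$ in $H^1_Y(E',M)$ equals $M_y\backslash Y_y$, where the subscripts refer to fixed points under the twisted $E$-action $\varphi_x$ (respectively twisted $E'$-action $\varphi_y$). The composed map of the lemma clearly sends the fiber over $[x]$ into the fiber over $[y]$ via $(\nu,x)\mapsto (\epsilon(\nu),y)$, so the task reduces to showing that this induces a bijection $R(M)_x\backslash R(Y)_x \to M_y\backslash Y_y$.

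Here the key tool is Lemma \ref{lem.ShAid}, applied once to the $H$-group $M$ and once to the $H$-set $Y$. It supplies $E$-isomorphisms $R(M)_* \simeq R^E_{E'}(M_*)$ and $R(Y)_* \simeq R^E_{E'}(Y_*)$, both given by the explicit formula $f_2(w)=\epsilon(x_w)f_1(\sigma)$. Combined with the fixed-point adjunction $(R^E_{E'}(Z))^E = Z^{E'}$ coming from \eqref{eq.fpbij}, these yield canonical identifications
\[
R(M)_x = (R(M)_*)^E \simeq (M_*)^{E'} = M_y, \qquad R(Y)_x \simeq Y_y,
\]
the second one relying on the further claim that the $E$-isomorphism $R(Y)_* \simeq R^E_{E'}(Y_*)$ carries the subset $R(\xi)^{-1}(x_0)$ onto $R^E_{E'}(\xi^{-1}(y_0))$. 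One then verifies that the displayed bijection $R(M)_x\backslash R(Y)_x \simeq M_y\backslash Y_y$ agrees with the map induced by $(\nu,x)\mapsto (\epsilon(\nu),y)$, simply because both boil down to evaluation at the identity of $G$.

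The main obstacle is the bookkeeping in the last claim, namely the compatibility of $\xi$ and $R(\xi)$ under the identification of Lemma \ref{lem.ShAid}. Unpacking the canonical isomorphism $R(\Hom(A,M)) \simeq \Hom(A,R(M))$ given by $f_1(\sigma)(a)=f_2(\sigma^{-1}(a))(\sigma)$, and using that $A$ maps trivially to $G$, one computes that for $\nu \in R(Y)$ the restriction of $R(\xi)(\nu)$ to $A$ corresponds under Lemma \ref{lem.ShAid} to the $E'$-equivariant map $a \mapsto \xi(\epsilon(\nu))(a) = y_0(a)$. This is the only step requiring a genuine (if short) calculation; once it is in place, all remaining assertions, including the equivariance of the bijection $R(Y)_x\simeq Y_y$ with respect to the identified group actions, follow formally from the constructions.
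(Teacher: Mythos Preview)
Your proposal is correct and follows essentially the same approach as the paper: reduce to the classical Shapiro isomorphism on the underlying $H^1$ sets, then use Lemma~\ref{lem.FiberH1} and Lemma~\ref{lem.ShAid} to identify the fibers $R(M)_x\backslash R(Y)_x$ and $M_y\backslash Y_y$. The only minor difference is in how you verify that $R(\xi)^{-1}(x_0)$ corresponds to $R^E_{E'}(\xi^{-1}(y_0))$ under the identification of Lemma~\ref{lem.ShAid}: you propose a direct computation using the explicit formula for $R(\Hom(A,M))\simeq\Hom(A,R(M))$, whereas the paper applies $R^E_{E'}$ to the cartesian square defining $\xi^{-1}(y_0)$ and invokes the fact that a right adjoint preserves cartesian squares and final objects.
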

\begin{proof}
Consider the commutative square 
\begin{equation}
\begin{CD}
H^1_{R(Y)}(E,R(M)) @>>> H^1_Y(E',M) \\
@VVV @VVV \\
H^1(E,R(M)) @>>> H^1(E',M)
\end{CD}
\end{equation}
We must prove that the top arrow is bijective. Now the classical form of 
Shapiro's lemma asserts that the bottom arrow is bijective. So we are
reduced to proving the following. Fix $x \in Z^1(E,R(M))$ and let $y$ be
its  image under the cocycle-level Shapiro map
\[
Z^1(E,R(M)) \to Z^1(E',R(M)) \to Z^1(E',M),
\]
the first arrow being restriction from $E$ to $E'$, and the second being
the map induced by $\epsilon:R(M) \to M$. What we must prove is that the
top arrow restricts to a bijection from the fiber of the left arrow over 
$[x]$ to the fiber of the right arrow over $[y]$. 

These fibers were described in Lemma \ref{lem.FiberH1}.  The fiber 
on the right  is
$M_{y}\backslash Y_{y}$ and the one on the left is
$R(M)_x\backslash R(Y)_x$.    
   We write $x_0$ for the restriction of $x$ to $A$, and $y_0$ for the
restriction of $y$ to $A$. As usual we write $\varphi_x$ for the
homomorphism $E \to R(M)\rtimes E$ obtained from
$x$, and $\varphi_y$ for the
homomorphism $E' \to M\rtimes E'$ obtained from
$y$.  To prove the lemma we just need to produce (compatible) canonical 
bijections 
$M_y= R(M)_x$ and $Y_y= R(Y)_x$. For this we use Lemma \ref{lem.ShAid}. 

The first bijection is clear: we have 
\[
R(M)_x=(R(M)_*)^E=\bigl(R^E_{E'}(M_*)\bigr)^E=(M_*)^{E'}=M_{y}.  
\]
For the second one we begin by  applying the functor $R^E_{E'}$ to the
cartesian square  
\begin{equation}
\begin{CD}
\xi^{-1}(y_0) @>>> Y_* \\
@VVV @V{\xi}VV \\
\{y_0\} @>>> \Hom(A,M)_*
\end{CD} 
\end{equation}
of $E'$-sets. Since $R^E_{E'}$ is a right adjoint, it preserves cartesian
squares and final objects, and we conclude that 
the $E$-set obtained as the fiber of $R(\xi)$ over $x_0$ is coinduced 
from the
$E'$-set $\xi^{-1}(y_0)$. 
  It follows that 
\[
(R(\xi)^{-1}(x_0)_*)^E 
=(R^E_{E'}(\xi^{-1}(y_0)_*))^{E}=(\xi^{-1}(y_0)_* )^{E'}
\] 
and this is precisely the canonical bijection $R(Y)_x=Y_y$ we needed to
construct. So the lemma is proved. 
\end{proof} 

\subsection{Application to $H^1_{\alg}(\mathcal E,G(K))$} 
Let  
\[
1 \to D(K) \to \mathcal E \to G(K/F) \to 1
\]
be a Galois gerb for $K/F$. Let $E$ be an intermediate field for $K/F$, and
let $\mathcal E'$ be the preimage of $G(K/E)$ in $\mathcal E$. 

Let $G_0$ be a linear algebraic group over $E$ and put $G=R_{E/F}G_0$ (Weil
restriction of scalars). Then $G(K)=R(G_0(K))$ and
$\Hom_K(D,G)=R(\Hom_K(D,G_0))$, where $R$ denotes coinduction from $G(K/E)$
to $G(K/F)$. So there is a Shapiro isomorphism 
\begin{equation}
H^1_{\alg}(\mathcal E,G(K)) = H^1_{\alg}(\mathcal E',G_0(K)).
\end{equation}

\subsection{Application to $B(F,G)$}
Now let $F$ be a local or global field, and let $E/F$ be a finite separable
extension. Again consider $G=R_{E/F}(G_0)$ for some linear algebraic
$E$-group $G_0$. Then there is a Shapiro isomorphism 
\begin{equation}\label{eq.ShB}
B(F,G) = B(E,G_0).
\end{equation} 

\subsection{Application to $B_i(F,T)$ for $i=1,2,3$}
Let $E/F$ be a finite separable extension of global fields, let $T_0$ be a
torus over $E$, and put $T=R_{E/F}(T_0)$. Then for $i=1,2,3$ there are
Shapiro isomorphisms 
\begin{equation}\label{eq.ShBi}
B_i(F,T) = B_i(E,T_0). 
\end{equation} 
For $i=3$ this is just \eqref{eq.ShB} in different notation. (The groups 
$B_i(F,T)$
were defined in subsection \ref{sub.DefBiT}.)

\subsection{Corestriction and restriction for $B_i(F,T)$}
\label{sub.ResAndCor} 
Let $E/F$ be a finite separable extension of global
fields, and let $T$ be an $F$-torus. The Shapiro isomorphism makes it easy
to define corestriction maps for $T$. Put $\tilde T:=R_{E/F}(T)$. Because we
started with a torus $T$ over $F$ (not $E$), there is a norm map
$N_{E/F}:\tilde T \to T$. For $i=1,2,3$ we define a corestriction map 
\begin{equation}
\Cor:B_i(E,T) \to B_i(F,T)
\end{equation}
as the composed map 
\[
B_i(E,T) \overset{\eqref{eq.ShBi}}{=}B_i(F,\tilde T) \xrightarrow{N_{E/F}}
B_i(F,T). 
\]
\begin{lemma} Let $K/E$ be a finite
extension such that $K/F$ is Galois and $T$ is split by $K$.  Put
$Y_i(K):=X_*(T) \otimes X_i(K)$.  
\begin{enumerate}
\item 
For $i=1,2,3$  there is a commutative diagram 
\begin{equation*}
\begin{CD}
Y_i(K)_{G(K/E)} @>{\simeq}>> B_i(E,T) @>>> Y_i(K)^{G(K/E)}
\\ @VVV @V{\Cor}VV @VVV \\
Y_i(K)_{G(K/F)} @>{\simeq}>> B_i(F,T) @>>> Y_i(K)^{G(K/F)}
\end{CD}
\end{equation*}
The left vertical arrow is induced by the identity map on $Y_i(K)$. The
middle vertical arrow is corestriction for $E/F$. The right vertical arrow
is given by $y \mapsto \sum_{\sigma \in G(K/F)/G(K/E)}
\sigma(y)$. 
\item
For $i=1,2,3$  there is a commutative diagram 
\begin{equation*}
\begin{CD}
Y_i(K)_{G(K/E)} @>{\simeq}>> B_i(E,T) @>>> Y_i(K)^{G(K/E)}
\\ @AAA @A{\Res}AA @AAA \\
Y_i(K)_{G(K/F)} @>{\simeq}>> B_i(F,T) @>>> Y_i(K)^{G(K/F)}
\end{CD}
\end{equation*} 
The left vertical arrow is  given
by $y \mapsto \sum_{\sigma \in G(K/E)\backslash G(K/F)}
\sigma(y)$. 
 The middle vertical arrow is restriction for $E/F$. The right
vertical arrow is induced by the identity map on $Y_i(K)$. 
\end{enumerate}
\end{lemma}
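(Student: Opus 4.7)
The plan is to reduce both commutative diagrams to identities on the cocharacter side via the isomorphism $c$ of Lemma \ref{lem.AbTN2}, using Shapiro's lemma to relate $B_i(E,T)$ to $B_i(F,\tilde T)$, where $\tilde T := R_{E/F}T$. Write $G := G(K/F)$ and $H := G(K/E)$, so that $X_*(\tilde T) = \mathrm{Ind}_H^G X_*(T)$. The key preliminary observation I would establish is that the Shapiro isomorphism $B_i(F,\tilde T) \simeq B_i(E,T)$ of \eqref{eq.ShBi} corresponds, under the $c$-isomorphisms on both sides, to the standard Shapiro identification $(X_*(\tilde T) \otimes X_i(K))_G \simeq Y_i(K)_H$ on coinvariants. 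This reduces to naturality of $c$ in the Tate-Nakayama triple (subsection \ref{sub.BetterNat}) together with the explicit cocycle-level description of the Shapiro isomorphism in section \ref{sec.Shapiro}.

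For part (1), $\Cor$ is by definition the composition $B_i(E,T) \simeq B_i(F,\tilde T) \xrightarrow{N_{E/F}} B_i(F,T)$, and the counit $N_{E/F}: \tilde T \to T$ induces on cocharacters the $G$-equivariant map $X_*(\tilde T) \to X_*(T)$ given by $g \otimes y \mapsto g(y)$. After composing with the Shapiro identification on coinvariants, this becomes the canonical surjection $Y_i(K)_H \twoheadrightarrow Y_i(K)_G$, which is the identity on representatives. Combined with naturality of $c$ in $T$ (Proposition \ref{prop.NatG}), this gives commutativity of the left square. For the right square, Lemma \ref{lem.cNew} gives $r \circ c = N_{K/E}$ on the top row and $r \circ c = N_{K/F}$ on the bottom row, and the outer rectangle commutes by the elementary identity
\[
\sum_{\tau \in G/H} \tau \bigl(N_{K/E}(y)\bigr) = \sum_{\tau \in G/H} \sum_{\sigma \in H} \tau\sigma(y) = \sum_{g \in G} g(y) = N_{K/F}(y).
\]
Since the map $c$ on the top left is bijective, commutativity of the right square follows from that of the left square and the outer rectangle.

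For part (2), the argument is entirely parallel. Under Shapiro, the restriction map $\Res: B_i(F,T) \to B_i(E,T) \simeq B_i(F,\tilde T)$ is induced by the unit $T \to \tilde T$, which on cocharacters descends to coinvariants as the transfer $Y_i(K)_G \to Y_i(K)_H$, $y \mapsto \sum_{\sigma \in H \backslash G} \sigma(y)$. This, together with naturality of $c$, gives commutativity of the left square. The outer rectangle commutes via the symmetric identity $N_{K/E}\bigl(\sum_{\sigma \in H \backslash G} \sigma(y)\bigr) = N_{K/F}(y)$, and the right square then follows.

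The main point requiring care is the preliminary identification of the Shapiro isomorphism on $B_i$ with the standard Shapiro isomorphism on coinvariants: everything else is bookkeeping or routine calculation with norms and transfers on $Y_i(K)$. Once this identification is secured via the naturality and functoriality machinery already set up in sections \ref{sec.absTN} and \ref{sec.Shapiro}, the two diagrams fall out by inspection.
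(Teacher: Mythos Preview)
Your approach routes everything through the Shapiro isomorphism and the coinvariant side, whereas the paper takes a more direct route modeled on Lemma~\ref{lem.Trick}. The paper observes that the right squares and outer rectangles commute by inspection (the right square is a Frobenius-reciprocity calculation on invariants, and the outer rectangle is exactly the norm identity you wrote down). From these two facts the left square follows whenever the map $r$ in the bottom row is injective, which holds when $X_*(T)$ is free over $\mathbb Z[G(K/F)]$; the general case is then obtained by choosing a surjection $T' \to T$ with $X_*(T')$ free as a $\mathbb Z[G(K/F)]$-module. In particular, for part~(2) the left square is literally Lemma~\ref{lem.Trick}, so your detour through the unit $T \to \tilde T$ is unnecessary.

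The gap in your argument is the ``preliminary identification'' you yourself flag as the main point: that the Shapiro isomorphism on $B_i$ agrees, via $c$, with the standard Shapiro identification on coinvariants. This does not fall out of subsection~\ref{sub.BetterNat} (which concerns morphisms of Tate--Nakayama triples, not changes of the module $M$) or from the cocycle-level Shapiro description alone. The Shapiro map of section~\ref{sec.Shapiro} is restriction followed by $\epsilon$; the $\epsilon$-step is indeed handled by Lemma~\ref{lem.Nat1}, but the restriction step requires knowing how $\Res$ interacts with $c$ on coinvariants, and that is precisely Lemma~\ref{lem.Trick}, whose proof is the free-module reduction. So your route, carried out in full, necessarily passes through the paper's argument with an extra layer of Shapiro bookkeeping on top; the paper simply applies the reduction trick directly to the diagrams in the statement.
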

\begin{proof}
Both parts of the lemma  can be proved in the same way as 
Lemma
\ref{lem.Trick}. The outer rectangles and right squares clearly commute.
Therefore the left squares also commute when $X_*(T)$ is free as
$\mathbb Z[G(K/F)]$-module. The general case is then reduced to this
special one by choosing $T' \to T$ with $X_*(T') \to X_*(T)$ surjective and
$X_*(T')$ free as $\mathbb Z[G(K/F)]$-module. 
\end{proof}

\subsection{Corestriction and restriction for $B(F,T)$ when $F$ is local} 
Let $E/F$ be a finite separable extension of local fields, and let $T$ be
an $F$-torus. As in the global case we define a corestriction map as the
composed map 
\[
B(E,T) =B(F,\tilde T) \xrightarrow{N_{E/F}}
B(F,T). 
\]
\begin{lemma} \label{lem.LocalCorRes}
 Let $K/E$ be a finite
extension such that $K/F$ is Galois and $T$ is split by $K$.  Put
$Y:=X_*(T)$. 
\begin{enumerate}
\item 
There is a commutative diagram 
\begin{equation*}
\begin{CD}
Y_{G(K/E)} @>{\simeq}>> B(E,T) @>>> Y^{G(K/E)}
\\ @VVV @V{\Cor}VV @VVV \\
Y_{G(K/F)} @>{\simeq}>> B(F,T) @>>> Y^{G(K/F)}
\end{CD}
\end{equation*}
The left vertical arrow is induced by the identity map on $Y$. The
middle vertical arrow is corestriction for $E/F$. The right vertical arrow
is given by $y \mapsto \sum_{\sigma \in G(K/F)/G(K/E)}
\sigma(y)$. 
\item
There is a commutative diagram 
\begin{equation*}
\begin{CD}
Y_{G(K/E)} @>{\simeq}>> B(E,T) @>>> Y^{G(K/E)}
\\ @AAA @A{\Res}AA @AAA \\
Y_{G(K/F)} @>{\simeq}>> B(F,T) @>>> Y^{G(K/F)}
\end{CD}
\end{equation*} 
The left vertical arrow is  given
by $y \mapsto \sum_{\sigma \in G(K/E)\backslash G(K/F)}
\sigma(y)$. 
 The middle vertical arrow is restriction for $E/F$. The right
vertical arrow is induced by the identity map on $Y$. 
\end{enumerate}
\end{lemma}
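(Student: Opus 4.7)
The plan is to follow verbatim the strategy used for the global analogue, which itself is modeled on Lemma \ref{lem.Trick}. In both parts (1) and (2) the top and bottom rows of the diagram are instances of the sequence
\[
Y_{G(K/?)} \xrightarrow{c} B(?,T) \xrightarrow{r} Y^{G(K/?)}
\]
(with $?$ equal to $E$ or $F$), where $c$ is the isomorphism of Lemma \ref{lem.LocTN} and $r$ is the restriction map coming from the inflation-restriction sequence of subsection \ref{sub.InfRes}. The composition $rc$ is the norm map for the appropriate Galois group, by Lemmas \ref{lem.c0} and \ref{lem.cNew}.

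First I would verify that the right squares commute. This is a direct calculation: the composition $r\circ\Cor$ equals $(\sum_{\sigma \in G(K/F)/G(K/E)}\sigma(\cdot))\circ r$ on cocycles (this is the standard formula for $r\circ\Cor$ in terms of the norm), and similarly for restriction. Next I would show the outer rectangles commute, which amounts to the identity $N_{K/F} = (\sum_\sigma \sigma)\circ N_{K/E}$ on the left-hand column (and its analogue for restriction) — this is a one-line check with norm maps. These two verifications together force the left squares to commute whenever the bottom map $r$ is injective.

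The bottom $r$ is injective exactly when $H^1(G(K/F), T(K)) = 0$ (by the inflation-restriction sequence), and this vanishing holds whenever $X_*(T)$ is free of finite rank as a $\mathbb Z[G(K/F)]$-module, by Shapiro's lemma together with Hilbert's Theorem 90. So for such $T$ both parts of the lemma are proven.

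For a general $F$-torus $T$ split by $K$, I would choose a finitely generated free $\mathbb Z[G(K/F)]$-module $M'$ and a surjective $G(K/F)$-equivariant map $M' \twoheadrightarrow X_*(T)$, yielding a morphism $T' \to T$ of $K$-split $F$-tori with $T'$ of the privileged type above. All maps in the diagram are functorial in $T$, and the leftmost vertical arrow $X_*(T')_{G(K/?)} \to X_*(T)_{G(K/?)}$ is surjective (since forming coinvariants is right exact). Hence the commutativity of the left square for $T$ follows from that for $T'$. The only non-formal point in the argument, and the place I would expect to spend the most care, is to confirm at the cocycle level that the definition of $\Cor$ via the composite $B(E,T) = B(F,R_{E/F}T) \xrightarrow{N_{E/F}} B(F,T)$ corresponds under Lemma \ref{lem.LocTN} to the asserted map on coinvariants; this is a bookkeeping exercise with the Shapiro isomorphism of section \ref{sec.Shapiro}, using that the cocharacter module $X_*(R_{E/F}T)$ is the $G(K/F)$-module coinduced from the $G(K/E)$-module $X_*(T)$.
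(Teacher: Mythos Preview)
Your proposal is correct and follows essentially the same approach as the paper: the paper's proof is literally ``Same as in global case,'' and the global case is proved exactly by the Lemma~\ref{lem.Trick} strategy you describe (right square and outer rectangle commute directly; left square then follows when $X_*(T)$ is $\mathbb Z[G(K/F)]$-free; general case by surjecting from such a module). One small imprecision: for part~(2) the relevant $r$ whose injectivity you need is the one on the $E$-row (the target of $\Res$), not the bottom row, but this is harmless since freeness over $\mathbb Z[G(K/F)]$ implies freeness over $\mathbb Z[G(K/E)]$ and hence kills both $H^1$ groups.
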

\begin{proof}
Same as in global case. 
\end{proof}

\section{$B(F,G)_{\bsc}$ in the local case}\label{sec.BFGlocal} 
\subsection{Notation} 
Let $F$ be a local field. We fix a separable closure $\bar F$ of $F$ and
put $\Gamma:=\Gal(\bar F/F)$. Let $G$ be a connected reductive $F$-group. We
are going to study $B(F,G)_{\bsc}$ (see section \ref{sec.DiscSetBFG})
 and the map $\kappa_G:B(F,G)_{\bsc} \to
A(F,G)=(\Lambda_G)_{\Gamma}$  
(see section \ref{sec.KappaGforB}). As in  sections \ref{sec.DiscSetBFG}
and \ref{sec.KappaGforB}  
 we write $Z(G)$ for the center of $G$, and $C(G)$ for the biggest torus in $Z(G)$.

\subsection{The case of tori}  From 
Lemma \ref{lem.LocInf} it follows  that, for every  $F$-torus $T$,
the map  
\[
\kappa_T:B(F,T) \to A(F,T)=(X_*(T))_{\Gamma}
\] 
is an isomorphism. 
In this simple case the colimit defining $B(F,T)$ is already attained when
$K$ is big enough to split $T$. 

\subsection{$B(F,G)$ in the nonarchimedean case} 
Assume that $F$ is  nonarchimedean. For any linear algebraic
group
$G$ over $F$, there is a canonical identification of  $B(F,G)$  with the set
denoted by $\mathbf B(G)$ in \cite{IsoII}. Strictly speaking \cite{IsoII}
treats only the $p$-adic case, but the definition of $\mathbf B(G)$ given
there makes sense for all nonarchimedean $F$.  

\begin{proposition}\label{prop.BscLocalMain}
Let $G$ be a  connected reductive $F$-group. Then the following statements
hold. 
\begin{enumerate}
\item The map $\kappa_G:B(F,G) \to A(F,G)$ restricts to a bijection 
\begin{equation}\label{eq.LocalBijection}
\kappa_G:B(F,G)_{\bsc}  \to A(F,G)
\end{equation}
\item If $T$ is an elliptic maximal $F$-torus in $G$, then the natural map 
\[
B(F,T) \to B(F,G)_{\bsc}
\] 
is surjective. 
\end{enumerate}
\end{proposition}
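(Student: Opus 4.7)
The plan is to reduce Part (1) to the torus case via a $z$-extension, using Part (2) for the surjectivity step, and to prove Part (2) using the rigidity of elliptic maximal tori in connected reductive groups over nonarchimedean local fields.

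For Part (1), I would begin with a $z$-extension $1 \to Z \to G' \to G \to 1$ with $G'_{\der}$ simply connected. By Proposition \ref{prop.ZactB}, $B(F,G)_{\bsc}$ is the quotient of $B(F,G')_{\bsc}$ by the action of $B(F,Z)$; and the short exact sequence $0 \to \Lambda_Z \to \Lambda_{G'} \to \Lambda_G \to 0$, together with the right-exactness of $\Gamma$-coinvariants, gives $A(F,G) = A(F,G')/A(F,Z)$. Since $\kappa_Z$ is a bijection for the torus $Z$ and $\kappa_{G'}$ is equivariant for these quotients by functoriality, the statement for $G$ reduces to the same statement for $G'$. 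For such $G'$, writing $D' := G'/G'_{\der}$, we have $\Lambda_{G'} = X_*(D')$ and hence $A(F,G') = A(F,D') = B(F,D')$ via $\kappa_{D'}$.

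Surjectivity of $\kappa_{G'}$ on basic elements then uses Part (2) together with the existence of an elliptic maximal $F$-torus $T$ in $G'$ (standard over nonarchimedean local $F$). The inclusion $T \hookrightarrow G'$ induces a surjection $X_*(T) \twoheadrightarrow \Lambda_{G'}$ (since $\Lambda_{G'} = X_*(T)/X_*(T_{\ssc})$), whence a surjection $B(F,T) = X_*(T)_{\Gamma} \twoheadrightarrow A(F,G')$. By Part (2), the map $B(F,T) \to B(F,G')$ factors through $B(F,G')_{\bsc}$, giving surjectivity of $\kappa_{G'}$ on basic elements. For injectivity, suppose $b_1, b_2 \in B(F,G')_{\bsc}$ satisfy $\kappa_{G'}(b_1) = \kappa_{G'}(b_2)$. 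Proposition \ref{prop.NewtKappa} gives them the same Newton point, so by subsection \ref{sub.1rstCpt} they differ by a class in $H^1(F, J_{b_1})$. Since $b_1$ is basic, $J_{b_1}$ is an inner form of $G'$ (so $\Lambda_{J_{b_1}} = \Lambda_{G'}$), and iterating the $z$-extension reduction for $J_{b_1}$ reduces the required injectivity of $\kappa_{J_{b_1}}|_{H^1(F,J_{b_1})}$ to the Tate--Nakayama case handled by Lemma \ref{lem.LocTN}.

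For Part (2), ellipticity of $T$ yields $X_*(T)^\Gamma = X_*(C(G))^\Gamma$ from the long exact $\Gamma$-cohomology sequence for $0 \to X_*(C(G)) \to X_*(T) \to X_*(T/C(G)) \to 0$ and the vanishing of $X_*(T/C(G))^\Gamma$. Consequently, for any $t \in B(F,T)$, functoriality of the Newton map places the Newton point of the image of $t$ in $B(F,G)$ inside $\Hom_F(\mathbb D_F, C(G))$, so this image is basic. For surjectivity, given a basic class $b \in B(F,G)$ represented by an algebraic $1$-cocycle $(\nu, x)$ with $\nu: \mathbb D_{K/F} \to C(G)$, the inner form $J_b$ of subsection \ref{sub.J_b} admits an elliptic maximal $F$-torus (Steinberg), and all elliptic maximal tori in $J_b$ are $J_b(F)$-conjugate over nonarchimedean local $F$. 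Pulling back through the inner twisting and conjugating $x$ by a suitable element of $G(K)$ replaces $(\nu,x)$ by a cohomologous cocycle valued in $T(K)$, producing the required lift in $B(F,T)$.

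The main obstacle is the cocycle conjugation at the end of Part (2): turning a statement about conjugacy of elliptic tori in $J_b$ into an explicit coboundary for the algebraic $1$-cocycle in $\mathcal E(K/F)$. This demands careful bookkeeping with the inner twisting identifying $J_b$ with $G$ over $\bar F$ and with the Galois action on the torus $T$. Secondary technical points include the compatibility of $\kappa$ with the $B(F,Z)$-quotient (a functoriality akin to Lemma \ref{lem.KappaInf}) and the canonical identification $\Lambda_{J_b} = \Lambda_G$ for inner forms, both of which follow from the construction of $\kappa_G$ via tori and $z$-extensions.
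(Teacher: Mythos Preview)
There are two genuine gaps.

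First, in your direct approach to Part (2), the claim that all elliptic maximal $F$-tori in $J_b$ are $J_b(F)$-conjugate over a nonarchimedean local field is false. Already in $\mathrm{GL}_2(\mathbb{Q}_p)$ the elliptic maximal tori are parametrized by quadratic extensions of $\mathbb{Q}_p$, of which there are several non-conjugate ones. So your cocycle-conjugation argument does not go through, and the ``main obstacle'' you flag is not a bookkeeping issue but an actual obstruction.

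Second, your injectivity argument for Part (1) is circular. After reducing to $G'$ with simply connected derived group and passing to $J_{b_1}$ (an inner form of $G'$, hence also with simply connected derived group), you need injectivity of $\kappa_{J_{b_1}}$ on $H^1(F,J_{b_1})$. You propose ``iterating the $z$-extension reduction'', but $J_{b_1}$ already has simply connected derived group, so there is nothing to iterate: you are back to exactly the same assertion for an inner form of $G'$. The missing input is the Kneser/Bruhat--Tits theorem that $H^1(F,H)$ is trivial for $H$ semisimple simply connected over a nonarchimedean local field. With this in hand, the kernel of $H^1(F,J_{b_1}) \to H^1(F,D')$ (where $D' = J_{b_1}/(J_{b_1})_{\der}$) is the image of $H^1(F,(J_{b_1})_{\der})$, which vanishes, and injectivity follows; Lemma~\ref{lem.LocTN} alone does not supply this.

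The paper's proof uses precisely this vanishing theorem to establish injectivity of \eqref{eq.LocalBijection} first, and then deduces Part (2) purely formally: in the commuting square
\[
\begin{CD}
B(F,T) @>{\kappa_T}>> (\Lambda_T)_\Gamma \\
@VVV @VVV \\
B(F,G)_{\bsc} @>{\eqref{eq.LocalBijection}}>> (\Lambda_G)_\Gamma
\end{CD}
\]
the top arrow is an isomorphism, the right vertical arrow is surjective (since $\Lambda_T \twoheadrightarrow \Lambda_G$), and the bottom arrow is now known to be injective, forcing the left vertical arrow to be surjective. This sidesteps any direct cocycle manipulation in $J_b$ and avoids the false conjugacy claim entirely. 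Surjectivity of \eqref{eq.LocalBijection} then follows from Part (2), as you also noted.
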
 

\begin{proof}
First we prove that \eqref{eq.LocalBijection} is injective. 
When the derived
group of $G$ is simply connected, this follows easily from the vanishing of
$H^1$ for simply connected semisimple groups (due to Kneser \cite{KnI,KnII}
in the
$p$-adic case  and  Bruhat-Tits \cite{BT} in general). The general case is 
then 
treated using $z$-extensions and Proposition \ref{prop.ZactB}. The reader
who finds these indications too brief can look ahead to 
the proof of Proposition \ref{prop.BAMain}, where the corresponding steps are
treated in much greater detail. 

Next we prove part (2) of the proposition. The image of the natural map
$B(F,T) \to B(F,G)$ is contained in the subset $B(F,G)_{\bsc}$, simply 
because
$T$ is elliptic.  (The natural injection $\Hom_F(\mathbb D_F,Z(G))
\hookrightarrow \Hom_F(\mathbb D_F,T)$ is actually bijective, since the
image of any $F$-homomorphism $\mathbb D_F \to T$ is a split subtorus of
$T$.) The functoriality of
$\kappa_G$ guarantees that the diagram 
\begin{equation}
\begin{CD}
B(F,T) @>{\kappa_T}>> (\Lambda_T)_{\Gamma} \\ 
@VVV @VVV \\
B(F,G)_{\bsc} @>{\eqref{eq.LocalBijection}}>> (\Lambda_G)_{\Gamma}
\end{CD}
\end{equation} 
commutes. Since $\kappa_T$ is an isomorphism, 
the surjectivity of the left vertical map  follows from that of the right
vertical map and the (already established) injectivity of \eqref{eq.LocalBijection}. 

Finally we recall that  elliptic maximal $F$-tori
$T$ in $G$ are known to exist. This is due to Kneser \cite[\S 15]{KnII} in the
$p$-adic case and
DeBacker \cite{DeB} in general.  The surjectivity of
\eqref{eq.LocalBijection} now follows from part (2). 
\end{proof}

\subsection{$B(F,G)$ and $B(F,G)_{\bsc}$ in the complex
case}\label{sub.Cmplx} 
The complex case is very simple:  the map
\eqref{eq.SlopeMap} is bijective, which just says that $B(\mathbb C,G)$ is 
the set of
$G(\mathbb C)$-conjugacy classes of homomorphisms from $\mathbb G_m$ to
$G$. In particular we have 
\[
B(\mathbb C,G)_{\bsc}=\Lambda_{C(G)}. 
\]

\subsection{$B(F,G)_{\bsc}$ in the real case} We can analyze $B(\mathbb
R,G)_{\bsc}$ using some results of Shelstad
\cite{Sh}.  We choose a fundamental maximal $\mathbb R$-torus $T$ in $G$. We
write
$\Omega$ for its absolute Weyl group,  $\Omega(\mathbb R)$ for the fixed
points of complex conjugation on $\Omega$, and $\Omega_{\mathbb R}$ for the
subgroup of
$\Omega(\mathbb R)$ consisting of elements that can be represented by an
element in the normalizer of $T$ in $G(\mathbb R)$. As Shelstad shows,
\begin{itemize}
\item $T$ transfers to every inner form of $G$, and  
\item   there is a natural bijection 
\begin{equation}\label{eq.Shel}
\Omega_{\mathbb R}\backslash \Omega(\mathbb R) \xrightarrow{\simeq} 
\ker[H^1(\mathbb R, T) \to H^1(\mathbb R,G)] 
\end{equation}
\end{itemize}

Borovoi \cite[Theorem 1]{B1} observes that Shelstad's results  lead to a 
useful description of $H^1(\mathbb R,G)$. For this Borovoi uses the
following (right) action of $\Omega(\mathbb R)$ on $H^1(\mathbb R, T)$.
Given an element $\omega \in\Omega(\mathbb R)$ and a $1$-cocycle $t$
of
$\Gal(\mathbb C/\mathbb R)$ in $T$, the action of $\omega$ sends 
the class of $t$ to the class of the $1$-cocycle $t'$ given
by 
$t'_\sigma=\dot\omega^{-1}t_\sigma\sigma(\dot\omega)$, where $\dot\omega$ is
a representative for $\omega$ in the normalizer of
$T$ in $G(\mathbb C)$. Obviously the $\Omega(\mathbb R)$-orbit of the class
of $t$ is equal to the quotient $\Omega^t_{\mathbb R}\backslash
\Omega(\mathbb R)$, where $\Omega^t$ denotes the twist of $\Omega$ by $t$.
(So $\Omega^t$ is the Weyl group of $T$ in the pure inner form $G^t$ of $G$ 
obtained as the twist  by 
$t$.) Now \eqref{eq.Shel}, applied to the inner form $G^t$, implies (by the
usual twisting argument in Galois cohomology) that the fiber of 
$H^1(\mathbb R,T) \to H^1(\mathbb R,G)$ through $t$ is equal to the 
$\Omega(\mathbb R)$-orbit of $t$. 
 (When we twist, $\Omega_{\mathbb
R}$ changes, but
$\Omega(\mathbb R)$ does not.) Moreover, the fact that $T$ transfers 
to every inner form of $G$ implies (see, e.g., \cite{EST}) that
$H^1(\mathbb R,T) \to H^1(\mathbb R,G)$ is surjective. Putting these
observations together, Borovoi concludes that $H^1(\mathbb R,G)$ is the
quotient of $H^1(\mathbb R,T)$ by the above action of $\Omega(\mathbb R)$. 

We are now going to follow the  same line of reasoning to describe
$B(\mathbb R,G)_{\bsc}$ in terms of the subset $B(\mathbb R,T)_{G-\bsc}$
of $B(\mathbb R,T)$ consisting of all elements whose Newton point
$\nu:\mathbb G_m \to T$ is central in $G$. There is a natural action of
$\Omega(\mathbb R)$ on $B(\mathbb R,T)$, induced by the following action on
algebraic $1$-cocycles. Let $\omega \in \Omega(\mathbb R)$, and choose a
representative $\dot\omega$ of $\omega$ in the normalizer of $T$ in
$G(\mathbb C)$. Let $b=(\nu,x)$ be an algebraic $1$-cocycle in $T$. Then the
action of $\omega$ sends the class of $b$ to the class of the algebraic
$1$-cocycle $b':=(\omega^{-1}(\nu),w \mapsto \dot\omega^{-1}x_w
w(\dot\omega))$.  When $b$ is basic, so that $\nu$ is central,
$\omega^{-1}(\nu)$ is of course equal to $\nu$. In particular, the action of
$\Omega(\mathbb R)$ preserves the subset $B(\mathbb R,T)_{G-\bsc}$ of
$B(\mathbb R,T)$.   

\begin{lemma}\label{lem.BscReal}
The natural map $B(\mathbb R,T)_{G-\bsc} \to B(\mathbb R,G)_{\bsc}$ induces
a bijection between $B(\mathbb R,G)_{\bsc}$ and the quotient of 
$B(\mathbb R,T)_{G-\bsc}$ by the action of $\Omega(\mathbb R)$. 
\end{lemma}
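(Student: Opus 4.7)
The plan is to follow Borovoi's argument for $H^1(\mathbb R, G)$ as recalled in the passage preceding the lemma, translating it to basic elements of $B(\mathbb R, G)$. I split the proof into surjectivity of the map $\pi: B(\mathbb R, T)_{G-\bsc} \to B(\mathbb R, G)_{\bsc}$ and an analysis of its fibers.

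\textbf{Surjectivity.} Given $b \in B(\mathbb R, G)_{\bsc}$, choose a representing algebraic $1$-cocycle $(\nu, x_0)$ with $\nu: \mathbb G_m \to Z(G)$ central and $\mathbb R$-defined; the $\mathbb R$-form $J_b$ of subsection \ref{sub.J_b} is then an inner form of $G$. By Shelstad's transferability result (used by Borovoi), $T$ embeds as a maximal $\mathbb R$-torus of $J_b$. Choose $g \in G(\mathbb C)$ realizing this transfer at the $\mathbb C$-level and replace $(\nu, x_0)$ by its equivalent $g^{-1}$-conjugate $(\nu, x_w := g^{-1} x_{0,w} w(g))$: a direct verification shows $x_w \in N_G(T)(\mathbb C)$, and by using that the transfer can be chosen so that the original $\mathbb R$-form of $T$ and the one twisted by $x$ inside $J_{g^{-1}\cdot b}$ coincide — which is the full content of Shelstad in this setting — one can refine $g$ so that in fact $x_w \in T(\mathbb C)$ for all $w$. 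This produces a class in $B(\mathbb R, T)_{G-\bsc}$ projecting to $b$.

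\textbf{Fibers.} The $\Omega(\mathbb R)$-action on $B(\mathbb R, T)_{G-\bsc}$ manifestly preserves the fibers of $\pi$: lifting $\omega \in \Omega(\mathbb R)$ to $\dot\omega \in N_G(T)(\mathbb C) \subset G(\mathbb C)$, the action of $\omega$ on a $T$-valued cocycle is by definition the $G(\mathbb C)$-equivalence coming from $\dot\omega^{-1}$, which is trivial in $B(\mathbb R, G)_{\bsc}$. Conversely, suppose $b_1 = [(\nu, t_1)]$ and $b_2 = [(\nu, t_2)]$ in $B(\mathbb R, T)_{G-\bsc}$ project to the same class in $B(\mathbb R, G)_{\bsc}$; then some $g \in G(\mathbb C)$ satisfies $g t_{1,w} w(g^{-1}) = t_{2,w}$ for all $w$. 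Since $t_1, t_2$ are $T(\mathbb C)$-valued, a short calculation shows that $\Int(g^{-1})T$ is a maximal $\mathbb R$-subtorus of $J_{b_1}$, and that the isomorphism $\Int(g^{-1}): T \to \Int(g^{-1}) T$ is Galois-equivariant between the original and twisted $\mathbb R$-structures. Hence $T$ and $\Int(g^{-1})T$ are two $\mathbb R$-isomorphic, $\mathbb C$-conjugate maximal $\mathbb R$-tori of $J_{b_1}$, so they are $J_{b_1}(\mathbb R)$-conjugate (a standard fact for real reductive groups, applicable here because both tori are fundamental in $J_{b_1}$). Replacing $g$ by $gh$ for a suitable $h \in J_{b_1}(\mathbb R)$ — which does not change $b_2 = g \cdot b_1$ since $h$ stabilizes $b_1$ — one arranges $g \in N_G(T)(\mathbb C)$, giving a Weyl element $\omega \in \Omega(\mathbb C)$. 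From the relation $g t_{1,w} \sigma(g)^{-1} = t_{2,w}$ (where $w$ maps to the nontrivial $\sigma$), the direct computation $g^{-1}\sigma(g) = \omega^{-1}(t_{2,w}^{-1}) \, t_{1,w} \in T(\mathbb C)$ proves $\omega \in \Omega(\mathbb R)$; that the action of the appropriate power of $\omega$ sends $b_1$ to $b_2$ is then immediate from the formulas.

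\textbf{Main obstacle.} The substantial input is Shelstad's transfer theorem, used in the surjectivity step, in the \emph{strong} form needed to upgrade the cocycle from $N_G(T)(\mathbb C)$-valued to $T(\mathbb C)$-valued (and not just to produce some $\mathbb R$-embedding of a maximal torus). The parallel delicacy in the fiber analysis is the statement that two $\mathbb C$-conjugate, $\mathbb R$-isomorphic maximal $\mathbb R$-tori of a real reductive group are $\mathbb R$-conjugate, which one proves by applying Shelstad (or its direct consequences) to the inner form $J_{b_1}$. Once these Shelstad inputs are granted, the remainder reduces to careful bookkeeping with algebraic $1$-cocycles, the $G(\mathbb C)$-action producing equivalence in $B(\mathbb R, G)$, and Borovoi's formula for the $\Omega(\mathbb R)$-action on $B(\mathbb R, T)$.
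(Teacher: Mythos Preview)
Your argument is correct and follows the same Borovoi template (surjectivity plus fiber analysis) as the paper, but implements each half differently.

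For surjectivity, the paper passes to the adjoint group: since $Z(G_{\ad})$ is trivial, $B(\mathbb R, G_{\ad})_{\bsc} = H^1(\mathbb R, G_{\ad})$, and surjectivity of $H^1(\mathbb R, T_{\ad}) \to H^1(\mathbb R, G_{\ad})$ (Borovoi applied to $G_{\ad}$) lets one choose a representative $(\nu,x)$ whose adjoint image is $T_{\ad}$-valued, whence $x$ itself is $T$-valued. Your direct use of Shelstad's transfer also works, and your two-step worry (first $N_G(T)$, then refine to $T$) is unnecessary: the $\mathbb R$-definedness of the transfer $\Int(g)|_T : T \to J_b$ says precisely that $g^{-1} x_{0,w} w(g)$ centralizes $T$, hence lies in $T(\mathbb C)$ in one step.

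For the fibers, the paper twists by $b$ to identify the fiber through $[b]$ with $\ker\bigl[B(\mathbb R, T)_{G-\bsc} \to B(\mathbb R, J_b)_{\bsc}\bigr]$, observes this equals $\ker\bigl[H^1(\mathbb R, T) \to H^1(\mathbb R, J_b)\bigr]$ (since $\Hom(\mathbb G_m, T) \to \Hom(\mathbb G_m, J_b)$ is injective), and then invokes Shelstad's bijection \eqref{eq.Shel} for $J_b$ to get $\Omega^b_{\mathbb R} \backslash \Omega(\mathbb R)$, identified with the $\Omega(\mathbb R)$-orbit of $[b]$. Your cocycle-level argument unpacks the same Shelstad input via $J_{b_1}(\mathbb R)$-conjugacy of fundamental tori. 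Both approaches implicitly use that $T$ remains fundamental in $J_b$; this holds because twisting by a $T$-valued cocycle leaves the Galois action on the roots unchanged, so the ``no real roots'' criterion for fundamentality is preserved. The paper's route is shorter and more directly parallel to Borovoi; yours is more explicit at the cocycle level.
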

\begin{proof} We claim that $B(\mathbb R,T)_{G-\bsc} \to B(\mathbb
R,G)_{\bsc}$ is surjective. Indeed, consider an element $b \in B(\mathbb
R,G)_{\bsc}$. Its image in $B(\mathbb R,G_{\ad})_{\bsc}=H^1(\mathbb R,G_{\ad})$ 
lies in the image of $H^1(\mathbb R,T_{\ad})$, because 
$H^1(\mathbb R,T_{\ad}) \to H^1(\mathbb R,G_{\ad})$ is surjective. 
Therefore $b$ can be represented by an algebraic $1$-cocycle $(\nu,x)$ 
for which the image of $x$ in the adjoint group takes values in $T_{\ad}$. 
It follows that $x$ itself takes values in $T$. This, together with the 
fact that $\nu$ is central in $G$, 
 shows that $(\nu,x)$ is 
the image of an algebraic $1$-cocycle in $T$, and the claim follows.

It remains to examine the fibers of our surjection.  Just as for Galois
cohomology, the fiber of 
$B(\mathbb R,T)_{G-\bsc} \twoheadrightarrow B(\mathbb R,G)_{\bsc}$ through
the class in 
$B(\mathbb R,T)_{G-\bsc}$ represented by the algebraic $1$-cocycle
$b=(\nu,x)$ in $T$ (with $\nu$ central in $G$) can be identified with the
kernel of 
\[
B(\mathbb R,T)_{G-\bsc} \twoheadrightarrow B(\mathbb R,J_b)_{\bsc}, 
\] where
$J_b$ is the inner form of $G$ obtained as the twist by $b$. Now the
kernel of $B(\mathbb R,T) \to B(\mathbb R,J_b)$ is equal to the kernel of 
$H^1(\mathbb R,T) \to H^1(\mathbb R,J_b)$, because $\Hom(\mathbb G_m,T) \to
\Hom(\mathbb G_m,J_b)$ is obviously injective. Therefore, by 
 the second
of Shelstad's results reviewed above,  the fiber of 
$B(\mathbb R,T)_{G-\bsc} \to B(\mathbb R,G)_{\bsc}$ through the class of
$b$ can be identified with $\Omega^b_{\mathbb R}\backslash
\Omega(\mathbb R)$, where $\Omega^b$ is the Weyl group of $T$ in the twist
$J_b$. Unwinding the definitions, one sees that $\Omega^b_{\mathbb R}$ is
the stabilizer in $\Omega(\mathbb R)$ of the class of $b$, and we conclude
that the fiber of 
$B(\mathbb R,T)_{G-\bsc} \to B(\mathbb R,G)_{\bsc}$ through the class of
$b$ is the $\Omega(\mathbb R)$-orbit of the class of $b$, as desired. 
\end{proof}

\begin{remark}
When the fundamental torus $T$ is  elliptic (equivalently, when elliptic
maximal tori  exist in $G$), any
$F$-homomorphism
$\mathbb G_m \to T$ is automatically central in $G$, so $B(\mathbb
R,T)_{G-\bsc}=B(\mathbb R,T)$, and the lemma tells us that $B(\mathbb
R,G)_{\bsc}$ is the quotient of $B(\mathbb R,T)$ by the above action of
$\Omega(\mathbb R)$. 
\end{remark} 

\subsection{The image of $B(F,G)_{\bsc} \to A(F,G)$}
The next result involves the subset $A_0(F,G)$ of $A(F,G)$ defined in Remark
\ref{rem.im=A0}. 
\begin{proposition}\label{prop.LocImKap}
For any local field $F$ the image of $\kappa_G:B(F,G)_{\bsc} \to A(F,G)$ is 
$A_0(F,G)$.
\end{proposition}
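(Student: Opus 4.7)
The containment $\im \kappa_G \subset A_0(F,G)$ is Remark \ref{rem.im=A0}, so my task is the reverse inclusion; I would handle the three local fields separately. In the nonarchimedean case, $X^*(\mathbb D_F) = \mathbb Q$, and since $C(G) \to G/G^{\der}$ is an isogeny (same dimension, finite kernel) one has $\Lambda_{C(G)} \otimes \mathbb Q \xrightarrow{\sim} \Lambda_G \otimes \mathbb Q$ as $\Gamma$-modules, whence $A_0(F,G) = A(F,G)$; the conclusion then follows from the bijection $\kappa_G: B(F,G)_{\bsc} \xrightarrow{\sim} A(F,G)$ of Proposition \ref{prop.BscLocalMain}. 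In the complex case, $\Gamma$ is trivial, $X^*(\mathbb D_{\mathbb C}) = \mathbb Z$, and $N$ is the identity, so $A_0(\mathbb C,G) = \Lambda_{C(G)}$; by subsection \ref{sub.Cmplx}, $B(\mathbb C,G)_{\bsc} = \Lambda_{C(G)}$, and functoriality forces $\kappa_G$ to restrict to the inclusion $\Lambda_{C(G)} \hookrightarrow \Lambda_G$, so its image is precisely $A_0(\mathbb C,G)$.

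The real case requires more work. I would choose a fundamental maximal $\mathbb R$-torus $T \subset G$; then $C(G) \subset T$ and Lemma \ref{lem.BscReal} gives a surjection $B(\mathbb R,T)_{G-\bsc} \twoheadrightarrow B(\mathbb R,G)_{\bsc}$. Since $\kappa_T$ is a bijection, the image of $\kappa_G$ equals the image in $(\Lambda_G)_\Gamma$ of
\[
A_{G-\bsc} := \{\lambda \in (X_*(T))_\Gamma : N(\lambda) \in (X_*(C(G)) \otimes \tfrac{1}{2}\mathbb Z)^\Gamma\}.
\]
Given $\mu \in A_0(\mathbb R,G)$, I would lift it to some $\lambda \in (X_*(T))_\Gamma$ using the surjection $X_*(T) \twoheadrightarrow \Lambda_G$, and then verify that $N(\lambda) \in (X_*(C(G)) \otimes \tfrac{1}{2}\mathbb Z)^\Gamma$ automatically.

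The verification hinges on the structural fact that, because $T$ is fundamental in $G$, its preimage $T_{\ssc}$ in $G^{\ssc}$ is elliptic in $G^{\ssc}$, hence $X_*(T_{\ssc})^\Gamma = 0$ and $(X_*(T_{\ssc}) \otimes \tfrac{1}{2}\mathbb Z)^\Gamma = 0$. Taking $\Gamma$-invariants of the short exact sequence
\[
0 \to X_*(T_{\ssc}) \otimes \tfrac{1}{2}\mathbb Z \to X_*(T) \otimes \tfrac{1}{2}\mathbb Z \to \Lambda_G \otimes \tfrac{1}{2}\mathbb Z \to 0
\]
then yields an injection $(X_*(T) \otimes \tfrac{1}{2}\mathbb Z)^\Gamma \hookrightarrow (\Lambda_G \otimes \tfrac{1}{2}\mathbb Z)^\Gamma$. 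Now $N(\mu) \in (\Lambda_{C(G)} \otimes \tfrac{1}{2}\mathbb Z)^\Gamma$ lifts canonically, via the embedding $X_*(C(G)) = \Lambda_{C(G)} \hookrightarrow X_*(T)$, to an element $\alpha \in (X_*(C(G)) \otimes \tfrac{1}{2}\mathbb Z)^\Gamma$, and both $N(\lambda)$ and $\alpha$ project to $N(\mu)$ in $(\Lambda_G \otimes \tfrac{1}{2}\mathbb Z)^\Gamma$; by injectivity, $N(\lambda) = \alpha$ lies in the required subgroup. The main obstacle is locating this structural input (ellipticity of $T_{\ssc}$ for $T$ fundamental); once it is in hand, the cohomological comparison is routine.
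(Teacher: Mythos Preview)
Your nonarchimedean and complex arguments are fine and match the paper's. The real case, however, has a genuine gap: the claim that $T$ fundamental in $G$ forces $T_{\ssc}$ to be elliptic (i.e.\ anisotropic) in $G_{\ssc}$ is false. Take $G=SL_3$ over $\mathbb R$: a fundamental maximal torus has $\mathbb R$-split rank $1$, so $(X_*(T_{\ssc}))^\Gamma\neq 0$, and your injectivity of $(X_*(T)\otimes\frac12\mathbb Z)^\Gamma\hookrightarrow(\Lambda_G\otimes\frac12\mathbb Z)^\Gamma$ breaks down. In particular, an \emph{arbitrary} lift $\lambda$ of $\mu$ need not satisfy $N(\lambda)\in (X_*(C(G))\otimes\frac12\mathbb Z)^\Gamma$; you have to correct the lift.

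What actually holds for a fundamental torus over $\mathbb R$ is the weaker fact that the Tate cohomology group $\hat H^0(\Gamma,X_*(T_{\ssc}))$ vanishes: one has $T_{\ssc}\simeq T_a\times T_i$ with $T_a$ anisotropic and $T_i\simeq R_{\mathbb C/\mathbb R}(S)$ induced (see the proof of Lemma~10.4 in \cite{EST}), and both factors have trivial $\hat H^0$. This is exactly what the paper uses. Concretely, choose any lift $\lambda$ of $\mu$ and let $\alpha\in(X_*(C(G)))^\Gamma$ be the canonical lift of $N(\mu)$; then $y:=N(\lambda)-\alpha$ lies in the kernel $(X_*(T_{\ssc}))^\Gamma$ of $p$, and the vanishing of $\hat H^0$ says $N:(X_*(T_{\ssc}))_\Gamma\to(X_*(T_{\ssc}))^\Gamma$ is surjective, so $y=N(x)$ for some $x$. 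Replacing $\lambda$ by $\lambda-x$ gives a lift with $N(\lambda-x)=\alpha$, hence $G$-basic. The paper packages this as the statement that the square
\[
\begin{CD}
(\Lambda_T)_\Gamma @>p>> (\Lambda_G)_\Gamma\\
@V{N_{\mathbb C/\mathbb R}}VV @VV{N_{\mathbb C/\mathbb R}}V\\
(\Lambda_T)^\Gamma @>p>> (\Lambda_G)^\Gamma
\end{CD}
\]
is semicartesian. Your argument would have proved the stronger (and false) claim that $p$ is injective on invariants.
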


\begin{proof} We already know from Remark \ref{rem.im=A0} 
that the image of $B(F,G)_{\bsc} \to A(F,G)$ is contained in $A_0(F,G)$. 
So we just need to check that any element in $A_0(F,G)$ lies in 
$\im[B(F,G)_{\bsc} \to A(F,G)]$. In the nonarchimedean case  this is clear
from Proposition \ref{prop.BscLocalMain}(1), and in the complex case it is
clear from  the fact that $B(\mathbb C,G)_{\bsc}=\Lambda_{C(G)}$. The
real case is more interesting. 

Before tackling the real case we need to make a definition. We say that 
a commutative diagram 
\begin{equation}
\begin{CD}
Z @>>> X\\
@VVV @VVV \\
Y @>>> S
\end{CD}
\end{equation} 
of sets is \emph{semicartesian} if the induced map $Z \to X \times_S Y$ is surjective. 

 In the real case the commutative square \eqref{CD.NewtKappaABC} works out to
\begin{equation}
\begin{CD}
B(\mathbb R,G)_{\bsc} @>{\kappa_G}>> (\Lambda_G)_{\Gamma}\\
@V{Newton}VV @V{N_{\mathbb C/\mathbb R}}VV \\
(\Lambda_{C(G)}) ^{\Gamma} @>i>> (\Lambda_G)^{\Gamma}.
\end{CD}
\end{equation} 
To prove the proposition we must prove that this square is
semicartesian.  Let $T$ be a fundamental maximal $\mathbb R$-torus in $G$. The
map 
$B(\mathbb R,T)_{G-\bsc} \to B(\mathbb R,G)_{\bsc}$ is surjective 
by  Lemma \ref{lem.BscReal}.
Moreover, the map 
$i:(\Lambda_{C(G)})^{\Gamma} \to (\Lambda_G)^{\Gamma}$ factors
as 
$(\Lambda_{C(G)})^{\Gamma} \hookrightarrow (\Lambda_T)^{\Gamma} 
\xrightarrow{p}
(\Lambda_G)^{\Gamma}$, where $p$ is (induced by) the canonical surjection in
the short exact sequence 
\begin{equation}
0 \to X_*(T_{\ssc}) \to \Lambda_T \xrightarrow{p} \Lambda_G \to
0.
\end{equation}

 So,   to prove the proposition, it will suffice to
show that the square 
\begin{equation}
\begin{CD}
(\Lambda_T)_{\Gamma} @>p>> (\Lambda_G)_{\Gamma}\\
@V{N_{\mathbb C/\mathbb R}}VV @V{N_{\mathbb C/\mathbb R}}VV \\
(\Lambda_T) ^{\Gamma} @>p>> (\Lambda_G)^{\Gamma}
\end{CD}
\end{equation}
is semicartesian.

Consider $g \in (\Lambda_G)_{\Gamma}$ and $t \in (\Lambda_T) ^{\Gamma}$
such that $N_{\mathbb C/\mathbb R}(g)=p(t)$. We need to construct $\tilde t
\in (\Lambda_T) _{\Gamma}$ such that $p(\tilde t)=g$ and  $N_{\mathbb
C/\mathbb R}(\tilde t)=t$. We begin by choosing any $t_1 \in 
(\Lambda_T) _{\Gamma}$ such that $p(t_1)=g$. Then $y:=t-N_{\mathbb
C/\mathbb R}(t_1)$ lies in the kernel $(X_*(T_{\ssc}))^{\Gamma}$ of
the bottom horizontal arrow in our square. To finish the proof it suffices 
to construct an element $x \in (X_*(T_{\ssc}))_{\Gamma}$ such that 
$N_{\mathbb C/\mathbb R}(x)=y$, since we will then obtain the desired
element $\tilde t$ as the sum $t_1+x$. 

The existence of $x$ (for arbitrary $y$) is just the statement that the Tate
cohomology group $H^0(\Gamma,X_*(T_{\ssc}))$ vanishes. This is indeed the
case, because  $T_{\ssc}$
is isomorphic to a product $T_a \times T_i$, with $T_a$  anisotropic and
$T_i$  of the form $R_{\mathbb C/\mathbb R}(S)$ for some $\mathbb C$-torus
$S$ (see, e.g., the proof of Lemma 10.4 in \cite{EST}). 
\end{proof}

\section{A finiteness theorem} \label{sec.Finite} 

\subsection{Motivation} 

Let $K/F$ be a finite Galois extension of global fields. As usual we write
$V_F$ for the set of all places of $F$. When $u$ is a finite place, we write
$\mathcal O_u$ for the valuation ring in $F_u$. 
 For every subset $S$ of $V_F$ we denote by $S_K$ the preimage
of $S$ under the natural surjection $V_K \twoheadrightarrow V_F$. When $S$
contains $S_{\infty}$, the set of infinite places of $F$, we put 
\begin{align*}
F_S:&=\{x \in F: x \in \mathcal O_u \quad \forall \, u \in V_F \setminus S \}, \\
K_S:&=\{x \in K: x \in \mathcal O_v \quad \forall \, v \in V_K \setminus S_K \}, \\
\mathbb A_{K,S}:&=\{x \in \mathbb A_K: x_v \in \mathcal O_v \quad \forall \, v \in V_K \setminus S_K \}.
\end{align*}
If $S=V_F$, then $F_S=F$. In the number field case, if $S=S_{\infty}$, then
$F_S$ is the ring of integers in $F$.

Now let $G$ be a linear algebraic group over $F$. By way of motivation for
this section we begin by reviewing a  standard finiteness result for
$H^1(G(K/F),G(K))$. To formulate the result we first need to choose an
extension of $G$ to a smooth affine group scheme $\mathcal G$ 
over $F_{S(\mathcal G)}$, where ${S(\mathcal G)}$ is some finite set of
places containing $S_{\infty}$. Given two such extensions $\mathcal G_1$,
$\mathcal G_2$, there exists a finite set $S$ of places such that 
\begin{itemize}
\item $S$ contains both $S(\mathcal G_1)$ and $S(\mathcal G_2)$, and 
\item the identity morphism for $G$ extends (uniquely) 
 to an $F_S$-isomorphism between $\mathcal G_1$, $\mathcal G_2$.
\end{itemize}
For such a set $S$ we have $\mathcal G_1(\mathcal O_v)=\mathcal G_2(\mathcal
O_v)$ when $v \notin S_K$. 

Here is the standard finiteness result, along with its easy proof.   
When dealing with localization maps (as in the next result), we make the following 
convention: $u$ denotes a place of $F$, and $v$ denotes some chosen place of $K$ 
lying over $u$. This will allow us to keep our statements a little more succinct.

\begin{proposition}\label{prop.TotalLocal}
Let $x \in H^1(G(K/F),G(K))$ and write $x_u$ for the image of $x$ under the
localization map 
\[
H^1(G(K/F),G(K)) \to H^1(G(K_v/F_u),G(K_v)).
\] 
Then there exists a finite set $S$ of places of $F$ such that 
\begin{itemize}
\item $S$ contains
$S(\mathcal G)$, and 
\item for all $u \notin S$ the element $x_u$ lies in the image of the map 
\[
H^1(G(K_v/F_u),\mathcal G(\mathcal O_v)) \to H^1(G(K_v/F_u),G(K_v)).
\]  
\end{itemize} 
\end{proposition}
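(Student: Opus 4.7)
The plan is to reduce the statement to a direct property of a single cocycle representative by exploiting finiteness of $G(K/F)$. The key point is that although $G(K)$ is an enormous group, any particular $1$-cocycle $c:G(K/F) \to G(K)$ only involves finitely many values, so it can be ``spread out'' to an integral model away from a finite set of places.

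First, I would choose any $1$-cocycle $c$ of $G(K/F)$ in $G(K)$ representing $x$, and choose some closed embedding of the $F_{S(\mathcal G)}$-scheme $\mathcal G$ into affine space $\mathbb A^N_{F_{S(\mathcal G)}}$. Each of the finitely many values $c_\sigma \in G(K) = \mathcal G(K)$ (with $\sigma \in G(K/F)$) is then described by an $N$-tuple of elements of $K$. All told, this gives a finite collection of elements of $K$. Since $K$ is a global field, there is a finite subset $S \subset V_F$ with $S \supset S(\mathcal G)$ such that every one of these finitely many elements of $K$ lies in $\mathcal O_v$ for every $v \in V_K \setminus S_K$. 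By the choice of embedding and the construction of $S$, this means $c_\sigma \in \mathcal G(\mathcal O_v)$ for every $\sigma \in G(K/F)$ and every $v \in V_K \setminus S_K$.

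Next, fix $u \notin S$ and let $v$ be the chosen place of $K$ over $u$. The localization map is induced on cocycles by restricting along the inclusion of the decomposition group $G(K_v/F_u) \hookrightarrow G(K/F)$ and then composing with $G(K) \hookrightarrow G(K_v)$. Restricting $c$ to $G(K_v/F_u)$ produces a $1$-cocycle whose values lie in $\mathcal G(\mathcal O_v)$, because those values are among the $c_\sigma$ controlled in the previous paragraph. This cocycle defines an element of $H^1(G(K_v/F_u), \mathcal G(\mathcal O_v))$ whose image in $H^1(G(K_v/F_u), G(K_v))$ is $x_u$, by construction of the localization map and functoriality of $H^1$ with respect to the inclusion $\mathcal G(\mathcal O_v) \hookrightarrow G(K_v)$.

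There is no real obstacle here; the only minor point to be careful about is that the set $S$ depends on the chosen cocycle $c$ and the chosen closed embedding of $\mathcal G$, but since the statement only asserts existence of some finite $S$ this is harmless. The essential input is that $G(K/F)$ is finite, so the ``denominators'' appearing in a cocycle form a finite set and can be swallowed into $S$.
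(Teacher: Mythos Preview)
Your proof is correct and follows essentially the same approach as the paper: choose a representing cocycle, use finiteness of $G(K/F)$ to ensure the finitely many values $c_\sigma$ all lie in $\mathcal G(K_S)$ for some finite $S \supset S(\mathcal G)$, and observe that this $S$ works. The paper states this in one line without spelling out the affine embedding, but your added detail is exactly how one justifies that step.
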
 
\begin{proof} Choose a cocycle representing $x$. 
Because $G(K/F)$ is finite, there exists finite $S \supset {S(\mathcal G)}$
such that 
$x$ takes values in $\mathcal G(K_S)$. Clearly $S$ does the job. 
\end{proof} 

\subsection{Goal of this section} We are going to generalize the finiteness
result from $H^1(G(K/F),G(K))$ to the bigger set $H^1_{\alg}(\mathcal
E_3(K/F),G(K))$. 
For $u \notin S(\mathcal G)$ we define 
\begin{equation}\label{eqGOGK}
H^1(G(K_v/F_u),\mathcal G(\mathcal O_v)) \to 
H^1_{\alg}(\mathcal E(K_v/F_u),G(K_v)) 
\end{equation} 
as the composition of the natural map (induced by $\mathcal G(\mathcal O_v) \hookrightarrow G(K_v)$)
\begin{equation*}
H^1(G(K_v/F_u),\mathcal G(\mathcal O_v)) \to H^1(G(K_v/F_u),G(K_v)) 
\end{equation*} 
 and 
the canonical  inclusion 
\begin{equation*}
 H^1(G(K_v/F_u),G(K_v)) 
\hookrightarrow H^1_{\alg}(\mathcal E(K_v/F_u),G(K_v)).
\end{equation*}

\begin{proposition}\label{prop.BigDiag} 
Let $b \in H^1_{\alg}(\mathcal E_3(K/F),G(K))$ and write $b_u$ for the image
of
$b$ under the localization map 
\[
\ell^F_u:H^1_{\alg}(\mathcal E_3(K/F),G(K)) 
\xrightarrow{\eqref{eq.Loc}} H^1_{\alg}(\mathcal E(K_v/F_u),G(K_v)). 
\]
Then there exists a finite set $S$ of places of $F$ such that 
\begin{itemize}
\item $S$ contains
$S(\mathcal G)$, and 
\item for all $u \notin S$ the  element $b_u$ lies in the image of
the  map \eqref{eqGOGK}.
 \end{itemize} 
\end{proposition}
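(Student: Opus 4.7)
The plan is to represent $b$ by an algebraic $1$-cocycle $(\nu, x) \in Z^1_{\alg}(\mathcal E_3(K/F), G(K))$, to fix once and for all a set-theoretic lift $\dot\sigma \in \mathcal E_3(K/F)$ of each element of the finite group $G(K/F)$, and to exploit the fact (subsection \ref{sub.1rstCpt}) that a class in $H^1_{\alg}(\mathcal E(K_v/F_u), G(K_v))$ lies in the image of $H^1(G(K_v/F_u), G(K_v))$ precisely when its Newton point is trivial. The argument then splits into two stages: first control the Newton point of $b_u$ at almost all places, then control the values of the descended Galois cocycle.

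For the first stage, I would use that the $K$-homomorphism $\nu : \mathbb T_{K/F} \to G$ must factor through a finite-type quotient of the protorus $\mathbb T_{K/F}$, because $G$ is of finite type over $K$ while the coordinate ring of $\mathbb T_{K/F}$ over $K$ is a filtered colimit of finitely generated $K$-algebras indexed by finite subsets of $V_K$. Equivalently, the character map $X^*(G_K) \to X_3$ induced by $\nu$ has image contained in $\mathbb Z[S_0]_0$ for some fixed finite $S_0 \subset V_K$. Dually, the cocharacter $\mu_v'$ of \eqref{eq.DefMu'} corresponds to the $v$-component projection $X_3 \to \mathbb Z$, which vanishes on $\mathbb Z[S_0]_0$ as soon as $v \notin S_0$. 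Hence $\nu \circ \mu_v' = 0$ for $v \notin S_0$, and Lemma \ref{lem.NewtLocGlob} then shows that $b_u$ has trivial Newton point whenever $u$ avoids the finite set $S_1 := f_{K/F}(S_0) \subset V_F$.

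For $u \notin S_1$, the localized $1$-cocycle of $\mathcal E(K_v/F_u)$ in $G(K_v)$ is trivial on the central subgroup $\mathbb G_m(K_v)$ (its restriction there equals $\nu \circ \mu_v' = 0$) and therefore descends to a $1$-cocycle $\bar x$ of $G(K_v/F_u)$ in $G(K_v)$ whose class represents $b_u$. Unwinding the two-step construction of the localization map from subsections \ref{sub.ChGalExt} and \ref{sub.ChBnd}, one checks that for a suitably normalized $\tilde\mu_v'$ the value $\bar x_\sigma$ equals $x_{\dot\sigma}$, viewed in $G(K_v)$ via $G(K) \hookrightarrow G(K_v)$, for every $\sigma \in G(K_v/F_u) \subset G(K/F)$. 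Consequently all values of $\bar x$ lie in the $v$-independent finite set $F_0 := \{x_{\dot\sigma} : \sigma \in G(K/F)\} \subset G(K)$. Now choose a finite $S \supset S_\infty \cup S(\mathcal G) \cup S_1$ such that $F_0 \subset \mathcal G(K_S)$; for $u \notin S$ every place $v$ of $K$ above $u$ avoids $S_K$, so $K_S \subset \mathcal O_v$ and thus $F_0 \subset \mathcal G(\mathcal O_v)$. Hence $\bar x$ represents a class in $H^1(G(K_v/F_u), \mathcal G(\mathcal O_v))$ mapping to $b_u$.

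The main obstacle I foresee is the identification $\bar x_\sigma = x_{\dot\sigma}$ in the preceding paragraph: it amounts to matching a section of $\mathcal E_3(K/F)$ above $G(K_v/F_u)$ with the $\tilde\mu_v'$-image of a section of $\mathcal E(K_v/F_u)$, modulo $\mathbb T_{K/F}(K_v)$. I expect this to follow from the compatibility $\Res(\alpha_3(K/F)) = \mu_v'(\alpha(K_v/F_u))$ already extracted from \eqref{eq.MuClass79} in the proof of Lemma \ref{lem.LocWorksWell}. If a clean on-the-nose match cannot be arranged, one replaces $\bar x$ by a cohomologous representative whose values lie in $F_0 \cdot \nu(\mathbb T_{K/F}(K))$; since only finitely many correcting elements arise, the same enlargement argument for $S$ still places the corrected $\bar x$ in $\mathcal G(\mathcal O_v)$ for $u \notin S$.
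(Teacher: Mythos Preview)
Your first stage is correct and clean: the Newton point of $b_u$ is indeed trivial for $u$ outside a finite set, by exactly the factorization argument you give, and then subsection~\ref{sub.1rstCpt} reduces the problem to ordinary Galois cohomology.

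The gap is in the second stage, precisely at the point you flag as the ``main obstacle.'' Unwinding the localization map, the descended cocycle is
\[
\bar x_\sigma = \nu(d_\sigma)\,x_{\dot\sigma},
\]
where $d_\sigma \in \mathbb T_{K/F}(K_v)$ is the $1$-cochain measuring the discrepancy between $\tilde\mu'_v(\ddot\sigma)$ and $\dot\sigma$ inside $\mathcal E_3^v(K/F)$. The coboundary of $d$ is $\mu'_v(c)\cdot a^{-1}$, where $c$ is a local fundamental $2$-cocycle (values in $K_v^\times$) and $a$ comes from the global section (values in $\mathbb T_{K/F}(K)$). Since $\mu'_v(c)$ takes values in $\mathbb T_{K/F}(K_v)$, not in $\mathbb T_{K/F}(K)$, no choice of $\tilde\mu'_v$ forces $d_\sigma$ into $\mathbb T_{K/F}(K)$, and there is no reason $\nu(d_\sigma)$ should be trivial. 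Your fallback---that one can pass to a cohomologous representative with values in $F_0\cdot\nu(\mathbb T_{K/F}(K))$ and that ``only finitely many correcting elements arise''---does not hold: the elements $\nu(d_\sigma)$ genuinely live in $G(K_v)$ and vary with $v$, so there is no fixed finite set containing them for all $u$.

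The paper closes this gap by introducing a finite adequate set $S$ and the integral gerb $\mathcal E_3(S)$ bound by the torus $\mathcal T_S$ over $F_S$. The key inputs are the vanishing $H^r(G(K_v/F_u),\mathcal T_S(\mathcal O_v))=0$ for $u\notin S$ (Lemma~\ref{lem.LangHensTorus}), the vanishing $H^1(G(K_v/F_u),\mathbb T_S(K_v))=0$ (Lemma~\ref{lem.VanS}), and the comparison of canonical classes (Lemma~\ref{lem.kSpS}). These let one build the commutative diagram~\eqref{CD.NewMass} in which localization from $H^1_{\alg}(\mathcal E_3(S),\mathcal G(K_S))$ factors through $H^1(G(K_v/F_u),\mathcal G(\mathcal O_v))$. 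In effect, the cohomological vanishing over $\mathcal O_v$ is what absorbs the $v$-dependent correction $\nu(d_\sigma)$ that your argument leaves uncontrolled; without it, the proof does not go through.
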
 

Before proving the proposition, we make
note of a simple corollary. 
\begin{corollary} \label{cor.ResDirProd}
Assume that $G$ is connected. 
Let $b \in H^1_{\alg}(\mathcal E_3(K/F),G(K))$ and again write $b_u$ for the image
of $b$ under the localization map 
$\ell^F_u$.  
Then there exists a finite set $S$ of places of $F$ such that 
$b_u$ is trivial   
for all $u \notin S$. 
\end{corollary}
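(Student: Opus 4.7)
The plan is to deduce the corollary from Proposition \ref{prop.BigDiag} by showing that, for connected $G$ and for $u$ avoiding a suitably enlarged finite set of places, the pointed set $H^1(G(K_v/F_u),\mathcal G(\mathcal O_v))$ appearing there is itself trivial, so that its image in $H^1_{\alg}(\mathcal E(K_v/F_u),G(K_v))$ can only be the base point.

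First I would apply Proposition \ref{prop.BigDiag} directly to obtain a finite set $S_0 \supset S(\mathcal G)$ such that, for every $u \notin S_0$, the localization $b_u$ lies in the image of the map \eqref{eqGOGK}. Next, because $G$ is connected and smooth, the spread-out group scheme $\mathcal G$ over $F_{S(\mathcal G)}$ has connected geometric fibers at all but finitely many closed points (connectedness being an open condition on fibers of a finitely presented morphism); similarly, only finitely many finite places of $F$ ramify in $K$. I would enlarge $S_0$ to a finite set $S \supset S_0 \cup S_\infty$ with the additional property that for every $u \notin S$, the residue extension $K_v/F_u$ is unramified and $\mathcal G_{\mathcal O_u}$ is smooth with connected fibers.

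For such $u$, the extension $K_v/F_u$ is unramified, so $\mathcal O_v$ is the ring of integers in the unramified extension $K_v/F_u$ of local fields with finite residue field, and the Galois group $G(K_v/F_u)$ is finite cyclic, generated by a Frobenius element. Since $\mathcal G \times_{\mathcal O_u} \mathcal O_v$ is smooth and affine with connected fibers, Lang's theorem applied to its special fiber (together with a standard smoothness/Hensel argument to lift torsors from the residue field to $\mathcal O_v$) shows that
\[
H^1(G(K_v/F_u),\mathcal G(\mathcal O_v)) = 1.
\]
Consequently the image of \eqref{eqGOGK} consists of the base point alone, and Proposition \ref{prop.BigDiag} forces $b_u$ to be trivial for every $u \notin S$.

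The main potential obstacle is the vanishing statement $H^1(G(K_v/F_u),\mathcal G(\mathcal O_v)) = 1$ in the last step: one needs that the torsor descent from the residue field (where Lang's theorem applies verbatim, since smooth connected algebraic groups over finite fields have trivial $H^1$) to $\mathcal O_v$ goes through. This is standard (smooth schemes satisfy Hensel's lemma, and a $\mathcal G$-torsor over $\mathcal O_v$ that becomes trivial modulo the maximal ideal is itself trivial), but it is the one place in the argument where smoothness of $\mathcal G$ and connectedness of its fibers are both genuinely used; everything else is  bookkeeping to ensure that only finitely many places fail these hypotheses.
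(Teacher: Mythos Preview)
Your proposal is correct and follows essentially the same route as the paper: apply Proposition~\ref{prop.BigDiag}, then enlarge the finite set of places so that outside it $K_v/F_u$ is unramified and $\mathcal G$ has connected (smooth) fibers, and finally invoke Lang's theorem together with Hensel's lemma to conclude that $H^1(G(K_v/F_u),\mathcal G(\mathcal O_v))$ is trivial. The paper's proof is identical in structure, citing SGA~3, Tome~I, Exp.~$VI_B$, Prop.~3.7 for the openness of connectedness of fibers where you appeal to the same fact in slightly different language.
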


\begin{proof} For any finite place $u$ of $F$ we denote by $\kappa(u)$ the
residue field of the valuation ring $\mathcal O_u$.  From Proposition 3.7 in
Expos\'e
$VI_B$ of SGA 3, Tome I  it follows easily that there exists a finite set
$S$ of places with
$S \supset S(\mathcal G)$ and such that $\mathcal G \otimes \kappa(u)$ is
connected for all $u \notin S$. Enlarging $S$ if need be, we may also  assume 
that $K/F$ is unramified outside $S$. Then a standard
argument involving Hensel's Lemma and Lang's Theorem shows that  
 $H^1(G(K_v/F_u),\mathcal G(\mathcal O_v)) $ is trivial for all $u \notin
S$. From this it is clear that  
Corollary \ref{cor.ResDirProd} does follow from 
Proposition \ref{prop.BigDiag}.   
\end{proof}

The next two subsections will provide lemmas that will be used in the proof of Proposition 
\ref{prop.BigDiag}.

\subsection{A vanishing theorem} 
 In this subsection the linear algebraic group $G$ will not
appear, so we will temporarily lighten our notation by using $G$ as an
abbreviation for the Galois group $G(K/F)$. Moreover we denote the
decomposition group at a place $w$ of $K$ simply by $G_w$. (So $G_w$ is the
stabilizer of $w$ in $G$.)

For any set $S$  of places of $F$    we consider the short exact sequence 
\begin{equation}
0 \to X_3(S) \to X_2(S) \to X_1(S) \to 0 \tag{$X(S)$}
\end{equation}
with  $X_1(S):=\mathbb Z$, $X_2(S):=\mathbb Z[S_K]$ and $X_3(S)=\mathbb
Z[S_K]_0$. (When we worked with this sequence before, in subsection 
\ref{sub.GlobTNT},   we did not include 
$S$ in the notation. Now we do, because we need to keep track of which set 
$S$ we are using.) 
Dual to this short exact sequence of $G$-modules is a short exact
sequence 
\[
1 \to \mathbb G_m \to \tilde{\mathbb T}_S \to \mathbb T_S \to 1
\]
of protori over $F$ (that split over $K$).

\begin{lemma}\label{lem.VanS}
Let $S$ be a set of places of $F$ such that 
\[
\{G_w:w \in S_K\}=\{G_w:w \in V_K\}.
\] 
Then the following statements hold. 
\begin{enumerate}
\item The group $H^1(G',\mathbb T_S(K))$ vanishes for every subgroup $G'$ of $G$. 
\item For every place $v$ of $K$ the group $H^1(G_v,\mathbb T_S(K_v))$
vanishes. 
\end{enumerate}
\end{lemma}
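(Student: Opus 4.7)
The plan is to apply the long exact cohomology sequence to the short exact sequence of $G$-modules
\[
0 \to K^\times \to \tilde{\mathbb T}_S(K) \to \mathbb T_S(K) \to 0,
\]
obtained by applying $\Hom(-,K^\times)$ to $0 \to X_3(S) \to X_2(S) \to X_1(S) \to 0$; this stays exact because $X_1(S) = \mathbb Z$ is projective. The same reasoning produces an analogous exact sequence of $G_v$-modules with $K_v^\times$ in place of $K^\times$. Combined with Hilbert 90 for $K^\times$ (resp.\ $K_v^\times$), both parts will reduce to showing that a certain product of restriction maps on $H^2$ is injective.

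To make this reduction precise, I would first establish that $H^1(H, \tilde{\mathbb T}_S(-)) = 0$ for the relevant subgroup $H$. Lemma \ref{lem.TateX2}, extended to subgroups $H \subseteq G$ via Mackey's double-coset decomposition of the permutation module $X_2(S) = \mathbb Z[S_K]$, yields
\[
H^r(H, \tilde{\mathbb T}_S(K)) \simeq \prod_{w \in S_K/H} H^r(H \cap G_w, K^\times)
\]
(and the analogous formula with $K_v^\times$ in the local case). Taking $r=1$ and invoking Hilbert 90 for the extensions $K/K^{H \cap G_w}$ (resp.\ $K_v/K_v^{G_v \cap G_w}$) kills every factor. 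The long exact sequence then identifies the desired $H^1$ with the kernel of the product of restriction maps $H^2(H, K^\times) \to \prod_{w \in S_K/H} H^2(H \cap G_w, K^\times)$.

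For part~(2), the hypothesis supplies $v^* \in S_K$ with $G_{v^*} = G_v$, so $v^*$ is a $G_v$-fixed point of $S_K$; its singleton orbit contributes a factor $H^2(G_v, K_v^\times)$ to the product, with the corresponding restriction map equal to the identity. This provides a retraction and hence the desired injectivity. For part~(1), no such identity factor is available in general, and I would instead invoke the Brauer-Hasse-Noether exact sequence for $K/K^{G'}$, which embeds $H^2(G', K^\times) = \Br(K/K^{G'})$ into $\bigoplus_{w' \in V_{K^{G'}}} H^2(G' \cap G_w, K_w^\times)$; the local invariant at $w'$ is computed via the composition $H^2(G', K^\times) \to H^2(G' \cap G_w, K^\times) \to H^2(G' \cap G_w, K_w^\times)$, for $w$ any place of $K$ over $w'$. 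Intersecting the set equality in the hypothesis with $G'$ yields $\{G' \cap G_w : w \in S_K\} = \{G' \cap G_w : w \in V_K\}$, so for each $w \in V_K$ there exists $w^* \in S_K$ with $G' \cap G_{w^*} = G' \cap G_w$; the restrictions to $H^2(G' \cap G_w, K^\times)$ via $w$ and via $w^*$ are then literally the same map. Hence vanishing of $\res_{G'/G' \cap G_{v'}}(\alpha)$ for every $v' \in S_K$ forces every local invariant of $\alpha$ to vanish, and Brauer-Hasse-Noether concludes $\alpha = 0$.

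The main obstacle I anticipate is carrying out the Mackey bookkeeping cleanly enough that the set-theoretic hypothesis on $S$ translates directly into the equality $\{G' \cap G_w : w \in S_K\} = \{G' \cap G_w : w \in V_K\}$ used in the Brauer-Hasse-Noether step; once this identification is in place, the rest of the argument is formal.
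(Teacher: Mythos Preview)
Your proposal is correct and follows essentially the same route as the paper: apply the long exact sequence to $1 \to \mathbb G_m \to \tilde{\mathbb T}_S \to \mathbb T_S \to 1$ over $K$ (resp.\ $K_v$), kill $H^1$ of the middle term via the Shapiro-type decomposition of Lemma~\ref{lem.TateX2} together with Hilbert~90, and reduce to injectivity of a product of restriction maps on $H^2$; for part~(2) this is immediate from the fixed point $w\in S_K$ with $G_w=G_v$, and for part~(1) one intersects the hypothesis with $G'$ (the paper records this as $\{G'_w:w\in S_K\}=\{G'_w:w\in V_K\}$) and then invokes the injectivity of $H^2(G',K^\times)\to\prod_{v}H^2(G'_v,K_v^\times)$, which is exactly your Brauer--Hasse--Noether step. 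Your anticipated ``Mackey bookkeeping'' obstacle is not really one: Lemma~\ref{lem.TateX2} applies verbatim with $G$ replaced by any subgroup $G'$, since $\mathbb Z[S_K]$ viewed as a $G'$-module is again a permutation module on the $G'$-set $S_K$.
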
 
\begin{proof}
  (1) Notice that our hypothesis on $S$ implies  that 
\begin{equation}\label{eq.2SetsWeq}
\{G'_w:w \in S_K\}=\{G'_w:w \in V_K\}.
\end{equation}
 We are going to use the long exact sequence of Tate cohomology for the 
short exact sequence 
\[
1 \to \mathbb G_m(K) \to \tilde{\mathbb T}_S(K) \to \mathbb T_S(K) \to 1
\] 
of $G'$-modules. From Lemma \ref{lem.TateX2} it follows that 
\[
H^r(G',\tilde{\mathbb T}_S(K))=\prod_{v \in G'\backslash S_K}
H^r(G'_v,K^\times).
\] 
So $H^1(G',\tilde{\mathbb T}_S(K))$ vanishes by Hilbert's Theorem 90. To
prove that $H^1(G',\mathbb T_S(K))$ vanishes, we just need to prove that 
the natural map 
\[
H^2(G',K^\times) \to \prod_{v \in G'\backslash S_K}
H^2(G'_v,K^\times)  
\] 
is injective.  By \eqref{eq.2SetsWeq} it is equivalent to prove 
that 
\[
H^2(G',K^\times) \to \prod_{v \in G'\backslash V_K}
H^2(G'_v,K^\times)  
\] 
is injective, 
and this follows from the well-known injectivity of 
\[
H^2(G',K^\times) \to \prod_{v \in G'\backslash V_K}
H^2(G'_v,K_v^\times).  
\] 

(2) We are going to use the long exact sequence of Tate cohomology for the 
short exact sequence 
\[
1 \to \mathbb G_m(K_v) \to \tilde{\mathbb T}_S(K_v) \to \mathbb T_S(K_v) \to
1
\] 
of $G_v$-modules. From Lemma \ref{lem.TateX2} we have  
\[
H^r(G_v,\tilde{\mathbb T}_S(K_v))=\prod_{w \in G_{v}\backslash S_K}
H^r(G_{v,w},K_v^\times),
\] 
where $G_{v,w}=G_v \cap G_w$. 
So $H^1(G_v,\tilde{\mathbb T}_S(K_v))$ vanishes by Hilbert's Theorem 90. To
prove that $H^1(G_v,\mathbb T_S(K_v))$ vanishes, we just need to prove that 
the natural map 
\[
H^2(G_v,K_v^\times) \to \prod_{w \in G_v\backslash S_K}
H^2(G_{v,w},K_v^\times)  
\] 
is injective. This is clear because (by our hypothesis on $S$) there exists
$w \in S_K$ such that $G_w=G_v$, and for this $w$ we have $G_{v,w}=G_v$. 
\end{proof} 

\begin{remark}
There exist finite subsets $S$ of $V_F$ satisfying the hypothesis of the
last lemma. This is obvious,  because the Galois group $G$ is finite and
therefore has only finitely many subgroups.  
\end{remark}

\subsection{Comparison of two cohomology classes} In this subsection the linear
algebraic group $G$ again does not appear, so we continue to use $G$ to
denote $G(K/F)$. In this subsection $S$ denotes any set of places of $F$ 
satisfying the three conditions  in subsection
\ref{sub.TNnot}. As in subsection \ref{sub.GlobTNT}, using
$S$-adeles we obtain  another short exact sequence of
$G$-modules, namely 
\begin{equation}
1 \to A_3(S) \to A_2(S) \to A_1(S) \to 1. \tag{$A(S)$}
\end{equation}
 (Before we did not  include $S$ in  the notation.) Recall that 
\[
A_2(S)=\mathbb A_{K,S}^\times=\bigl(\prod_{v \notin S_K} \mathcal O_v^\times \bigr) \times \bigl(
\prod_{v \in S_K} K_v^\times \bigr), \qquad A_3(S)=K_S^\times,  
\] 
and our assumptions on $S$ imply that $A_1(S)$ is the group of idele classes
of $K$. 

We denote by $\alpha(S)$ Tate's canonical class in $H^2(G,\Hom(X(S),A(S)))$.  
Its projections under $\pi_i:\Hom(X(S),A(S)) \to
\Hom(X_i(S),A_i(S))$ will be denoted by $\alpha_i(S)$.  When $S=V_F$, we
write $X$, $ X_i$, $A$, $A_i$,  $\alpha$, $\alpha_i$, $\mathbb T$,  $\tilde{\mathbb T}$
instead of
$X(V_F)$, $X_i(V_F)$, $A(V_F)$, 
$A_i(V_F)$, $\alpha(V_F)$, $\alpha_i(V_F)$, $\mathbb T_{V_F}$, $\tilde{\mathbb T}_{V_F}$. 
 
There is an obvious morphism $p^S$ from the exact sequence $X(S)$ to the exact sequence $X=X(V_K)$. 
This morphism is given by the vertical maps in the 
commutative diagram 
 \begin{equation*}
\begin{CD}
0 @>>> X_3(S) @>>> X_2(S) @>>> X_1(S) @>>> 0 \\
@. @V{p^S_3}VV @V{p^S_2}VV @V{p^S_1}VV @. \\
0 @>>> X_3 @>>> X_2 @>>> X_1 @>>> 0.
 \end{CD}
\end{equation*} 
The maps $p^S_2$  and $p^S_3$ are induced by the inclusion $S_K \subset V_K$, and 
$p^S_1$ is the identity map on $X_1(S)=\mathbb Z=X_1$. 

Dual to this is another commutative diagram
 \begin{equation*}
\begin{CD}
1 @>>> \mathbb G_m @>>> \tilde{\mathbb T} @>>> \mathbb T @>>> 1 \\
@. @| @V{p^S_2}VV @V{p^S_3}VV @. \\
1 @>>> \mathbb G_m @>>> \tilde{\mathbb T}_S @>>> \mathbb T_S @>>> 1.
 \end{CD}
\end{equation*} 
with exact rows.

Similarly there is an obvious inclusion morphism $k^S:A(S) \hookrightarrow A$, given by the 
vertical inclusions in the  
commutative diagram 
  \begin{equation*}
\begin{CD}
1 @>>> A_3(S) @>>> A_2(S) @>>> A_1(S) @>>> 1 \\
@. @V{k^S_3}VV @V{k^S_2}VV @V{k^S_1}VV @. \\
1 @>>> A_3 @>>> A_2 @>>> A_1 @>>> 1.
 \end{CD}
\end{equation*} 
Concretely, these vertical inclusions are (reading from left to right) 
$K_S^\times  \hookrightarrow K^\times$, 
$\mathbb A_{K,S}^\times  \hookrightarrow \mathbb A_K^\times$, and 
$\mathbb A_{K,S}^\times /K_S^\times  \hookrightarrow \mathbb A_K^\times/K^\times$. 
Observe that the last of these inclusions is actually an isomorphism by virtue of one of the 
conditions imposed on $S$ in \ref{sub.TNnot}. 

The purpose of the next lemma is to compare the Tate classes $\alpha(S)$ and $\alpha$. 
 The relevant groups, maps and Tate classes are shown in the following diagram. 
  \begin{equation*}
\begin{CD}
\alpha(S) \quad \Hom(X(S), A(S)) @>{k^S}>> \Hom(X(S),A) @<{p^S}<< \Hom(X,A) \quad \alpha  \\
 @V{\pi_i}VV @V{\pi_i}VV @V{\pi_i}VV  \\
\alpha_i(S) \quad \Hom(X_i(S), A_i(S)) @>{k^S_i}>> \Hom(X_i(S),A_i) @<{p^S_i}<< \Hom(X_i,A_i) \quad \alpha_i .
 \end{CD}
\end{equation*} 
The proximity of $\alpha(S)$ to $\Hom(X(S),A(S))$ is meant as a reminder that 
$\alpha(S)$ lies in $H^2(G,\Hom(X(S), A(S)))$, and so on. 

\begin{lemma}\label{lem.kSpS}
There are equalities 
\begin{align}
k^S(\alpha(S))&=p^S(\alpha), \label{FirstEq} \\
 k_i^S(\alpha_i(S))&=p_i^S(\alpha_i) \quad (i=1,2,3). \label{SecondEq}
\end{align}
\end{lemma}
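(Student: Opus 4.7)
The plan is to prove the second family of equalities \eqref{SecondEq} for $i=1,2$ directly, then deduce \eqref{FirstEq} from these via a cartesian-square argument in the style of Lemma \ref{lem.cart}, and finally deduce \eqref{SecondEq} for $i=3$ by applying $\pi_3$ to \eqref{FirstEq}.

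For $i=1$ there is nothing to do: $X_1(S)=X_1=\mathbb Z$ and $p_1^S$ is the identity map, while $k_1^S:A_1(S)\to A_1$ is an isomorphism by the third condition on $S$ in subsection~\ref{sub.TNnot}; under this isomorphism $\alpha_1(S)$ is the global fundamental class of $K/F$, which is $\alpha_1$. For $i=2$ I would apply Lemma~\ref{lem.TateX2} three times to get canonical identifications
\[
H^2(G,\Hom(X_2,A_2))=\prod_{u \in V_F}H^2(G_v,A_2), \quad
H^2(G,\Hom(X_2(S),A_2(S)))=\prod_{u\in S}H^2(G_v,A_2(S)),
\]
\[
H^2(G,\Hom(X_2(S),A_2))=\prod_{u\in S}H^2(G_v,A_2).
\]
Under these identifications, $p_2^S$ corresponds to the projection $\prod_{u\in V_F}\to\prod_{u\in S}$ (because the evaluation maps $\eval_v$ used in Lemma~\ref{lem.TateX2} only involve places $v\in S_K$), and $k_2^S$ corresponds to component-wise pushforward along $A_2(S)\hookrightarrow A_2$. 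The definition of $\alpha_2$ and $\alpha_2(S)$ recalled after Lemma~\ref{lem.TateX2} tells us that at every place $u\in S$ both sides have component $i_v(\alpha(K_v/F_u))$, where the inclusion $i_v$ factors through $K_v^\times\hookrightarrow A_2(S)\hookrightarrow A_2$. Hence \eqref{SecondEq} holds for $i=2$.

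To obtain \eqref{FirstEq}, I need the analog of Lemma~\ref{lem.cart} for the group $H^2(G,\Hom(X(S),A))$. Concretely, I will show that the square
\begin{equation*}
\begin{CD}
H^2(G,\Hom(X(S),A)) @>{\pi_1}>> H^2(G,\Hom(X_1(S),A_1)) \\
@V{\pi_2}VV @VbVV \\
H^2(G,\Hom(X_2(S),A_2)) @>a>> H^2(G,\Hom(X_2(S),A_1))
\end{CD}
\end{equation*}
is cartesian. Just as in the proof of Lemma~\ref{lem.tildePiExists}, this reduces to the vanishing of $H^1(G,\Hom(X_2(S),A_1))$, which follows from Lemmas~\ref{lem.TateX2} and~\ref{lem.H1Van}. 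Once this cartesian property is in hand, the two elements $k^S(\alpha(S))$ and $p^S(\alpha)$ of $H^2(G,\Hom(X(S),A))$ have the same image under both $\pi_1$ and $\pi_2$ (by the $i=1,2$ cases of \eqref{SecondEq} combined with the functoriality $\pi_i k^S = k^S_i\pi_i$ and $\pi_i p^S=p^S_i\pi_i$), whence they are equal. Finally, applying $\pi_3$ to \eqref{FirstEq} and using the same functoriality yields \eqref{SecondEq} for $i=3$.

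The main obstacle will be the cartesian-square argument, since one has to carefully verify that the vanishing of $H^1(G,\Hom(X_2(S),A_1))$ still works when $S$ is a proper subset of $V_F$; the decomposition $X_2(S)=\mathbb Z[S_K]$ and Lemma~\ref{lem.TateX2} reduce this to the vanishing of $H^1(G_v,A_1)$ for $v\in S_K$, which is part of Lemma~\ref{lem.H1Van}. Everything else is bookkeeping in terms of the local-global descriptions provided by Tate's machinery.
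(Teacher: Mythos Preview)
Your proposal is correct and follows essentially the same route as the paper: verify \eqref{SecondEq} for $i=1,2$ directly via the local descriptions, use the vanishing of $H^1(G,\Hom(X_2(S),A_1))$ (Lemmas~\ref{lem.TateX2} and~\ref{lem.H1Van}) to deduce \eqref{FirstEq}, and then apply $\pi_3$ to obtain the case $i=3$. The only cosmetic difference is that the paper phrases the key step as injectivity of $(\pi_1,\pi_2)$ on $H^2(G,\Hom(X(S),A))$ rather than as a cartesian square, but both statements follow from the same short exact sequence and the same $H^1$-vanishing.
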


\begin{proof} 

Because of the way the Tate classes are defined, we need to  prove 
\eqref{SecondEq} for $i=1,2$, then deduce \eqref{FirstEq}, and from that 
obtain \eqref{SecondEq} for $i=3$.

It is clear that \eqref{SecondEq} holds for $i=1$, because both $\alpha_1(S)$ 
and $\alpha_1$ are the fundamental class in
$\mathbb A_{K,S}^\times /K_S^\times  \simeq \mathbb A_K^\times/K^\times$. 

Next we show that \eqref{SecondEq} holds for $i=2$. We must check that the 
elements $ k_2^S(\alpha_2(S))$ and  $p_2^S(\alpha_2)$ in $H^2(G,\Hom(X_2(S),A_2))$ are equal. 
This follows from Lemma \ref{lem.TateX2}, since, for all $v \in  S_K$, both $ k_2^S(\alpha_2(S))$ and  $p_2^S(\alpha_2)$ 
have the same image in $H^2(G_v,A_2)$, namely the image of the local fundamental class under 
$K_v^\times \hookrightarrow \mathbb A_K^\times$. 

Now we prove the equality \eqref{FirstEq}. Applying $\pi_i$ to this equality (with $i=1,2$) we obtain 
the equalities \eqref{SecondEq}  for $i=1,2$, and these have already been verified. To prove 
\eqref{FirstEq} it remains only to prove that the  map 
\[
(\pi_1,\pi_2):H^2(G,\Hom(X(S),A)) \to H^2(G,\Hom(X_1(S),A_1)) \oplus H^2(G,\Hom(X_2(S),A_2))
\]
is injective. We use  essentially the same reasoning as in Tate's 
proof of Lemma \ref{lem.cart}; the reader can also consult our proof that the square \eqref{CD.car2} is cartesian. 
The desired injectivity follows from the vanishing of 
$H^1(G,\Hom(X_2(S),A_1))$, and this  is an easy consequence of  
  Lemmas  \ref{lem.TateX2} and  \ref{lem.H1Van}.

The equality \eqref{SecondEq} for $i=3$ is obtained by applying $\pi_3$ to  \eqref{FirstEq}. 
\end{proof}

\subsection{Proof of Proposition \ref{prop.BigDiag}} 

 We begin with a definition. 
We say that a subset $S$ of $V_F$ is\emph{ adequate} if it satisfies the following 
list of conditions: 
\begin{itemize}
\item
$S$ is finite. 
\item 
$S$ contains $S(\mathcal G)$ (and consequently all archimedean places). 
\item $S$ contains all finite places that ramify in $K$. 
\item For every intermediate field $E$ of $K/F$,  every ideal class of $E$
contains an ideal with support in the preimage $S_E$ of
$S$ under $V_E \twoheadrightarrow V_F$. 
\item $S$ satisfies the hypothesis of Lemma \ref{lem.VanS}, 
i.e. 
\[
\{G(K/F)_w:w \in S_K\}=\{G(K/F)_w:w \in V_K\}. 
\]
\end{itemize}
It is clear that, if $S$ is adequate and $S'$ is a finite set of places with $S' \supset S$, then 
$S'$ is adequate. It is also clear that adequate sets $S$ exist. 

When $S$ is adequate, the first and third conditions in the list above imply 
that the 
protorus $\mathbb T_S$ is actually a torus, and that it  
extends uniquely to a torus $\mathcal T_S$ over $F_S$. 
We will soon need the following vanishing theorem. 

\begin{lemma}\label{lem.LangHensTorus}
Let $u$ be a place of $F$ outside $S$, and let $v$ be a place of $K$
lying over~$u$. Then 
$H^r(G(K_v/F_u),\mathcal T_S(\mathcal O_v))=0$ for all $r \in \mathbb Z$. 
\end{lemma}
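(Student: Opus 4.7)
The plan is to exploit the defining short exact sequence
\[
1 \to \mathbb{G}_m \to \tilde{\mathbb{T}}_S \to \mathbb{T}_S \to 1
\]
of $F$-tori (coming from the dual of the sequence $0 \to X_3(S) \to X_2(S) \to X_1(S) \to 0$) and reduce everything to cohomology of $\mathcal{O}_v^\times$. Since $S$ is adequate, both $\tilde{\mathbb{T}}_S$ and $\mathbb{T}_S$ are genuine tori, and the whole sequence spreads out to a short exact sequence of smooth group schemes over $\mathcal{O}_{F_S}$. The place $u'$ (I'll use $u'$ for the place outside $S$ to distinguish it from the generic $u$) lies outside $S$, so this sequence base-changes to smooth group schemes over $\mathcal{O}_{u'}$ and then over $\mathcal{O}_v$.

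First I would pass to $\mathcal{O}_v$-points. Since $\tilde{\mathcal{T}}_S \to \mathcal{T}_S$ is smooth and surjective (as morphism of schemes) and $\mathcal{O}_v$ is henselian, Hensel's lemma guarantees surjectivity on $\mathcal{O}_v$-points, giving a short exact sequence of $G_v$-modules
\[
1 \to \mathcal{O}_v^\times \to \tilde{\mathcal{T}}_S(\mathcal{O}_v) \to \mathcal{T}_S(\mathcal{O}_v) \to 1,
\]
where $G_v := G(K_v/F_{u'})$. By the long exact sequence in Tate cohomology, the desired vanishing $H^r(G_v, \mathcal{T}_S(\mathcal{O}_v)) = 0$ follows from vanishing of $H^r(G_v, \mathcal{O}_v^\times)$ and $H^r(G_v, \tilde{\mathcal{T}}_S(\mathcal{O}_v))$ in all degrees.

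For the first vanishing, the extension $K_v/F_{u'}$ is unramified because $u' \notin S$ and adequate sets contain every place ramifying in $K$; hence $\mathcal{O}_v^\times$ is cohomologically trivial as a $G_v$-module by the standard result of local class field theory for unramified extensions. For the second vanishing, I would imitate the proof of Lemma \ref{lem.TateX2}: as a $G_v$-permutation module $X_2(S) = \mathbb{Z}[S_K]$ decomposes into $G_v$-orbits $\mathbb{Z}[G_v\cdot w] = \mathrm{Ind}_{G_{v,w}}^{G_v} \mathbb{Z}$, where $G_{v,w} := G_v \cap G_w$. Correspondingly $\tilde{\mathcal{T}}_S(\mathcal{O}_v) = \Hom(X_2(S),\mathcal{O}_v^\times)$ splits as a product of coinduced modules, and Shapiro's lemma gives
\[
H^r\bigl(G_v,\tilde{\mathcal{T}}_S(\mathcal{O}_v)\bigr) = \prod_{w} H^r\bigl(G_{v,w},\mathcal{O}_v^\times\bigr),
\]
with $w$ running over a set of orbit representatives. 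Each $G_{v,w}$ is the Galois group of $K_v$ over some intermediate field $L \subset K_v$, and $L/F_{u'}$ is automatically unramified as a subextension of an unramified extension, so the same cohomological triviality of the unit group kills each factor.

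The main obstacle will be step two: justifying carefully that the $G_v$-module decomposition of $X_2(S)$ really does translate via the Hom-construction into a $G_v$-equivariant product decomposition of $\tilde{\mathcal{T}}_S(\mathcal{O}_v)$ whose pieces are coinduced in the sense needed by Shapiro. This is essentially the content already packaged in Lemma \ref{lem.TateX2}, so once I verify that its proof goes through with $\mathcal{O}_v^\times$ in place of an arbitrary $G$-module (which it does, being a purely formal consequence of $X_2(S)$ being free on a $G$-set), the argument is complete.
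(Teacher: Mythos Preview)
Your argument is correct. The paper offers only the sentence ``This is a standard consequence of Hensel's lemma and Lang's Theorem,'' which points to the direct approach: for any smooth commutative $\mathcal O_u$-group scheme $\mathcal T$ with connected fibers, filter $\mathcal T(\mathcal O_v)$ by the kernels of reduction mod powers of the maximal ideal, use Hensel's lemma for surjectivity of reduction, Lang's theorem for cohomological triviality of $\mathcal T(\kappa_v)$, and additive Hilbert~90 for the graded pieces.

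Your route is different: you exploit the specific short exact sequence $1 \to \mathbb G_m \to \tilde{\mathbb T}_S \to \mathbb T_S \to 1$ and reduce to the textbook fact that $\mathcal O_v^\times$ is cohomologically trivial for unramified extensions, together with the permutation structure of $X_2(S)$ and Shapiro's lemma. This avoids invoking Lang's theorem directly and is arguably more transparent for this particular torus, at the cost of being less general (it uses the structure of $\mathbb T_S$ rather than working for an arbitrary torus). The step you flag as the main obstacle---that $\tilde{\mathcal T}_S(\mathcal O_v) = \Hom(X_2(S), \mathcal O_v^\times)$ decomposes $G_v$-equivariantly into coinduced pieces---is indeed routine once one notes that $\tilde{\mathcal T}_S$ splits over $\mathcal O_v$ (since $K$ splits it and $K \hookrightarrow K_v$), so the identification with $\Hom(X_2(S), \mathcal O_v^\times)$ is literal and Lemma~\ref{lem.TateX2} applies verbatim.
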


\begin{proof}
This  is a standard consequence of Hensel's lemma and Lang's Theorem. 
\end{proof}

For adequate $S$ we are now going to construct a pointed set 
$H^1_{\alg}(\mathcal E_3(S),\mathcal G(K_S))$ together with canonical map 
\begin{equation}\label{eq.MapKSG}
H^1_{\alg}(\mathcal E_3(S),\mathcal G(K_S)) \to H^1_{\alg}(\mathcal E_3(K/F),G(K)).
\end{equation}
Here $\mathcal E_3(S)$ denotes an extension 
\[
1 \to \mathcal T_S(K_S) \to \mathcal E_3(S) \to G(K/F) \to 1
\]
with corresponding cohomology class $\alpha_3(S)$. 
(In section \ref{sec.GTN} the set $S$ was fixed and this extension was 
denoted 
 simply by $\mathcal E_3$, 
but now we need to keep track of $S$.)  

The definition of the set $H^1_{\alg}(\mathcal E_3(S),\mathcal G(K_S))$ 
involves  an obvious extension of our usual notion of 
$H^1_{\alg}$ to a situation involving a Galois 
extension of rings (namely $K_S/F_S$) rather than fields.   
 More precisely  $H^1_{\alg}(\mathcal E_3(S),\mathcal G(K_S))$ is 
defined to be  the set 
$H^1_Y(\mathcal E_3(S),\mathcal G(K_S))$ from subsection \ref{sub.NonAbH1Y} 
formed using  $Y=\Hom_{K_S}(\mathcal T_S,\mathcal G)$ and 
the obvious map $\xi:Y \to \Hom(\mathcal T_S(K_S),\mathcal G(K_S))$. 
So an element in $Z^1_{\alg}(\mathcal E_3(S),\mathcal G(K_S))$ 
is a pair $(\nu,x)$ consisting of 
a $K_S$-homomorphism $\nu:\mathcal T_S \to \mathcal G$ and a 
$1$-cocycle $x$ of $\mathcal E_3(S)$ in $\mathcal G(K_S)$ satisfying 
the  two   conditions imposed in  subsection \ref{sub.NonAbH1Y}.

The map \eqref{eq.MapKSG} will be defined as a composition 
\begin{equation}\label{eq.MapKSGComp}
H^1_{\alg}(\mathcal E_3(S),\mathcal G(K_S)) \xrightarrow{BC} H^1_{\alg}(\mathcal E_3^K(S), G(K))
 \xrightarrow{p^*}H^1_{\alg}(\mathcal E_3(K/F),G(K)).
\end{equation}
Here $\mathcal E_3^K(S)$ is the Galois gerb (for $K/F$) defined by pushing out 
$\mathcal E_3(S)$ along the  map $\mathcal T_S(K_S) \to \mathbb T_S(K)$ induced 
by the inclusion of $K_S$ in $K$.  The map $BC$ appearing in \eqref{eq.MapKSGComp} 
is the obvious base change map: on the level of cocycles it  
sends $(\nu,x)$ to $(\nu,x')$, where $x'$ is the unique element of $Z^1(\mathcal E_3^K(S),G(K))$ 
that agrees with $\nu$ on $\mathbb T_S(K)$ and with $x$ 
 on $\mathcal E_3(S) \hookrightarrow \mathcal E_3^K(S)$. 
(So the map $BC$ is essentially an instance of \eqref{eq.AlgInf}, though we are now working 
with a slightly extended notion of $H^1_{\alg}$.) 

It remains to define the map $p^*$ appearing in \eqref{eq.MapKSGComp}.
Taking $i=3$ in the second equality in  Lemma \ref{lem.kSpS}, we see that the image of $\alpha_3(S)$ under 
\[
H^2(G(K/F),\mathcal T_S(K_S)) \to H^2(G(K/F),\mathbb T_S(K))
\] 
is equal to the image of $\alpha_3$ under 
\[
p^S_3:H^2(G(K/F),\mathbb T(K)) \to H^2(G(K/F),\mathbb T_S(K)).
\] 
Therefore there exists ${\tilde p}^S_3$ making the diagram 
 \begin{equation}
\begin{CD}
1 @>>> \mathbb T(K) @>>> \mathcal E_3(K/F) @>>> G(K/F) @>>> 1 \\
@. @V{p^S_3}VV @V{{\tilde p}^S_3}VV @| @. \\
1 @>>> \mathbb T_S(K) @>>> \mathcal E_3^K(S) @>>> G(K/F) @>>> 1
\end{CD}
\end{equation} 
commute. 
To lighten the notation we are now going to abbreviate $p^S_3$ to $p$. This should cause 
no confusion since we 
will have no further use for $p^S$, $p^S_1$, $p^S_2$. Similarly 
we will  abbreviate ${\tilde p}^S_3$ to $\tilde p$. 

As in subsection \ref{sub.ChBnd} $\tilde p$ induces a map 
\[
{\tilde p}^*:H^1_{\alg}(\mathcal E_3^K(S),G(K)) \to H^1_{\alg}(\mathcal E_3(K/F),G(K))  
\] 
that, by Lemma \ref{lem.VanS}(1),  is independent of the choice of $\tilde p$ extending $p$. 
Therefore we further lighten our notation by writing $p^*$ in place of ${\tilde p}^*$; this is the second 
map appearing in \eqref{eq.MapKSGComp}. 

The next lemma is the first step towards proving Proposition \ref{prop.BigDiag}. 
\begin{lemma}\label{lem.NeededForFinProp} 
Let $b \in H^1_{\alg}(\mathcal E_3(K/F),G(K))$. Then 
 there exists an adequate  subset $S$ of 
$V_F$ such that  $b$
 lies in the image of the map \eqref{eq.MapKSG}. 
\end{lemma}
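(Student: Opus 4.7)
The plan is to fix a cocycle representative of $b$, reduce the data to finite pieces using that $G(K/F)$ is finite and $G$ is of finite type over $F$, and then enlarge $S$ so as to encompass all of it. The main obstacle is to match the $2$-cocycles defining $\mathcal E_3(S)$ and $\mathcal E_3(K/F)$ after the band change; this matching is made possible by Lemma \ref{lem.kSpS}.

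First I would fix a representative $(\nu,x) \in Z^1_{\alg}(\mathcal E_3(K/F), G(K))$ of $b$ together with a set-theoretic section $s:G(K/F) \to \mathcal E_3(K/F)$. Writing $s(\sigma)s(\tau) = a_{\sigma,\tau}s(\sigma\tau)$ with $a \in Z^2(G(K/F),\mathbb T(K))$ representing $\alpha_3$, the cocycle $x$ is completely determined by the pair consisting of the homomorphism $\nu:\mathbb T \to G$ (defined over $K$) and the finite list $\{x_{s(\sigma)}\}_{\sigma} \subset G(K)$. The Zariski closure of the image of $\nu$ is a finite-dimensional torus $T_\nu \subset G_K$, so the dual map $X^*(T_\nu) \to X^*(\mathbb T) = X_3$ has finitely generated image, and hence lies inside $X_3(S_1)$ for some finite $S_1 \subset V_F$. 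Dualizing, $\nu$ factors as $\nu_{S_1} \circ p^{S_1}_3$ for some $K$-homomorphism $\nu_{S_1}:\mathbb T_{S_1} \to G$. A standard spreading-out argument then shows that $\nu_{S_1}$ extends to a morphism $\mathcal T_{S_1} \otimes_{F_{S_1}} K_{S_2} \to \mathcal G \otimes_{F_{S(\mathcal G)}} K_{S_2}$ for some larger finite $S_2$ containing $S_1 \cup S(\mathcal G)$.

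Next I would enlarge $S \supset S_2$ to be adequate and large enough so that (i) each of the finitely many elements $x_{s(\sigma)}$ lies in $\mathcal G(K_S)$, and (ii) the values of $p^S_3 \circ a$ all lie in $\mathcal T_S(K_S) \subset \mathbb T_S(K)$; both conditions are achievable by enlarging $S$ finitely since $G(K) = \bigcup_S \mathcal G(K_S)$ and $\mathbb T_S(K) = \bigcup_S \mathcal T_S(K_S)$. Composing with $\mathbb T_S \twoheadrightarrow \mathbb T_{S_1}$ we obtain a $K_S$-homomorphism $\nu_0 : \mathcal T_S \otimes K_S \to \mathcal G \otimes K_S$ satisfying $\nu_0 \circ p^S_3 = \nu$ on $K$-points.

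The delicate step is the construction of $x_0$. By Lemma \ref{lem.kSpS}(2) with $i=3$, the classes $k^S_3(\alpha_3(S))$ and $p^S_3(\alpha_3)$ coincide in $H^2(G(K/F),\mathbb T_S(K))$. Fixing any $a_0^{\mathrm{ref}} \in Z^2(G(K/F),\mathcal T_S(K_S))$ representing $\alpha_3(S)$, the $2$-cocycles $k^S_3 \circ a_0^{\mathrm{ref}}$ and $p^S_3 \circ a$ differ by $\delta c$ for some $1$-cochain $c \in C^1(G(K/F),\mathbb T_S(K))$, and after enlarging $S$ once more we may assume $c$ takes values in $\mathcal T_S(K_S)$; replacing $a_0^{\mathrm{ref}}$ by $a_0 := a_0^{\mathrm{ref}} + \delta c$ gives $k^S_3 \circ a_0 = p^S_3 \circ a$ as cocycles. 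Choosing a section $s_0$ of $\mathcal E_3(S) \twoheadrightarrow G(K/F)$ realizing $a_0$, and the lifting $\tilde p:\mathcal E_3(K/F) \to \mathcal E_3^K(S)$ with $\tilde p(s(\sigma)) = \iota(s_0(\sigma))$, one then defines
\[
x_0(t \cdot s_0(\sigma)) := \nu_0(t) \cdot x_{s(\sigma)} \qquad (t \in \mathcal T_S(K_S),\ \sigma \in G(K/F)).
\]
The cocycle condition for $x_0$ is equivalent to the cocycle condition for $x$ under the identifications above, and one checks directly that $(\nu_0, x_0) \in Z^1_{\alg}(\mathcal E_3(S),\mathcal G(K_S))$ maps to the class of $(\nu, x)$ under $p^* \circ BC$. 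The hard part is precisely the cocycle-level matching in the last step: Lemma \ref{lem.kSpS} only supplies cohomological equality, and one must verify that the bounding $1$-cochain can be taken to lie in $\mathcal T_S(K_S) \subset \mathbb T_S(K)$ after enlarging $S$; granted this, everything else is a routine assembly.
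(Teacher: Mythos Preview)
Your overall strategy matches the paper's: represent $b$ by an algebraic $1$-cocycle $(\nu,x)$, factor $\nu$ through the finite-type quotient $\mathbb T_S$, and then argue that after enlarging $S$ everything becomes integral. The paper's proof is essentially the same route.

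Where you diverge is in the execution of the step you yourself flag as the ``hard part''. You try to match the $2$-cocycles $k^S_3\circ a_0^{\mathrm{ref}}$ and $p^S_3\circ a$ at the \emph{cochain} level, by finding a bounding $1$-cochain $c$ in $\mathbb T_S(K)$ and then ``enlarging $S$'' until $c$ lands in $\mathcal T_S(K_S)$. This step is not justified as written: when you enlarge $S$ to $S'$, the torus $\mathbb T_S$ is replaced by a different torus $\mathbb T_{S'}$, the reference cocycle $a_0^{\mathrm{ref}}$ must be replaced by one representing $\alpha_3(S')$, and the cochain $c$ has to be rechosen from scratch. There is no direct relationship between the old $c$ and the new one, so the process need not terminate. (Equivalently, you are implicitly assuming that $H^2(G(K/F),\mathcal T_S(K_S))\to H^2(G(K/F),\mathbb T_S(K))$ is injective, which the paper does not establish.) Your condition (ii) suffers from the same defect: as $S$ grows, so does the rank of $X_3(S)$, and it is not clear that one can satisfy it by a finite enlargement.

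The paper sidesteps this entirely. Once $\nu$ factors through $\mathbb T_S$, the cocycle $x$ is trivial on $\ker\bigl(p^S_3:\mathbb T(K)\twoheadrightarrow\mathbb T_S(K)\bigr)=\ker\tilde p$, and since $\tilde p:\mathcal E_3(K/F)\to\mathcal E_3^K(S)$ is a surjective homomorphism of extensions, $x$ \emph{descends} to a cocycle $\bar x$ on $\mathcal E_3^K(S)$ --- no cochain-level matching needed, only the cohomological equality of Lemma~\ref{lem.kSpS}. One then restricts $\bar x$ along the canonical inclusion $\mathcal E_3(S)\hookrightarrow\mathcal E_3^K(S)$ (which exists by the pushout construction), and enlarges $S$ so that the finitely many values on coset representatives become integral. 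Your construction, once corrected, is really just this descent written out in coordinates; but by insisting on explicit sections and $2$-cocycles you manufacture a matching problem that the more conceptual phrasing avoids.
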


\begin{proof}
 The  proof is similar to that of the
standard finiteness theorem in global Galois cohomology that we reviewed
earlier.  We choose an algebraic $1$-cocycle $(\nu,x)$ representing $b$. 
Thus $\nu$ is a $K$-homomorphism from $\mathbb T$ to $G$. 
Now $G$ is of
finite type over $F$ and $\mathbb T$ is the protorus obtained from the
projective system $\mathbb T_S$ of tori (with $S$ varying over all finite
subsets of $V_F$). So there exists an  adequate set $S$ such that $\nu$ comes from a
$K$-homomorphism  $\nu:\mathbb T_S \to G$. At this point we have refined
$(\nu,x)$ to an algebraic $1$-cocycle of $\mathcal E_3^K(S)$ in $G(K)$.  

Enlarging $S$, we may assume that
 $\nu$ is defined over $K_S$. The
restriction of the $1$-cocycle $x$ to $\mathcal T_S(K_S)$ agrees with $\nu$
and therefore takes values in $\mathcal G(K_S)$. Since $\mathcal T_S(K_S)$ is
of finite index in $\mathcal E_3(S)$, by enlarging $S$ further we may assume
that the restriction of $x$ to $\mathcal E_3(S) \hookrightarrow \mathcal
E_3^K(S)$ takes values in $\mathcal G(K_S)$. At this point we have refined
$b$ to an algebraic $1$-cocycle of $\mathcal E_3(S)$ in $\mathcal G(K_S)$,
and we are done with the proof of the lemma.  
\end{proof}

To deduce Proposition \ref{prop.BigDiag} from the lemma 
we just proved, it clearly suffices to construct, 
  for every  adequate set $S$  and every place $u \notin S$,  
 a localization map $\ell_u^S$ making the square  
 \begin{equation}\label{eq.SimpleFinSquare}
\begin{CD}
H^1_{\alg}(\mathcal E_3(S),\mathcal G(K_S))  @>{\eqref{eq.MapKSG}}>> 
H^1_{\alg}(\mathcal E_3(K/F),G(K)) \\
 @V{\ell_u^S}VV  @V{\ell^F_u}VV \\
H^1(G(K_v/F_u),\mathcal G(\mathcal O_v)) @>{\eqref{eqGOGK}}>>
H^1_{\alg}(\mathcal E(K_v/F_u),G(K_v)).
\end{CD}
\end{equation} 
commute. Moreover, it is enough to do this in the special case 
when $G(K_v/F_u)=G(K/F)$. Indeed, we may easily reduce to this 
special case by making use of the fixed field $E$ of $G(K_v/F_u)$ and  
 the restriction maps from the two sets in the top row of 
\eqref{eq.SimpleFinSquare} to their analogs for $K/E$. So, for the rest of this 
section we assume that $G(K_v/F_u)=G(K/F)$.

It remains only to explain how to construct $\ell_u^S$ making the 
square \eqref{eq.SimpleFinSquare} commute. To accomplish this 
we are going to construct a 
big commutative diagram 
 \begin{equation}\label{CD.NewMass}
\begin{CD}
H^1_{\alg}(\mathcal E_3(S),\mathcal G(K_S)) @>BC>> H^1_{\alg}(\mathcal
E^K_3(S),G(K)) @>p^*>> H^1_{\alg}(\mathcal E_3(K/F),G(K)) \\
 @V{\Loc}VV @V{\Loc}VV @V{\Loc}VV \\
H^1_{\alg}(\mathcal E_3^{\mathcal O_v}(S),\mathcal G(\mathcal O_v)) @>BC>>
H^1_{\alg}(\mathcal E^{K_v}_3(S),G(K_v)) @>p^*>> H^1_{\alg}(\mathcal
E^v_3(K/F),G(K_v))\\ 
@V{\mu_0^*}VV @V{\mu_0^*}VV @V{\mu_v'^*}VV\\
H^1(G(K_v/F_u),\mathcal G(\mathcal O_v)) @>>> H^1(G(K_v/F_u),G(K_v)) @>>>
H^1_{\alg}(\mathcal E(K_v/F_u),G(K_v)).
\end{CD}
\end{equation}  
The top row of the diagram is \eqref{eq.MapKSGComp}. 
 The composition of the two vertical maps at the right end is the
localization map $\ell^F_u$ (see section \ref{sec.Loc}).  
The two bottom horizontal arrows are the ones we composed to obtain 
the map \eqref{eqGOGK}. So, once we have constructed this big commutative 
diagram, the composition of the two vertical maps at its left end
will yield the desired map $\ell_u^S$.

There are only two sets in the diagram that have not yet been defined, namely, 
the two at the left end of the second row. These two sets bear the same 
relation to the ones above them as the third set in the second row does to 
the one above it. More precisely, for each of the two rings $R=\mathcal O_v, K_v$, 
we write $\mathcal E_3^R(S)$ for the extension of $G(K_v/F_u)$ obtained 
from $\mathcal E_3(S)$ by pushing forward along $\mathcal T_S(K_S) \to 
\mathcal T_S(R)$.   Now $\mathcal E_3^{K_v}(S)$ is 
a Galois gerb for $K_v/F_u$, so the set  $H^1_{\alg}(\mathcal E^{K_v}_3(S),G(K_v))$ 
is defined. We define $H^1_{\alg}(\mathcal E_3^{\mathcal O_v}(S),\mathcal G(\mathcal O_v))$ in the obvious way,   
as the set $H^1_Y(\mathcal E_3^{\mathcal O_v}(S),\mathcal G(\mathcal O_v))$ 
(see subsection \ref{sub.NonAbH1Y}) obtained from 
 $Y=\Hom_{\mathcal O_v}(\mathcal T_S,\mathcal G)$ together with 
the obvious map $\xi:Y \to\Hom(\mathcal T_S(\mathcal O_v),\mathcal G(\mathcal O_v))$.

As a first step towards constructing all the maps in 
the big commutative diagram above, we consider the
 commutative diagram 
\begin{equation}\label{CD.Tow2Big}
\begin{CD}
\mathcal T_S(K_S) @>>> \mathbb T_S(K) @<p<< \mathbb T(K) \\
@VVV @VVV @VVV \\
\mathcal T_S(\mathcal O_v) @>>> \mathbb T_S(K_v) @<p<< \mathbb T(K_v) \\
@A{\mu_0}AA @A{\mu_0}AA @A{\mu_v'}AA\\
\{1\} @>>> \{1\} @<q<< \mathbb G_m(K_v)
\end{CD}
\end{equation}
in which the maps are as follows. 
The unlabeled maps in the top two rows  of \eqref{CD.Tow2Big} 
 are induced by the obvious injective ring homomorphisms
\begin{equation}
\begin{CD}
\quad\qquad @. \quad K_S \quad@>>> \quad K \quad  @. \qquad\qquad\quad K \qquad\qquad\quad\\
@. @VVV @VVV  @VVV\\ 
\quad\qquad @. \quad \mathcal O_v \quad @>>> \quad K_v \quad @. \qquad\qquad\quad K_v \qquad\qquad\quad
\end{CD}
\end{equation}  
The map  $\mu_v'$ was defined in  section \ref{sec.Loc} on localization.  
The homomorphisms  $\mu_0$ 
and $q$ are of course trivial.  The diagram commutes; indeed, $p \mu_v'$  is
trivial because $u\notin S$, and it is clear that the other three squares commute.

{\bf Claim 1.}
We claim that $H^1(G(K_v/F_u),A)$ vanishes for each of the nine 
groups $A$ appearing in \eqref{CD.Tow2Big}. (Earlier we used $A$ as an abbreviation for the short exact sequence $A(V_F)$, but we no longer need to reserve the notation $A$ for this purpose.) 

For the group $A$ appearing in the lower right corner of \eqref{CD.Tow2Big}, 
Claim 1 follows from Hilbert's Theorem 90. 
For the remaining groups $A$ 
it follows from the 
various vanishing theorems proved in 
 Lemmas \ref{lem.TNcond2}, \ref{lem.VanS} and \ref{lem.LangHensTorus}.

It follows from Claim 1 that all the sets 
$H^1_{\alg}$ in the big commutative diagram \eqref{CD.NewMass}  
are well-defined up to canonical isomorphism (independent of the choice 
of extensions $\mathcal E$  having the right second cohomology classes). 

{\bf Claim 2.} 
The maps in  diagram \eqref{CD.Tow2Big} can be extended to 
 homomorphisms (of extensions of $G(K_v/F_u)$)  
\begin{equation}\label{CD.Tow2Big5}
\begin{CD}
\mathcal E_3^{}(S) @>>> \mathcal E_3^{K}(S) @<\tilde p<< \mathcal E_3(K/F) \\
@VVV @VVV @VVV \\
\mathcal E_3^{\mathcal O_v}(S) @>>> \mathcal E_3^{K_v}(S) @<\tilde p<<
\mathcal E_3^{v}(K/F)
\\ @A{\tilde{\mu}_0}AA @A{\tilde{\mu}_0}AA @A{\tilde{\mu}'_v}AA\\
G(K_v/F_u) @>>> G(K_v/F_u) @<\tilde q<< \mathcal E(K_v/F_u). 
\end{CD}
\end{equation}
(Of course the two arrows in the bottom row are necessarily the identity map on $G(K_v/F_u)$ and 
the canonical surjection $\mathcal E(K_v/F_u) \twoheadrightarrow G(K_v/F_u)$.)  
Moreover, for any choice of such a collection of extended homomorphisms,  the diagram 
\eqref{CD.Tow2Big5} is essentially
commutative, by which we mean that each of its four squares commutes up to conjugacy
under $\mathbb T_S(K_v)$. 

The second part of Claim 2 follows from Claim 1. 
The first part of Claim 2 will follow from Claim 3 below.

 Notice that there are canonical elements in 
$H^2(G(K_v/F_u),A)$ for each of the four modules $A$ appearing in the corners of \eqref{CD.Tow2Big}. They are as follows:
\begin{itemize}
\item  $\alpha_3(S)$ when $A=\mathcal T_S(K_S)$, 
\item  $\alpha_3$ when $A=\mathbb T(K)$,
\item the local fundamental class $\alpha(K_v/F_u)$ when $A=\mathbb G_m(K_v)$, 
\item $1$  when $A=\{1\}$.
\end{itemize} 

 {\bf Claim 3.} There is a unique collection of nine elements $\alpha_A \in H^2(G(K_v/F_u),A)$, one for each of the 
nine modules $A$ in diagram \eqref{CD.Tow2Big}, such that 
\begin{itemize}
\item $\alpha_A$ is the canonical element when $A$ is one of the four corners, and 
\item  each arrow $A \to A'$ in the diagram maps 
$\alpha_A$ to $\alpha_{A'}$.
\end{itemize} 
Moreover these nine elements $\alpha_A$ are the $2$-cohomology classes corresponding 
to the nine extensions of $G(K_v/F_u)$ 
 appearing in diagram \eqref{CD.Tow2Big5}. 

To verify the first part of Claim 3 it is enough to check that, for each of the four outer edges 
\[
A' \xrightarrow{} A \xleftarrow{} A''
\]  
in  diagram \eqref{CD.Tow2Big}, the image 
of $\alpha_{A'}$ under $A' \to A$ agrees with the image of $\alpha_{A''}$ under $A'' \to A$. 
This follows from 
 \begin{itemize}
\item Lemma \ref{lem.kSpS} for the top edge, 
\item Lemma \ref{lem.LocWorksWell}(2) for the right edge, 
\item Lemma \ref{lem.LangHensTorus} for the left edge, 
\end{itemize} 
and is trivially true for the bottom edge. The second part of Claim 3 is clear, once one 
remembers how the various extensions  were defined. 

Now we are going to define all the maps in \eqref{CD.NewMass}. 
 Each of the nine sets in that diagram is of the form $H^1_Y(E,M)$ (see subsection \ref{sub.NonAbH1Y}) 
for suitable $E$, $M$, $Y$ and $\xi:Y \to \Hom(A,M)$.   
The relevant groups $E$ and $M$ 
are shown explicitly in the diagram, and the relevant groups $A$ are the ones appearing in 
the corresponding locations in diagram \eqref{CD.Tow2Big}.  
The nine sets $Y$ (indicated only by the subscript ``$\alg$''  in the diagram), 
 together with certain natural maps linking them, are shown in the 
commutative diagram 
 \begin{equation}\label{CD.Massive2}
\begin{CD}
\Hom_{K_S}(\mathcal T_S,\mathcal G) @>>> \Hom_{K}(\mathbb T_S, G) @>p>>
\Hom_{K}(\mathbb T, G) \\
 @VVV @VVV @VVV \\
\Hom_{\mathcal O_v}(\mathcal T_S,\mathcal G) @>>>
\Hom_{K_v}(\mathbb T_S, G) @>p>>\Hom_{K_v}(\mathbb T, G) \\
@V{\mu_0}VV @V{\mu_0}VV @V{\mu_v'}VV\\
\Hom_{\mathcal O_v}(\{1\}, \mathcal G) @>>> \Hom_{K_v}(\{1\}, G) @>q>>
\Hom_{K_v}(\mathbb G_m, G).
\end{CD}
\end{equation} 
The nine $G(K_v/F_u)$-groups $M$ are linked by obvious homomorphisms 
\begin{equation}\label{CD.Massive5}
\begin{CD}
\mathcal G(K_S) @>>> G(K) @=  G(K) \\
 @VVV @VVV @VVV\\
\mathcal G(\mathcal O_v) @>>>
 G(K_v) @=  G(K_v) \\
@| @|  @|\\
 \mathcal G(\mathcal O_v) @>>>  G(K_v) @=  G(K_v).
\end{CD}
\end{equation} 
We use the maps shown in \eqref{CD.Massive5} \eqref{CD.Massive2}, \eqref{CD.Tow2Big5} 
to define the maps in \eqref{CD.NewMass}. All of them are instances of the maps $\Phi$ or $\Psi$
defined in  
subsections \ref{sub.MapPhifgh} and \ref{sub.DefnPsiMap}. The ones for which the relevant 
arrow in \eqref{CD.Massive5} is an equality are the ones for which the map is of type $\Psi$,
and  the others are of type $\Phi$. 

Some of the maps in \eqref{CD.NewMass} have already been 
defined earlier in this subsection, but it is easy to see that they agree with  
the ones we just described.  
The commutativity of the four squares in 
\eqref{CD.NewMass} follows from 
\begin{itemize}
\item Lemma \ref{lem.CompPhi} for the upper left square, 
\item Lemma \ref{lem.CompPsi} for the lower right square, 
\item Lemma \ref{lem.PhiPsi} for the remaining two squares.
\end{itemize}
The construction of diagram \eqref{CD.NewMass} is now complete,  so 
we are finally finished with the proof of  Proposition \ref{prop.BigDiag}.

\section{$B(F,G)_{\bsc}$ in the global case}\label{sec.BFGglobal} 

\subsection{Goal} 
Let $F$ be a global field. We fix a separable closure $\bar F$ of $F$ and
put $\Gamma:=\Gal(\bar F/F)$.  
Let $G$ be a connected reductive group over
$F$.  In this section we are going to study
$B(F,G)_{\bsc}$. The formulation of the main result is inspired by Borovoi's
Theorem 5.11 in \cite{B2}. 

\subsection{Main result} 

Let $S_{\infty}$ denote the set of infinite places of $F$. Consider the
commutative square 
\begin{equation}\label{CD.BAMain}
\begin{CD}
B(F,G)_{\bsc} @>>> \prod_{u \in S_{\infty}} B(F_u,G)_{\bsc}\\
@V{\kappa_G}VV @VVV \\
A(F,G) @>>> \prod_{u \in S_{\infty}} A(F_u,G),
\end{CD}
\end{equation}
in which the right vertical arrow is the product over $S_{\infty}$ of the
local maps $\kappa_G$, and the horizontal arrows have as $u$-components 
the  localization maps appearing in  
 Lemma \ref{lem.KapLocCompat}.

\begin{proposition}\label{prop.BAMain}
Diagram \eqref{CD.BAMain} is a cartesian square of sets.
\end{proposition}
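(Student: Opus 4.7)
The square commutes by Lemma~\ref{lem.KapLocCompat}, so the task is to show the canonical comparison map from $B(F,G)_{\bsc}$ to the fiber product $A(F,G)\times_{\prod_{u\in S_\infty} A(F_u,G)} \prod_{u\in S_\infty} B(F_u,G)_{\bsc}$ is a bijection. I would proceed in two reductions, the first using $z$-extensions and the second using the short exact sequence $1\to G_{\sc}\to G\to D\to 1$ with $D=G/G_{\der}$.

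For the first reduction, choose a $z$-extension $1\to Z\to G'\to G\to 1$ with $Z$ an induced torus. Proposition~\ref{prop.ZactB} presents $B(F,G)_{\bsc}$ and each $B(F_u,G)_{\bsc}$ as quotients by the actions of $B(F,Z)$ and $B(F_u,Z)$, and the exactness of $0\to\Lambda_Z\to\Lambda_{G'}\to\Lambda_G\to 0$ gives parallel quotient descriptions of $A(F,G)$ and $A(F_u,G)$ by $A(F,Z)$ and $A(F_u,Z)$. Because $\kappa_Z$ is an isomorphism for the torus $Z$, both globally and at every place, the square for $G$ is literally the quotient of the square for $G'$ by the (cartesian) square for $Z$; for $Z$ itself the square is cartesian automatically since every vertical arrow is an isomorphism. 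A direct diagram chase then reduces cartesianness for $G$ to cartesianness for $G'$, so we may assume $G_{\der}$ is simply connected. Under this assumption $\Lambda_G=\Lambda_D$, so the lower row of \eqref{CD.BAMain} is identified (via $\kappa_D$, which is bijective for tori both globally and locally) with the localization map $B(F,D)\to\prod_{u\in S_\infty} B(F_u,D)$, and $\kappa_G$ becomes $p_\ast:B(F,G)_{\bsc}\to B(F,D)$; moreover, for nonarchimedean $u$ the map $p_\ast:B(F_u,G)_{\bsc}\to B(F_u,D)$ is bijective by Proposition~\ref{prop.BscLocalMain}, so the nonarchimedean data is already encoded in the bottom-left corner.

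For injectivity, suppose $b,b'\in B(F,G)_{\bsc}$ have $p_\ast(b)=p_\ast(b')$ and agree at every $u\in S_\infty$. Twisting by $b$ replaces $G$ by its inner form $J_b$ (still with simply connected derived group) and $b'$ by a class in the kernel of $B(F,J_b)_{\bsc}\to B(F,D)$; by Lemma~\ref{lem.NG'G} this kernel is the image of $H^1(F,(J_b)_{\sc})=H^1(F,G_{\sc})$. The class in $H^1(F,G_{\sc})$ obtained this way is locally trivial at every archimedean place (and automatically at every nonarchimedean place, since $H^1(F_u,G_{\sc})$ vanishes there by Kneser--Bruhat--Tits). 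The Hasse principle for simply connected semisimple groups (Kneser--Harder--Chernousov in the number field case, trivial in the function field case) then forces $b'=b$.

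For surjectivity, given a matching datum $(a,(b_u)_{u\in S_\infty})$, I would first construct some global basic lift $\tilde b\in B(F,G)_{\bsc}$ with $p_\ast(\tilde b)=a$ (realizing $a=b^D\in B(F,D)$ by a cocycle, lifting cocycle-level over $\mathcal E(K/F)$ using surjectivity of $G\to D$ after adjusting by a class in $H^1(F,G_{\sc})$ that is adelically unramified and can be modified by the Hasse principle). For each $u\in S_\infty$ the elements $\tilde b|_u$ and $b_u$ of $B(F_u,G)_{\bsc}$ have equal image in $B(F_u,D)$, hence differ by an element of $H^1(F_u,(G^{\tilde b})_{\sc})=H^1(F_u,G_{\sc})$; the resulting family in $\prod_{u\in S_\infty} H^1(F_u,G_{\sc})$ lifts to a unique class in $H^1(F,G_{\sc})$ by the Hasse principle, and using this class to twist $\tilde b$ yields the required $b$. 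The main obstacle is the Hasse principle input for simply connected semisimple groups; everything else is a functorial diagram chase against the machinery (Propositions~\ref{prop.ZactB}, \ref{prop.BscLocalMain}, Lemma~\ref{lem.KapLocCompat}) already in place.
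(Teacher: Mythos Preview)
Your overall strategy matches the paper's: reduce via a $z$-extension to the case $G_{\der}$ simply connected, then compare with $D=G/G_{\der}$. But there are genuine gaps in both halves of the simply connected case, and the paper handles each of them differently.

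\textbf{Injectivity.} The equality $(J_b)_{\ssc}=G_{\ssc}$ is false; they are only inner forms of one another. More seriously, even after lifting the everywhere-locally-trivial class $c\in H^1(F,J_b)$ to some $\tilde c\in H^1(F,(J_b)_{\der})$, you cannot conclude that $\tilde c$ is locally trivial at the real places: the map $H^1(F_u,(J_b)_{\der})\to H^1(F_u,J_b)$ need not be injective, so the Hasse principle for $(J_b)_{\der}$ does not directly yield $\tilde c=1$. (Your appeal to Lemma~\ref{lem.NG'G} also needs $J_b(K)\to D(K)$ surjective, which is not automatic.) The paper sidesteps this by first showing that $b,b'$ have the same Newton point (since $\Hom_F(\mathbb D_F,Z(G))\hookrightarrow\Hom_F(\mathbb D_F,D)$), so the difference is a class $x\in H^1(F,J_b)$; local triviality at all places then places $x$ in $\ker[\ker^1(F,J_b)\to\ker^1(F,D)]$, which vanishes by \cite[Lemma~4.3.1(b)]{CTT} in the number field case and by Harder's $H^1(F,(J_b)_{\der})=1$ in the function field case.

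\textbf{Surjectivity.} Your plan hinges on first producing a global basic lift $\tilde b\in B(F,G)_{\bsc}$ with $p_*(\tilde b)=a$, but this is precisely the hard step and your sketch does not establish it: $G(K)\to D(K)$ need not be surjective, and an arbitrary lift to $B(F,G)$ need not be basic. The paper takes a completely different route. It chooses a finite set $S\supset S_\infty$ (containing an auxiliary finite place) from which $b_D$ ``comes,'' then picks a maximal $F$-torus $T$ that is fundamental over $F_u$ for every $u\in S$ (via \cite[Lemma~5.5.3]{H} in the number field case and \cite[Prop.~3.2]{Wort} in the function field case), and explicitly constructs an element of $X_*(T)\otimes\mathbb Z[S_K]_0$ whose class in $B(F,T)$ simultaneously lifts $b_D$ and, at each archimedean $u$, a preimage of $b_u$ in $B(F_u,T)$ supplied by Lemma~\ref{lem.BscReal}. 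Basicness of the resulting global element is then verified place by place through Lemma~\ref{lem.BscLocGlobal}: at $u\in S$ because $T$ is fundamental there, and outside $S$ because the element is locally trivial.

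\textbf{The $z$-extension step.} Your ``direct diagram chase'' needs an extra ingredient: passing from cartesian squares for $G'$ and for $Z$ to one for $G$ requires that $A(F_u,Z)\to A(F_u,G')$ be injective for every $u\in S_\infty$. The paper isolates this in an elementary Lemma~\ref{lem.ElemCart} on quotients of cartesian squares, and checks the injectivity using that $Z=R_{K/F}(S)$ forces $(\Lambda_Z)_{G(K_v/F_u)}$ to be torsion-free.
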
 

The proof will be given in the next three subsections. 

\begin{corollary}
In the function field case the map 
\[
\kappa_G:B(F,G)_{\bsc} \to A(F,G)
\] 
is bijective. 
\end{corollary}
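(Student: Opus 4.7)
The plan is to deduce this directly from Proposition \ref{prop.BAMain}. In the function field case, $F$ has no archimedean places, so the set $S_{\infty}$ is empty. Consequently, both products $\prod_{u \in S_{\infty}} B(F_u, G)_{\bsc}$ and $\prod_{u \in S_{\infty}} A(F_u, G)$ are empty products, i.e., singleton sets (the terminal object in the category of sets), and the two horizontal arrows in the cartesian square \eqref{CD.BAMain} are the unique maps to a one-point set.

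A cartesian square of sets in which both objects on the right are singletons has the property that its left vertical arrow is a bijection. Explicitly, the fiber product $Y \times_{\{*\}} \{*\}$ is canonically identified with $Y$, so the induced map from $X$ (the top-left corner) to $Y$ (the bottom-left corner) is a bijection whenever the square is cartesian. Applying this observation to \eqref{CD.BAMain} gives that $\kappa_G : B(F,G)_{\bsc} \to A(F,G)$ is bijective.

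The only obstacle is verifying that Proposition \ref{prop.BAMain} genuinely applies without any hidden assumption of characteristic zero or existence of infinite places; but its statement is uniform across local and global fields, and its proof (which we are assuming) does not require $S_{\infty}$ to be nonempty. Thus no further argument is needed beyond specializing the cartesian square to $S_{\infty} = \emptyset$.
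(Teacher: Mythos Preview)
Your proof is correct and is exactly the reasoning the paper has in mind: the corollary is stated without proof immediately after Proposition~\ref{prop.BAMain}, and the introduction spells out the same observation (``In the function field case $S_\infty$ is empty, and so $\kappa_G$ induces an isomorphism $B(F,G)_{\bsc} \simeq A(F,G)$'').
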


\subsection{Proof of Proposition \ref{prop.BAMain} when $G_{\der}$ is simply
connected}  In this subsection we assume that the derived group of $G$ is
simply connected. We consider the short exact sequence 
\[
1 \to G_{\der} \to G \to D \to 1,
\] 
where $D$ denotes the quotient of $G$ by its derived group $G_{\der}$. 

Proposition \ref{prop.BAMain} is equivalent to the statement that 
\begin{equation}\label{CD.Gder}
\begin{CD}
B(F,G)_{\bsc} @>>> \prod_{u \in S_{\infty}} B(F_u,G)_{\bsc}\\
@VVV @VVV \\
B(F,D) @>>> \prod_{u \in S_{\infty}} B(F_u,D).
\end{CD}
\end{equation}
 is a cartesian square of sets, because $A(F,G)=B(F,D)$ and
$A(F_u,G)=B(F_u,D)$.

We need to show that the map from $B(F,G)_{\bsc}$ to the fiber product 
is bijective. We begin by showing that it  is injective. For this we
consider basic elements $b,b'$ in $B(F,G)$, and we assume that 
\begin{itemize}
\item $b,b'$ become equal in $B(F,D)$, and 
\item $b,b'$ become equal in $B(F_u,G)$ for every infinite place $u$ of $F$.
\end{itemize} 
We need to show that $b=b'$. 

Let $Z(G)$ denote the center of $G$, and let 
$\nu,\nu':\mathbb D_F
\to Z(G)$ be the Newton points of $b,b'$ respectively.  We claim that
$\nu=\nu'$. Indeed, this follows from the fact that $b,b'$ have the same
image in $B(F,D)$, since the map 
$\Hom_F(\mathbb D_F,Z(G)) \to \Hom_F(\mathbb D_F,D)$ is injective (any
homomorphism from a protorus  to the finite group 
$\ker[Z(G)\to D]$ is obviously trivial). 

Since $\nu=\nu'$,  the discussion in \ref{sub.1rstCpt}  shows that the
difference between $b$ and $b'$ is measured by an element $x$ in the pointed
set
$H^1(F,J_b)$. Here we have chosen an algebraic $1$-cocycle representing $b$
and used it to obtain the inner form $J_b$  of $G$. We need to show that $x$
is trivial. Our second assumption implies that $x$ is locally trivial at
every infinite place of $F$. Our first assumption, together with the fact
that $B(F_u,G)_{\bsc} \to B(F_u,D)$ is bijective for finite places $u$
(see Proposition \ref{prop.BscLocalMain}(1)), 
tells us that $x$ is locally trivial at every finite place of $F$. Therefore
$x$ is locally trivial everywhere. Again using the first assumption, we
conclude that $x$ is an element in  
\begin{equation}\label{eq.KerKer}
\ker[\ker^1(F,J_b) \to \ker^1(F,D)].
\end{equation}
To show that $x$ is trivial we just need to prove that 
the set \eqref{eq.KerKer} is trivial. 
In the number field case this follows from \cite[Lemma 4.3.1(b)]{CTT} 
(closely related to  Theorem 4.3 in \cite{Sa}). In the
function field case it is even true that the set 
\begin{equation}\label{eq.KerH1}
\ker[H^1(F,J_b) \to H^1(F,D)]
\end{equation} 
is trivial. 
Indeed, the set \eqref{eq.KerH1} is the image of 
$H^1(F,(J_b)_{\der})$, and this is 
 trivial \cite{HarderIII} because $(J_b)_{\der}$ is semisimple simply
connected.

We are done proving injectivity of the map from $B(F,G)_{\bsc}$ to the fiber
product. Now we prove surjectivity. For this we consider 
\begin{itemize}
\item an element $b_D \in B(F,D)$, and 
\item elements $b_u \in B(F_u,G)_{\bsc}$, one for each $u \in S_{\infty}$, 
\end{itemize}
such that
 \begin{itemize}
\item for all $u \in S_{\infty}$, the elements $b_D$ and $b_u$ become
equal in $B(F_u,D)$.
\end{itemize}
 We must show that there exists a basic element $b \in
B(F,G)$ such that 
\begin{itemize}
\item $b$ maps to $b_D$ under $B(F,G) \to B(F,D)$, and 
\item for every infinite place $u$ the element $b$ maps to $b_u$ under
$B(F,G)
\to B(F_u,G)$.
\end{itemize}

Choose a finite  set $S$ of places of $F$ such that 
\begin{itemize}
\item $S$ contains all infinite places of $F$, 
\item $S$ contains some finite place $u_0$ of $F$, and 
\item $b_D$ comes from $S$. 
\end{itemize}
Here we are using the following definition. 
\begin{definition}
Say that $b_D \in B(F,D)$ \emph{comes from $S$} if $b_D$ lies in the image
of 
\[
(X_*(D) \otimes X_3(K,S))_{G(K/F)} \to (X_*(D) \otimes X_3(K))_{G(K/F)}
\simeq B(F,D) 
\] 
for some (equivalently, every) finite Galois extension $K/F$ that splits
$D$, where $X_3(K,S)$ denotes $\mathbb Z[S_K]_0$.
\end{definition}
 
To see the equivalence of ``some'' and ``every'' in this definition, use
the surjectivity of  $X_3(L,S) \to X_3(K,S)$ when $L$ is a  
finite Galois extension of $F$ with $L \supset K$.

Next we choose (use \cite[Lemma 5.5.3]{H} in the number field case and 
\cite[Prop.~3.2]{Wort} in the function field case) a
maximal
$F$-torus
$T$ in
$G$ such that
$T$ is fundamental over $F_u$ for every $u \in S$. (In the nonarchimedean
case we are using fundamental as a synonym for elliptic, so $T$ is
fundamental when   
$T/Z(G)$ is anisotropic.) Finally, we choose a finite
Galois extension
$K/F$ that splits $T$. We then have an exact sequence 
\[
1 \to T_{\der} \to T \xrightarrow{q} D \to 1,
\] 
where $q$ denotes the restriction of $G \to D$ to $T$, and $T_{\der}=T\cap
G_{\der}$. Observe that $D$ also splits over $K$. Since we have chosen $S$
large enough that $b_D$ comes from $S$, we can express $b_D$ as the image of
some element 
\[
\sum_{v \in S_K} \mu_v \otimes v \in X_*(D) \otimes \mathbb Z[S_K]_0.
\] 
Thus $\mu_v \in X_*(D)$ for all $v \in S_K$, and $\sum_{v \in S_K} \mu_v=0$. 

For any place $u$ of $F$ we now write $V_u$ for the set of places of $K$
lying over $u$.  Let $u$ be an infinite place of $F$.  Because $b_u$ is basic and 
$T$ is fundamental over $F_u$, 
Lemma \ref{lem.BscReal} guarantees that
 there exists
$b_{T,u} \in B(F_u,T)$ such that $b_{T,u}$ maps to our given element $b_u$. 
We choose $\sum_{v \in  V_u} \mu'_v \otimes v \in X_*(T) \otimes \mathbb
Z[V_u]$ whose image in $(X_*(T) \otimes \mathbb Z[V_u])_{G(K/F)}\simeq
B(F_u,T)$ is equal to
$b_{T,u}$. 

Now $b_{T,u}$ and $b_D$ become equal in $B(F_u,D)$, so $\sum_{v \in
V_u}q(\mu'_v) \otimes v$ and $\sum_{v \in V_u}\mu_v \otimes v$ represent the
same class in 
$(X_*(D)\otimes \mathbb Z[V_u])_{G(K/F)}=B(F_u,D)$. Since $q:X_*(T) \to
X_*(D)$ is surjective, we may modify $\sum_{v \in  V_u} \mu'_v \otimes v$ in
such a way that 
\begin{itemize}
\item it still represents $b_{T,u}$, and 
\item $q(\mu'_v)=\mu_v$ for all $v \in V_u$. 
\end{itemize} 
We do this for every infinite place $u$. 

Recall that there exists a finite place $u_0$ in $S$. Choose some place
$v_0$ of $K$ lying over $u_0$. For every finite place $v \in S_K$ except
$v_0$ we choose $\mu'_v \in X_*(T)$ such that $q(\mu'_v)=\mu_v$. At this
point we have chosen elements $\mu'_v$ satisfying $q(\mu'_v)=\mu_v$ for
every $v \in S_K$ except for $v=v_0$. We now define $\mu'_{v_0}$ to be the
unique element of $X_*(T)$ such that $\sum_{v \in S_K} \mu'_v=0$. Applying
$q$ to this last equality, we see that $q(\mu'_{v_0})=\mu_{v_0}$. 

It is then clear that $\sum_{v \in S_K} \mu'_v \otimes v$ represents an
element
$b_T \in B(F,T)$ such that 
\begin{itemize}
\item $q(b_T) = b_D$, and 
\item $b_T \mapsto b_{T,u}$ under $B(F,T) \to B(F_u,T)$ for all infinite
places $u$ of $F$.
\end{itemize}
  Therefore the image $b$ of $b_T$ in $B(F,G)$ maps to
$b_D$ and to each $b_u$. To conclude the proof of surjectivity, it 
remains only to show that $b$ is basic. 
By Lemma \ref{lem.BscLocGlobal} it suffices   to show that 
 $b$ is locally basic everywhere. For
infinite places $u$, this follows from our assumption that $b_u$ is basic. 
For finite places  $u \in S$, it follows from the fact that $T$ is elliptic
at $u$. For finite places $u$ outside $S$, it is trivially true, since $b_T$
comes from $S$, hence is locally trivial outside of $S$.

\subsection{An elementary lemma} We are going to need the following very
easy lemma, whose proof is left to the reader. The lemma concerns the
following situation. Suppose that we are given a cartesian square 
\begin{equation}
\begin{CD}
A_1 @>f_{12}>> A_2 \\
@Vf_{13}VV @Vf_{24}VV \\
A_3 @>f_{34}>> A_4
\end{CD}
\end{equation}
in the category of groups, as well as a cartesian square
 \begin{equation}
\begin{CD}
X_1 @>g_{12}>> X_2 \\
@Vg_{13}VV @Vg_{24}VV \\
X_3 @>g_{34}>> X_4
\end{CD}
\end{equation}
in the category of sets. Suppose further that
\begin{itemize}
\item for $i=1,2,3,4$ we are given
an action of $A_i$ on $X_i$, and 
\item for $ij=12,13,24,34$ the map $g_{ij}$ is equivariant with respect to
$f_{ij}$, i.e., $g_{ij}(a_i x_i)=f_{ij}(a_i)g_{ij}(x_i)$. 
\end{itemize}
In this situation there is an obvious commutative square 
 \begin{equation} \label{CD.cart?}
\begin{CD}
A_1\backslash X_1 @>>> A_2\backslash X_2 \\
@VVV @VVV \\
A_3\backslash X_3 @>>> A_4\backslash X_4
\end{CD}
\end{equation}
of sets, with $A_i\backslash X_i$ denoting the quotient of $X_i$ by the
action of $A_i$. 
\begin{lemma} \label{lem.ElemCart}
The square \eqref{CD.cart?} is cartesian if 
\begin{itemize}
\item $A_4=f_{24}(A_2)f_{34}(A_3)$, and 
\item $A_4$ acts freely on $X_4$, i.e., if $a_4 \in A_4$ fixes some element
of $X_4$, then
$a_4$ is the identity.  
\end{itemize}
\end{lemma}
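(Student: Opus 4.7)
The plan is to show directly that the natural map
\[
A_1\backslash X_1 \to (A_2\backslash X_2) \times_{A_4\backslash X_4} (A_3\backslash X_3)
\]
is bijective by checking surjectivity and injectivity separately, exploiting the two hypotheses one at a time.

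For surjectivity, I would start with a pair of classes $[x_2] \in A_2\backslash X_2$ and $[x_3] \in A_3\backslash X_3$ having the same image in $A_4\backslash X_4$. Thus there exists $a_4 \in A_4$ with $a_4 \cdot g_{24}(x_2) = g_{34}(x_3)$ in $X_4$. The first hypothesis lets me write $a_4 = f_{24}(a_2)f_{34}(a_3)$ for some $a_2 \in A_2$ and $a_3 \in A_3$. Replacing $x_2$ by $a_2 \cdot x_2$ and $x_3$ by $a_3^{-1}\cdot x_3$ (which does not change the classes $[x_2]$ and $[x_3]$), equivariance gives $g_{24}(a_2 x_2) = g_{34}(a_3^{-1} x_3)$ in $X_4$. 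Since the set square is cartesian, there exists $x_1 \in X_1$ with $g_{12}(x_1)=a_2 x_2$ and $g_{13}(x_1)=a_3^{-1}x_3$, whose class in $A_1\backslash X_1$ is the desired preimage.

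For injectivity, I would take $x_1,x_1' \in X_1$ with the same image in the fiber product. Then there exist $a_2 \in A_2$ and $a_3 \in A_3$ with $g_{12}(x_1')=a_2\cdot g_{12}(x_1)$ and $g_{13}(x_1')=a_3\cdot g_{13}(x_1)$. Pushing these two equalities forward to $X_4$ along $g_{24}$ and $g_{34}$ respectively and using the commutativity of the set square, I obtain
\[
f_{24}(a_2)\cdot g_{24}g_{12}(x_1) = f_{34}(a_3)\cdot g_{34}g_{13}(x_1)
\]
with $g_{24}g_{12}(x_1)=g_{34}g_{13}(x_1)$. The freeness hypothesis then forces $f_{24}(a_2)=f_{34}(a_3)$ in $A_4$, so the cartesian property of the group square produces $a_1 \in A_1$ with $f_{12}(a_1)=a_2$ and $f_{13}(a_1)=a_3$. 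Equivariance gives $g_{12}(a_1 x_1)=g_{12}(x_1')$ and $g_{13}(a_1 x_1)=g_{13}(x_1')$, and a final appeal to the cartesian property of the set square yields $a_1 x_1 = x_1'$, so $[x_1]=[x_1']$ in $A_1\backslash X_1$.

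The argument is essentially bookkeeping, with no real obstacle; the only subtle point is remembering to use hypothesis (1) in the surjectivity step and hypothesis (2) in the injectivity step, and to exploit the cartesian properties of the two given squares at exactly the moments they are needed. Since the paper declares the proof straightforward, I would state the result and simply indicate the two steps above, leaving the equivariance checks to the reader.
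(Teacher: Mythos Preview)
Your argument is correct; the paper itself leaves the proof of this lemma to the reader, and your direct verification of bijectivity is exactly the intended routine check. One minor remark for the nonabelian case: in the surjectivity step factor $a_4 = f_{34}(a_3)f_{24}(a_2)$ rather than $f_{24}(a_2)f_{34}(a_3)$ (the two product conditions are equivalent since $f_{24}(A_2)$ and $f_{34}(A_3)$ are subgroups), so that the substitution genuinely yields $g_{24}(a_2 x_2)=g_{34}(a_3^{-1}x_3)$.
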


\subsection{Proof of the general case of Proposition \ref{prop.BAMain} using 
$z$-extensions}  We have proved  Proposition \ref{prop.BAMain} when 
$G_{\der}$ is simply connected. Now we use $z$-extensions to prove it in
general. 
So, we begin by choosing a finite Galois extension $K/F$ splitting  $G$ and
a $z$-extension 
\[
1 \to Z \to G' \to G \to 1
\] 
 with $Z=R_{K/F}(S)$ for some split $K$-torus $S$.
Because
$G'_{\der}$ is simply connected,   the square 
\begin{equation}\label{CD.Gder2}
\begin{CD}
B(F,G')_{\bsc} @>>> \prod_{u \in S_{\infty}} B(F_u,G')_{\bsc}\\
@VVV @VVV \\
A(F,G') @>>> \prod_{u \in S_{\infty}} A(F_u,G').
\end{CD}
\end{equation}
is cartesian.

The commutative diagram 
\begin{equation}\label{CD.Zinfty}
\begin{CD}
B(F,Z) @>>> \prod_{u \in S_{\infty}} B(F_u,Z)\\
@VVV @VVV \\
A(F,Z) @>>> \prod_{u \in S_{\infty}} A(F_u,Z).
\end{CD}
\end{equation} 
is trivially cartesian, because the vertical arrows are isomorphisms.

As in the lead-up to Lemma \ref{lem.ElemCart}, the groups in diagram
\eqref{CD.Zinfty} act on the sets in diagram \eqref{CD.Gder2}, and so we
obtain a commutative square of quotient sets, and this boils down to  
\begin{equation}\label{CD.Gder571}
\begin{CD}
B(F,G)_{\bsc} @>>> \prod_{u \in S_{\infty}} B(F_u,G)_{\bsc}\\
@VVV @VVV \\
A(F,G) @>>> \prod_{u \in S_{\infty}} A(F_u,G) ,
\end{CD}
\end{equation}
 by Proposition  \ref{prop.ZactB} and the fact that 
\[
A(F,Z) \to A(F,G') \to A(F,G) \to 0
\] 
is exact, locally as well as globally. (Use that 
$
0 \to\Lambda_Z \to \Lambda_{G'} \to \Lambda_G \to 0 
$ 
is exact.)

It follows from  Lemma \ref{lem.ElemCart}  that 
the square \eqref{CD.Gder571} is cartesian. The first hypothesis of
that lemma is trivially satisfied, because the right vertical arrow in
\eqref{CD.Zinfty} is an isomorphism. To show that the  second hypothesis is
 satisfied, we need to check that  $A(F_u,Z) \to A(F_u,G')$
is injective for every place $u$. 

So   
we need to check that 
\begin{equation}\label{eq.KerHomology}
 (\Lambda_Z)_{G(K_v/F_u)} \to (\Lambda_{G'})_{G(K_v/F_u)} 
\end{equation}
is injective for any  $v$ over $u$. From the long exact sequence of
group homology we see that the kernel of \eqref{eq.KerHomology}  is a
torsion group (killed by  $[K_v:F_u]$). But, because
$Z=R_{K/F}(S)$, the group  $(\Lambda_Z)_{G(K_v/F_u)}$ is torsion-free.
So the kernel of \eqref{eq.KerHomology} vanishes, and 
the proof of Proposition \ref{prop.BAMain} is now complete. 

\subsection{The image of $B(F,G)_{\bsc} \to A(F,G)$}
The next result involves the subset $A_0(F,G)$ of $A(F,G)$ defined in Remark
\ref{rem.im=A0}. 
\begin{proposition}\label{prop.ImKapGlob}
For any global field $F$ the image of $\kappa_G:B(F,G)_{\bsc} \to A(F,G)$ is 
$A_0(F,G)$.
\end{proposition}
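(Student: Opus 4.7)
The plan is to reduce to the local case via the cartesian square of Proposition \ref{prop.BAMain}, using Proposition \ref{prop.LocImKap} as input. We already have the inclusion $\im\kappa_G \subset A_0(F,G)$ recorded in Remark \ref{rem.im=A0}, so only the reverse inclusion $A_0(F,G) \subset \im\kappa_G$ needs attention.

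First I would verify that the localization maps behave well with respect to the subsets $A_0$. Concretely, for each place $u$ of $F$ there is a commutative square
\begin{equation*}
\begin{CD}
A(F,G) @>>> A(F_u,G) \\
@VNVV @VNVV \\
(\Lambda_G \otimes X^*(\mathbb D_F))^{\Gamma} @>>> (\Lambda_G \otimes X^*(\mathbb D_{F_u}))^{\Gamma(u)}
\end{CD}
\end{equation*}
obtained by naturality of $N$ in the field $F$ (assemble the maps of subsection \ref{sub.LocForTorComm} and their generalization to $\Lambda_G$ used in defining \eqref{eq.MapLocForA}). Because the bottom arrow is induced by the $\Gamma$-equivariant map $X^*(\mathbb D_F) \to X^*(\mathbb D_{F_u})$ dual to $\mu'$ (see subsection \ref{sub.LocNotation}), and because the inclusion $\Lambda_{C(G)} \hookrightarrow \Lambda_G$ is functorial in $F$, the bottom arrow carries $(\Lambda_{C(G)} \otimes X^*(\mathbb D_F))^{\Gamma}$ into $(\Lambda_{C(G)} \otimes X^*(\mathbb D_{F_u}))^{\Gamma(u)}$. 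Therefore the localization map $A(F,G) \to A(F_u,G)$ sends $A_0(F,G)$ into $A_0(F_u,G)$.

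Now fix $a \in A_0(F,G)$, and for each $u \in S_\infty$ let $a_u \in A(F_u,G)$ denote its local image. By what we just said, $a_u \in A_0(F_u,G)$ for every $u \in S_\infty$. Proposition \ref{prop.LocImKap} applied to the local fields $F_u$ then furnishes, for each $u \in S_\infty$, an element $b_u \in B(F_u,G)_{\bsc}$ with $\kappa_G(b_u)=a_u$. The pair $(a,(b_u)_{u \in S_\infty})$ is by construction a compatible element of the fiber product $A(F,G) \times_{\prod_{u \in S_\infty} A(F_u,G)} \prod_{u \in S_\infty} B(F_u,G)_{\bsc}$, where the compatibility is exactly the equality $\kappa_G(b_u)=a_u$ encoded in Lemma \ref{lem.KapLocCompat}. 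By Proposition \ref{prop.BAMain} the natural map from $B(F,G)_{\bsc}$ to this fiber product is bijective, so there exists $b \in B(F,G)_{\bsc}$ whose image under $\kappa_G$ is $a$ and whose localization at each infinite place $u$ is $b_u$. In particular $a \in \im\kappa_G$, completing the proof.

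I do not anticipate a real obstacle: once the elementary compatibility of $A_0$ with localization is recorded, the cartesian square reduces everything to Proposition \ref{prop.LocImKap}. The only point requiring any care is that the compatibility used to invoke Proposition \ref{prop.BAMain} is genuinely the one built into its diagram \eqref{CD.BAMain}, which is Lemma \ref{lem.KapLocCompat}; as long as one is consistent about how $a_u$ is defined (either as localization of $a$ through $A(F,G) \to A(F_u,G)$, or through the map induced by $\mathbb D_{F_u} \to \mathbb D_F$), this is automatic.
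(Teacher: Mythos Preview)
Your argument is correct and is precisely the unpacking of the paper's own proof, which simply states that the reverse inclusion ``follows easily from Propositions \ref{prop.LocImKap} and \ref{prop.BAMain}.'' You have filled in the implicit steps (compatibility of $A_0$ with localization, then lifting via the cartesian square) in exactly the intended way.
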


\begin{proof} We already know from Remark \ref{rem.im=A0} 
that the image of $B(F,G)_{\bsc} \to A(F,G)$ is contained in $A_0(F,G)$. 
So we just need to check that any element in $A_0(F,G)$ lies in 
$\im[B(F,G)_{\bsc} \to A(F,G)]$. This follows easily from 
Propositions \ref{prop.LocImKap} and  \ref{prop.BAMain}.  
\end{proof}

\subsection{Analysis of the total localization map}
Let $K$ be a finite Galois extension of $F$ in $\bar F$ such that $\Gal(\bar
F/K)$ acts trivially on $\Lambda_G$.  For each finite place
$u$ of
$F$ we choose a place $v$ of $K$ lying over $u$. We  then have 
\[
B(F_u,G)_{\bsc}\simeq (\Lambda_G)_{G(K_v/F_u)}.
\]
So the fact that diagram \eqref{CD.BAMain} is cartesian can  be
reformulated as the fact that the diagram 
\begin{equation}\label{CD.Gder5712}
\begin{CD}
B(F,G)_{\bsc} @>>> \bigoplus_{u \in V_F} B(F_u,G)_{\bsc}\\
@VVV @VVV \\
\bigl(X_3(K)\otimes
\Lambda_G\bigr)_{G(K/F)} @>>>\bigoplus_{u \in V_F} (\Lambda_G)_{G(K_v/F_u)}
\end{CD}
\end{equation}
is cartesian. Here we are using direct-sum notation in a nonstandard way by 
writing
$\bigoplus_{u
\in V_F} B(F_u,G)_{\bsc}$ for the subset of $\prod_{u \in V_F}
B(F_u,G)_{\bsc}$ consisting of families  of elements  $b_u \in
B(F_u,G)_{\bsc}$ such that
$b_u$ is trivial for all but finitely many places $u$. In order to
understand the significance of this last cartesian diagram, we apply the
right-exact functor
$X
\mapsto (X\otimes \Lambda_G)_{G(K/F)}$ to the short exact sequence 
\[
0 \to X_3(K) \to X_2(K) \to X_1(K) \to 0,
\]
concluding that the cokernel of the bottom horizontal arrow in 
diagram \eqref{CD.Gder5712} can be identified with
$(\Lambda_G)_{G(K/F)}$. The fact that \eqref{CD.Gder5712} is cartesian then
yields the following result. 

\begin{proposition}\label{prop.CokGlobalG}
 An element in $\bigoplus_{u \in V_F} B(F_u,G)_{\bsc}$
lies in the image of the localization map 
\begin{equation}\label{eq.LocMp}
B(F,G)_{\bsc} \to\bigoplus_{u \in V_F} B(F_u,G)_{\bsc}
\end{equation}
if and only if its image under 
\[
\bigoplus_{u \in V_F} B(F_u,G)_{\bsc} \to \bigoplus_{u \in V_F}
(\Lambda_G)_{G(K_v/F_u)} \to (\Lambda_G)_{G(K/F)}
\]
is trivial. 
\end{proposition}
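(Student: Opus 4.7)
The plan is to deduce the proposition from two ingredients: the cartesian nature of diagram \eqref{CD.Gder5712} (which the author notes is a reformulation of Proposition \ref{prop.BAMain}) and an identification of the cokernel of its bottom row with $(\Lambda_G)_{G(K/F)}$. Together they will show that an element in $\bigoplus_{u \in V_F} B(F_u,G)_{\bsc}$ lifts to $B(F,G)_{\bsc}$ if and only if its image in the cokernel of the bottom row vanishes, which is exactly what the proposition asserts.

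First I would verify in detail that \eqref{CD.Gder5712} is cartesian. This amounts to combining (a) Proposition \ref{prop.BAMain}, (b) Proposition \ref{prop.BscLocalMain}(1), which makes $\kappa_G \colon B(F_u,G)_{\bsc} \to A(F_u,G) = (\Lambda_G)_{G(K_v/F_u)}$ a bijection at every nonarchimedean place, and (c) Corollary \ref{cor.ResDirProd}, which ensures that the global-to-local map actually takes values in the direct sum. Concretely, given an element $(b_u)_{u\in V_F}$ in the top right and an $\alpha \in A(F,G)$ in the bottom left with matching images in the bottom right, I would restrict attention to the archimedean components, use Proposition \ref{prop.BAMain} to produce a unique $b \in B(F,G)_{\bsc}$ lifting $\alpha$ and the archimedean $(b_u)_{u \in S_\infty}$, and then invoke Lemma \ref{lem.KapLocCompat} together with the bijectivity of the local $\kappa_G$ at finite places to conclude that the localization of $b$ at each finite place $u$ necessarily equals the prescribed $b_u$.

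Second, I would identify the cokernel of the bottom row. Applying the right-exact functor $X \mapsto (X \otimes \Lambda_G)_{G(K/F)}$ to the short exact sequence $0 \to X_3(K) \to X_2(K) \to X_1(K) \to 0$ yields an exact sequence terminating in $(X_1(K) \otimes \Lambda_G)_{G(K/F)} \to 0$. Since $X_1(K) = \mathbb{Z}$ with trivial Galois action, this terminal group is $(\Lambda_G)_{G(K/F)}$, while the middle group $(\mathbb{Z}[V_K] \otimes \Lambda_G)_{G(K/F)}$ decomposes via an orbit–stabilizer calculation (a form of Shapiro's lemma for coinvariants) as $\bigoplus_{u \in V_F} (\Lambda_G)_{G(K_v/F_u)}$. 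A quick check confirms that this decomposition matches the bottom arrow of \eqref{CD.Gder5712} as written.

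Combining the two: because the square is cartesian, an element $(b_u) \in \bigoplus_u B(F_u,G)_{\bsc}$ lifts to $B(F,G)_{\bsc}$ if and only if its image in $\bigoplus_{u \in V_F}(\Lambda_G)_{G(K_v/F_u)}$ lies in the image of the bottom arrow, i.e.~in the kernel of the projection to the cokernel $(\Lambda_G)_{G(K/F)}$; this is exactly the claimed criterion. The step I expect to demand the most care is the passage from the cartesian square \eqref{CD.BAMain} (indexed only by $S_\infty$, with $\prod$ on the right) to \eqref{CD.Gder5712} (indexed by all of $V_F$, with $\bigoplus$ on the right), because both the replacement of products by direct sums and the replacement of $B(F_u,G)_{\bsc}$ by its image under $\kappa_G$ at finite places must be justified carefully; the rest of the argument is essentially a diagram chase.
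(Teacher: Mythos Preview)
Your proposal is correct and follows essentially the same approach as the paper: the paper's discussion preceding the proposition already establishes that \eqref{CD.Gder5712} is cartesian (as a reformulation of Proposition~\ref{prop.BAMain} via Proposition~\ref{prop.BscLocalMain}(1)) and identifies the cokernel of its bottom row with $(\Lambda_G)_{G(K/F)}$ using right-exactness of coinvariants, after which the proof reads simply ``Clear.'' Your write-up fleshes out exactly these steps, including the passage from $S_\infty$ to all of $V_F$ that the paper leaves implicit.
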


\begin{proof}
Clear. 
\end{proof}

The kernel of the localization map \eqref{eq.LocMp} is easily seen to
coincide with $\ker^1(F,G)$, and in \cite[\S 4]{CTT} this is described in
terms of $Z(\hat G)$. So we have a satisfactory understanding of the
localization map.

\appendix

\section{Rigidity of weak Tate-Nakayama triples}\label{App.A} 

\subsection{Review of the definition of Tate-Nakayama triple}
Let $X$, $A$ be $G$-modules, and let $\alpha \in H^2(G,\Hom(X,A))$.   
Recall  from section \ref{sec.absTN} that $(X,A,\alpha)$ is a
\emph{Tate-Nakayama triple for
$G$} 
if the following two conditions hold for every subgroup $G'$ of $G$:   
\begin{itemize}
\item For all $r \in \mathbb Z$ cup product with   $\Res_{G/G'}(\alpha)$
induces isomorphisms 
\[ 
H^r(G', X) \to
H^{r+2}(G', A). 
\] 
\item $H^1(G',\Hom(X,A))$ is trivial. 
\end{itemize} 
Weak Tate-Nakayama triples are ones for which the first condition holds (but
possibly not the second). The second condition is referred to as 
rigidity. In this appendix our main goal is to show 
 that  weak Tate-Nakayama triples of a certain kind 
 are automatically rigid.

\subsection{Review of Nakayama's theorem} 

Let $G$ be a finite group. For any $G$-module $M$ and any $r \in \mathbb Z$
the
 Tate cohomology
group $H^r(G,M)$ is defined. For a subgroup $G'$ of $G$ there are
restriction maps 
$\Res_{G/G'}:H^r(G,M) \to H^r(G',M)$ (see \cite{Se} for all this).

We now recall a special case of a result of Nakayama (see \cite{N,Se}).

\begin{theorem} Let $(X,A,\alpha)$ be a weak Tate-Nakayama triple. 
Then cup product with $\alpha$ is an isomorphism 
\[
H^r(G,M \otimes X) \to H^{r+2}(G,M \otimes A)
\] 
for every $r \in \mathbb Z$ and 
 every $G$-module $M$ that is torsion-free as abelian group. 
\end{theorem}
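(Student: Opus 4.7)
The plan is to carry out Nakayama's classical dimension-shifting argument, as in \cite[Ch.~IX, \S 8]{Se}. Writing $T^r(M) := H^r(G, M \otimes X)$ and $U^r(M) := H^{r+2}(G, M \otimes A)$, cup product with $\alpha$ defines a natural transformation $\phi^r_M : T^r(M) \to U^r(M)$ on the category $\mathcal C$ of $\mathbb Z$-torsion-free $G$-modules. Both $T^r$ and $U^r$ are cohomological $\delta$-functors on $\mathcal C$ because tensoring with $X$ or $A$ preserves short exact sequences whose terms are $\mathbb Z$-flat. The goal is to show that $\phi^r_M$ is bijective for every $r \in \mathbb Z$ and every $M \in \mathcal C$.

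The base case is $M = \mathbb Z[G/G']$ for a subgroup $G' \le G$. Since this module is $\mathbb Z$-free, the diagonal $G$-module $\mathbb Z[G/G'] \otimes X$ is (co)induced from the $G'$-module $X$, and Shapiro's lemma identifies $T^r(\mathbb Z[G/G'])$ with $H^r(G', X)$ and $U^r(\mathbb Z[G/G'])$ with $H^{r+2}(G', A)$. Under these identifications $\phi^r$ becomes cup product with $\Res_{G/G'}(\alpha)$, an isomorphism in every degree by the weak Tate-Nakayama hypothesis. Specializing to $G' = \{1\}$ shows in particular that $T^r(\mathbb Z[G]) = U^r(\mathbb Z[G]) = 0$ for every $r$ (Tate cohomology of the trivial group vanishes), and the same holds for any finite direct sum of copies of $\mathbb Z[G]$.

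Next comes the dimension-shift step, performed first for $M$ finitely generated. I would fit such $M$ into two short exact sequences in $\mathcal C$,
\[
0 \to M' \to P \to M \to 0 \qquad\text{and}\qquad 0 \to M \to P'' \to M'' \to 0,
\]
with $P$ and $P''$ finite direct sums of copies of $\mathbb Z[G]$. The first comes from any surjection of a finite free $\mathbb Z[G]$-module onto $M$; the kernel is automatically $\mathbb Z$-torsion-free since $M$ is. For the second I would apply the first construction to the $\mathbb Z$-dual $M^\vee := \Hom_{\mathbb Z}(M, \mathbb Z)$ and dualize, using the canonical $G$-isomorphism $\mathbb Z[G]^\vee \simeq \mathbb Z[G]$ for finite $G$ together with the vanishing of $\Ext^1_{\mathbb Z}(N, \mathbb Z)$ whenever $N$ is finitely generated and torsion-free. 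Because each term is $\mathbb Z$-flat, tensoring with $X$ or $A$ preserves exactness; because $T^r(P) = T^r(P'') = 0$ in every degree, and similarly for $U$, the connecting homomorphisms in the long exact Tate-cohomology sequences are themselves isomorphisms, yielding $T^r(M) \simeq T^{r+1}(M')$ and $T^{r-1}(M) \simeq T^r(M'')$ (and analogously for $U$). Naturality of cup product guarantees that $\phi$ commutes with these shifts. The same argument with $P$, $P''$ allowed to be finite sums of permutation modules $\mathbb Z[G/G_i]$, combined with the five-lemma, shows more generally that $\phi^r_M$ is an isomorphism in every degree if and only if the corresponding assertion holds for the iterated syzygies and cosyzygies.

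Combining these two-sided shifts with the base case gives the statement for every finitely generated $M \in \mathcal C$, and a general $M \in \mathcal C$ is then handled by writing it as the filtered colimit of its finitely generated $G$-submodules (each still in $\mathcal C$) and invoking the fact that Tate cohomology of the finite group $G$ and tensoring with a fixed module both commute with filtered colimits. The main obstacle will be the bi-infiniteness of Tate cohomology: one cannot anchor the induction at a single degree, and the argument works only because the shifts are available in both directions --- a feature that in turn depends on the self-duality $\mathbb Z[G]^\vee \simeq \mathbb Z[G]$ for finite $G$ and on the accessibility of the $\mathbb Z$-dual construction in $\mathcal C$.
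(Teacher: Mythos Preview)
The paper does not prove this theorem; it simply cites Nakayama \cite{N} and Serre \cite[p.~156]{Se}. Your sketch follows the standard shape of Nakayama's argument, and the base case (permutation modules via Shapiro) and the colimit passage are fine. But the crucial sentence ``Combining these two-sided shifts with the base case gives the statement for every finitely generated $M \in \mathcal C$'' is not justified, and this is where the real content lies.

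What your two short exact sequences establish, via the five-lemma, is that the class $\mathcal G$ of finitely generated torsion-free $M$ for which $\phi^r_M$ is an isomorphism in every degree is closed under 2-out-of-3 in short exact sequences and contains all permutation modules. It does \emph{not} follow that $\mathcal G$ is everything: iterated syzygies (or cosyzygies) of a general $\mathbb Z[G]$-lattice need never become permutation modules, so there is no induction that terminates. The bi-infiniteness you flag as the obstacle is real, but two-sided dimension shifting in $M$ alone does not resolve it.

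The argument in Serre closes this gap differently: one dimension-shifts the class $\alpha$ itself (not $M$) down to degree zero, so that cup product with $\alpha$ becomes, via connecting isomorphisms, the map on $\hat H^{\bullet}$ induced by an honest $G$-module homomorphism $\psi\colon X \to J_G^{\otimes 2} \otimes A$ (with $J_G = \mathbb Z[G]/\mathbb Z\cdot N_G$). The weak Tate--Nakayama hypothesis then says that $\psi$ induces isomorphisms on $\hat H^r(G',-)$ for all $r$ and all subgroups $G'$; after replacing $\psi$ by an injective map with the same effect on cohomology, its cokernel is cohomologically trivial and hence has projective dimension $\le 1$ over $\mathbb Z[G]$ (Serre, IX \S5). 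Tensoring a length-one projective resolution with a torsion-free $M$ shows that $M \otimes \operatorname{coker}\psi$ is again cohomologically trivial, so $\id_M \otimes \psi$ induces isomorphisms on $\hat H^{\bullet}(G,-)$, and unwinding gives the theorem. The missing ingredient in your sketch is precisely this appeal to the structure theory of cohomologically trivial modules.
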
 

For any $G$-module there is an  obvious pairing 
\[
\Hom(X,A) 
\otimes 
\Hom(M,X) \to \Hom(M,A),
\] 
given by composition of mappings. So cup product with $\alpha \in 
H^2(G,\Hom(X,A))$ also yields maps  
\begin{equation}\label{eq.cupTN}
H^r(G,\Hom(M,X)) \xrightarrow{\alpha\smile} H^{r+2}(G,\Hom(M, A)).
\end{equation}

\begin{definition}\label{def.calC}  
 Let $(X,A,\alpha)$ be a weak Tate-Nakayama triple. 
Let $\mathcal C=\mathcal C(X,A,\alpha)$ be the class    of
$G$-modules for which \eqref{eq.cupTN} is an isomorphism for all $r
\in \mathbb Z$. 
\end{definition} 
The next lemma gives some simple observations about the
class
$\mathcal C$, the fourth of which is a standard corollary of Nakayama's
theorem.

\begin{lemma} \label{lem.calC} 
\hfill 
\begin{enumerate} 
\item 
The class $\mathcal C$ is closed under arbitrary direct sums. 
\item Let $0 \to M''' \to M'' \to M' \to 0$ be a short exact sequence of
$G$-modules, and assume that $M'$ is free as abelian group. If two of
 $M',M'',M'''$ lie in the class $\mathcal C$, then so does the third one. 
\item 
The class $\mathcal C$ contains all $\mathbb Z$-free $G$-modules $M$ which
admit a chain 
\[
M_1 \subset M_2 \subset M_2 \subset \dots
\] 
of submodules such that 
(i) each $M_n$ lies in $\mathcal C$, and 
(ii) $M =\cup_{n=1}^{\infty} M_n$. 
\item The class $\mathcal C$ contains all $G$-modules $M$ that are free
of finite rank as abelian groups. 
\item The class $\mathcal C$ contains all $G$-modules $M$ that are free as
abelian groups and have a $\mathbb Z$-basis that is permuted by the
action of $G$. 
\item The class $\mathcal C$ contains all $G$-modules $M$ that are
free of countable rank as abelian groups.
\end{enumerate}
\end{lemma}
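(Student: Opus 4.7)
The plan is to establish the six items in the stated order, drawing on Nakayama's theorem for the essential input in (4) and otherwise proceeding by formal manipulations with $\Hom$, direct sums, and Tate cohomology. For (1), the key observation is that $\Hom(\bigoplus_\alpha M_\alpha, N) = \prod_\alpha \Hom(M_\alpha, N)$; since Tate cohomology of the finite group $G$ is computed from a complete resolution by finitely generated free $\mathbb Z[G]$-modules, $H^r(G, -)$ commutes with arbitrary products, and cup product with $\alpha$ acts componentwise, so the conclusion drops out. For (2), one applies the left-exact functors $\Hom(-, X)$ and $\Hom(-, A)$ to the given short exact sequence; this preserves exactness because $M'$ is $\mathbb Z$-free (so $\Ext^1_{\mathbb Z}(M', -) = 0$), and the resulting ladder of long exact Tate cohomology sequences linked by $\alpha\smile$ is handled by the five lemma.

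For (4), I would use the fact that when $M$ is $\mathbb Z$-free of finite rank there is a natural $G$-isomorphism $M^* \otimes X \xrightarrow{\simeq} \Hom(M, X)$ (and analogously with $A$) that intertwines the two candidate cup products with $\alpha$; since $M^* := \Hom_{\mathbb Z}(M, \mathbb Z)$ is again $\mathbb Z$-free of finite rank, Nakayama's theorem applied to $M^*$ then supplies the needed isomorphism. For (5), the $G$-permuted basis decomposes $M$ into a direct sum of orbit modules $\mathbb Z[G/H]$, and (1) reduces us to a single such factor; Shapiro's lemma together with the identification $\Hom(\mathbb Z[G/H], N) = \Hom_H(\mathbb Z[G], N)$ converts the $\alpha$-cup product on $H^*(G, \Hom(\mathbb Z[G/H], -))$ into the $\Res_{G/H}(\alpha)$-cup product on $H^*(H, -)$, which is an isomorphism by the weak Tate-Nakayama hypothesis at the subgroup $H$.

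For (3), the move I expect to use is the telescoping short exact sequence
\[
0 \to \bigoplus_n M_n \xrightarrow{\mathrm{id}-\sigma} \bigoplus_n M_n \to M \to 0,
\]
with $\sigma$ built from the inclusions $M_n \hookrightarrow M_{n+1}$; both outer terms sit in $\mathcal C$ by (1), and since the quotient $M$ is $\mathbb Z$-free, (2) then delivers $M \in \mathcal C$ with no need to impose any freeness hypothesis on the successive quotients $M_{n+1}/M_n$. Finally (6) combines (3) and (4): choose a $\mathbb Z$-basis $\{e_1, e_2, \ldots\}$ of $M$ and let $M_n$ be the $G$-submodule generated by $e_1, \ldots, e_n$, which is finitely generated (since $G$ is finite) and $\mathbb Z$-free (as a submodule of $M$), hence lies in $\mathcal C$ by (4); the chain exhausts $M$, so (3) closes the argument. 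The sole nonformal input is Nakayama's theorem in (4); the main technical subtlety is checking in (4) that the identification $\Hom(M, X) = M^* \otimes X$ is $G$-equivariant and intertwines the two versions of the $\alpha$-cup product, but this is a routine naturality verification.
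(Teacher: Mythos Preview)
Your proposal is correct and matches the paper's proof almost exactly: the telescoping sequence in (3), the $\mathbb Z$-dual trick in (4), and the filtration argument in (6) are precisely what the paper does.

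The one place you diverge is (5). You invoke Shapiro's lemma to identify $H^r(G,\Hom(\mathbb Z[G/H],-))$ with $H^r(H,-)$ and then appeal to the weak Tate--Nakayama hypothesis at the subgroup $H$. This works, but the paper's route is shorter: after using (1) to reduce to a single orbit module $\mathbb Z[G/H]$, simply observe that $\mathbb Z[G/H]$ is free of finite rank $[G:H]$ as an abelian group (since $G$ is finite), so (4) applies directly. Your Shapiro argument has the conceptual virtue of explaining \emph{why} the subgroup condition in the definition of weak Tate--Nakayama triple is relevant, but it requires the extra verification that the Shapiro isomorphism intertwines the two cup products, which the paper's argument avoids entirely.
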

\begin{proof}

(1)
$\Hom(\cdot,X)$ and $\Hom(\cdot,A)$ convert direct sums into direct
products, and these are preserved by Tate cohomology. 

(2) 
Our assumption on $M'$ ensures that the sequences  
\[
0 \to \Hom(M',X) \to\Hom(M'',X) \to \Hom(M''',X) \to 0
\]
\[
0 \to \Hom(M',A) \to\Hom(M'',A) \to \Hom(M''',A) \to 0
\]
are short exact. Now consider their long exact sequences of Tate cohomology
 and apply
the $5$-lemma. 

(3) 
It follows from (1) that the module
$N:=\oplus_{n=1}^{\infty} M_n$ lies in
$\mathcal C$. We write elements $x \in N$ as sequences
$(x_1,x_2,x_3,\dots)$ with
$x_n \in M_n$ and $x_n=0$ for all but finitely many $n$. There is an
obvious surjection $g:N \to M$, defined by $g(x_1,x_2,x_3,\dots) =
\sum_{n=1}^{\infty} x_n$. We  define an endomorphism $f$ of $N$ by the
rule 
\[
f(x_1,x_2,x_3,\dots)=(x_1,x_2-x_1,x_3-x_2,\dots,x_{n+1}-x_n,\dots). 
\] 
It is easy to check that the sequence 
$
0 \to N \xrightarrow{f} N \xrightarrow{g} M \to 0 
$
is short exact. Since $N$ lies in $\mathcal C$ and $M$ is $\mathbb Z$-free,
we conclude from (2) that $M$ lies in $\mathcal C$.

(4) Apply the theorem of Nakayama to the $\mathbb Z$-dual of $M$. 

(5) This follows from (1) and (4). 

(6) This follows from (3) and (4). 
\end{proof}

\subsection{A sufficient condition for  a weak Tate-Nakayama triple to be 
rigid} 

We  consider a weak Tate-Nakayama triple $(X,A,\alpha)$. We are going to
give  some simple conditions on $X$ that imply the  rigidity of
$(X,A,\alpha)$. Before doing so we introduce some notation. Let $S$ be any
$G$-set. Then we write
$\mathbb Z[S]$ for the free abelian group on $S$. There is an obvious
$G$-module structure on $\mathbb Z[S]$, for which the $G$-action permutes
the basis elements $s \in S$ according to the given action on
$S$. There is an obvious $G$-map $f$ from $\mathbb Z[S]$ to the trivial
$G$-module $\mathbb Z$, 
defined by $f(\sum_{s \in S} n_s s)=\sum_{s\in S} n_s$. We denote by
$\mathbb Z[S]_0$ the $G$-module obtained as the kernel of $f$. Thus there is
a short exact sequence of $G$-modules 
\begin{equation}\label{eq.if0}
0 \to \mathbb Z[S]_0 \xrightarrow{i} \mathbb Z[S] \xrightarrow{f} \mathbb Z
\to 0. 
\end{equation}

\begin{lemma}\label{lem.rigid}
Consider a weak Tate-Nakayama triple $(X,A,\alpha)$. Assume that   $X$
satisfies one of the following two conditions: 
\begin{itemize}
\item There exists a $G$-set $S$ such that $X$ is isomorphic to $\mathbb
Z[S]$. 
\item   
   There exists a $G$-set $S$ such that $X$ is isomorphic to $\mathbb
Z[S]_0$, and, in addition,   $H^{-1}(G',X)$ vanishes for every subgroup $G'$
of
$G$.  
\end{itemize} 
Then cup product induces an isomorphism 
\begin{equation}\label{eq.XXiso}
H^r(G,\Hom(X,X)) \to H^{r+2}(G,\Hom(X,A))
\end{equation}  
for all $r \in \mathbb Z$. Moreover, $(X,A,\alpha)$ is rigid. 
\end{lemma}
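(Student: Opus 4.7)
The plan is to establish the cup-product isomorphism by showing that $X$ itself lies in the class $\mathcal{C}(X,A,\alpha)$ of Definition \ref{def.calC}, and to then derive rigidity from this. In Case 1 membership in $\mathcal{C}$ is immediate from Lemma \ref{lem.calC}(5); in Case 2 one applies Lemma \ref{lem.calC}(2) to the short exact sequence $0\to X\to\mathbb{Z}[S]\to\mathbb{Z}\to 0$, whose outer terms lie in $\mathcal{C}$ by parts (5) and (4) of that lemma. Since $(X,A,\Res_{G/G'}(\alpha))$ is a weak Tate-Nakayama triple of the same form for every subgroup $G'\leq G$, the same argument yields the isomorphism over each $G'$, identifying $H^1(G',\Hom(X,A))$ with $H^{-1}(G',\Hom(X,X))$. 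Rigidity therefore reduces to the vanishing of the latter for all $G'$.

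To attack this vanishing, apply $\Hom(-,X)$ (exact since $\mathbb{Z}$ is free) to the sequence just mentioned, producing $0\to X\to\Hom(\mathbb{Z}[S],X)\to\Hom(X,X)\to 0$. Decomposing $S$ into $G'$-orbits with stabilizers $H'_j$ identifies $\Hom(\mathbb{Z}[S],X)$ with $\prod_j\Hom(\mathbb{Z}[G'/H'_j],X)$, so Shapiro's lemma, combined with the fact that Tate cohomology of a finite group commutes with arbitrary products (being computed via a finite-type complete resolution), yields $H^r(G',\Hom(\mathbb{Z}[S],X))=\prod_j H^r(H'_j,X)$. In Case 1, a further $H'_j$-orbit decomposition of $S$ writes $\mathbb{Z}[S]$ as a direct sum of modules $\mathbb{Z}[H'_j/K_\beta]$, and $H^{-1}(K_\beta,\mathbb{Z})=0$ because the norm acts as multiplication by $|K_\beta|$ on the trivial module $\mathbb{Z}$; hence $H^{-1}(G',\Hom(\mathbb{Z}[S],\mathbb{Z}[S]))=0$, closing Case 1.

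In Case 2 the hypothesis gives $H^{-1}(H'_j,X)=0$ directly, so the long exact sequence reduces rigidity to injectivity of the connecting map $f\colon H^0(G',X)\to\prod_j H^0(H'_j,X)$. I would deduce this from the commutative square
\[
\begin{CD}
H^0(G',X) @>f>> \prod_j H^0(H'_j,X) \\
@V{\iota_1}VV @V{\iota_2}VV \\
H^0(G',\mathbb{Z}[S]) @>g>> \prod_j H^0(H'_j,\mathbb{Z}[S])
\end{CD}
\]
induced by $X\hookrightarrow \mathbb{Z}[S]$: both $\iota_1$ and $\iota_2$ are injective via the long exact sequences of $0\to X\to\mathbb{Z}[S]\to\mathbb{Z}\to 0$ (using $H^{-1}(H,\mathbb{Z})=0$ for any finite $H$), so it suffices to show that $g$ is injective. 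The key observation is that for each $j$, choosing an orbit representative $s_j\in S$ (which is $H'_j$-fixed), the $H'_j$-equivariant projection $\mathbb{Z}[S]\twoheadrightarrow\mathbb{Z}$ extracting the coefficient at $s_j$ composes with restriction to produce a map $H^0(G',\mathbb{Z}[S])\to H^0(H'_j,\mathbb{Z})=\mathbb{Z}/|H'_j|\mathbb{Z}$ that realizes the identity on the $j$-th summand of $H^0(G',\mathbb{Z}[S])=\bigoplus_j\mathbb{Z}/|H'_j|\mathbb{Z}$ and kills the other summands, since $s_j$ lies in no other $G'$-orbit. Thus $g$ is injective, $f$ is injective, and rigidity follows. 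The main obstacle is this final summand-by-summand bookkeeping in Case 2; carefully tracking the Shapiro isomorphism through the chosen projections is the crux of the argument.
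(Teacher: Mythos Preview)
Your proposal is correct and follows essentially the same route as the paper: membership of $X$ in $\mathcal{C}(X,A,\alpha)$ via Lemma~\ref{lem.calC}, reduction of rigidity to the vanishing of $H^{-1}(G',\Hom(X,X))$, and then the Shapiro/orbit-decomposition arguments you give. The paper packages those last vanishing statements separately as Corollary~\ref{lem.STvan} (your Case~1) and Lemma~\ref{lem.H-10S}(3) (your Case~2, with $S=T$ and $\epsilon=\id$), but the content of those lemmas is exactly what you reproduce inline---including the commutative square comparing $H^0$ in $X$ and in $\mathbb{Z}[S]$ and the summand-by-summand injectivity check via the Shapiro projections $\pi_{s_j}\circ\Res$.
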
 
\begin{proof} 
The statement that \eqref{eq.XXiso} is an isomorphism for all $r \in
\mathbb Z$ is just the statement $X$ lies in the class
$\mathcal C(X,A,\alpha)$ of Definition \ref{def.calC}. When $X=\mathbb
Z[S]$, this follows from part (5) of Lemma \ref{lem.calC}. When $X=\mathbb
Z[S]_0$, it follows from parts (2) and  (5) of that lemma.  

It remains to prove that $(X,A,\alpha)$ is rigid. So, for every subgroup
$G'$ of $G$, we must show that $H^1(G',\Hom(X,A))$ vanishes. In fact, we may
as well  take $G'$ to be $G$, since  all the hypotheses of the
lemma also hold for the Tate-Nakayama triple
$(X,A,\Res_{G/G'}(\alpha))$ for $G'$.  Because of  the isomorphism
\eqref{eq.XXiso},  we just need to check that $H^{-1}(G,\Hom(X,X))$
vanishes.

When $X=\mathbb Z[S]$,   
this vanishing is a special case of Corollary \ref{lem.STvan}, and, when
$X=\mathbb Z[S]_0$, it is a special case of Lemma \ref{lem.H-10S}(3). (In
that lemma, when $S=T$, we may take $\epsilon$ to be the identity map.) 
\end{proof} 

In the rest of this appendix we make the calculations with Tate
cohomology that  were invoked in the proof of  the previous lemma. Along
the way we review some basic facts about Tate cohomology and prove some other
technical results needed in the body of the text.  

\subsection{Standard facts about Tate cohomology} 
Let $G$ be a finite group. The following lemma reviews some of the most
basic facts about Tate cohomology.
 
\begin{lemma} \label{lem.StdFacts} \hfill
\begin{enumerate}
\item $H^{-1}(G,\mathbb Z)=0$. 
\item $H^r(G, \prod_{i \in I} M_i)=\prod_{i \in I} H^r(G,M_i)$.
\item $H^r(G, \bigoplus_{i \in I} M_i)=\bigoplus_{i \in I} H^r(G,M_i)$. 
\end{enumerate}
\end{lemma}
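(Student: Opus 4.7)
\smallskip

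\noindent\textbf{Proof proposal.} For (1), I would argue directly from the definition of Tate cohomology in degree $-1$. Since $G$ acts trivially on $\mathbb Z$, both $\mathbb Z_G$ and $\mathbb Z^G$ are canonically identified with $\mathbb Z$, and the norm map $N:\mathbb Z_G\to\mathbb Z^G$ becomes multiplication by $|G|$. Since $|G|\ne 0$ in $\mathbb Z$, this map is injective, and its kernel $H^{-1}(G,\mathbb Z)$ is therefore zero.

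For (2) and (3) the plan is to fix once and for all a complete resolution $P_\bullet\to\mathbb Z$, meaning a $\mathbb Z$-exact doubly infinite complex of finitely generated projective $\mathbb Z[G]$-modules augmented over $\mathbb Z$, and to compute Tate cohomology as $H^r(G,M)=H^r\bigl(\Hom_{\mathbb Z[G]}(P_\bullet,M)\bigr)$. Such a resolution exists because $G$ is finite, so $\mathbb Z[G]$ is noetherian and one can splice a finitely generated projective resolution of $\mathbb Z$ with a dual resolution. The key categorical fact I will use is that, for any $P$, the functor $\Hom_{\mathbb Z[G]}(P,-)$ preserves arbitrary products, and, when $P$ is finitely generated projective, it also preserves arbitrary direct sums.

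Granting that, the argument for (2) is routine: $\Hom_{\mathbb Z[G]}(P_r,\prod_i M_i)=\prod_i\Hom_{\mathbb Z[G]}(P_r,M_i)$ for each $r$, and taking cohomology of a product of complexes of abelian groups yields the product of cohomologies, because kernels and images both commute with arbitrary products (products are exact in $\mathbf{Ab}$). For (3) the same argument works with direct sums in place of products, using that $P_r$ is finitely generated so that $\Hom_{\mathbb Z[G]}(P_r,\bigoplus_i M_i)=\bigoplus_i\Hom_{\mathbb Z[G]}(P_r,M_i)$, and using that direct sums are also exact in $\mathbf{Ab}$, so cohomology commutes with them as well.

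The only point that requires any care is the finite-generation hypothesis on $P_\bullet$, which is what makes (3) work; without it the isomorphism $\Hom(P_r,\bigoplus M_i)=\bigoplus\Hom(P_r,M_i)$ would fail. Since all three items are well-known and elementary, I would keep the writeup to a few lines, essentially reducing everything to the commutation of $\Hom(P_r,-)$ with the relevant (co)limits and the exactness of products and coproducts of abelian groups.
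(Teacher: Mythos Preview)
Your proposal is correct and follows essentially the same approach as the paper: (1) is dispatched directly from the definition, and (2), (3) are deduced from computing Tate cohomology via a complete resolution $P_\bullet$ together with the fact that $\Hom(P_r,-)$ preserves products always and preserves direct sums when $P_r$ is finitely generated. The paper's proof is terser but identical in substance.
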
 

\begin{proof}
(1) is clear. (2)  follows formally from the fact that 
 $H^r(G,M)$
is computed as the cohomology of the complex $\Hom_G(P_n,M)$, where $P_n$ is
the standard complete resolution of $G$. (3) follows formally from the fact
that each of the $G$-modules $P_n$ in the standard resolution is finitely
generated as abelian group, so that the functor $\Hom(P_n,\cdot)$ preserves
direct sums. 
\end{proof}

\subsection{Modules induced from subgroups} 
Again let $G$ be a finite group. 
Let $H$ be a subgroup of $G$, and let $M$ be a $G$-module. Suppose that
there is a family $M_x$ of subgroups of $M$, one for each $x \in G/H$, such
that 
\begin{itemize}
\item $M=\bigoplus_{x \in G/H} M_x$, and  
\item $M_{gx}=gM_x$ for all $g \in G$, $x \in G/H$.
\end{itemize} 
Let $x_0$ denote the base-point in $G/H$ (in other words, $x_0$ is the
trivial coset
$H$ of $H$). The stabilizer $H$ of $x_0$ then acts on $M_0:=M_{x_0}$,  
and  $M$ is both induced and coinduced from the $H$-module $M_0$. 

Let us
denote by $\pi_0$ the projection of $M$ onto the direct summand $M_0$. 
It is evident that $\pi_0$ is an $H$-map, and  
 Shapiro's lemma states that the composed map 
\[
H^r(G,M) \xrightarrow{\Res_{G/H}} H^r(H,M) \xrightarrow{\pi_0} H^r(H,M_0) 
\]
 is an isomorphism.

 Now consider an arbitrary  $G$-set $S$. Of course $S$ decomposes as 
\begin{equation}\label{eq.DisjUnion}
S =\coprod_{s \in G\backslash S} Gs, 
\end{equation}  
where $Gs$ denotes the orbit of $s \in S$ under $G$. For $s \in S$ the
map $g \mapsto gs$ identifies $G/G_s$ with $Gs$. As a consequence of 
Lemma \ref{lem.StdFacts} (2) and Shapiro's lemma, one obtains the following
lemma (see page 714 of \cite{T}). 

\begin{lemma}\label{lem.ShapVar}
For any $G$-set $S$ and any $G$-module $M$ there is a canonical isomorphism 
\[
\pi: H^r(G,\Hom(\mathbb Z[S],M)) \to \prod_{s \in G\backslash S}
H^r(G_s,M), 
\] 
in which, for any $s \in S$, the $s$-component of $\pi$ is given by 
the composed map 
\[
H^r(G,\Hom(\mathbb Z[S],M)) \xrightarrow{\Res_{G/G_s}} H^r(G_s,\Hom(\mathbb
Z[S],M)) \xrightarrow{\pi_s} H^r(G_s,M),
\] 
where $\pi_s$ is the map sending $f \in \Hom(\mathbb Z[S],M)$ to its value
at
$s$. 
\end{lemma}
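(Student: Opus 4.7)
The plan is to reduce to a single $G$-orbit via the decomposition \eqref{eq.DisjUnion} and then invoke the version of Shapiro's lemma recalled just before the statement.

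First I would write $\mathbb Z[S] = \bigoplus_{s \in G\backslash S} \mathbb Z[Gs]$ as $G$-modules, coming from the orbit decomposition $S = \coprod_{s \in G\backslash S} Gs$. Since $\Hom(-,M)$ converts direct sums in the first argument into direct products, this gives a natural $G$-module decomposition
\[
\Hom(\mathbb Z[S],M) = \prod_{s \in G\backslash S} \Hom(\mathbb Z[Gs],M).
\]
By Lemma \ref{lem.StdFacts}(2), Tate cohomology commutes with this product, so the problem is reduced to producing, for each single orbit $Gs$, a canonical isomorphism $H^r(G,\Hom(\mathbb Z[Gs],M)) \xrightarrow{\sim} H^r(G_s,M)$ given by $\pi_s \circ \Res_{G/G_s}$.

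Next I would fit the single-orbit case into the general Shapiro setup  reviewed just before the lemma. The map $g \mapsto gs$ identifies $G/G_s$ with $Gs$ as $G$-sets. For each $x \in Gs$ let $M_x \subset \Hom(\mathbb Z[Gs],M)$ be the subgroup of homomorphisms supported at $x$; evaluation at $x$ gives a group isomorphism $M_x \cong M$. Then $\Hom(\mathbb Z[Gs],M) = \bigoplus_{x \in Gs} M_x$ (a finite direct sum, hence the same as the product appearing above), and clearly $M_{gx} = gM_x$ for all $g \in G$. Taking base-point $x_0 = s$, the stabilizer $G_s$ acts on $M_0 = M_s$, and evaluation at $s$ identifies $M_0$ with $M$ as a $G_s$-module. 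This is exactly the setup in which the abstract Shapiro isomorphism $H^r(G,\Hom(\mathbb Z[Gs],M)) \xrightarrow{\sim} H^r(G_s,M_0)$ is given by $\Res_{G/G_s}$ followed by the projection onto the $x_0$-component, i.e.\ by $\pi_s$.

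Combining the two steps, the product of these single-orbit isomorphisms over $s \in G\backslash S$ yields the canonical isomorphism $\pi$ of the lemma, with the claimed explicit $s$-component. There is no real obstacle here: the only point requiring care is identifying the Shapiro projection $\pi_s$ with genuine evaluation at the element $s \in S$ (as opposed to at some other representative of the orbit), which is precisely why the base-point is chosen to be $s$ itself.
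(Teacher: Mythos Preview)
Your proposal is correct and follows exactly the approach the paper indicates: decompose $S$ into $G$-orbits via \eqref{eq.DisjUnion}, use Lemma~\ref{lem.StdFacts}(2) to pass the product outside Tate cohomology, and then apply the Shapiro isomorphism (in the form reviewed just before the lemma) to each single orbit. The paper gives no further detail beyond citing these two ingredients and Tate's article, so your write-up is in fact more explicit than the original.
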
 
The groups
$H^r(G_s,M)$ and $H^r(G_t,M)$ are \emph{canonically} isomorphic when $s,t
\in S$ lie in the same $G$-orbit. Indeed, this isomorphism is induced by
$\Int(g):G_s \to G_t$ and $m \mapsto gm$ for any $g
\in G$ such that $gs=t$. The choice of $g$ is immaterial because inner
automorphisms act trivially on Tate cohomology. This is why it is
reasonable to write the target of the isomorphism in the previous lemma
as a product over $G\backslash S$ (rather than going to the trouble of
choosing a set of representatives for the orbits of $G$ on $S$). 

\begin{lemma}\label{lem.HrZS}
Let $S$ be any $G$-set. Then 
\[
H^r(G,\mathbb Z[S])=\bigoplus_{s \in G\backslash S} H^r(G_s,\mathbb Z).
\]
\end{lemma}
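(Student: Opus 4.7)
The plan is to reduce the calculation to a single orbit via the direct sum decomposition of $\mathbb Z[S]$, and then to apply Shapiro's lemma to each orbit.

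First, I would use the orbit decomposition \eqref{eq.DisjUnion} of $S$ to obtain the corresponding decomposition of $\mathbb Z[S]$ as a $G$-module, namely
\[
\mathbb Z[S]=\bigoplus_{s \in G\backslash S} \mathbb Z[Gs].
\]
Applying Lemma \ref{lem.StdFacts}(3), this yields
\[
H^r(G,\mathbb Z[S])=\bigoplus_{s \in G\backslash S} H^r(G,\mathbb Z[Gs]),
\]
so it remains to identify each summand $H^r(G,\mathbb Z[Gs])$ with $H^r(G_s,\mathbb Z)$.

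Next, for a single orbit $Gs$, I would note that $\mathbb Z[Gs]$ satisfies the hypotheses of the Shapiro setup explained just before Lemma \ref{lem.ShapVar}: take $H=G_s$, let $M_x:=\mathbb Z \cdot x$ for each $x \in Gs \simeq G/G_s$, and observe that $\mathbb Z[Gs]=\bigoplus_{x \in G/G_s} M_x$ with $gM_x=M_{gx}$. The base-point summand $M_0=\mathbb Z\cdot s$ is canonically the trivial $G_s$-module $\mathbb Z$, so Shapiro's lemma gives
\[
H^r(G,\mathbb Z[Gs]) \xrightarrow{\simeq} H^r(G_s,\mathbb Z),
\]
via restriction followed by projection onto $M_0$. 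Combining with the previous step yields the claimed isomorphism.

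There is no real obstacle here: the argument is the direct dual of Lemma \ref{lem.ShapVar}. The only point worth verifying is that the Shapiro framework of the paper is stated for modules that are simultaneously induced and coinduced via an internal direct-sum decomposition, which is immediate for $\mathbb Z[Gs]$ because $G$ is finite and the summands $M_x$ are freely permuted by $G$. As in Lemma \ref{lem.ShapVar}, the right-hand side is written as a product (here a direct sum) over $G\backslash S$ because the groups $H^r(G_s,\mathbb Z)$ for $s$ in a common $G$-orbit are canonically isomorphic, inner automorphisms acting trivially on Tate cohomology.
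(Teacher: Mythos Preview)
Your proof is correct and follows exactly the paper's approach: decompose $\mathbb Z[S]$ via the orbit decomposition \eqref{eq.DisjUnion}, apply Lemma \ref{lem.StdFacts}(3) to pass the direct sum through Tate cohomology, and then use Shapiro's lemma on each orbit. The paper's proof is the one-line version of what you have written out.
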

\begin{proof}
 Using the decomposition  \eqref{eq.DisjUnion}, we deduce this from Lemma
\ref{lem.StdFacts}(3) and Shapiro's lemma. 
\end{proof}

When $S$ and $T$ are finite $G$-sets, the $G$-module  
 $\Hom(\mathbb Z[S],\mathbb Z[T])$ is canonically isomorphic to $\mathbb Z[S
\times T]$, so there is a canonical isomorphism 
\[
H^r(G,\Hom(\mathbb Z[S],\mathbb Z[T])) =\bigoplus_{(s,t) \in G\backslash (S
\times T)} H^r(G_{s,t},\mathbb Z),
\]  
where $G_{s,t}$ denotes the stabilizer of $(s,t)$ in $G$. The next lemma
gives a similar result in the case that $S$ and $T$ are arbitrary $G$-sets. 

\begin{lemma}
Let $S$, $T$  be any $G$-sets. Then 
\[
H^r(G,\Hom(\mathbb Z[S],\mathbb Z[T]))=\prod_{s \in G\backslash S} \,
\bigoplus_{t \in G_s\backslash T} H^r(G_{s,t},\mathbb Z). 
\]
\end{lemma}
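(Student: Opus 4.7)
The plan is to obtain the stated formula by applying two previously established results in succession, with no new calculation required.

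First, I would apply Lemma \ref{lem.ShapVar} with the $G$-module $M$ taken to be $\mathbb Z[T]$. This gives a canonical isomorphism
\[
H^r(G,\Hom(\mathbb Z[S],\mathbb Z[T]))
\;\xrightarrow{\simeq}\;
\prod_{s\in G\backslash S} H^r(G_s,\mathbb Z[T]),
\]
where each factor on the right is Tate cohomology of the stabilizer subgroup $G_s$ acting on $\mathbb Z[T]$ via the restriction of the given $G$-action on $T$.

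Next, for each orbit representative $s\in G\backslash S$, I would apply Lemma \ref{lem.HrZS} to the finite group $G_s$ and the $G_s$-set $T$. This yields
\[
H^r(G_s,\mathbb Z[T])=\bigoplus_{t\in G_s\backslash T} H^r((G_s)_t,\mathbb Z),
\]
and the identity $(G_s)_t=G_s\cap G_t=G_{s,t}$ identifies the stabilizers occurring on the right with those that appear in the statement to be proved. Combining the two displayed isomorphisms gives the desired equality.

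There is no real obstacle here; the only thing to be mindful of is the usual point that the Tate cohomology groups $H^r(G_{s,t},\mathbb Z)$ are canonically well-defined on $G$-orbits of $(s,t)$, because inner automorphisms act trivially on Tate cohomology, so the product over $s\in G\backslash S$ of the direct sum over $t\in G_s\backslash T$ makes unambiguous sense (and agrees with a sum over $G\backslash(S\times T)$, grouped first by the orbit of $s$).
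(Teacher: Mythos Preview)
Your proposal is correct and matches the paper's proof exactly: the paper simply says ``This follows from the previous two lemmas,'' referring to Lemma~\ref{lem.ShapVar} (applied with $M=\mathbb Z[T]$) and Lemma~\ref{lem.HrZS} (applied to each $G_s$ acting on $T$), which is precisely what you do.
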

\begin{proof}
This follows from the previous two lemmas.  
\end{proof}

\begin{remark}
The righthand side of the canonical isomorphism in the previous lemma can
be viewed as a subgroup of 
\[
\prod_{(s,t) \in G\backslash (S
\times T)} H^r(G_{s,t},\mathbb Z).
\] 
The projection of $H^r(G,\Hom(\mathbb
Z[S],\mathbb Z[T]))$ onto $H^r(G_{s,t},\mathbb Z)$ is then given by  the
composed map 
\[
H^r(G,\Hom(\mathbb Z[S],\mathbb Z[T])) \xrightarrow{\Res_{G/G_{s,t}}}
H^r(G_{s,t},\Hom(\mathbb Z[S],\mathbb Z[T]))
\xrightarrow{\pi_{s,t}} H^r(G_{s,t},\mathbb Z)
\] 
where $\pi_{s,t}:\Hom(\mathbb Z[S],\mathbb Z[T]) \to \mathbb Z$ is the
$G_{s,t}$-map sending $f$ to the $t$-component of $f(s)$. 
\end{remark} 

\begin{corollary}\label{lem.STvan}
Let $S$, $T$  be any $G$-sets. Then 
\[
H^{-1}(G,\Hom(\mathbb Z[S],\mathbb Z[T]))=0. 
\]
\end{corollary}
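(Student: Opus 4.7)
The plan is to deduce Corollary \ref{lem.STvan} directly from the decomposition established in the preceding lemma. Specializing that lemma to $r=-1$ gives
\[
H^{-1}(G,\Hom(\mathbb Z[S],\mathbb Z[T])) \;=\; \prod_{s \in G\backslash S}\,\bigoplus_{t \in G_s\backslash T} H^{-1}(G_{s,t},\mathbb Z),
\]
so the whole question reduces to showing that each factor $H^{-1}(G_{s,t},\mathbb Z)$ vanishes.

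This is an instance of Lemma \ref{lem.StdFacts}(1): for any finite group $H$ acting trivially on $\mathbb Z$, the Tate group $H^{-1}(H,\mathbb Z)$ is the kernel of the norm map $\mathbb Z_H \to \mathbb Z^H$, which under the trivial action is simply multiplication by $|H|$ on $\mathbb Z$, hence injective. In particular, taking $H=G_{s,t}$ (a subgroup of the finite group $G$) yields the vanishing for every pair $(s,t)$.

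Since each factor in the product over $G\backslash S$ and each summand in the direct sum over $G_s\backslash T$ is zero, the entire group vanishes. There is no real obstacle here; the only subtlety is noting that Lemma \ref{lem.StdFacts}(1) is stated for $G$ but applies verbatim to any finite group, so it may be worth a half-sentence of justification. The proof is then a one-line invocation of the previous lemma followed by this vanishing remark.
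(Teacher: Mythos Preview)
Your proof is correct and follows exactly the same approach as the paper: invoke the preceding lemma to decompose $H^{-1}(G,\Hom(\mathbb Z[S],\mathbb Z[T]))$ into a product of direct sums of groups $H^{-1}(G_{s,t},\mathbb Z)$, each of which vanishes by Lemma~\ref{lem.StdFacts}(1). The paper's proof is the one-line version of what you wrote.
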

\begin{proof}
This follows from the previous lemma together with 
Lemma \ref{lem.StdFacts} (1). 
\end{proof} 

The next result again involves the short exact sequence (see
\eqref{eq.if0}) 
\begin{equation} \label{eq.if0'}
0 \to \mathbb Z[S]_0 \xrightarrow{i} \mathbb Z[S] \xrightarrow{f} \mathbb Z
\to 0. 
\end{equation}
\begin{lemma} \label{lem.H-10S}
Let $S$, $T$  be any $G$-sets, and suppose that there exists a $G$-map
$\epsilon:T \to S$. Then the following conclusions hold. 
\begin{enumerate}
\item The map 
\begin{equation}\label{eq.ZSS} 
H^{0}(G,\Hom(\mathbb Z,\mathbb Z[T])) \xrightarrow{f} 
H^{0}(G,\Hom(\mathbb Z[S],\mathbb Z[T]))
\end{equation}
is injective. 
\item The group $H^{-1}(G,\Hom(\mathbb Z[S]_0,\mathbb Z[T]))$ vanishes. 
\item Suppose further that $H^{-1}(G',\mathbb Z[T]_0)=0$ for every subgroup
$G'$ of $G$. Then the group 
$H^{-1}(G,\Hom(\mathbb Z[S]_0,\mathbb Z[T]_0))$ vanishes. 
\end{enumerate} 
\end{lemma}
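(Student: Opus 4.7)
The three parts are linked by two short exact sequences, but the crucial input is part (1), which I will prove directly from the Shapiro-style decompositions. Parts (2) and (3) will then follow from (1) by diagram-chasing with the long exact sequences of Tate cohomology.

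For (1), I will use the explicit description of $H^0$ provided by Lemmas \ref{lem.StdFacts}, \ref{lem.HrZS}, and \ref{lem.ShapVar}. Namely, identifying $\Hom(\mathbb Z,\mathbb Z[T])$ with $\mathbb Z[T]$, we have
\[
H^0(G,\mathbb Z[T])=\bigoplus_{Gt\in G\backslash T}\mathbb Z/|G_t|,\qquad H^0(G,\Hom(\mathbb Z[S],\mathbb Z[T]))=\prod_{Gs\in G\backslash S}\bigoplus_{G_s t\in G_s\backslash T}\mathbb Z/|G_{s,t}|,
\]
where $G_{s,t}:=G_s\cap G_t$. A $G$-invariant lift $\tilde\eta=\sum_t n_t\,t$ (with $n_t$ constant on $G$-orbits) has class $(n_{Gt})_{Gt}$; its image under $f^*$ is the constant function $s\mapsto\tilde\eta$, whose $(Gs,G_st)$-component is $n_{Gt}\bmod|G_{s,t}|$. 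The key observation is that for each $G$-orbit $Gt_0\subset T$, choosing a representative $t_0$ and setting $s_0:=\epsilon(t_0)$ gives $G_{t_0}\subset G_{s_0}$ (by $G$-equivariance of $\epsilon$), so $G_{s_0,t_0}=G_{t_0}$ and the component in question is literally $n_{Gt_0}$ (taken $\bmod|G_{t_0}|$). Thus each coordinate of $\eta$ can be read off from $f^*(\eta)$, giving injectivity.

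For (2), I apply $\Hom(-,\mathbb Z[T])$ to the short exact sequence $0\to\mathbb Z[S]_0\to\mathbb Z[S]\to\mathbb Z\to0$ (exact after $\Hom$ because $\mathbb Z$ is $\mathbb Z$-free) and take the long exact Tate sequence. The relevant fragment is
\[
H^{-1}(G,\Hom(\mathbb Z[S],\mathbb Z[T]))\to H^{-1}(G,\Hom(\mathbb Z[S]_0,\mathbb Z[T]))\to H^0(G,\Hom(\mathbb Z,\mathbb Z[T]))\xrightarrow{f^*}H^0(G,\Hom(\mathbb Z[S],\mathbb Z[T])).
\]
The leftmost group vanishes by Corollary \ref{lem.STvan}, and injectivity of $f^*$ at the right is (1); so the middle group vanishes. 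For (3), the same procedure with coefficients $\mathbb Z[T]_0$ yields
\[
H^{-1}(G,\Hom(\mathbb Z[S],\mathbb Z[T]_0))\to H^{-1}(G,\Hom(\mathbb Z[S]_0,\mathbb Z[T]_0))\to H^0(G,\mathbb Z[T]_0)\xrightarrow{f^*}H^0(G,\Hom(\mathbb Z[S],\mathbb Z[T]_0)).
\]
The leftmost group is $\prod_{s\in G\backslash S}H^{-1}(G_s,\mathbb Z[T]_0)$ by Lemma \ref{lem.ShapVar}, and this vanishes by the standing hypothesis of (3). So it suffices to show the rightmost $f^*$ is injective, and this follows from (1) via the commutative square whose columns are the two versions of $f^*$ and whose rows are induced by $0\to\mathbb Z[T]_0\to\mathbb Z[T]\to\mathbb Z\to0$: the horizontal map $H^0(G,\mathbb Z[T]_0)\hookrightarrow H^0(G,\mathbb Z[T])$ is injective since $H^{-1}(G,\mathbb Z)=0$, and the right column is injective by (1), so a trivial chase gives injectivity of the left column.

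The only substantive step is part (1); once it is in hand, (2) and (3) are formal. The slight subtlety in (1) is verifying that the equality $G_{s_0,t_0}=G_{t_0}$ is independent of the choice of representative $t_0$ of its $G$-orbit, but this is immediate because a different choice conjugates everything by an element of $G$. Everything else is a straightforward unwinding of the Shapiro identifications already recorded earlier in the appendix.
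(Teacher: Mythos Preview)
Your proof is correct and follows essentially the same route as the paper. For part (1), the paper packages your coordinate computation slightly more abstractly: it defines the $G_t$-map $g_t:\Hom(\mathbb Z[S],\mathbb Z[T])\to\mathbb Z$ sending $h$ to the $t$-component of $h(\epsilon(t))$ and observes that $g_t\circ f=\pi_t$, which on $H^0$ recovers exactly your ``read off the $(Gs_0,G_{s_0}t_0)$-component'' step (the fact that $g_t$ is a $G_t$-map is precisely your observation $G_{\epsilon(t),t}=G_t$). Parts (2) and (3), including the commutative square used to deduce injectivity of $f^*$ with $\mathbb Z[T]_0$-coefficients from part (1) and the vanishing of $H^{-1}(G,\mathbb Z)$, match the paper's argument verbatim.
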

\begin{proof} 
To prove (1) we do the following. For each $t \in T$ we write
$G_t$ for the stabilizer of $t$ in $G$, and we define a $G_t$-map 
\[
g_t:\Hom(\mathbb Z[S],\mathbb Z[T]) \to \mathbb Z
\]
by sending $h$ to the $t$-component of $h(\epsilon(t))$ (thinking of $h$ as
an
$S
\times T$-matrix satisfying a certain finiteness condition, we are sending 
$h$ to its  entry $h_{\epsilon(t),t}$). It is clear  that the
composed map 
\[
\mathbb Z[T]=\Hom(\mathbb Z,\mathbb Z[T]) \xrightarrow{f}  \Hom(\mathbb
Z[S],\mathbb Z[T]) \xrightarrow{g_t} \mathbb Z
\] 
is nothing but the $G_t$-map $\pi_t$ projecting an element in $\mathbb Z[T]$
onto its
$t$-component. 
From this we see that any element $x$ in the kernel of the
map \eqref{eq.ZSS} has trivial image under 
\[
H^0(G,\mathbb Z[T]) \xrightarrow{\Res_{G/G_t}} H^0(G_t,\mathbb Z[T])
\xrightarrow{\pi_t} H^0(G_t,\mathbb Z)
\]
for all $t \in T$. It then follows from Lemma \ref{lem.HrZS} that $x=0$.

Now we prove (2). From the long exact sequence of Tate cohomology for the
short exact sequence 
\[
0 \to \Hom(\mathbb Z,\mathbb Z[T]) \xrightarrow{f} 
\Hom(\mathbb Z[S],\mathbb Z[T]) \xrightarrow{i} 
\Hom(\mathbb Z[S]_0,\mathbb Z[T])
\to 0 
\]
we see that the vanishing of $H^{-1}(G,\Hom(\mathbb Z[S]_0,\mathbb Z[T]))$ 
follows from the first part of this lemma, together with the vanishing of 
$H^{-1}(G,\Hom(\mathbb Z[S],\mathbb Z[T]))$ (see Corollary
\ref{lem.STvan}). 

Finally, we prove (3). 
We use the long exact cohomology  sequence for the short exact sequence 
\[
0 \to \Hom(\mathbb Z,\mathbb Z[T]_0) \xrightarrow{f} 
\Hom(\mathbb Z[S],\mathbb Z[T]_0)
\xrightarrow{i} \Hom(\mathbb Z[S]_0,\mathbb Z[T]_0) \to 0.
\] 
To prove that $H^{-1}(G,\Hom(\mathbb Z[S]_0,\mathbb Z[T]_0))$ vanishes, it
is enough to show that 
\begin{equation}\label{eq.H-1SX} 
H^{-1}(G,\Hom(\mathbb Z[S],\mathbb Z[T]_0))=0
\end{equation}
and that 
\begin{equation}\label{eq.H0inj}
H^0(G,\Hom(\mathbb Z,\mathbb Z[T]_0))\xrightarrow{f} H^0(G,\Hom(\mathbb
Z[S],\mathbb Z[T]_0))
\end{equation}
is injective. 

The vanishing of the group in \eqref{eq.H-1SX} follows from Lemma
\ref{lem.ShapVar} together with our assumption that $H^{-1}(G', \mathbb
Z[T]_0)=0$ for every subgroup $G'$ of $G$. To prove that the map
\eqref{eq.H0inj} is injective, we consider the commutative
square 
\begin{equation}\label{CD.H0SXf}
\begin{CD}
H^0(G,\Hom(\mathbb Z,\mathbb Z[T]_0)) @>f>>  H^0(G,\Hom(\mathbb
Z[S],\mathbb Z[T]_0))
\\  @VVV @VVV \\
H^0(G,\Hom(\mathbb Z,\mathbb Z[T])) @>f>>  H^0(G,\Hom(\mathbb Z[S],\mathbb
Z[T])), 
\end{CD}
\end{equation} 
in which the two vertical maps are induced by the inclusion $\mathbb Z[T]_0
\hookrightarrow \mathbb Z[T]$. We want to prove that the top horizontal
arrow is injective, and for this  it will suffice to show that the left
vertical arrow and bottom horizontal arrow are both injective.  
 The vanishing of 
$H^{-1}(G,\Hom(\mathbb Z,\mathbb Z))=H^{-1}(G,\mathbb Z)$ 
(see Lemma \ref{lem.StdFacts}(1))
implies 
the injectivity of the left vertical arrow. The 
injectivity of the
bottom horizontal arrow was established in the first part of this lemma.  
\end{proof}

\section{Review of corestriction} 

In section \ref{sec.KRAS} we used  a number of simple results
concerning corestriction in group cohomology. They are probably all standard,
but I was not able to find a  textbook reference that had everything I
needed.  For that reason I am including a reasonably complete exposition  of
corestriction in this appendix (with no claim of  originality).  

\subsection{Notation} 
In this appendix $G$ is an arbitrary group, so that  
Tate cohomology is no longer defined, and we are free to write $H^r(G,M)$
($r \ge 0$) for the ordinary cohomology groups of a $G$-module $M$. These
can be computed using any
$\mathbb Z[G]$-free resolution
$P$  of the trivial $G$-module $\mathbb Z$. Indeed, $H^r(G,M)$ is the $r$-th
cohomology group of the complex $\Hom(P,M)^G=\Hom_G(P,M)$. If one takes 
$P$ to be the
standard resolution $\mathbb P$, one is led to standard cochains. 

\subsection{Automorphisms}
Let $\theta$ be an automorphism of $G$. By a $\theta$-automorphism of a
$G$-module $M$ we will mean an automorphism $\theta_M$ of the abelian group
$M$ such that $\theta_M(gm)=\theta(g)\theta_M(m)$ for all $g \in G$, $m \in
M$. Similarly for complexes of $G$-modules. Any $\theta$-automorphism of
$M$ preserves the $G$-invariants $M^G$ in $M$. 

There is an obvious $\theta$-automorphism $\theta_{\mathbb P}$ 
of the standard resolution for
$G$ (take the automorphism of $\mathbb P_r=\mathbb
Z[G^{r+1}]$ induced by the automorphism $(g_0,\dots,g_r) \mapsto
(\theta(g_0),\dots,\theta(g_r))$ of $G^{r+1}$). 

Let $M$ be a $G$-module, and suppose that we are given a
$\theta$-automorphism $\theta_M$ of $M$.  There is then an obvious
$\theta$-automorphism
$\theta$ of the complex  $\Hom(\mathbb P,M)$ 
(sending $f:\mathbb P_r \to M$ to $\theta_M
\circ f \circ \theta_{\mathbb P}^{-1}$). The induced automorphism on the cohomology
of the complex  $\Hom(\mathbb P,M)^G$ then provides an
automorphism $\theta$ of $H^r(G,M)$. However, it is not essential to use
the standard resolution. It works equally well to take any free resolution 
$P$ (of $\mathbb Z$) equipped with a $\theta$-automorphism (of complexes) that induces 
the identity map on $\mathbb Z=H^0(P)$.  

Now let $x \in G$, and consider the inner automorphism $\theta_x=\Int(x)$
of $G$. Any $G$-module $M$ then admits a canonical $\theta_x$-automorphism,
namely $\theta_M(m)=xm$. It is well-known (see \cite{Se}) that the induced
automorphism $\theta_x$ of  $H^r(G,M)$ is trivial. 

However, when one is given a normal subgroup $K$ of $G$, there are some
interesting automorphisms (needed for the Hochschild-Serre
spectral sequence). Again fix $x \in G$, but now write $\theta_x$ for the
automorphism $k \mapsto xkx^{-1}$ of $K$. On any $G$-module $M$ we have the 
canonical $\theta_x$-automorphism $\theta_M(m)=xm$. So there is an induced
automorphism $\theta_x$ on $H^r(K,M)$, and it is often non-trivial. This construction 
yields an action of $G$ on $H^r(K,M)$, and the normal subgroup $K$
acts trivially. We will refer to the resulting action of $G/K$ on 
$H^r(K,M)$ as the \emph{Hochschild-Serre action}. 

\subsection{Restriction} 
Let $K$ be a subgroup of $G$. 
One way to think about  restriction homomorphisms in group cohomology is as
follows. Let $P$ be a $\mathbb Z[G]$-free resolution of $\mathbb Z$. Then 
$P$ is also a $\mathbb Z[K]$-free resolution of $\mathbb Z$.   

For any $G$-module $A$ there is an obvious inclusion $A^G \subset A^K$.
Applying this simple observation to the $G$-modules $\Hom(P_r,M)$ ($M$ being
some $G$-module), we obtain inclusions $\Hom(P_r,M)^G \hookrightarrow
\Hom(P_r,M)^K$, and these give rise to restriction maps 
\[
\Res_{G/K}: H^r(G,M) \to H^r(K,M)
\]
for any $G$-module $M$. 

\subsection{Corestriction} Let $K$ be a subgroup of $G$, and assume that
the index $[G:K]$ is finite. For any $G$-module $A$ there is a norm map 
$N_{G/K}:A^K \to A^G$, defined by 
\[
N_{G/K} (a) =\sum_{g \in G/K} ga. 
\] 
Applying this construction to the $G$-modules $\Hom(P_r,M)$, we obtain
induced maps 
\[
 H^r(K,M) \to H^r(G,M), 
\] 
called corestriction maps, and denoted by $\Cor_{G/K}$. 

It is a standard result that $\Cor_{G/K} \Res_{G/K} = [G:K]$. When $K$ is
normal in $G$, we have another standard result.  

\begin{lemma} \label{lem.CorRes}
Assume that $K$ is a normal subgroup of finite index in $G$, and let $M$
be a $G$-module.    We then have
the Hochschild-Serre action of $G/K$ on $H^r(K,M)$. 
\begin{enumerate}
\item The composed  map $\Res_{G/K} \Cor_{G/K}$
coincides with  the norm map $N_{G/K}$ formed using the action of
$G/K$ on $H^r(G,M)$.  
\item The corestriction map 
$\Cor_{G/K}:H^r(K,M) \to H^r(G,M)$ factors through the canonical
surjection 
\[
H^r(K,M) \twoheadrightarrow H^r(K,M)_{G/K}
\]  
from $H^r(K,M)$ to the group of $G/K$-coinvariants for the
Hochschild-Serre action of $G/K$.  
\end{enumerate}
\end{lemma}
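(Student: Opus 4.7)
My plan is to argue both statements at the cochain level using a single resolution to compute cohomology for both $G$ and $K$. I would fix a $\mathbb{Z}[G]$-free resolution $P$ of the trivial module $\mathbb{Z}$; since $K$ has finite index in $G$, $P$ is also $\mathbb{Z}[K]$-free, so both $H^r(G,M)$ and $H^r(K,M)$ are computed from the cochain complexes $C^\bullet_G := \Hom(P,M)^G$ and $C^\bullet_K := \Hom(P,M)^K$. Because $K$ is normal in $G$, the natural $G$-action on $\Hom(P,M)$, given by $(gf)(x) = g \cdot f(g^{-1}x)$, preserves $C^\bullet_K$ and descends to a $G/K$-action; comparing with the $\theta_x$-automorphism formalism recalled earlier in the excerpt (via the canonical choice $\theta_P(x) := gx$, which is $\Int(g)$-semilinear on $P$), one checks that the induced action on $H^r(K,M)$ coincides with the Hochschild-Serre action. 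In this setup $\Res_{G/K}$ is induced by the inclusion $C^\bullet_G \hookrightarrow C^\bullet_K$, and $\Cor_{G/K}$ by the cochain-level norm $N \colon C^\bullet_K \to C^\bullet_G$ sending $f$ to $\sum_{[g] \in G/K} gf$ (well-defined on cosets because $f$ is $K$-fixed).

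For part (1), I would simply observe that $\Res \circ \Cor$ sends a cochain $f \in C^\bullet_K$ to $\sum_{[g] \in G/K} gf$, now regarded as an element of $C^\bullet_K$ rather than $C^\bullet_G$; this is literally the norm for the $G/K$-action on $C^\bullet_K$, and passing to cohomology yields the desired identification with the Hochschild-Serre norm on $H^r(K,M)$. For part (2), the key identity is $N(hf) = \sum_{[g]} (gh)f$ for $h \in G$ and $f \in C^\bullet_K$. Since right multiplication by $h$ induces a bijection of $G/K$ (using that $K$ is normal), this sum equals $\sum_{[g']} g'f = N(f)$. Hence $N$ vanishes on differences $hf - f$ at the cochain level, and consequently $\Cor_{G/K}(h\xi - \xi) = 0$ in $H^r(G,M)$ for every $\xi \in H^r(K,M)$ and every $h \in G$, so $\Cor_{G/K}$ factors through $H^r(K,M)_{G/K}$ as claimed.

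There is no serious obstacle here: everything is transparent once both cohomology groups are computed from the same resolution, and the only mild subtlety is matching the cochain-level $G/K$-action on $C^\bullet_K$ with the Hochschild-Serre action on $H^r(K,M)$ defined earlier via $\theta_x$-automorphisms. That matching is a direct verification rather than a genuine difficulty.
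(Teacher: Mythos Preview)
Your argument is correct and is essentially the same approach as the paper's: both compute $H^r(G,M)$ and $H^r(K,M)$ from a single $\mathbb Z[G]$-free resolution, realize $\Cor_{G/K}$ as the cochain-level norm $N$, and exploit that the Hochschild--Serre action is induced by $f \mapsto xf$. The only cosmetic difference is that for part~(2) the paper packages your identity $N(hf)=N(f)$ as the commutativity of the square
\[
\begin{CD}
H^r(K,M) @>{\Cor_{G/K}}>> H^r(G,M) \\
@V{\theta_x}VV @V{\theta_x}VV\\
H^r(K,M) @>{\Cor_{G/K}}>> H^r(G,M)
\end{CD}
\]
for $\theta_x=\Int(x)$, together with the fact that the right vertical arrow is the identity; and for part~(1) the paper simply cites Cartan--Eilenberg where you give the one-line cochain computation.
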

\begin{proof}
For (1) see Corollary 9.2 on page 257 in
Cartan-Eilenberg, though they  are
treating Tate cohomology and are therefore assuming that $G$ is finite.
This makes no real difference. 

For (2) we can reason as follows. Corestriction is functorial in the
following sense. Suppose that we are given an automorphism $\theta$ of $G$
that preserves  a subgroup $K$ of finite index in $G$. Suppose too that
we are given a
$\theta$-automorphism $\theta_M$ of some $G$-module $M$. We then obtain
induced automorphisms (denoted by $\theta$) on the cohomology groups
$H^r(G,M)$ and $H^r(K,M)$, and the square 
\[
\begin{CD}
H^r(K,M) @>{\Cor_{G/K}}>> H^r(G,M) \\
@V{\theta}VV @V{\theta}VV\\
H^r(K,M) @>{\Cor_{G/K}}>> H^r(G,M)
\end{CD}
\] 
commutes. When $K$ is normal, we may take $\theta$ to be the inner
automorphism $\Int(x)$ obtained from some element $x \in G$. 
We may take $\theta_M$ to be $m \mapsto xm$. 
Then the
induced automorphism of $H^r(G,M)$ is trivial, and the induced automorphism
of $H^r(K,M)$ is precisely the Hochschild-Serre action of $x \in G/K$. The
commutativity of our square for all $x \in G/K$ then tells us that
$\Cor_{G/K}$ factors through the coinvariants of $G/K$ on $H^r(K,M)$. 
\end{proof} 

\subsection{Corestriction for coinduced modules} 
Again let $K$ be a subgroup of finite index in $G$. 
For any $K$-module $M$ we write $R(M)$ for the $G$-module coinduced from
$M$. Thus an element in $R(M)$ is a function $f:G \to M$ such that
$f(kx)=kf(x)$ for all $k \in K$, $x \in G$. The group $G$ acts by right
translations. The adjunction morphism $\epsilon:R(M)
\twoheadrightarrow M$ is the $K$-map given by $\epsilon(f)=f(1_G)$.  
There is a canonical $K$-map 
$j:M \hookrightarrow R(M)$ such that $\epsilon j=\id_M$. It sends $m \in M$
to the element
$f_m
\in R(M)$ defined by 
\[
f_m(x):= 
\begin{cases}
xm &\text{ if $x \in K$,} \\
0 &\text{ otherwise.}
\end{cases}
\] 
We denote by $Sh:H^i(G,R(M)) \to H^i(K,M)$ the Shapiro isomorphism.

\begin{lemma} \label{lem.IndCor}
For any $K$-module $M$ and any $i \ge 0$ 
the  composed map 
\begin{equation}\label{map.RCor}
H^i(K,M) \xrightarrow{j} H^i(K,R(M)) \xrightarrow{\Cor_{G/K}} H^i(G,R(M))
\xrightarrow{Sh} H^i(K,M)
\end{equation}
is equal to the identity map on $H^i(K,M)$. 
\end{lemma}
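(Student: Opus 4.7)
My plan is to reduce the lemma to a direct cochain-level computation using a single $\mathbb{Z}[G]$-free resolution $P_\bullet\to\mathbb{Z}$. Because $\mathbb{Z}[G]$-free modules are also $\mathbb{Z}[K]$-free, the same resolution computes $H^\bullet(G,R(M))$, $H^\bullet(K,R(M))$, and $H^\bullet(K,M)$. At this level I will use three standard identifications. First, Shapiro's isomorphism $Sh\colon H^i(G,R(M))\to H^i(K,M)$ is induced by post-composition with the $K$-equivariant adjunction map $\epsilon\colon R(M)\to M$; that is, at the cochain level $Sh$ sends $\phi\in\Hom_G(P_r,R(M))$ to $\epsilon\circ\phi\in\Hom_K(P_r,M)$. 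Second, corestriction is induced by the norm map $N_{G/K}\colon \Hom(P_r,R(M))^K\to \Hom(P_r,R(M))^G$, which on a chain $p$ reads $(N\phi)(p)=\sum_{g\in G/K} g\cdot\phi(g^{-1}p)$; here the relevant $G$-action on $\Hom(P_r,R(M))$ is the diagonal one, $(g\phi)(p)=g\,\phi(g^{-1}p)$, under which taking $G$-fixed points recovers $\Hom_G(P_r,R(M))$. Third, $j_*$ is simply post-composition with the $K$-map $j\colon M\hookrightarrow R(M)$ of the statement.

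With these identifications in place, fix a cocycle $\psi\in\Hom_K(P_r,M)$ and trace it through the composition \eqref{map.RCor}. After applying $j_*$ and then $\Cor_{G/K}$, the resulting cochain in $\Hom_G(P_r,R(M))$ sends $p\in P_r$ to the element of $R(M)$ whose value at $x\in G$ is
\[
\sum_{g\in G/K} j\bigl(\psi(g^{-1}p)\bigr)(xg).
\]
Applying $Sh$, i.e.\ evaluating at $x=1$, produces
\[
\sum_{g\in G/K} j\bigl(\psi(g^{-1}p)\bigr)(g).
\]
Now I invoke the defining property of $j$: the value $j(m)(g)$ equals $gm$ when $g\in K$ and $0$ otherwise. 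Choosing coset representatives $\{g_\alpha\}_{\alpha\in G/K}$ with $g_1=1$ for the trivial coset, the only summand with $g_\alpha\in K$ is $g_1=1$ (distinct representatives lie in distinct cosets, so no other $g_\alpha$ can meet $K$), and it contributes exactly $1\cdot\psi(1^{-1}p)=\psi(p)$. Hence the composition \eqref{map.RCor} acts as the identity on cochains, so also on cohomology.

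I anticipate no serious obstacle; the content of the argument is bookkeeping once the cochain formulas for $Sh$, $\Cor_{G/K}$, and $j_*$ are written down. The one step that requires a moment of care is the formula for $\Cor_{G/K}$, which must be read off from the norm map on $\Hom(P_r,R(M))$ relative to the diagonal $G$-action used to identify $G$-equivariant homomorphisms with $G$-invariants; this is exactly the prescription implicit in the definition of corestriction given earlier in this appendix. After that, the ``support on $K$'' property of $j$ collapses the sum to a single term, which is precisely the mechanism by which the composite is the identity.
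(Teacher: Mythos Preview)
Your proof is correct. You carry out a direct cochain-level computation valid in every degree $i$, using a single $\mathbb Z[G]$-free resolution $P_\bullet$ and the explicit descriptions of $j_*$, $\Cor_{G/K}$ (as the norm map on $\Hom(P_r,R(M))$), and $Sh$ (as post-composition with $\epsilon$). The collapse of the sum to the single term indexed by the trivial coset, forced by the support condition built into $j$, is exactly the point.

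The paper takes a different route. It observes that $R$ is exact and that each of the three arrows in the composite is a morphism of cohomological $\partial$-functors in $M$; since $M\mapsto H^i(K,M)$ is a universal (effaceable) $\partial$-functor, it suffices to check the identity in degree $i=0$, where the computation is the same one you make but specialized to $P_0=\mathbb Z[G]$ (equivalently, to invariants). Your approach is more elementary and self-contained, avoiding any appeal to universality of derived functors; the paper's approach is shorter and more conceptual, but presupposes the $\partial$-functor formalism and the exactness of coinduction. Either way, the underlying mechanism---the ``support on $K$'' property of $j$ killing all nontrivial coset contributions---is identical.
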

\begin{proof} 
Observe that the functor $R$ is exact and that 
all three arrows in \eqref{map.RCor} are actually morphisms of
cohomological $\partial$-functors. Since the initial cohomological
$\partial$-functor is universal, in order to check that \eqref{map.RCor} is
always the identity map, it suffices to do so when $i=0$. This is a simple
computation. 
\end{proof}

\subsection{Corestriction of homogeneous cochains} 
Again let $K$ be a subgroup of finite index in $G$. We want to give a
cocycle-level formula for corestriction $\Cor_{G/K}$. Of course this
involves a choice. As we have seen, corestriction is most easily understood
when one views a free $G$-resolution of $\mathbb Z$ as also being a free
$K$-resolution of $\mathbb Z$. For this purpose we use the standard
resolution  $P_r(G)=\mathbb Z[G^{r+1}]$ for $G$. We will also need the
standard resolution $P_r(K)$ for $K$, given by $P_r(K)=\mathbb Z[K^{r+1}]$. 
Both $\Hom(P(G),M)^K$ and $\Hom(P(K),M)^K$ compute the $K$-cohomology of
$M$. To relate them, we can use any morphism $P(G) \to
P(K)$ of complexes of
$K$-modules that induces the identity on the module $\mathbb Z$ that is
being resolved.  The most obvious way to get such a morphism is to choose a
 map $p:G \to K$ satisfying $p(kg)=kp(g)$ (for all $k \in K$, $g \in G$),
and then to define
$\tilde p:P_r(G)
\to P_r(K)$ as the $\mathbb Z$-linear map induced by $(g_0,\dots,g_r)
\mapsto (p(g_0),\dots,p(g_r))$.  

Let us use homogeneous cochains. So, we start with a standard homogeneous 
cochain for $K$ with values in $M$. This is simply a map $f:K^{r+1} \to M$
satisfying $f(xk_0,\dots,xk_r)=xf(k_0,\dots,k_r)$ for all $x \in K$. From
this we obtain a map $f_1:G^{r+1} \to M$, defined by
$f_1(g_0,\dots,g_r)=f(p(g_0),\dots,p(g_r))$. The corestriction $f_2$ of
$f$ is obtained by applying $N_{G/K}$ to $f_1$. Thus 
\begin{equation}
f_2(g_0,\dots,g_r)=\sum_{x \in G/K}
x\bigl(f(p(x^{-1}g_0),\dots,p(x^{-1}g_r)\bigr).
\end{equation}  
This too is a homogeneous cochain. 

\subsection{Compatibility of corestriction with pullback maps} 

Suppose we are given a finite group $G$ and a commutative diagram 
 \begin{equation*}
\begin{CD}
1 @>>> A' @>>> E'     @>>> G @>>> 1 \\
 @. @VhVV  @V{\tilde h}VV @| @. \\
1 @>>> A @>>> E     @>>> G @>>> 1 
\end{CD}
\end{equation*} 
with exact rows. The homomorphisms $h$, $\tilde h$ induce pullback 
maps $h^*$, $\tilde h^*$ on group cohomology. These are inflation maps  
when $h$, $\tilde h$ are surjective. 

\begin{lemma}\label{lem.CorInf}
Let $M$ be an $E$-module. 
Then the square 
\begin{equation}
\begin{CD}
H^r(A,M) @>{\Cor_{E/A}}>> H^r(E,M) \\
@V{h^*}VV @V{\tilde h^*}VV \\
H^r(A',M) @>{\Cor_{E'/A'}}>> H^r(E',M)
\end{CD}
\end{equation}
commutes for all $r \ge 0$. 
\end{lemma}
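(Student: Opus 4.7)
The plan is to prove commutativity at the level of homogeneous cochains, using carefully coordinated choices for the section-like maps $p$ and $p'$ that enter into the explicit formula for corestriction recalled just above in the appendix.

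First, I choose a single set-theoretic section $s \colon G \to E'$ of the canonical surjection $E' \twoheadrightarrow G$, and use it to define both $p'$ and $p$ simultaneously. Namely, let $\pi \colon E' \to G$ and $\pi \colon E \to G$ denote the projections, and set
\[
p'(e') := e' \cdot s(\pi(e'))^{-1}, \qquad p(e) := e \cdot (\tilde h \circ s)(\pi(e))^{-1}.
\]
These satisfy the required properties $p'(a'e') = a' p'(e')$ for $a' \in A'$ and $p(a e) = a p(e)$ for $a \in A$, because $\tilde h \circ s$ is a set-theoretic section of $E \twoheadrightarrow G$. The crucial compatibility is the identity
\[
p(\tilde h(z)) \;=\; \tilde h(z) \cdot \tilde h(s(\pi(z)))^{-1} \;=\; \tilde h\bigl(z \cdot s(\pi(z))^{-1}\bigr) \;=\; \tilde h(p'(z)) \;=\; h(p'(z)) \qquad (z \in E'),
\]
where the last equality holds because $p'(z) \in A'$ and $\tilde h|_{A'} = h$.

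With this setup, I will verify the commutativity term by term using the explicit formula recalled in the appendix. Starting from a homogeneous $A$-cochain $f \colon A^{r+1} \to M$, the formula gives
\[
(\tilde h^* \Cor_{E/A} f)(e'_0, \ldots, e'_r) \;=\; \sum_{x \in E/A} x \cdot f\bigl(p(x^{-1}\tilde h(e'_0)), \ldots, p(x^{-1}\tilde h(e'_r))\bigr),
\]
\[
(\Cor_{E'/A'} h^* f)(e'_0, \ldots, e'_r) \;=\; \sum_{y \in E'/A'} y \cdot f\bigl(h(p'(y^{-1}e'_0)), \ldots, h(p'(y^{-1}e'_r))\bigr).
\]
Parametrizing both cosets by $G$ via $y_g := s(g)$ and $x_g := \tilde h(y_g)$, and using that $\tilde h$ is a homomorphism so $x_g^{-1}\tilde h(e'_i) = \tilde h(y_g^{-1} e'_i)$, the compatibility $p \circ \tilde h = h \circ p'$ makes the arguments of $f$ agree position by position. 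Since $M$ is made into an $E'$-module via $\tilde h$, the outer actions of $y_g$ and $x_g$ on $M$ also agree, so the two sums coincide as cochains on $E'$.

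There is no real obstacle here beyond bookkeeping: the only substantive point is the fact that compatible choices of $p$ and $p'$ exist, and this is immediate once one observes that a single section $s \colon G \to E'$ induces one on $E$ via $\tilde h$. Everything else is a routine substitution into the formula for $\Cor$ already given in the appendix, and no separate treatment of the injective (restriction) or surjective (inflation) cases for $\tilde h$ is needed.
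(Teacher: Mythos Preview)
Your proof is correct and follows exactly the approach the paper indicates: the paper's own proof is the single sentence ``This follows easily from the explicit formula we gave for corestriction of homogeneous cochains,'' and you have carried out that verification in full detail, including the key observation that a section $s:G\to E'$ transports via $\tilde h$ to a compatible section of $E\twoheadrightarrow G$, yielding $p\circ\tilde h = h\circ p'$.
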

\begin{proof}
This follows easily from the explicit formula we gave for corestriction of
homogeneous cochains. 
\end{proof}

\subsection{Explicit formula for corestriction of inhomogeneous $1$-cochains} 
In the case of $1$-cochains let us now rewrite  the   
 formula for corestriction in terms of 
inhomogeneous cochains.  So, we start with an inhomogeneous 
$1$-cochain for
$K$, i.e.~a map
$\phi:K
\to M$. The corresponding homogeneous $1$-cochain $f$ is given by
$f(k_0,k_1)=k_0\bigl(\phi(k_0^{-1}k_1)\bigr)$. Corestriction sends this to
the homogeneous $1$-cochain $f_2$. The corresponding inhomogeneous
$1$-cochain $\psi$ (which represents the corestriction of $\phi$) is given 
by  
\[
\psi(g)=f_2(1,g)=\sum_{y \in G/K} y f\bigl(p(y^{-1}),p(y^{-1}g)
\bigr)=\sum_{x \in K\backslash G}
x^{-1}p(x)\bigl(\phi(p(x)^{-1}p(xg)) \bigr). 
\] 

Having made this computation, we no longer have any use for homogeneous cochains, and 
we revert to our usual practice of referring to inhomogeneous cochains simply 
as cochains. The same goes for cocycles.  

Let us apply the computation above to the following very special case. We consider an
extension 
\[
1 \to A \to E \to G \to 1
\] 
of a finite group $G$ by an abelian group $A$. For any $G$-module $M$ we 
are interested in the corestriction map $H^1(A,M) \to H^1(E,M)$. 
Because $A$ is abelian, $H^1(A,M)$ is equal to $\Hom(A,M)$.

As usual, inner automorphisms
by elements in $G$ make $A$ into a $G$-module. Choose a set-theoretic
section
$s:G \to E$, and define a $2$-cocycle $\alpha$ of $G$ in $A$ by
the rule $s(\sigma)s(\tau)=\alpha_{\sigma,\tau}s(\sigma\tau)$. Our cochain level
version of corestriction requires the choice of a map $p:E \to A$ such that
$p(aw)=ap(w)$ for all $a \in A$, $w \in E$. The obvious way to get such a
map $p$ is to put $p(as(\sigma)):=a$ for all $a\in A$, $\sigma \in G$. 

A $1$-cocycle of $A$ in $M$ is a homomorphism $\mu:A \to M$ of abelian
groups. An easy computation shows that the corestriction of $\mu$ to $E$ is
represented by the
$1$-cocycle
$b$ of
$E$ in $M$ defined by 
\begin{align*}
b_{as(\sigma)}&=\sum_{\tau \in G} \tau^{-1}\bigl(
\mu(\tau(a))+\mu(\alpha_{\tau,\sigma}) \bigr) \\ 
&= (N_G\mu)(a) + \sum_{\tau \in G} \tau^{-1}\bigl(\mu
(\alpha_{\tau,\sigma})\bigr)
\end{align*} 
for all $a \in A$, $\sigma \in G$. (This provides a nice illustration of
the principle that $\Res \circ \Cor$ is $N_G$ when we are dealing with a
normal subgroup and the quotient group is $G$.) 

In the special case when $N_G\mu=0$, the formula above shows  that  $b$ 
is inflated from 
 the $1$-cocycle $b'$ of $G$ in $M$ given by 
\begin{equation}
b'_\sigma=\sum_{\tau \in G} \tau^{-1}\bigl(\mu
(\alpha_{\tau,\sigma})\bigr).
\end{equation} 
Now Tate cohomology makes sense for the finite group $G$, and, still 
assuming that $N_G\mu=0$,
 we may view $\mu$ as a $(-1)$-cocycle of $G$ in  
 $\Hom(A,M)$, so we can also form the 
cup-product $c:=\alpha \smile \mu \in Z^1(G,M)$. 

\begin{lemma}\label{lem.CorCup}
The cocycles $b'$ and $c$ are  are
cohomologous.
\end{lemma}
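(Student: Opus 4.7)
The plan is a direct cocycle-level verification: I will show that $b'$ and $c = \alpha \smile \mu$ are represented, with suitable choices of resolutions and conventions, by the very same $1$-cocycle $\sigma \mapsto \sum_{\tau \in G} \tau^{-1}(\mu(\alpha_{\tau,\sigma}))$, so that in fact $[b'] = [c]$ in $H^1(G,M)$.

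First I would verify that the expression written down for $b'$ is indeed a $1$-cocycle, as a warm-up that already uses both hypotheses on $(\alpha, \mu)$. A short calculation reduces the $1$-cocycle identity $b'_{\sigma_1} + \sigma_1 b'_{\sigma_2} = b'_{\sigma_1 \sigma_2}$ to the vanishing of $\sum_\tau \tau^{-1}(\mu(\tau \alpha_{\sigma_1,\sigma_2}))$ after one applies the $2$-cocycle relation
$$\tau\alpha_{\sigma_1,\sigma_2} - \alpha_{\tau\sigma_1,\sigma_2} + \alpha_{\tau,\sigma_1\sigma_2} - \alpha_{\tau,\sigma_1} = 0$$
together with the change of variable $\tau \mapsto \tau\sigma_1^{-1}$ in one of the sums. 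Rewriting the residual sum via $(\tau^{-1}\mu)(a) = \tau^{-1}(\mu(\tau a))$ identifies it with $(N_G \mu)(\alpha_{\sigma_1,\sigma_2})$, which vanishes by hypothesis. This calculation is the prototype of the whole lemma: it already exhibits the interplay between $N_G\mu = 0$ and the cocycle relation for $\alpha$ that will govern the comparison with the cup product.

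Second, I would match this to the cup product. The pairing at play is evaluation $A \otimes \Hom(A,M) \to M$, $(a,f) \mapsto f(a)$, and the cup product in question is
$$\smile \colon Z^2(G,A) \otimes Z^{-1}(G,\Hom(A,M)) \to Z^1(G,M).$$
Using the standard complete resolution $P_\bullet \to \mathbb{Z}$ of $G$ and the usual diagonal approximation, together with the identification $Z^{-1}(G,N) = \ker(N_G) \subset N$, the explicit formula for $\alpha \smile \mu$ evaluated at $\sigma \in G$ is precisely $\sum_\tau \tau^{-1}(\mu(\alpha_{\tau,\sigma}))$, which is $b'_\sigma$. Thus $c$ and $b'$ represent the same class.

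The main obstacle is pinning down the correct Tate cup-product formula in the negative degree $(-1)$, since this requires a complete resolution and careful sign bookkeeping; with different conventions one would only get equality of cohomology classes rather than of cocycles. To sidestep this, I would prefer to do the computation via dimension-shifting: tensoring $0 \to I_G \to \mathbb{Z}[G] \to \mathbb{Z} \to 0$ with $\Hom(A,M)$ produces a short exact sequence whose middle term is cohomologically trivial in the Tate sense, giving a natural isomorphism $H^{-1}(G,\Hom(A,M)) \cong H^0(G,\Hom(A,M) \otimes I_G)$. Under this isomorphism the cup product with $\alpha$ becomes a pairing only in nonnegative degrees, where the classical cochain formula applies directly; tracing the connecting homomorphism through then reduces the comparison with $b'$ to a routine bookkeeping of summation indices, the essential algebraic input still being the cocycle identity for $\alpha$ combined with $N_G\mu = 0$.
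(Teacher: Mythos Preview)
Your outline is sound in spirit, but the route you take diverges from the paper's. You either (A) assert that the cup-product formula in degree $(2,-1)$ is literally $b'_\sigma$, or (B) propose to sidestep that assertion via dimension-shifting along $0 \to I_G \to \mathbb Z[G] \to \mathbb Z \to 0$. The paper does neither. It quotes the Atiyah--Wall formula
\[
c_\sigma=\sum_{\tau \in G} \alpha_{\sigma,\tau}\otimes \sigma\tau\,\mu,
\]
which is \emph{not} $b'_\sigma$ on the nose, and then proves $c$ is cohomologous to $d_\sigma=\sum_\tau \tau^{-1}\alpha_{\tau,\sigma}\otimes \tau^{-1}\mu$ (which does unwind to $b'_\sigma$ under the evaluation pairing) by the following trick: replace $\alpha$ by the cohomologous $2$-cocycle $\alpha'_{\sigma,\tau}=-\sigma\tau\,\alpha_{\tau^{-1},\sigma^{-1}}$ obtained from the alternate section $s'_\sigma=(s_{\sigma^{-1}})^{-1}$ of $E\twoheadrightarrow G$, compute the Atiyah--Wall cup product $c'$ with $\alpha'$, and check $c'_\sigma=d_\sigma$ (using the $1$-cocycle relation $c'_\sigma=-\sigma c'_{\sigma^{-1}}$ and a change of variable $\tau\mapsto\tau^{-1}$). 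This is entirely elementary cocycle manipulation and avoids both the convention-hunting in your Approach~A and the dimension-shifting machinery in your Approach~B. Your Approach~A, as written, is really just an assertion --- you yourself flag that pinning down the formula is ``the main obstacle'' --- so the paper's argument is what actually fills that gap. Your Approach~B would work, but the section-change trick is shorter and more transparent.
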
 
\begin{proof}
This follows from the lemma in the next subsection. 
\end{proof}

\subsection{Some formulas for cup products}
Let $G$ be a finite group and let $A$, $B$ be $G$-modules. We then have
Tate cohomology groups and  cup product pairings 
\[
H^p(G,A) \otimes  H^q(G,B) \to  H^{p+q}(G,A \otimes B). 
\]
We need a cochain level formula for this cup product when $p=2$ and $q=-1$. 
\begin{lemma}\label{lem.cup} 
Let $a_{\sigma,\tau}$ be a $2$-cocycle of $G$ in $A$ and let
$b$ be a $(-1)$-cocycle of $G$ in $B$. Thus $b$ is an element of $B$ such
that $Nb=0$, where $Nb:=\sum_{\sigma \in G} \sigma b$. Then the cup product
$c=a \smile b$ is represented by the $1$-cocycle 
\[
c_\sigma=\sum_{\tau \in G} a_{\sigma,\tau} \otimes \sigma\tau b.
\] 
Moreover the $1$-cocycle 
\[
d_\sigma=\sum_{\tau \in G} \tau^{-1} a_{\tau,\sigma} \otimes \tau^{-1} b
\]
is cohomologous to $c_\sigma$, so it too represents   $a \smile b$. 
\end{lemma}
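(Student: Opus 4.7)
The plan is a direct cochain-level verification combining the standard cup product formula with the dimension-shifting mechanism of Tate cohomology, followed by an explicit coboundary calculation.

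First I would confirm that $c_\sigma$ really is a $1$-cocycle, as a sanity check and a warm-up. Its coboundary at $(\sigma,\rho)$ is
\[
\sigma c_\rho - c_{\sigma\rho} + c_\sigma \;=\; \sum_\tau\bigl[\sigma a_{\rho,\tau} - a_{\sigma\rho,\tau}\bigr]\otimes\sigma\rho\tau\, b \;+\; \sum_\tau a_{\sigma,\tau}\otimes\sigma\tau\, b.
\]
The $2$-cocycle identity rewrites the bracket as $a_{\sigma,\rho} - a_{\sigma,\rho\tau}$, and after reindexing $\tau\mapsto\rho^{-1}\tau$ in the middle sum the expression collapses to $a_{\sigma,\rho}\otimes\sigma(Nb)$, which is zero because $Nb=0$. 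The same kind of calculation verifies that $d_\sigma$ is a $1$-cocycle.

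Second, to identify $c_\sigma$ with $a\smile b$ I would use dimension shifting. Embed $B$ in a coinduced module $I:=\Hom(\mathbb Z[G],B)$, giving a short exact sequence $0\to B\to I\to B''\to 0$ whose associated connecting map $\delta:H^{-2}(G,B'')\xrightarrow{\simeq}H^{-1}(G,B)$ is an isomorphism (since $I$ is cohomologically trivial in every degree). Our class $b$ thus lifts canonically to a class in $H^{-2}(G,B'')$ represented by a specific element $b''\in B''$. In degrees $(2,-2)$ the cup product $H^2(G,A)\otimes H^{-2}(G,B'')\to H^0(G,A\otimes B'')$ is given by the classical ``front-face $\otimes$ back-face'' formula on a standard complete resolution, and applying $\delta$ (which commutes with cup products up to the usual sign) transports this to $H^1(G,A\otimes B)$. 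Unwinding the connecting map on the cochain level, i.e.\ choosing a preimage in $I$ and taking its $G$-differential, reproduces precisely $\sum_\tau a_{\sigma,\tau}\otimes\sigma\tau b$.

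Third, to show $c$ and $d$ are cohomologous I would exhibit an explicit $0$-cochain $e\in A\otimes B$ with $(\partial e)_\sigma = \sigma e - e = d_\sigma - c_\sigma$. A natural candidate, suggested by the symmetry between the two formulas under the involution $\tau\leftrightarrow\tau^{-1}$, is
\[
e \;:=\; \sum_{\tau\in G} \tau^{-1}a_{\tau,1}\otimes\tau^{-1}b
\]
(or a close variant with a normalization constant absorbed); checking the identity $\sigma e-e=d_\sigma-c_\sigma$ is a bookkeeping manipulation that uses the $2$-cocycle relation for $a$ applied to the triple $(\tau,\sigma,\ldots)$, the reindexing $\tau\mapsto\tau\sigma^{-1}$, and the condition $Nb=0$ to eliminate a residual $Nb$-term.

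The genuinely delicate step is the middle one: matching an explicit cochain formula for cup products in the ``mixed'' range $(p,q)=(2,-1)$ with the universal characterization of the Tate cup product. Different but equally standard diagonal approximations on the complete resolution yield the two formulas $c_\sigma$ and $d_\sigma$, so the content of the last statement is that these diagonals are chain-homotopic; the dimension-shift argument above is the cleanest way to keep track of signs and conventions, and once it is in place the coboundary computation for $d-c$ is routine.
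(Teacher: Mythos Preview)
Your outline diverges from the paper in both halves, and the second half has a real gap.

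For the formula $c_\sigma=\sum_\tau a_{\sigma,\tau}\otimes\sigma\tau b$, the paper simply cites Atiyah--Wall in \cite{CF}; your dimension-shifting derivation is a legitimate alternative, though heavier than a citation.

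For $c\sim d$, however, your candidate $e=\sum_\tau \tau^{-1}a_{\tau,1}\otimes\tau^{-1}b$ cannot work: if $a$ is normalized (so $a_{\tau,1}=0$ for all $\tau$) then $e=0$, yet $c$ and $d$ are visibly not equal in general even for normalized $a$. So ``a close variant'' is doing all the work, and you have not actually located the coboundary. The paper avoids this search entirely by a change-of-section trick: build the extension $E$ from $a$ with section $s$, pass to the new section $s'_\sigma:=(s_{\sigma^{-1}})^{-1}$, and let $a'$ be the resulting (cohomologous) $2$-cocycle, which works out to $a'_{\sigma,\tau}=-\sigma\tau\,a_{\tau^{-1},\sigma^{-1}}$. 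Plugging $a'$ into the formula for $c$ gives a cocycle $c'$ cohomologous to $c$; then using the $1$-cocycle identity $c'_\sigma+\sigma c'_{\sigma^{-1}}=0$ and a reindexing $\tau\mapsto\tau^{-1}$ shows $c'_\sigma=d_\sigma$ on the nose. If you want to salvage your direct approach, the correct $e$ is $\sum_\tau h_\tau\otimes\tau b$ where $h$ is the $1$-cochain with $a'-a=\partial h$, i.e.\ $h_\sigma=s'_\sigma s_\sigma^{-1}$; this is not $a_{\tau,1}$.
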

\begin{proof}
The  formula for $c$  comes from the article by 
Atiyah-Wall in \cite{CF}. It remains to prove that $c$ and $d$ are
cohomologous.  
Now we will certainly get a $1$-cocycle $c'_\sigma$ cohomologous to
$c_\sigma$ if we replace $a_{\sigma,\tau}$ by a $2$-cocycle
$a'_{\sigma,\tau}$ cohomologous to
$a_{\sigma,\tau}$, and in fact we will see that $d_\sigma=c'_\sigma$ for a
suitable choice of $a'_{\sigma,\tau}$. 

The right choice for $a'_{\sigma,\tau}$ turns out to be  
\[
a'_{\sigma,\tau}=-\sigma\tau a_{\tau^{-1},\sigma^{-1}}.
\]
 Why is $a'$ cohomologous to $a$? Use the $2$-cocycle $a$ to build an
extension
$p:E\twoheadrightarrow G$ of $G$ by $A$, equipped with a set-theoretic
section
$s:G
\to E$ of $p$. By construction we have the multiplication rule $s_\sigma
s_\tau=a_{\sigma,\tau}s_{\sigma\tau}$ in $E$. Any other set-theoretic
section $s'$ gives rise to a cohomologous $2$-cocycle.  The $2$-cocycle  
$a'$ is obtained in this way from the section $s'$  defined by
$s'_\sigma=(s_{\sigma^{-1}})^{-1}$.  

For this choice of $a'$ we find that $c'_\sigma$ is given by 
\[
c'_\sigma=-\sum_{\tau \in G} \sigma\tau a_{\tau^{-1},\sigma^{-1}} \otimes
\sigma\tau b.
\]
The $1$-cocycle property for $c'$ implies that $c'_\sigma+\sigma
c'_{\sigma^{-1}}=0$. Therefore we have 
\[
c'_\sigma=-\sigma c'_{\sigma^{-1}}=\sum_{\tau \in G} 
\tau a_{\tau^{-1},\sigma} \otimes \tau b.
\]
Replacing $\tau$ by $\tau^{-1}$ in this last sum, we find that 
$c'_\sigma=d_\sigma$, as claimed. 
\end{proof}

\bibliographystyle{amsalpha} 
\providecommand{\bysame}{\leavevmode\hbox to3em{\hrulefill}\thinspace}

\end{document}